\newcommand{\N}{\mathbb{N}}
\newcommand{\Z}{\mathbb{Z}}
\newcommand{\T}{\mathbb{T}} 
\newcommand{\K}{\mathbb{K}} 
\newcommand{\I}{\mathcal{I}}
\newcommand{\G}{\mathcal{G}} 
\newcommand{\R}{\mathcal{R}} 
\newcommand{\D}{\mathcal{D}} 
\newcommand{\F}{\mathcal{F}} 
\newcommand{\OO}{\mathcal{O}}
\newcommand{\X}{\mathsf{X}} 
\newcommand{\Y}{\mathsf{Y}} 
\newcommand{\LL}{\mathsf{L}} 
\newcommand{\A}{\mathfrak{A}} 
\newcommand{\LRA}{\longrightarrow}
\newcommand{\LMT}{\longmapsto}
\DeclareMathOperator{\Iso}{\textup{Iso}}
\DeclareMathOperator{\lp}{lp} 
\DeclareMathOperator{\id}{id} 
\DeclareMathOperator{\supp}{supp} 
\DeclareMathOperator{\Stab}{Stab}
\let\tilde\widetilde{}
\let\leq\leqslant{}     
\let\geq\geqslant{}
\let\subset\subseteq{}      
{}
\newcommand{\widesim}[2][1.5]{\mathrel{\overset{#2}{\scalebox{#1}[1]{$\sim$}}}}
\newtheorem{lemma}{Lemma}[section] 
\newtheorem{corollary}[lemma]{Corollary}
\newtheorem{theorem}[lemma]{Theorem}
\newtheorem{proposition}[lemma]{Proposition}
\newtheorem*{theorem*}{Theorem}
\theoremstyle{definition}
\newtheorem{definition}[lemma]{Definition}
\newtheorem{example}[lemma]{Example}
\newtheorem{remark}[lemma]{Remark}
\numberwithin{equation}{section} 
\begin{document}

\begin{abstract}
    To every one-sided shift space $\X$ we associate a cover $\tilde{\X}$, a groupoid $\G_\X$ and a $\mathrm{C^*}$-algebra $\OO_\X$.
    We characterize one-sided conjugacy, eventual conjugacy and (stabilizer-preserving) continuous orbit equivalence between $\X$ and $\Y$
    in terms of isomorphism of $\G_\X$ and $\G_\Y$, 
    and diagonal-preserving $^*$-isomorphism of $\OO_\X$ and $\OO_\Y$.
    We also characterize two-sided conjugacy and flow equivalence of the associated two-sided shift spaces $\Lambda_\X$ and $\Lambda_\Y$
    in terms of isomorphism of the stabilized groupoids $\G_\X\times \R$ and $\G_\Y\times \R$,
    and diagonal-preserving $^*$-isomorphism of the stabilized $\mathrm{C^*}$-algebras $\OO_\X\otimes \K$ and $\OO_\Y\otimes \K$.
    Our strategy is to lift relations on the shift spaces to similar relations on the covers.

    Restricting to the class of sofic shifts whose groupoids are effective,
    we show that it is possible to recover the continuous orbit equivalence class of $\X$ from the pair $(\OO_\X, C(\X))$,
    and the flow equivalence class of $\Lambda_\X$ from the pair $(\OO_\X\otimes \K, C(\X)\otimes c_0)$.
    In particular, continuous orbit equivalence implies flow equivalence for this class of shift spaces.
\end{abstract}

\title{$\mathrm{C^*}$-algebras, groupoids and covers of shift spaces}

\author[K.A.~Brix]{Kevin Aguyar Brix}
\address[K.A.~Brix]{School of Mathematics and Applied Statistics, 
University of Wollongong, Wollongong NSW 2522, Australia}
\email{kabrix.math@fastmail.com}

\author[T.M.~Carlsen]{Toke Meier Carlsen}
\address[T.M.~Carlsen]{Department of Science and Technology\\University of the Faroe Islands\\
Vestara Bryggja 15\\ 
FO-100 T\'orshavn\\
the Faroe Islands}
\email{toke.carlsen@gmail.com}

\keywords{Symbolic dynamics, shift spaces, groupoids, $\mathrm C^*$-algebras}
\thanks{The first named author is supported by the Danish National Research Foundation through the Centre for Symmetry and Deformation (DNRF92)
and the Carlsberg Foundation through an Internationalisation Fellowship,
while the second named author was supported by Research Council Faroe Islands through the project ``Using graph $\mathrm{C^*}$-algebras to classify graph groupoids''.}
\maketitle

\setcounter{tocdepth}{1}
\tableofcontents


\section*{Introduction}
In~\cite{CK80}, Cuntz and Krieger used finite type symbolic dynamical systems to construct a family of simple $\mathrm{C^*}$-algebras
today known as \emph{Cuntz--Krieger algebras}.
Such a dynamical system is up to conjugacy determined by a finite square $\{0,1\}$-matrix $A$,
and the $\mathrm{C^*}$-algebra $\OO_A$ comes equipped with a distinguished commutative subalgebra $\D_A$ called the \emph{diagonal}
and a circle action $\gamma\colon \T\curvearrowright \OO_A$ called the \emph{gauge action}.
This construction has allowed for new and fruitful discoveries in both symbolic dynamics and in operator algebras via translations of interesting problems and results.

One of the most important relations among two-sided subshifts besides conjugacy is flow equivalence.
Cuntz and Krieger showed that if the subshifts $\Lambda_A$ and $\Lambda_B$,
determined by irreducible matrices which are not permutations $A$ and $B$, are flow equivalent,
then there is a $^*$-isomorphism between the stabilized Cuntz--Krieger algebras $\OO_A\otimes \K \LRA \OO_B\otimes \K$
which maps $\D_A\otimes c_0$ onto $\D_B\otimes c_0$.
Here, $\K$ is the $\mathrm{C^*}$-algebra of compact operators on separable Hilbert space and $c_0$ is the maximal abelian subalgebra of diagonal operators.
The stabilized Cuntz--Krieger algebras together with their diagonal subalgebra therefore constitute an invariant of flow equivalence.
However, 
\[
    A = 
    \begin{pmatrix}
        1 & 1 \\
        1 & 1
    \end{pmatrix}, \qquad
    A' =
    \begin{pmatrix}
        1 & 1 & 0 & 0 \\
        1 & 1 & 1 & 0 \\
        0 & 1 & 1 & 1 \\
        0 & 0 & 1 & 1
    \end{pmatrix}
\]
are examples of irreducible and nonpermutation matrices which are not flow equivalent but whose Cuntz--Krieger algebras $\OO_A$ and $\OO_{A'}$ are $^*$-isomorphic, 
cf.~\cite[Lemma 6.4]{Rordam1995}.
This raised the question:
Is it possible to characterize flow equivalence in terms of the associated $\mathrm{C^*}$-algebras?

In the striking paper~\cite{MM14}, Matsumoto and Matui employ topological groupoids to answer this question:
Using Renault's groupoid reconstruction theory~\cite{Renault08} (which is based on work of Kumjian~\cite{Kumjian1986})
they prove that $\Lambda_A$ and $\Lambda_B$ (determined by irreducible and nonpermutation $\{0,1\}$-matrices $A$ and $B$) are flow equivalent
if and only if there is a $^*$-isomorphism $\Phi\colon \OO_A\otimes \K\LRA \OO_B\otimes \K$ satisfying $\Phi(\D_A\otimes c_0) = \D_B\otimes c_0$, cf.~\cite[Corollary 3.8]{MM14}.
In the particular case above, it follows that no $^*$-isomorphism $\OO_A\otimes \K \LRA \OO_{A'}\otimes \K$ will map $\D_A\otimes c_0$ onto $\D_{A'}\otimes c_0$.

In~\cite{Mat2010} (see also~\cite{Mat2010b}), Matsumoto introduces the notion of continuous orbit equivalence.
He proves that one-sided shift spaces $\X_A$ and $\X_B$ (determined by irreducible and nonpermutation $\{0,1\}$-matrices $A$ and $B$) 
are continuously orbit equivalent if and only if
there is a $^*$-isomorphism $\OO_A\LRA \OO_B$ which carries $\D_A$ onto $\D_B$.
For this reason, Matsumoto remarks that continuous orbit equivalence is a one-sided analog of flow equivalence.
These results on flow equivalence and continuous orbit equivalence are generalized to include all finite type shifts in~\cite[Corollaries 7.1 and 7.2]{CEOR}.

In the more general setting of directed graphs, the second-named author and Rout used groupoids to show that $\X_A$ and $\X_B$ 
(for $\{0,1\}$-matrices $A$ and $B$ with no zero rows and no zero columns) 
are one-sided eventually conjugate if and only if there is $^*$-isomorphism $\Phi\colon \OO_A\LRA \OO_B$ 
satisfying $\Phi(\D_A) = \D_B$ and $\Phi\circ \gamma^A = \gamma^B\circ \Phi$~\cite[Corollary 4.2]{Carlsen-Rout}.
Furthermore, they show that $\Lambda_A$ and $\Lambda_B$ are conjugate if and only if there is a $^*$-isomorphism $\Phi\colon \OO_A\otimes \K\LRA \OO_B\otimes \K$
satisfying $\Phi(\D_A\otimes c_0) = \D_B\otimes c_0$ and $\Phi\circ (\gamma^A\otimes \id) = (\gamma^B\otimes \id)\circ \Phi$~\cite[Corollary 5.2]{Carlsen-Rout}.
From this we understand that one-sided eventual conjugacy is a one-sided analog of two-sided conjugacy.
In a similar spirit, one-sided conjugacy for shifts of finite type was characterized using groupoids in terms of the Cuntz--Krieger algebra with its diagonal
and a certain completely positive map by the authors~\cite[Theorem 3.3]{BC}.
Orbit equivalence of general directed graphs were studied in~\cite{BCW2017, Arklint-Eilers-Ruiz2018, Carlsen-Winger}.

The aim of this paper is to study general shift spaces and provide similar characterizations in terms of groupoids and $\mathrm{C^*}$-algebras.
When $\X$ is a shift space which is not of finite type then the shift operation $\sigma_\X$ is not a local homeomorphism~\cite[Theorem 1]{Parry1966}
so $(\X, \sigma_\X)$ is not a Deaconu--Renault system (in the sense of~\cite[Section 8]{Sims-Williams}).
The Deaconu--Renault groupoid naturally associated to $\X$ then fails to be \'etale.
Therefore, a na\"{i}ve strategy to generalize Cuntz and Krieger's results does not work here.
The bulk of the work is therefore to circumvent this problem.

Matsumoto is the first to associate a $\mathrm{C^*}$-algebra to a general two-sided subshift 
and study its properties, see~\cite{Mat1997, Mat1998, Mat1999, Mat1999b, Mat1999c}.
Unfortunately, there was a mistake in one of the foundational results.
The second-named author and Matsumoto~\cite{Carlsen-Matsumoto2004} then provided a new construction  
which is in general not $^*$-isomorphic to Matsumoto's algebra.
This new construction lacks a universal property and therefore has the downside of not always admitting a gauge action.
The second-named author finally introduced a $\mathrm{C^*}$-algebra $\OO_\X$ associated to a general \emph{one-sided} shift space $\X$ 
using a Cuntz--Pimsner construction~\cite{Carlsen2008} which satisfies Matsumoto's results and admits a gauge action.
We refer the reader to~\cite{Carlsen-Matsumoto2004, Carlsen-Silvestrov2007, Dokuchaev-Exel2017}
for a more detailed description of the history of associating a $\mathrm{C^*}$-algebra to general subshifts.

The $\mathrm{C^*}$-algebra $\OO_\X$ has appeared in various guises throughout the literature.
In~\cite[Section 4.3]{Thomsen2010} (see also~\cite[Corollary 6.7]{Carlsen-Thomsen2012}), Thomsen realized it as a groupoid $\mathrm{C^*}$-algebra of a semi-\'etale groupoid,
Carlsen and Silvestrov describe it as one of Exel's crossed products~\cite[Theorem 10]{Carlsen-Silvestrov2007},
while Dokuchaev and Exel use partial actions~\cite[Theorem 9.5]{Dokuchaev-Exel2017}.
Matsumoto then took a slightly different approach and considered certain labeled Bratteli diagrams called $\lambda$-graph systems
and associated to each $\lambda$-graph system $\mathfrak{L}$ a $\mathrm{C^*}$-algebra $\OO_{\mathfrak{L}}$~\cite{Mat1999-lambda, Mat2002, Mat2006}.
Any two-sided subshift $\Lambda$ has a canonical $\lambda$-graph system $\mathfrak{L}^\Lambda$
and the spectrum of the diagonal subalgebra of $\OO_{\mathfrak{L}}$ is (homeomorphic to) the $\lambda$-graph $\mathfrak{L}^\Lambda$.
Matsumoto then studied orbit equivalence, eventual conjugacy and two-sided conjugacy of these $\lambda$-graphs
and how they are reflected in the $\mathrm{C^*}$-algebras~\cite{Mat2010b, Mat2019}.
Recently, Exel and Steinberg have further investigated semigroups of shift spaces and shown that there is a universal groupoid 
which can be suitably restricted to model either Matsumoto's $\mathrm{C^*}$-algebras or $\OO_\X$,~\cite[Theorem 10.3]{Exel-Steinberg}.

Our approach is based on~\cite[Chapter 2]{CarlsenPhD}:
To any one-sided shift space $(\X, \sigma_\X)$, we construct a cover $\tilde{\X}$ equipped with a local homeomorphism $\sigma_{\tilde{\X}}\colon \tilde{\X}\LRA \tilde{\X}$
and a surjection $\pi_\X\colon \tilde{\X}\LRA \X$ satisfying $\sigma_\X\circ \pi_\X = \pi_\X\circ \sigma_{\tilde{\X}}$.
The pair $(\tilde{\X}, \sigma_{\tilde{\X}})$ is a Deaconu--Renault system.
From $(\tilde{\X}, \sigma_{\tilde{\X}})$, we construct the Deaconu--Renault groupoid $\G_\X$ which is \'etale and consider the associated groupoid $\mathrm{C^*}$-algebra $\OO_\X$.
Starling constructed the space $\tilde{\X}$ as the tight spectrum of a certain inverse semigroup $\mathcal{S}_\X$ associated to $\X$
and showed that $\OO_\X$ is $^*$-isomorphic to the tight $\mathrm{C^*}$-algebra of $\mathcal{S}_\X$, cf.~\cite[Theorem 4.8]{Starling2016}.
The construction of $\tilde{\X}$ generalizes the left Krieger cover of a sofic shift 
(see~\cite{Krieger1984} where it is called \emph{the past state chains} or~\cite[Exercise 6.1.9]{Kitchens}).
From~\cite[Remark 3.8]{Carlsen2003}, we therefore know that for sofic shifts the $\mathrm{C^*}$-algebra $\OO_\X$ is $^*$-isomorphic to a Cuntz--Krieger algebra.

The paper is structured in the following way:
In Section~\ref{sec:approach}, we define the cover $\tilde{\X}$ and the associated groupoid $\G_\X$.
We characterize when $\G_\X$ is principal or effective, respectively, in terms of conditions on $\X$ (Propositions~\ref{prop:principal} and~\ref{prop:essentially-principal}).
In Section~\ref{sec:diagonal-preserving}, we show that any $^*$-isomorphism $\OO_\X\LRA \OO_\Y$
which maps $C(\X)$ onto $C(\Y)$ is in fact diagonal-preserving (Theorem~\ref{thm:diagonal-preserving}).
Sections~\ref{sec:one-sided-conjugacy},~\ref{sec:eventual-conjugacy} and~\ref{sec:two-sided-conjugacy}
give complete characterizations of one-sided conjugacy (Theorem~\ref{thm:one-sided-conjugacy}), 
one-sided eventual conjugacy (Theorem~\ref{thm:eventual-conjugacy}) 
and two-sided conjugacy (Theorem~\ref{thm:two-sided-conjugacy}), respectively,
in terms of isomorphism of groupoids and diagonal-preserving $^*$-isomorphism of $\mathrm{C^*}$-algebras.
As opposed to Matsumoto, our results are not limited to the case where the groupoid is effective,
and we characterize the relations on the shift spaces and not only the covers (or the $\lambda$-graphs).

In Section~\ref{sec:coe} we study continuous orbit equivalence:
We characterize stabilizer-preserving continuous orbit equivalence in terms of isomorphisms of groupoids which respect certain cocycles,
and $^*$-isomorphisms of $\mathrm{C^*}$-algebras which respect certain gauge actions (Theorem~\ref{thm:coe}).
Section~\ref{sec:flow} concerns flow equivalence:
We can characterize flow equivalence in terms of isomorphism of stabilized groupoids which respects certain cohomological data,
and $^*$-isomorphism of stabilized $\mathrm{C^*}$-algebras which respect certain gauge actions suitably stabilized (Theorem~\ref{thm:flow}).
When the groupoids involved are effective, some of the conditions simplify.
In particular, we obtain the following result related to the flow equivalence problem for sofic shifts.

\begin{theorem*}[Proposition~\ref{prop:essentially-principal}, Theorem~\ref{thm:flow-sofic}, Corollary~\ref{cor:coe-flow-sofic}]
	Let $\Lambda_\X$ and $\Lambda_\Y$ be two-sided sofic shift spaces such that $\G_\X$ and $\G_\Y$ are effective.
	Then $\Lambda_\X$ and $\Lambda_\Y$ are flow equivalent if and only if 
    there is a $^*$-isomorphism $\Phi\colon \OO_\X\otimes \K\LRA \OO_\Y\otimes \K$ satisfying $\Phi(C(\X)\otimes c_0) = C(\Y)\otimes c_0$.
	Furthermore, if $\X$ and $\Y$ are continuously orbit equivalent, then $\Lambda_\X$ and $\Lambda_\Y$ are flow equivalent.
\end{theorem*}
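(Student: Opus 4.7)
The plan is to reduce this to the general flow-equivalence characterization (Theorem~\ref{thm:flow}), which identifies flow equivalence of $\Lambda_\X$ and $\Lambda_\Y$ with an isomorphism of stabilized groupoids respecting certain cohomological data (equivalently, a diagonal-preserving $^*$-isomorphism of stabilized $\mathrm{C^*}$-algebras compatible with certain gauge actions), and to show that the effectiveness hypothesis makes the cocycle/gauge-action compatibility automatic. The key input is Renault's groupoid reconstruction theorem, which for an effective \'etale groupoid recovers both the groupoid and its topology from the pair (C*-algebra, Cartan diagonal).

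For the main equivalence, the forward direction is immediate from Theorem~\ref{thm:flow} by forgetting the gauge compatibility. For the converse, I would first observe that, under the effectiveness hypothesis on $\G_\X$ and $\G_\Y$ (as characterized in Proposition~\ref{prop:essentially-principal}), the stabilized groupoids $\G_\X\times\R$ and $\G_\Y\times\R$ are also effective, since $\R$ is a transitive equivalence relation groupoid with trivial isotropy. Identifying $\OO_\X\otimes\K$ with $C^*(\G_\X\times\R)$ having diagonal $C(\X)\otimes c_0$, any diagonal-preserving $^*$-isomorphism $\Phi$ is induced by a unique isomorphism of topological groupoids $\psi\colon \G_\X\times\R\LRA\G_\Y\times\R$. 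Effectiveness further forces the canonical length cocycle $c_\X$ and the pulled-back cocycle $c_\Y\circ\psi$ to be continuously cohomologous, because on an effective groupoid any two continuous $\Z$-valued cocycles inducing the same action on the diagonal differ by a continuous coboundary. This verifies the hypothesis needed to invoke Theorem~\ref{thm:flow} and thereby produces the flow equivalence of $\Lambda_\X$ and $\Lambda_\Y$.

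The last assertion follows quickly: continuous orbit equivalence of $\X$ and $\Y$ yields, via Theorem~\ref{thm:coe}, a $^*$-isomorphism $\OO_\X\LRA\OO_\Y$ mapping $C(\X)$ onto $C(\Y)$; tensoring with $\id_\K$ gives a diagonal-preserving $^*$-isomorphism $\OO_\X\otimes\K\LRA\OO_\Y\otimes\K$ sending $C(\X)\otimes c_0$ onto $C(\Y)\otimes c_0$, and the main equivalence just proved delivers flow equivalence of $\Lambda_\X$ and $\Lambda_\Y$.

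The principal obstacle is the cocycle-matching step in the effective case: showing that Renault's reconstructed $\psi$ automatically respects the length cocycles without any prior gauge-compatibility hypothesis on $\Phi$. This is exactly where effectiveness is indispensable; without it the gauge condition in Theorem~\ref{thm:flow} cannot be dropped, and the statement above would fail, as already witnessed by the Cuntz--Krieger example in the introduction.
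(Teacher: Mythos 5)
Your reduction has a genuine gap at exactly the step you flag as ``the principal obstacle,'' and the claimed resolution does not work. After reconstructing a groupoid isomorphism $\Psi\colon \G_\X\times\R\LRA\G_\Y\times\R$ from the diagonal-preserving $^*$-isomorphism, you assert that effectiveness forces $\bar c_\X$ and $\bar c_\Y\circ\Psi$ to be continuously cohomologous because ``on an effective groupoid any two continuous $\Z$-valued cocycles inducing the same action on the diagonal differ by a continuous coboundary.'' That statement is false: every continuous cocycle induces a gauge action that fixes the diagonal pointwise, so the hypothesis carries no information (e.g.\ $c$ and $2c$ on an irreducible SFT groupoid are not cohomologous). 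More importantly, even if the two cocycles were cohomologous, condition (ii) of Theorem~\ref{thm:flow} demands more: homomorphisms $\psi^*$, $\psi^\#$ intertwining the $\kappa$-cocycles \emph{and} a \emph{positive} isomorphism $H(\psi)$ of the ordered cohomology groups. Positivity is the actual crux --- it is what feeds into Lemma~\ref{lem:pos} and Corollary~\ref{cor:flow} to produce a least period preserving orbit equivalence and hence a flow equivalence --- and nothing in your argument produces it. A symptom of the gap is that your converse never uses the sofic hypothesis; if your argument were correct, the theorem would hold for all shift spaces with effective groupoids, whereas the paper's Theorem~\ref{thm:flow-essentially-principal} explicitly retains the positive cohomology isomorphism as an extra hypothesis in that generality.

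The paper's route through the hard direction is different and is where soficity enters: following the construction in the proof of Theorem~\ref{thm:flow} (ii)$\implies$(iv), one builds $f\in C(\X,\N_+)$, $g\in C(\Y,\N_+)$ and a homeomorphism $h\colon\X_f\LRA\Y_g$ covered by a groupoid isomorphism $\G_{\X_f}\LRA\G_{\Y_g}$; by Theorem~\ref{thm:coe-essentially-principal} this $h$ and its lift to the covers are continuous orbit equivalences. Because $\X_f$ and $\Y_g$ are sofic, their covers are shifts of finite type, and effectiveness guarantees they have no isolated points, so the proof of Matsumoto--Matui's result (\cite[Theorem 5.11]{MM-zeta}) applies to show the orbit equivalence is automatically positive and least period preserving; Corollary~\ref{cor:flow} then yields flow equivalence of $\Lambda_{\X_f}$ and $\Lambda_{\Y_g}$, hence of $\Lambda_\X$ and $\Lambda_\Y$. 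Your final paragraph (continuous orbit equivalence $\Rightarrow$ flow equivalence) is structurally fine, matching Corollary~\ref{cor:coe-flow-sofic}, except that the relevant source of the $^*$-isomorphism for a plain (not necessarily stabilizer-preserving) continuous orbit equivalence is Corollary~\ref{cor:coe-sofic} rather than Theorem~\ref{thm:coe}; of course it still depends on repairing the main equivalence.
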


In most sections we prove our results by lifting a relation on the shift spaces to a similar relation on the covers.
We can then encode this relation into structure-preserving $^*$-isomorphisms of the $\mathrm{C^*}$-algebras using groupoids as an intermediate step.
The results of~\cite{CRST} then allow us to reconstruct the groupoid from the $\mathrm{C^*}$-algebras.

\section{Preliminaries}\label{sec:prelim}

We let $\Z$ denote the integers and let $\N = \{0,1,2,\ldots\}$ and $\N_+ = \{1,2,3,\ldots\}$ denote the nonnegative and positive integers, respectively.

\subsection{Symbolic dynamics}

    Let $\A$ be a finite set of symbols (the \emph{alphabet}) considered as a discrete space and let $|\A|$ denote its cardinality.
    Then
    \[
        \A^{\N} = \{ x = x_0x_1x_2\cdots \mid x_i\in \A, i\in \N\}
    \]
    is a second-countable, compact Hausdorff space when equipped with the subspace topology of the product topology on $\A^\N$.
    The \emph{shift-operation} $\sigma\colon \A^{\N}\LRA \A^{\N}$ is the continuous surjection given by $\sigma(x)_n = x_{n+1}$, for $x\in \A^\N$.
    A \emph{one-sided shift space} is a pair $(\X, \sigma_\X)$ in which $\X\subset \A^{\N}$ is closed and \emph{shift-invariant} 
    in the sense that $\sigma(\X) \subset \X$ (we do not assume equality) and where $\sigma_\X := \sigma|_\X\colon \X\LRA \X$.

    Let $\X$ be a one-sided shift space over the alphabet $\A$.
    If $x = x_0 x_1 x_2\cdots\in \X$, we write $x_{[i,j)} = x_i x_{i+1}\cdots x_{j-1}$ for $0\leq i<j$ and $x_{[i,\infty)} = x_i x_{i+1}\cdots$ for $0\leq i$.
    A finite \emph{word} $\mu = \mu_1\cdots \mu_k$ with $\mu_i\in \A$, for each $i=0,\ldots,k$, is \emph{admissible} in $\X$ 
    if $x_{[i, j)} = \mu$ for some $x\in \X$.
    Let $|\mu| = k$ denote the \emph{length} of $\mu$.
    The \emph{empty word} $\varepsilon$ is the unique word of length zero which satisfies $\varepsilon \mu = \mu = \mu \varepsilon$ for any word $\mu$.
    The collection of admissible words in $\X$ of length $l$ is denoted $\LL_l(\X)$
    and the \emph{language} of $\X$ is then the monoid consisting of the union $\LL(\X) = \bigcup_{l\geq 0} \LL_l(\X)$;
    the product being concatenation of words.

    The \emph{cylinder set} of a word $\mu\in \LL(\X)$ is the compact and open set
    \[
        Z_{\X}(\mu) = \{\mu x\in \X\mid x\in \X\},
    \]
    and the collection of sets of the form $Z_\X(\mu)$ constitute a basis for the topology of $\X$.
    A point $x\in \X$ is \emph{isolated} if there is a $k\in \N$ such that $\{x\} = Z_\X(x_{[0, k)})$.

    A point $x\in \X$ is \emph{periodic} if there exists $p\in \N_+$ such that $\sigma_\X^p(x) = x$
    and \emph{eventually periodic} if there is an $n\in \N$ such that $\sigma_\X^n(x)$ is periodic.
    The \emph{least period} of an eventually periodic point $x\in \X$ is
    \[
        \lp(x) = \min\{ p\in \N_+ \mid \exists n, m\in \N: p = n - m,~\sigma_\X^n(x) = \sigma_\X^m(x)\}.
    \]
    A point is \emph{aperiodic} if it is not eventually periodic.
    The \emph{stabilizer} of $x\in \X$ is the group $\Stab(x) = \{ p\in \Z \mid \exists k, l\in \N: p = k - l, \sigma_\X^k(x) = \sigma_\X^l(x)\}$.

    Following~\cite{Mat1999}, we define for every $x\in \X$ and $l\in \N$ the \emph{predecessor set} as
    \[
        P_{l}(x) = \{\mu\in \LL_l(\X) \mid \mu x\in \X\}.
    \]
    Two points $x, y\in \X$ are \emph{$l$-past equivalent} if $P_l(x) = P_l(y)$, in which case we write $x\sim_l y$.
    Let $[x]_l$ be the $l$-past equivalence class of $x$.
    A point $x\in \X$ is \emph{isolated in past equivalence} if there is an $l\in \N$ such that $[x]_l$ is a singleton.
    A shift space $\X$ satisfies \emph{Matsumoto's condition (I)}~\cite[p. 680]{Mat1999} if no points are isolated in past equivalence;
    this is a generalization of Cuntz and Krieger's condition (I).
    We shall also consider the slightly weaker condition that there are no \emph{periodic} points which are isolated in past equivalence.

    A \emph{two-sided shift space} is a subset $\Lambda \subset \A^\Z$ which is closed and shift invariant
    with respect to the shift operation $\sigma\colon \A^\Z\LRA \A^\Z$ 
    given by $\sigma(\textrm{x})_n = \textrm{x}_{n+1}$, for $\textrm{x} = \ldots x_{-1} x_0 x_1\ldots\in \Lambda$ and $n\in \Z$.
    Let $\sigma_\Lambda = \sigma|_\Lambda\colon \Lambda\LRA \Lambda$.
    A pair of two-sided shift spaces $(\Lambda_1, \bar{\sigma}_1)$ and $(\Lambda_2, \bar{\sigma}_2)$ are \emph{two-sided conjugate} if
    there is a homeomorphism $h\colon \Lambda_1 \LRA \Lambda_2$ satisfying $h\circ \sigma_1 = \sigma_2\circ h$.
    We shall consider conjugacy of two-sided shift spaces in Section~\ref{sec:two-sided-conjugacy}.
    
    Given a two-sided shift space $(\Lambda, \bar{\sigma}_\Lambda)$ there is a corresponding one-sided shift space defined by
    \[
        \X_\Lambda = \{\textrm{x}_{[0, \infty)} \in \A^\N \mid \textrm{x}\in \Lambda\}
    \]
    together with the obvious shift operation.
    Conversely, if $(\X, \sigma_\X)$ is a one-sided shift space and $\sigma_\X$ is surjective, then the pair consisting of the projective limit
    \[
        \Lambda_\X = \varprojlim(\X, \sigma_\X)
    \]
    together with the induced shift homeomorphism $\sigma_\X\colon \Lambda_\X\LRA \Lambda_\X$ given by $\sigma_\X(\textrm{x})_n = \textrm{x}_{n+1}$
    for $\textrm{x}\in \Lambda$ is the corresponding \emph{two-sided shift space}
    (this is called the \emph{natural extension} of $\X$ in~\cite[Section 9]{DHS1999}).
    The two operations are mutually inverse to each other.
    See~\cite{LM, Kitchens} for excellent introductions to the general theory of symbolic dynamics.

\subsection{$\mathrm{C}^*$-algebras of shift spaces}
To each shift space $\X$, there is a universal unital $\mathrm{C}^*$-algebra $\OO_\X$ which was first constructed as a Cuntz--Pimsner algebra~\cite[Definition 5.1]{Carlsen2008}.
In Section~\ref{sec:approach}, we follow~\cite[Chapter 2]{CarlsenPhD} and construct 
a second-countable, amenable, locally compact, Hausdorff and \'etale groupoid $\G_{\X}$ whose $\mathrm{C}^*$-algebra is canonically isomorphic to $\OO_{\X}$.
For an introduction to (\'etale) groupoid $\mathrm{C}^*$-algebras see~\cite{Renault80, Paterson} or the introductory notes~\cite{SimsNotes}.

We briefly recall the universal description of $\OO_\X$ given in~\cite[Remark 7.3]{Carlsen2008}.
Given words $\mu,\nu\in \LL(\X)$, consider the set
\[
    C_{\X}(\mu,\nu) := \{ \nu x\in \X\mid \mu x\in \X\}
\]
which is closed (but not necessarily open) in $\X$.
We shall refer to the commutative $\mathrm{C}^*$-algebra
\[
    \D_\X := \mathrm{C}^*\{ 1_{C_{\X}(\mu,\nu)} \mid \mu,\nu\in \LL(\X) \}
\]
inside the $\mathrm{C^*}$-algebra of bounded functions on $\X$ as the \emph{diagonal}.
The $\mathrm{C}^*$-algebra $\OO_{\X}$ is the universal unital $\mathrm{C}^*$-algebra generated by partial isometries $(s_{\mu})_{\mu\in \LL(\X)}$ satisfying
\[
    s_{\mu} s_{\nu} =
    \begin{cases}
        s_{\mu \nu} & \mu \nu\in \LL(\X), \\
        0 & \textrm{otherwise},
    \end{cases}
\]
and such that the map 
\[
    1_{C(\mu, \nu)} \LMT s_{\nu}s_{\mu}^*s_{\mu}s_{\nu}^*,
\]
for $\mu, \nu\in \LL(\X)$, extends to $^*$-homomorphism $\D_\X \LRA \mathrm{C}^*(s_\mu \mid \mu\in \LL(\X))$.
This map is injective and the projections $\{s_{\nu}s_{\mu}^*s_{\mu}s_{\nu}^*\}_{\mu, \nu}$ generate a commutative $\mathrm{C}^*$-subalgebra 
which is $^*$-isomorphic to $\D_{\X}$ via the above map.
We shall henceforth identify $\D_\X$ with this $\mathrm{C}^*$-subalgebra of $\OO_{\X}$.

The universal property ensures that there is a \emph{canonical gauge action} $\gamma^{\X}\colon \T\curvearrowright \OO_{\X}$ of the circle group $\T$ given by
\[
    \gamma^\X_z(s_{\mu}) = z^{|\mu|}s_{\mu},
\]
for every $z\in \T$ and $\mu\in \LL(\X)$.
The fixed point algebra under the gauge action is an AF-algebra which is denoted $\F_\X$.
Note that $\D_{\X} \subset \F_\X$.

\section{Basic constructions}\label{sec:approach}

Let $\X$ be a one-sided shift space.
In this section, we associate a cover $\tilde{\X}$ to $\X$ and build a groupoid $\G_\X$ from the cover and its dynamical properties.
This construction is due to the second-named author in~\cite[Chapter 2]{CarlsenPhD}.
The $\mathrm{C^*}$-algebra $\OO_\X$ is then constructed as a groupoid $\mathrm{C^*}$-algebra.

\subsection{The cover $\tilde{\X}$}
Consider the set $\I = \{(k,l)\in \N\times \N\mid k\leq l\}$ equipped with the partial order $\preceq$ given by
\[
    (k_1,l_1)\preceq (k_2,l_2) \iff k_1\leq k_2 \textup{ and } l_1 - k_1 \leq l_2 - k_2.
\]
For every $(k,l)\in \I$ we define an equivalence relation on $\X$ by
\[
    x\widesim{k,l} x' \iff x_{[0,k)} = x'_{[0,k)} \textup{ and } \bigcup_{l'\leq l} P_{l'}(\sigma_{\X}^k(x)) = \bigcup_{l'\leq l} P_{l'}(\sigma_{\X}^k(x')).
\]
The $(k,l)$-equivalence class of $x\in \X$ is denoted $_k[x]_l$ and each $_k\X_l = \{_k[x]_l \mid x\in \X\}$ is a finite set.
If $(k_1,l_1)\preceq (k_2,l_2)$, then 
\[
    x \widesim{k_2,l_2} x' \implies x\widesim{k_1,l_1} x',
\]
for every $x,x'\in \X$.
Hence there is a well-defined map $_{(k_1,l_1)} Q_{(k_2,l_2)}\colon {}_{k_2}\X_{l_2}\LRA {}_{k_1}{\X}_{l_1}$ given by
\[
    {}_{(k_1,l_1)} Q_{(k_2,l_2)}({}_{k_2}{[x]}_{l_2}) = {}_{k_1}{[x]}_{l_1},
\]
for every ${}_{k_2}{[x]}_{l_2}\in {}_{k_2}\X_{l_2}$.
When the context is clear, we shall omit the subscripts of the map.
The spaces ${}_k\X_l$ together with the maps $Q$ thus define a projective system.

\begin{definition}
    Let $\X$ be a one-sided shift space.
    The \emph{cover} of $\X$ is the second-countable compact Hausdorff space $\tilde{\X}$ defined as the 
    projective limit $\underset{(k,l)\in \I}\varprojlim ({}_k\X_l, Q)$.
    We identify this with
    \[
        \tilde{\X} = \bigg\{ {({}_k {[{}_k x_l]}_l)}_{(k,l)\in \I} \in \prod_{(k,l)\in \I} {}_k \X_l \mid (k_1,l_1)\preceq (k_2,l_2)\colon 
        {}_{k_1} {[{}_{k_1}x_{l_1}]}_{l_1} = {}_{k_1} {[{}_{k_2}x_{l_2}]}_{l_1} \bigg \}
    \]
    equipped with the subspace topology of the product topology of $\prod_{(k,l)\in \I} {}_k\X_l$.
\end{definition}

The topology of $\tilde{\X}$ is generated by compact open sets of the form
\[
    U(x,k,l) = \{ \tilde{x}\in \tilde{\X} \mid {}_k x_l \widesim{k,l}  x\},
\]
for $x\in \X$ and $(k,l)\in \I$.
In order to see that sets of the above form constitute a basis, let $\tilde{x}\in U(y,k_1,l_1)\cap U(z,k_2,l_2)$.
Set $k := \max\{k_1, k_2\}$ and $l := l_1 + l_2$.
The pair $(k,l)$ thus majorizes both $(k_1,l_1)$ and $(k_2,l_2)$, and
\[
    \tilde{x}\in U({}_kx_l,k,l) \subset U(y,k_1,l_1)\cap U(z,k_2,l_2),
\]
Given a word $\mu\in \LL(\X)$, we also consider the compact open sets
\[
    U_{\mu} := \bigcup_{x\in C(\mu)} U(x,|\mu|,|\mu|).
\]

We shall now determine a \emph{shift operation} on $\tilde{\X}$ endowing it with the structure of a dynamical system.
For any $(k,l)\in \I$ with $k\geq 1$, observe that
\[
    x\widesim{k,l} y \implies \sigma_{\X}(x) \widesim{k-1,l} \sigma_{\X}(y).
\]
Therefore, there is a well-defined map ${}_k\sigma_l\colon {}_k\X_l\LRA {}_{k-1}\X_l$ given by
\[
    {}_k\sigma_l({}_k{[x]}_l) = {}_{k-1}{[\sigma_{\X}(x)]}_l,
\]
for every ${}_k{[x]}_l\in {}_k\X_l$, $k\geq 1$.
When the context is clear, we shall omit the subscripts.
Furthermore, this shift operation intertwines the maps $Q$ in the sense that the diagram
\begin{center}
    \begin{tikzpicture}
        \matrix (m) [matrix of math nodes,row sep=2em, column sep=2em,minimum width=4em]
        {
            {}_{k_2}\X_{l_2} & {}_{k_2-1}\X_{l_2} \\
            {}_{k_1}\X_{l_1} & {}_{k_1-1}\X_{l_1} \\
        };
        \path[->]
        (m-1-1) edge node[above] {$\sigma$} (m-1-2)
                edge node[left] {$Q$} (m-2-1)
        (m-1-2) edge node[right] {$Q$} (m-2-2)
        (m-2-1) edge node[below] {$\sigma$} (m-2-2);
    \end{tikzpicture}
\end{center}
commutes for every $(k_1,l_1), (k_2,l_2)\in \I$ with $(k_1,l_1)\preceq (k_2,l_2)$ and $k_1\geq 1$.
It follows that there is an induced shift operation $\sigma_{\tilde{\X}}\colon \tilde{\X}\LRA \tilde{\X}$ given by
\[
    {}_k\sigma_{\tilde{\X}}(\tilde{x})_l = {}_{k+1}\sigma_l({}_{k+1}[{}_{k+1}x_l]_l) = {}_k[\sigma_{\X}({}_{k+1}x_l)]_l,
\]
for every $\tilde{x} = ({}_k[{}_k x_l]_l)_{(k,l)\in \I}\in \tilde{\X}$.
The pair $(\tilde{\X}, \sigma_{\tilde{\X}})$ is then a dynamical system.

There is a canonical continuous and surjective map $\pi_{\X}\colon \tilde{\X}\LRA \X$ given in the following way: 
If $\tilde{x}\in \tilde{\X}$, then $x=\pi_{\X}(\tilde{x})\in \X$ is the unique element with the property that $x_{[0,k)} = {({}_kx_l)}_{[0,k)}$,
for every $(k,l)\in \I$.
This map intertwines the shift operations in the sense that 
\[
    \sigma_\X\circ \pi_\X = \pi_\X\circ \sigma_{\tilde{\X}}.
\]
We shall refer to $\pi_\X$ as the \emph{canonical factor map} associated to $\X$.
It is injective (and thus a homeomorphism) if and only if $\X$ is a shift of finite type.

On the other hand, there is a \emph{function} $\iota_\X\colon \X\LRA \tilde{\X}$ given by sending $x\in \X$ to $\tilde{x}\in \tilde{\X}$
for which ${}_k x_l = x$, for every $(k,l)\in \I$.
This satisfies the relation $\pi_{\X}\circ \iota_{\X} = \textrm{id}_{\X}$.
If $x\in \X$ is isolated, then $\pi_\X^{-1}(x) = \{\iota_\X(x)\}$.
However, $\iota_{\X}$ is in general \emph{not} continuous.

\begin{example}\label{ex:even-example}
    The even shift $\X_{\mathrm{even}}$ is the strictly sofic one-sided shift space over the alphabet $\{0,1\}$
    determined by the forbidden words $\mathscr{F} = \{ 10^{2n+1} 1\mid n\in \N\}$
    (see, e.g.,~\cite[Section 3]{LM} for an introduction to sofic shifts).
    The space $\X_{\mathrm{even}}$ contains no isolated points, but $0^\infty$ is the unique element for which $P_2(0^\infty) = \{00, 10, 01\}$,
    so $0^\infty$ is isolated in past equivalence.
    Hence $\iota_{\mathrm{even}}(0^\infty)\in \tilde{\X}_{\mathrm{even}}$ is isolated and $\iota_{\mathrm{even}}$ is not continuous.
\end{example}

\begin{lemma}
    The shift operation $\sigma_{\tilde{\X}}\colon \tilde{\X}\LRA \tilde{\X}$ is a local homeomorphism.
\end{lemma}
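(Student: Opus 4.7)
The plan is to show that for every $\tilde{x}\in\tilde{\X}$ and every $(k,l)\in\I$ with $k\geq 1$, the basic compact open neighborhood $V:=U({}_k x_l, k, l)$ of $\tilde{x}$ is mapped by $\sigma_{\tilde{\X}}$ bijectively and continuously onto the compact open set $U(\sigma_{\X}({}_k x_l), k-1, l)$. Since $V$ is compact, $\tilde{\X}$ is Hausdorff, and continuity of $\sigma_{\tilde{\X}}$ follows from its description as an inverse limit of continuous maps on the finite sets ${}_k\X_l$, the bijection will automatically be a homeomorphism onto its image; and since that image is open in $\tilde{\X}$, this exhibits $\sigma_{\tilde{\X}}$ as a local homeomorphism at $\tilde{x}$.

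The inclusion $\sigma_{\tilde{\X}}(V)\subseteq U(\sigma_{\X}({}_k x_l), k-1, l)$ is immediate from the coordinate-wise formula for $\sigma_{\tilde{\X}}$ together with the observation that ${}_k\sigma_l\colon{}_k\X_l\LRA{}_{k-1}\X_l$ is well-defined on equivalence classes. For injectivity on $V$, I would take $\tilde{y},\tilde{z}\in V$ with $\sigma_{\tilde{\X}}(\tilde{y})=\sigma_{\tilde{\X}}(\tilde{z})$ and unpack the formula coordinate by coordinate: at each $(j,m)\in\I$ one gets $\sigma_{\X}({}_{j+1}y_m)\widesim{j,m}\sigma_{\X}({}_{j+1}z_m)$, so ${}_{j+1}y_m$ and ${}_{j+1}z_m$ agree in symbols $1,\ldots,j$ and have matching shifted predecessor sets up to length $m$. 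Since $\tilde{y},\tilde{z}\in V$ also forces $\pi_{\X}(\tilde{y})_0=\pi_{\X}(\tilde{z})_0=({}_k x_l)_0$, the initial symbols coincide, hence ${}_{j+1}y_m\widesim{j+1,m}{}_{j+1}z_m$ for all $(j,m)\in\I$; the missing $(0,m)$-components then agree by projective-limit compatibility, so $\tilde{y}=\tilde{z}$.

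The main obstacle will be the reverse inclusion $U(\sigma_{\X}({}_k x_l), k-1, l)\subseteq\sigma_{\tilde{\X}}(V)$. I would approach it constructively by prepending the symbol $a:=({}_k x_l)_0$: given $\tilde{w}$ in the target, for each $(j+1,m)\in\I$ choose a representative $w$ of $\tilde{w}_{j,m}$ with $aw\in\X$ and set ${}_{j+1}z_m:=aw$. The enabling observation is that $\widesim{j+1,m}$ is preserved under prepending a common admissible symbol---a straightforward unpacking of the predecessor-set condition---which simultaneously renders the construction independent of the representative chosen and ensures that the resulting tuple of classes is compatible with the structure maps $Q$. One then checks by direct computation that $\sigma_{\tilde{\X}}(\tilde{z})=\tilde{w}$ and that $\tilde{z}\in V$. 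The delicate point is the existence of a representative $w$ with $aw\in\X$, and this is where the hypothesis $\tilde{w}\in U(\sigma_{\X}({}_k x_l), k-1, l)$ is used essentially: the shifted predecessor set of ${}_{k-1}w_l$ matches that of $\sigma_{\X}({}_k x_l)$ up to length $l$, which encodes precisely the fact that $a$ is an admissible left-extension and propagates via the invariance of $\widesim{j+1,m}$ under prepending.

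Putting the three steps together yields a continuous bijection $\sigma_{\tilde{\X}}|_V\colon V\LRA U(\sigma_{\X}({}_k x_l), k-1, l)$ between compact Hausdorff spaces, hence a homeomorphism onto an open subset of $\tilde{\X}$, which is what is required.
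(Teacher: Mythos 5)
Your proposal is correct and follows essentially the same route as the paper: the paper likewise proves the identity $\sigma_{\tilde{\X}}(U(z,k,l)) = U(\sigma_\X(z),k-1,l)$ by prepending the forced first symbol $a=z_0$ (using that $a\in P_1$ of every relevant representative, which is encoded in the $(0,1)$-component), and proves injectivity on $U(z,1,1)$ by exactly your observation that the zeroth symbol is determined there. The only differences are cosmetic: the paper's bookkeeping shifts the second index (${}_ry_s = a\,{}_rx_{s+1}$) where yours shifts the first, and it phrases the conclusion as ``open and locally injective'' rather than via a continuous bijection of compact Hausdorff spaces.
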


\begin{proof}
    We show that $\sigma_{\tilde{\X}}$ is open and locally injective.
    For the first part, let $z\in \X$ and $(k,l)\in \I$ with $k\geq 1$ and suppose $a=z_0\in \A$.
    We claim that 
    \[
        \sigma_{\tilde{\X}}(U(z,k,l)) = U(\sigma_{\X}(z), k - 1, l).
    \]
    The left-to-right inclusion is straightforward.
    For the converse let $\tilde{x}\in U(\sigma_{\X}(z),k-1,l)$ and note that $(0,1)\preceq (k-1,l)$.
    Since ${}_{k-1}x_l \widesim{k-1,l} \sigma_{\X}(z)$ it thus follows that ${}_{k-1}x_l \widesim{0,1} \sigma_{\X}(z)$.
    As $a\in P_1(\sigma_{\X}(z))$, we see that $a {}_{k-1}x_l\in \X$.
    A similar argument shows that $a{}_{r} x_{s}\in \X$, for every $(r,s)\in \I$.
    Put ${}_r y_s = a {}_{r} x_{s+1}$, for every $(r,s)\in \I$.
    Now, if $(k_1,l_1)\preceq (k_2,l_2)$ in $\I$, then $a {}_{k_2}x_{l_2+1} \widesim{k_1,l_1} a {}_{k_1}x_{l_1+1}$ and so
    \[
        {}_{(k_1,l_1)}Q_{(k_2,l_2)} ({}_{k_2}[{}_{k_2}y_{l_2}]_{l_2} )
        = {}_{k_1}[a {}_{k_2}x_{l_2+1}]_{l_1}
        = {}_{k_1}[a {}_{k_1}x_{l_1+1}]_{l_1}
        = {}_{k_1}[{}_{k_1}y_{l_1}]_{l_1}.        
    \]
    Hence $\tilde{y} = ({}_r[{}_r y_s]_s)_{(r,s)\in \I} \in \tilde{\X}$.
    Observe now that
    \[
        {}_{k}y_l = a {}_k x_{l+1} \widesim{k,l} z,
    \]
    showing that $\tilde{y}\in U(z,k,l)$.
    Finally, we see that $\tilde{x} = \sigma_{\tilde{\X}}(\tilde{y}) \in \sigma_{\tilde{\X}}(U(z,k,l))$ so $\sigma_{\tilde{\X}}$ is open.

    In order to see that $\sigma_{\tilde{\X}}$ is locally injective let $z\in \X$ with $a = z_0\in \A$.
    We claim that $\sigma_{\tilde{\X}}$ is injective on $U(x,1,1)$.
    Indeed, suppose $\tilde{x},\tilde{y}\in U(x,1,1)$ and $\sigma_{\tilde{\X}}(\tilde{x}) = \sigma_{\tilde{\X}}(\tilde{y})$.
    In particular, $({}_kx_l)_0 = z_0 = ({}_k y_l)_0$ for every $(k,l)\in \I$.
    Hence
    \[
        {}_k x_l = a\sigma_{\X}({}_k x_l) \widesim{k,l} a\sigma_{\X}({}_k y_l) = {}_k y_l
    \]
    for every $(k,l)\in \I$ from which it follows that $\tilde{x}=\tilde{y}$.
    We conclude that $\sigma_{\tilde{\X}}$ is a local homeomorphism.
\end{proof}

\begin{remark}
    The cover $(\tilde{\X}, \sigma_{\tilde{\X}})$ is a \emph{Deaconu--Renault system} in the sense of~\cite[Section 8]{CRST},
    and the construction is a generalization of the left Krieger cover (see~\cite{Krieger1984} where it is called \emph{the past state chains} or~\cite[Exercise 6.1.9]{Kitchens}) of a sofic shift space.
    In particular, the cover $(\tilde{\X}, \sigma_{\tilde{\X}})$ of a sofic shift $(\X, \sigma_\X)$ is (conjugate to) a shift of finite type.
\end{remark}

The next lemma shows how the topologies of $\X$ and $\tilde{\X}$ interact.
\begin{lemma}\label{lem:continuity}
    Let $\X$ be a one-sided shift space and let $k\colon \X\LRA \N$ be a map.
    Then the map $k_{\tilde{\X}}\colon \tilde{\X}\LRA \N$ satisfying $k_{\tilde{\X}} = k\circ \pi_\X$ is continuous 
    if and only if $k$ is continuous.
\end{lemma}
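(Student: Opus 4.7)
The plan is to prove the equivalence by exploiting the fact that $\N$ is discrete and that $\pi_\X\colon \tilde{\X}\LRA \X$ is a quotient map. Since $\N$ has the discrete topology, continuity of either $k$ or $k_{\tilde{\X}}$ amounts to the preimage of each singleton $\{n\}$ being open in its respective domain. So the task reduces to comparing openness in $\X$ with openness in $\tilde{\X}$ under the preimage operation $\pi_\X^{-1}$.

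The forward direction $(\Leftarrow)$ is immediate: if $k$ is continuous, then $k_{\tilde{\X}} = k\circ \pi_\X$ is a composition of continuous maps, hence continuous.

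For the reverse direction $(\Rightarrow)$, the key observation is that $\pi_\X$ is a quotient map. Indeed, $\tilde{\X}$ is compact (it is defined as a projective limit of finite discrete spaces, sitting inside a product of finite spaces) and $\X$ is Hausdorff, so the continuous surjection $\pi_\X$ sends closed subsets of $\tilde{\X}$ to compact subsets of $\X$, which are automatically closed. Thus $\pi_\X$ is a closed continuous surjection, and therefore a quotient map. Given this, for any $n\in \N$ we have
\[
    k_{\tilde{\X}}^{-1}(\{n\}) = \pi_\X^{-1}(k^{-1}(\{n\})),
\]
and the quotient property yields that the left-hand side is open in $\tilde{\X}$ if and only if $k^{-1}(\{n\})$ is open in $\X$. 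Applying this for each $n\in \N$ shows that continuity of $k_{\tilde{\X}}$ forces continuity of $k$.

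The only nontrivial step is justifying that $\pi_\X$ is a quotient map, which I expect to be the main (though still routine) obstacle; it boils down to compactness of $\tilde{\X}$ together with the standard closed-map argument for continuous maps from compact spaces into Hausdorff spaces. Everything else is purely formal manipulation using the discrete topology on $\N$.
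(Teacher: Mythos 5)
Your proof is correct, but it takes a genuinely different route from the paper's. The paper argues the nontrivial direction by contraposition with sequences: assuming $k$ is discontinuous at some $x$, it picks a sequence $x_n\to x$ with $k(x_n)\neq k(x)$, lifts these points through $\pi_\X^{-1}$, extracts a convergent subsequence in the compact space $\tilde{\X}$ whose limit lies in $\pi_\X^{-1}(x)$, and concludes that $k_{\tilde{\X}}$ is discontinuous there. You instead observe that $\pi_\X$ is a continuous surjection from a compact space onto a Hausdorff space, hence a closed map, hence a quotient map, and then the equivalence is purely formal: $k_{\tilde{\X}}^{-1}(\{n\})=\pi_\X^{-1}(k^{-1}(\{n\}))$ is open if and only if $k^{-1}(\{n\})$ is open. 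Both arguments rest on the same two facts (compactness of $\tilde{\X}$ and continuity plus surjectivity of $\pi_\X$), but yours is cleaner and more general --- the quotient-map property in fact gives the statement for maps from $\X$ into an arbitrary topological space, with no need to invoke discreteness of $\N$ or second countability of $\tilde{\X}$ for subsequence extraction. The paper's sequential argument is more hands-on but buys nothing extra here. Your proof is complete as written; the only step worth spelling out in a final write-up is the standard implication ``closed continuous surjection $\Rightarrow$ quotient map,'' which you correctly identify as the one nontrivial (though routine) ingredient.
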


\begin{proof}
    Define $k_{\tilde{\X}}\colon \tilde{\X}\LRA \N$ by $k_{\tilde{\X}} = k \circ \pi_\X$.
    If $k$ is continuous, then $k_{\tilde{\X}}$ is continuous.

    Suppose $k$ is not continuous.
    Then there is an element $x\in \X$ and a convergent sequence ${(x_n)}_n$ with limit $x$ such that $k(x_n) \neq k(x)$ for all $n\in \N$.
    In particular, the set
    \[
        C_i = \{x_n \mid n\in \N\} \cap Z(x_{[0, i)})
    \]
    is nonempty for each $i\in \N$.
    As $\pi_\X$ is surjective, $\tilde{C}_i = \pi_\X^{-1}(C_i)$ is nonempty.
    Choose $\tilde{t}_i\in \tilde{C}_i$ for each $i\in \N$.
    Then $\pi_\X(\tilde{t_i}) = x_{n_i}$ for some $n_i\in \N$ so $\tilde{k}(\tilde{t}_i) \neq k(x)$ for all $i\in \N$.
    Furthermore, the sequence ${(\tilde{t}_i)}_i$ has a convergent subsequence ${(\tilde{t}_{i_j})}_j$ with some limit $\tilde{x}$ which satisfies
    \[
        \pi_\X(\tilde{x}) = \pi_\X(\lim_{j\to \infty} \tilde{t}_{i_j}) = \lim_{j\to \infty} \pi_\X(\tilde{t}_{i_j}) = \lim_{j\to \infty} x_{n_{i_j}} = x,
    \]
    so $\tilde{x} \in \pi_\X^{-1}(x)$.
    Then $\tilde{t}_{i_j}\LRA \tilde{x}$ in $\tilde{\X}$ and $k_{\tilde{\X}}(\tilde{x}) = k(x) \neq k_{\tilde{\X}}(\tilde{t}_{i_j})$ for every $j\in \N$,
    so $k_{\tilde{\X}}$ is not continuous.
\end{proof}

The cover $\tilde{\X}$ may contain isolated points even if $\X$ does not, cf.~Example~\ref{ex:even-example}.
In~\cite[Lemma 4.3(1)]{CEOR}, it is shown that every isolated point in a shift of finite type is eventually periodic.
This is also the case for the class of sofic shift space but it need not be true in general.

\begin{lemma}\label{lem:isolated-points-sofic}
    Let $\X$ be a one-sided \emph{sofic} shift.
    If $x\in \X$ is isolated, then $x$ is eventually periodic.
\end{lemma}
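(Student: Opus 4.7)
The plan is to transfer the question from $\X$ to its cover $\tilde{\X}$, exploiting two facts already recorded in the excerpt: for a sofic shift $\X$ the cover $\tilde{\X}$ is (conjugate to) a shift of finite type, and if $x\in\X$ is isolated then $\pi_\X^{-1}(x)=\{\iota_\X(x)\}$. Combining these with the SFT version of the statement proved in~\cite[Lemma 4.3(1)]{CEOR} should give the result almost immediately.

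More concretely, I would first argue that $\iota_\X(x)$ is isolated in $\tilde{\X}$. Since $x$ is isolated, $\{x\}$ is open in $\X$; since $\pi_\X\colon\tilde{\X}\LRA\X$ is continuous and $\pi_\X^{-1}(x)=\{\iota_\X(x)\}$, the singleton $\{\iota_\X(x)\}=\pi_\X^{-1}(\{x\})$ is open in $\tilde{\X}$. Next, because $\X$ is sofic, the Remark above tells us that $(\tilde{\X},\sigma_{\tilde{\X}})$ is conjugate to a shift of finite type, so \cite[Lemma 4.3(1)]{CEOR} applies and yields integers $n>m\geq 0$ with
\[
\sigma_{\tilde{\X}}^{\,n}(\iota_\X(x))=\sigma_{\tilde{\X}}^{\,m}(\iota_\X(x)).
\]
Applying $\pi_\X$ to both sides and using the intertwining identity $\sigma_\X\circ\pi_\X=\pi_\X\circ\sigma_{\tilde{\X}}$ together with $\pi_\X\circ\iota_\X=\id_\X$ gives $\sigma_\X^{\,n}(x)=\sigma_\X^{\,m}(x)$, so $x$ is eventually periodic.

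There is no real obstacle here: once one recognises that isolation is preserved under pulling back by the surjection $\pi_\X$ (precisely because the fibre over an isolated point is a singleton), the problem reduces to the SFT case, which is known. The only point that might warrant a line of justification is why $\tilde{\X}$ is an SFT for sofic $\X$, but this is exactly the content of the Remark quoted above, itself a reformulation of the classical fact that the left Krieger cover of a sofic shift is a shift of finite type. Note also that the argument makes no use of effectivity or of any hypothesis on the groupoid $\G_\X$.
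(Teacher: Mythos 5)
Your proposal is correct and follows essentially the same route as the paper's own proof: pass to the cover, note that the fibre over an isolated point is the singleton $\{\iota_\X(x)\}$ and hence isolated, invoke \cite[Lemma 4.3(1)]{CEOR} for the shift of finite type $\tilde{\X}$, and push the eventual periodicity back down via $\pi_\X$. The only difference is that you spell out the (easy) justification that isolation pulls back, which the paper leaves implicit.
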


\begin{proof}
    Let $x\in \X$ be isolated.
    Then $\tilde{x}\in \pi_\X^{-1}(x)$ is isolated. 
    The cover $\tilde{\X}$ is (conjugate to) a shift of finite type, so $\tilde{x}$ is eventually periodic, cf.~\cite[Lemma 4.3(1)]{CEOR}.
    Hence $x = \pi_\X(\tilde{x})$ is eventually periodic.
\end{proof}

\begin{example}
    Consider the shift space $\X_\omega$ over the alphabet $\{0,1\}$ generated by the sequence
    \[
        \omega = 01 010 0100 01000 \cdots.
    \]
    Since $\omega$ is not periodic, $\X_\omega$ is infinite.
    The shift operation $\sigma_\omega$ is not surjective and $\X_\omega$ is not minimal.
    We can identify $\X_\omega$ with the orbit of $\omega$ together with all its accumulation points, i.e.,
    \[
        \X_\omega = \{\sigma_{\omega}^i(\omega): i\in \N\} \cup \{0^n 10^\infty: n\in \N\} \cup \{0^\infty\}
    \]
    in which $\{\sigma^i(\omega): i\in \N\}$ are exactly the isolated points of $\X_\omega$.
    In particular, $\omega\in \X_\omega$ is isolated and aperiodic.
    It follows from Lemma~\ref{lem:isolated-points-sofic} that $\X_\omega$ is not sofic.
    Observe also that $0^\infty\in \X_\omega$ is periodic point isolated in past equivalence.
    In fact, every point in $\{0^n 10^\infty: n\in \N\}$ is isolated in past equivalence,
    so $\pi_{\X_\omega}^{-1}(x)$ contains an isolated point for every $x\in \X_\omega$.
\end{example}

\subsection{The groupoid $\G_\X$}\label{sec:groupoid}

The pair $(\tilde{\X}, \sigma_{\tilde{\X}})$ is a Deaconu--Renault system in the sense of~\cite[Section 8]{CRST}.
The associated Deaconu--Renault groupoid~\cite{De1995} is
\[
    \G_\X = \{ (\tilde{x}, p, \tilde{y})\in \tilde{\X}\times \Z\times \tilde{\X} \mid 
    \exists i, j\in \N\colon p = i - j,~\tilde{x},\tilde{y}\in \tilde{\X},~\sigma_{\tilde{\X}}^i(\tilde{x}) = \sigma_{\tilde{\X}}^j(\tilde{x}) \}.
\]
The product of $(\tilde{x}, p, \tilde{y})$ and $(\tilde{y'}, q, \tilde{z})$ is defined if and only if $\tilde{y} = \tilde{y'}$ in which case
\[
    (\tilde{x}, p, \tilde{y}) (\tilde{y'}, q, \tilde{z}) = (\tilde{x}, p + q,\tilde{z}),
\]
while inversion is given by $(\tilde{x}, p, \tilde{y})^{-1} = (\tilde{y}, -p, \tilde{x})$.
The range and source maps are given as
\[
    r(\tilde{x}, p, \tilde{y}) = (\tilde{x},0,\tilde{x}), \qquad s(\tilde{x}, p, \tilde{y}) = (\tilde{y}, 0, \tilde{y}),
\]
respectively, for $(\tilde{x}, p, \tilde{y})\in \G_{\X}$.
The topology of $\G_{\X}$ is generated by sets of the form
\[
    Z(U, i, j, V) = \{ (\tilde{x}, i - j,\tilde{y})\in \G_{\X} \mid (\tilde{x},\tilde{y})\in U\times V \}
\]
where $U,V\subset \tilde{\X}$ are open subsets such that $\sigma_{\tilde{\X}}^i|_U$ and $\sigma_{\tilde{\X}}^j|_V$ are injective 
and $\sigma_{\tilde{\X}}^i(U) = \sigma_{\tilde{\X}}^j(V)$.
We naturally identify the unit space $\G_{\X}^{(0)} = \{ (\tilde{x}, 0, \tilde{x})\in \G_\X\mid x\in \tilde{\X}\}$ with the space $\tilde{\X}$
via the map $(\tilde{x}, 0, \tilde{x}) \LMT \tilde{x}$.
Equipped with this topology, $\G_\X$ is topological groupoid which is second-countable, locally compact Hausdorff 
and \'etale (in the sense that $r,s\colon \G_\X\LRA \G_\X$ are local homeomorphism onto $\G_\X^{(0)}$), cf.~\cite[Lemma 3.1]{Sims-Williams}.
By~\cite[Lemma 3.5]{Sims-Williams}, $\G_\X$ is also amenable.

The \emph{isotropy} of a point $\tilde{x}\in \tilde{\X}$ is the set 
\[
    \Iso(\tilde{x}) = \{ (\tilde{x}, p, \tilde{x})\in \G_\X\} 
\]
which carries a natural group structure.
In our case, the group $\Iso(\tilde{x})$ is always (isomorphic to) $0$ or $\Z$.
The \emph{stabilizer} is $\Stab(\tilde{x}) = \{p\in \Z\mid (\tilde{x}, p, \tilde{x})\in \Iso(\tilde{x})\}$.
The \emph{isotropy subgroupoid} of $\G_\X$ is the group bundle
\[
    \Iso(\G_\X) = \bigcup_{\tilde{x}\in \tilde{\X}} \Iso(\tilde{x}).
\]

The groupoid $\G_\X$ is \emph{principal} if $\Iso(\G_\X) = \G_\X^{(0)}$, 
and \emph{effective} if ${\Iso(\G_\X)}^\circ = \G_\X^{(0)}$, where ${\Iso(\G)}^\circ$ denotes the interior of the isotropy subgroupoid.
Since $\G_\X$ is second-countable and Hausdorff the latter is equivalent to $\G_\X$ being topologically principal,
i.e., that the set of points with trivial isotropy is dense in the unit space, cf.~\cite[Propostion 3.6]{Renault08}.
Below we characterize when the groupoid $\G_\X$ is principal and effective in terms of $\X$.
First we need a lemma.

\begin{lemma}\label{lem:isotropy-lemma}
    Let $\X$ be a one-sided shift space and let $\tilde{x}, \tilde{y}\in \tilde{\X}$.
    \begin{enumerate}
        \item[(i)] If $\pi_\X(\tilde{x}) = \pi_\X(\tilde{y})$ and $\sigma_{\tilde{\X}}^k(\tilde{x}) = \sigma_{\tilde{\X}}^k(\tilde{y})$ for some $k\in \N$, 
            then $\tilde{x} = \tilde{y}$. 
        \item[(ii)] If $\pi_\X(\tilde{x}) = \pi_\X(\tilde{y})$ is aperiodic and $\sigma_{\tilde{\X}}^l(\tilde{x}) = \sigma_{\tilde{\X}}^{k}(\tilde{y})$ for some $k,l\in \N$, 
            then $\tilde{x} = \tilde{y}$.
    \end{enumerate}
\end{lemma}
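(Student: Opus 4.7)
The plan is to prove (i) by a direct coordinate-by-coordinate comparison in the projective limit $\tilde{\X}$, and then to derive (ii) from (i) via a quick appeal to the intertwining $\sigma_\X\circ\pi_\X = \pi_\X\circ\sigma_{\tilde{\X}}$.

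For (i), write $x = \pi_\X(\tilde{x}) = \pi_\X(\tilde{y})$. I need to show ${}_m\tilde{x}_l \widesim{m,l} {}_m\tilde{y}_l$ for every $(m,l)\in\I$. The prefix condition $({}_m\tilde{x}_l)_{[0,m)} = x_{[0,m)} = ({}_m\tilde{y}_l)_{[0,m)}$ is immediate from $\pi_\X(\tilde{x}) = \pi_\X(\tilde{y})$, so the real content is the predecessor-set equality $P_{l'}(\sigma_\X^m({}_m\tilde{x}_l)) = P_{l'}(\sigma_\X^m({}_m\tilde{y}_l))$ for all $l'\leq l$.

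The key step is to lift the comparison to the index $(m+k, l+k)$. Because $(m,l)\preceq(m+k, l+k)$, compatibility of the projective system gives $P_{l'}(\sigma_\X^m({}_m\tilde{x}_l)) = P_{l'}(\sigma_\X^m({}_{m+k}\tilde{x}_{l+k}))$ for $l'\leq l$, and similarly for $\tilde{y}$. The iterated shift formula ${}_a\sigma_{\tilde{\X}}^k(\tilde{x})_b = {}_a[\sigma_\X^k({}_{a+k}\tilde{x}_b)]_b$ applied with $(a,b) = (m, l+k)$ lets me turn the hypothesis $\sigma_{\tilde{\X}}^k(\tilde{x}) = \sigma_{\tilde{\X}}^k(\tilde{y})$ into $\sigma_\X^k({}_{m+k}\tilde{x}_{l+k}) \widesim{m, l+k} \sigma_\X^k({}_{m+k}\tilde{y}_{l+k})$, hence $P_{l'+k}(\sigma_\X^{m+k}({}_{m+k}\tilde{x}_{l+k})) = P_{l'+k}(\sigma_\X^{m+k}({}_{m+k}\tilde{y}_{l+k}))$ for all $l'\leq l$. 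Converting this back to the required $P_{l'}$-equality uses the elementary observation that $\mu\in P_{l'}(\sigma_\X^m(w))$ if and only if $\mu\,w_{[m, m+k)}\in P_{l'+k}(\sigma_\X^{m+k}(w))$, combined with the fact that $({}_{m+k}\tilde{x}_{l+k})_{[m, m+k)} = x_{[m, m+k)} = ({}_{m+k}\tilde{y}_{l+k})_{[m, m+k)}$, so the common padding $x_{[m, m+k)}$ translates one equality of predecessor sets into the other.

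Part (ii) is then short: applying $\pi_\X$ to $\sigma_{\tilde{\X}}^l(\tilde{x}) = \sigma_{\tilde{\X}}^k(\tilde{y})$ yields $\sigma_\X^l(x) = \sigma_\X^k(x)$; aperiodicity of $x$ forces $l = k$, and part (i) then concludes $\tilde{x} = \tilde{y}$. The main obstacle is the coordinate bookkeeping in (i): the equivalences $\widesim{m,l}$ are indexed by a two-dimensional poset, the shift lowers only the first index, and the hypothesis naturally produces $\sigma_\X^{m+k}$-predecessor information whereas the conclusion asks for $\sigma_\X^m$-predecessor information. The device of lifting to $(m+k, l+k)$ to absorb the shift and then using the common prefix $x_{[m, m+k)}$ to convert between the two flavors of predecessor set is where all the real work happens; the rest is unwinding definitions.
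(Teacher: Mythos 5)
Your proof is correct and follows essentially the same route as the paper: part (i) by comparing coordinates in the projective limit, using the representatives $\sigma_\X^k({}_{m+k}x_{l+k})$ of the shifted points together with the prefix agreement coming from $\pi_\X(\tilde x)=\pi_\X(\tilde y)$, and part (ii) by applying $\pi_\X$, invoking aperiodicity to force $k=l$, and reducing to (i). The only difference is that the paper observes the $(r,s)$-equivalence of the shifted representatives already supplies \emph{exactly} the predecessor-set equality defining $\widesim{r+k,s}$ (since both involve $P_{l'}(\sigma_\X^{r+k}(\cdot))$ for $l'\leq s$), so your explicit back-and-forth conversion between $P_{l'}$ and $P_{l'+k}$ via the padding word $x_{[m,m+k)}$, while valid, is not needed.
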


\begin{proof}
    (i):
    Fix $k\in \N$ such that $\tilde{\sigma}^k_\X(\tilde{x}) = \tilde{\sigma}^k_\X(\tilde{y})$ and
    let $0\leq r\leq s$ be integers with $r+k\leq s$.
    An $(r,s)$-representative of $\tilde{\sigma}^k_\X(\tilde{x})$ and $\sigma_{\tilde{\X}}^k(\tilde{y})$
    is given by $\sigma^k_\X({}_{r + k} x_s)$ and $\sigma^k_\X({}_{r+k} y_s)$, respectively.
    So
    \[
        \sigma^k_\X({}_{r + k} x_s) \widesim{r,s} \sigma^k_\X({}_{r + k} y_s).
    \]
    Since $\pi_\X(\tilde{x}) = \pi_\X(\tilde{y})$ we also have ${}_{r + k} x_s \widesim{r + k, s} {}_{r + k} y_s$.
    It follows that $\tilde{x} = \tilde{y}$.
    
    (ii):
    Let $x = \pi_\X(\tilde{x}) = \pi_\X(\tilde{y})$ be aperiodic.
    If $\sigma_{\tilde{\X}}^l(\tilde{x}) = \sigma_{\tilde{\X}}^{k}(\tilde{y})$ for some $k, l\in \N$, then $\sigma_\X^l(x) = \sigma_\X^k(x)$, so $k = l$.
    Part (i) implies that $\tilde{x} = \tilde{y}$.
\end{proof}

Assertion (ii) may fail without the hypothesis of aperiodicity; this happens, e.g., for the even shift, cf.~Example~\ref{ex:even-example}.
It follows from Lemma~\ref{lem:isotropy-lemma}(ii) that the preimage under $\pi_\X$ of an aperiodic element consists only of aperiodic elements.
The preimage under $\pi_\X$ of an eventually periodic point contains an eventually periodic point but we do not know if it consists only of eventually periodic points.

\begin{proposition}\label{prop:principal}
    Let $\X$ be a one-sided shift space.
    The following conditions are equivalent:
    \begin{enumerate}
        \item[(i)] $\X$ contains no eventually periodic points;
        \item[(ii)] $\tilde{\X}$ contains no eventually periodic points;
        \item[(iii)] $\G_\X$ is principal.
    \end{enumerate}
\end{proposition}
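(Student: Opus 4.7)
The plan is to establish (ii) $\iff$ (iii) directly from the construction of $\G_\X$, and then to move eventually periodic points back and forth between $\X$ and $\tilde{\X}$ using the canonical factor map $\pi_\X$ and the section $\iota_\X$ in order to get (i) $\iff$ (ii).

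For (ii) $\iff$ (iii) I would simply unwind definitions. A nontrivial isotropy element of $\G_\X$ is of the form $(\tilde{x}, p, \tilde{x})$ with $p\neq 0$, and by the very definition of the Deaconu--Renault groupoid this exists precisely when there are $i\neq j$ in $\N$ with $\sigma_{\tilde{\X}}^i(\tilde{x}) = \sigma_{\tilde{\X}}^j(\tilde{x})$, i.e.\ precisely when $\tilde{x}\in \tilde{\X}$ is eventually periodic. Hence $\G_\X$ fails to be principal if and only if $\tilde{\X}$ contains an eventually periodic point.

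For (i) $\Rightarrow$ (ii), assume $\tilde{x}\in \tilde{\X}$ is eventually periodic, say $\sigma_{\tilde{\X}}^i(\tilde{x}) = \sigma_{\tilde{\X}}^j(\tilde{x})$ with $i\neq j$. Applying $\pi_\X$ and using the intertwining $\sigma_\X\circ \pi_\X = \pi_\X\circ \sigma_{\tilde{\X}}$ proved earlier, one obtains $\sigma_\X^i(\pi_\X(\tilde{x})) = \sigma_\X^j(\pi_\X(\tilde{x}))$, so $\pi_\X(\tilde{x})\in \X$ is eventually periodic. For the reverse implication (ii) $\Rightarrow$ (i), suppose $x\in \X$ satisfies $\sigma_\X^n(x) = \sigma_\X^{n+p}(x)$. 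I lift $x$ via $\iota_\X$: the point $\iota_\X(x)\in \tilde{\X}$ has $(k,l)$-coordinate $x$ for every $(k,l)\in \I$, and from the formula ${}_k\sigma_{\tilde{\X}}(\iota_\X(x))_l = {}_k[\sigma_\X({}_{k+1}x_l)]_l = {}_k[\sigma_\X(x)]_l$ one reads off $\sigma_{\tilde{\X}}\circ \iota_\X = \iota_\X\circ \sigma_\X$. Iterating then yields $\sigma_{\tilde{\X}}^n(\iota_\X(x)) = \iota_\X(\sigma_\X^n(x)) = \iota_\X(\sigma_\X^{n+p}(x)) = \sigma_{\tilde{\X}}^{n+p}(\iota_\X(x))$, exhibiting $\iota_\X(x)$ as an eventually periodic point of $\tilde{\X}$.

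There is no serious obstacle. The only mild subtlety is that $\iota_\X$ is not continuous in general (as illustrated by Example~\ref{ex:even-example}), so the intertwining $\sigma_{\tilde{\X}}\circ \iota_\X = \iota_\X\circ \sigma_\X$ must be verified directly at the level of coordinates in the projective limit rather than by any approximation argument; this is an immediate unwinding of the definition of $\sigma_{\tilde{\X}}$.
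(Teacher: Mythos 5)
Your proof is correct and follows essentially the same route as the paper: the equivalence (ii)~$\iff$~(iii) is a direct unwinding of the definition of $\G_\X$, eventually periodic points of $\tilde{\X}$ are pushed down to $\X$ via the intertwining $\sigma_\X\circ\pi_\X=\pi_\X\circ\sigma_{\tilde{\X}}$ (the paper packages this observation as Lemma~\ref{lem:isotropy-lemma}(ii), but the content is the same), and eventually periodic points of $\X$ are lifted via $\iota_\X$ using $\sigma_{\tilde{\X}}\circ\iota_\X=\iota_\X\circ\sigma_\X$. Your explicit coordinate verification of this last identity, and the remark that no continuity of $\iota_\X$ is needed, are correct and fill in details the paper leaves implicit.
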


\begin{proof}
    (i) $\iff$ (ii):
    It follows from Lemma~\ref{lem:isotropy-lemma}(ii) that if $x\in \X$ is aperiodic, then any $\tilde{x}\in \pi_\X^{-1}(x)\in \tilde{\X}$ is aperiodic.
    So if $\X$ consists only of aperiodic points, then $\tilde{\X}$ contains only aperiodic points.
    Conversely, if $x\in \X$ is eventually periodic, then $\iota_\X(x)\in \tilde{\X}$ is eventually periodic.

    The equivalence (ii) $\iff$ (iii) is obvious.
\end{proof}

\begin{proposition}\label{prop:essentially-principal}
    Let $\X$ be a one-sided shift space.
    The conditions
    \begin{enumerate}
        \item[(i)] $\X$ satisfies Matsumoto's condition (I);
        \item[(ii)] $\tilde{\X}$ contains no isolated points;
    \end{enumerate}
    are equivalent and strictly stronger that the following equivalent conditions
    \begin{enumerate}
        \item[(iii)] $\X$ contains no periodic points isolated in past equivalence;
        \item[(iv)] $\tilde{\X}$ has a dense set of aperiodic points;
        \item[(v)] $\G_\X$ is effective;
    \end{enumerate}
    which are strictly stronger than
    \begin{enumerate}
        \item[(vi)] $\X$ contains a dense set of aperiodic points.
    \end{enumerate}
\end{proposition}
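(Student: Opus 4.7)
The strategy is to handle the six conditions in four groups, using the interplay between basic open sets in $\tilde{\X}$ and past equivalence classes in $\X$, Renault's groupoid reconstruction theorem, and the Baire category theorem.

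For the equivalence (i) $\iff$ (ii), I would establish the pointwise statement that $x\in \X$ is isolated in past equivalence if and only if $\iota_\X(x)\in \tilde{\X}$ is isolated. If $[x]_l = \{x\}$ for some $l$, then after possibly enlarging $l$ (to control the iterates $\sigma^k_\X(x)$) the basic neighborhood $U(x,0,l)$ collapses to $\{\iota_\X(x)\}$. Conversely, if $\{\iota_\X(x)\} = U({}_k[x]_l,k,l)$ for some $(k,l)$, then any $y\sim_l x$ forces equality at the projected coordinate, so $x$ is isolated in past equivalence. For the equivalence (iv) $\iff$ (v), I would invoke Renault's characterization from~\cite[Proposition 3.6]{Renault08}: for a second-countable Hausdorff \'etale groupoid, being effective is equivalent to being topologically principal. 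In the Deaconu--Renault groupoid $\G_\X$ the isotropy at $\tilde{x}$ is trivial precisely when $\tilde{x}$ is aperiodic under $\sigma_{\tilde{\X}}$, so topological principality translates directly into (iv).

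For the equivalence (iii) $\iff$ (iv), the direction (iv) $\implies$ (iii) is immediate by contrapositive: a periodic point $x\in \X$ isolated in past equivalence yields, via (i) $\iff$ (ii) applied pointwise, an isolated $\iota_\X(x)\in \tilde{\X}$ that is periodic because $\iota_\X$ intertwines the shifts, and this obstructs density of aperiodic points. The reverse (iii) $\implies$ (iv) is the main obstacle. I would argue by contrapositive: if the aperiodic points fail to be dense in $\tilde{\X}$, some open $V\subset \tilde{\X}$ consists entirely of eventually periodic points. Writing $V = \bigcup_{i<j}\{\tilde{x}\in V:\sigma_{\tilde{\X}}^i(\tilde{x}) = \sigma_{\tilde{\X}}^j(\tilde{x})\}$ as a countable union of closed sets and applying Baire yields a nonempty open $U\subset V$ with $\sigma_{\tilde{\X}}^i = \sigma_{\tilde{\X}}^j$ on $U$ for some $i>j$. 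Since $\sigma_{\tilde{\X}}$ is a local homeomorphism, $W := \sigma_{\tilde{\X}}^j(U)$ is open and every point of $W$ is periodic of period dividing $i-j$. A further Baire decomposition of $W$ along the finitely many periodic points of $\X$ of period dividing $i-j$ yields a nonempty open $W'\subset W$ with $\pi_\X(W') = \{y\}$ for a single periodic $y\in \X$; using the identification $U_\nu = \pi_\X^{-1}(Z_\X(\nu))$ together with (i) $\iff$ (ii), this forces $y$ to be isolated in past equivalence, contradicting (iii).

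Finally, the implication (v) $\implies$ (vi) follows from (iv) by a Baire argument: if some cylinder $Z_\X(\mu)$ contains no aperiodic point, then $Z_\X(\mu)$ is at most countable (the eventually periodic points of any shift space form a countable set), so by Baire it contains an isolated $y\in \X$ with $\{y\} = Z_\X(\nu)$ for some word $\nu$. But then $U_\nu = \pi_\X^{-1}(\{y\}) = \{\iota_\X(y)\}$ is open in $\tilde{\X}$, exhibiting $\iota_\X(y)$ as an isolated eventually periodic point and contradicting (iv). For the strictness claims, the even shift $\X_{\mathrm{even}}$ of Example~\ref{ex:even-example} satisfies (vi) but not (iii), since $0^\infty$ is periodic and isolated in past equivalence; the separation between (i) and (iii) is witnessed by a shift containing an aperiodic point isolated in past equivalence but no periodic one, which can be arranged by a suitable non-sofic construction.
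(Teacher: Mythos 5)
Your treatment of the equivalences is essentially sound and follows the same skeleton as the paper's proof: the pointwise dictionary between isolation in past equivalence in $\X$ and isolation of $\iota_\X(x)$ in $\tilde{\X}$ gives (i) $\iff$ (ii); Renault's criterion gives (iv) $\iff$ (v); and Baire category drives both (iii) $\implies$ (iv) and the implication to (vi). Your route for (iii) $\implies$ (iv) differs in a worthwhile way: after extracting an open $U$ with $\sigma_{\tilde{\X}}^i = \sigma_{\tilde{\X}}^j$ on $U$, you push forward by the open map $\sigma_{\tilde{\X}}^j$ to get an open set of genuinely periodic points and then exploit the finiteness of the set of periodic points of $\X$ of period dividing $i-j$ to localize over a single $y\in\X$. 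The paper instead keeps the eventually periodic point in the form $\mu\alpha^\infty$ and runs an explicit past-equivalence computation inside $U(x,r,s)$ to show $\sigma_\X^n(x)$ is isolated in past equivalence. Both work; yours trades the explicit combinatorial verification for a second (finite, hence trivial) Baire step. One small imprecision: from a basic set $U(z,r,s)$ with singleton image under $\pi_\X$ you actually conclude that $\sigma_\X^r(y)$, rather than $y$ itself, is isolated in past equivalence — but since $\sigma_\X^r(y)$ is still periodic this does not affect the contradiction with (iii).

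The genuine gap is in the strictness claims, which are part of the statement. You correctly use the even shift of Example~\ref{ex:even-even-example} (i.e., Example~\ref{ex:even-example}) to separate (vi) from (iii)--(v), but for the separation of (i)--(ii) from (iii)--(v) you only assert that a shift with an aperiodic point isolated in past equivalence and no periodic ones ``can be arranged by a suitable non-sofic construction.'' This is precisely the nontrivial part: you need a shift space with \emph{no eventually periodic points at all} (so that (iii), indeed even principality, holds vacuously) which nonetheless fails Matsumoto's condition (I). Producing such an example is not routine, and the paper does not construct one ad hoc; it appeals to aperiodic primitive proper substitution shifts, where the absence of eventually periodic points follows from aperiodicity of the substitution (cf.~\cite{Queffelec}) and the existence of a point isolated in past equivalence is a nontrivial result of~\cite{Carlsen-Eilers2007}. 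Without exhibiting such an example (or citing one), the assertion that (i)--(ii) are \emph{strictly} stronger than (iii)--(v) remains unproved in your write-up.
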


\begin{proof}
    (i) $\iff$ (ii):
    Suppose $x\in \X$ is isolated in past equivalence so that $[x]_l = \{x\}$, for some $l\in \N$.
    Then $\{\iota(x)\} = U(x, 0, l)$ so $\iota(x)$ is isolated in $\tilde{\X}$.
    Conversely, if $\tilde{x}\in \tilde{\X}$ is isolated, say $\{\tilde{x}\} = U(x, r, s)$ for some integers $0\leq r\leq s$,
    then $\{\sigma_{\tilde{\X}}^r(\tilde{x})\} = U(\sigma_\X^r(x), 0, s)$, so $\pi_\X(\sigma_{\tilde{\X}}^r(\tilde{x}))\in \X$ is isolated in $s$-past equivalence.

    The implication (ii) $\implies$ (iii) is clear.

    (iii) $\implies$ (iv):
    Let $\mathcal{EP}(\tilde{\X})$ be the collection of eventually periodic points in $\tilde{\X}$ and set
    \[
        \mathcal{EP}_n^p = \{ \tilde{x}\in \mathcal{EP}(\tilde{\X}) \mid \sigma_{\tilde{\X}}^{n + p}(\tilde{x}) = \sigma_{\tilde{\X}}^n(\tilde{x}) \},
    \]
    for $n\in \N$ and $p\in \N_+$.
    Then $\mathcal{EP}(\tilde{\X}) = \bigcup_{n, p} \mathcal{EP}_n^p$.
    If there is an open set $U\subset \tilde{\X}$ consisting of eventually periodic points,
    then it follows from the Baire Category Theorem that $\mathcal{EP}_n^p$ has nonempty interior for some $n\in \N$ and $p\in \N_+$.
    In particular, there are an $x\in \X$ and integers $0\leq r\leq s$ with $r\leq n$ such that $U(x, r, s) \subset \mathcal{EP}_n^p$.
    Since $\iota_\X(x)\in U(x, r, s)$ it follows that $\sigma_\X^n(x)$ is $p$-periodic.
    We claim that $\sigma_\X^n(x)$ is isolated in past equivalence.

    Write $x = \mu \alpha^\infty$ for some words $\mu, \alpha\in \LL(\X)$ with $|\mu| = n$ and $|\alpha| = p$ and suppose 
    \[
        y \sim_{p + n + r - s} \sigma_\X^n(x) = \sigma_\X^{n + p}(x).
    \]
    Then $\mu \alpha y\in \X$ and $\iota_\X(\mu \alpha y) \in U(x, r, s)$, so $\alpha y = y$.
    Hence $y = \sigma_\X^p(x)$ as wanted.
   
    (iv) $\implies$ (iii):
    Suppose $x\in \X$ is a periodic point and there is an $l\in\N$ such that $[x]_l=\{x\}$.
    Then $U(x, 0, l) = \{\iota_\X(x)\}$ is an open set consisting of points with nontrivial isotropy.

    The equivalence (iv) $\iff$ (v) is obvious.

    (iv) $\implies$ (vi):
    Suppose $\X$ contains an open set $U$ consisting of eventually periodic points.
    Then
    \[
        U = \bigcup_{\alpha, \beta\in \LL(\X)} \{\alpha \beta^\infty\}\cap U,
    \]
    and by the Baire Category Theorem there are $\alpha, \beta\in \LL(\X)$ such that $\{\alpha \beta^\infty\}$ is an isolated eventually periodic point in $U$.
    Then $\iota_\X(\alpha \beta^\infty)$ is an isolated eventually periodic point in $\tilde{\X}$.

    To see that (iii) does not imply (i) observe that if $\X$ is the shift space generated by an aperiodic substitution, 
    then $\X$ contains no eventually periodic points, so $\G_\X$ is principal, cf.~\cite[Definition 5.15]{Queffelec}.
    However, if the substitution in addition to being aperiodic is also primitive and proper then, according to~\cite[Proposition 3.5]{Carlsen-Eilers2007}, 
    $\X$ contains a point which is isolated in past equivalence, so $\X$ does not satisfy Matsumoto’s condition (I).
    
    Finally, the even shift is an example of a shift with a dense set of aperiodic points but it contains a periodic point which is isolated in past equivalence, cf.~Example~\ref{ex:even-example}.
\end{proof}

Any groupoid homomorphism is assumed to be continuous and a groupoid isomorphism is assumed to be a homeomorphism.
A \emph{continuous cocycle} on $\G_\X$ is a groupoid homomorphism $\G_\X\LRA \Z$.
Let $B^1(\G_\X)$ be the collection of continuous cocycles on $\G_\X$.
There is a map $\kappa_\X\colon C(\X, \Z) \LRA B^1(\G_\X)$ given by
\begin{equation}\label{eq:kappa}
  \kappa_\X(f)(\tilde{x}, p, \tilde{y}) = \sum_{i = 0}^l f(\pi_\X(\sigma_{\tilde{\X}}^i(\tilde{x}))) - \sum_{j = 0}^k f(\pi_\X(\sigma_{\tilde{\X}}^j(\tilde{x}))),
\end{equation}
for $f\in C(\X, \Z)$, $(\tilde{x}, p, \tilde{y})\in \G_\X$ and
where $k, l\in \N$ satisfy $p = l - k$ and $\sigma_{\tilde{\X}}^l(\tilde{x}) = \sigma_{\tilde{\X}}^k(\tilde{y})$.
Observe that $\kappa_\X(f)$ is the unique cocycle satisfying 
\[
    \kappa_\X(f)(\tilde{x}, 1, \sigma_{\tilde{\X}}(\tilde{x})) = f(\pi_\X(\tilde{x})),
\]
for $\tilde{x}\in \tilde{\X}$.
The \emph{canonical continuous cocycle} $c_{\X}\colon \G_{\X}\LRA \Z$ is defined by 
\[
    c_\X(\tilde{x}, p, \tilde{y}) = p,
\]
for $(\tilde{x}, p, \tilde{y})\in \G_\X$.
Note that $c_\X = \kappa_\X(1)$ and $c_\X^{-1}(0) = \{(\tilde{x}, 0, \tilde{y})\in \G_\X\} \subset \G_\X$ is a clopen subgroupoid which is always principal.

\subsection{The $\mathrm{C}^*$-algebra $\mathrm{C}^*(\G_\X) = \OO_\X$}

The groupoid $\G_{\X}$ is second-countable, locally compact Hausdorff and \'etale, cf.~Section~\ref{sec:groupoid}.
Let $C_c(\G_\X)$ be the $^*$-algebra consisting of compactly supported and complex-valued maps with the convolution product.
As $\G_{\X}$ is also amenable, the full $\mathrm{C}^*(\G_\X)$ and the reduced $\mathrm{C}^*_{\mathrm{r}}(\G)$ groupoid $\mathrm{C}^*$-algebras 
are canonically $^*$-isomorphic, cf.~\cite[Theorem 4.1.4]{SimsNotes} or~\cite[Proposition 6.1.8]{AD-Renault2000}.
Therefore,~\cite[Proposition 3.3.3]{SimsNotes} allows us to consider $\mathrm{C}^*(\G_\X)$ as a subset of $C_0(\G_{\X})$.

There is a canonical $^*$-isomorphism $\OO_{\X}\LRA \mathrm{C}^*(\G_{\X})$ sending $s_a\LMT 1_{U_a}$ for each $a\in \A$, cf.~\cite[Chapter 2]{CarlsenPhD}.
According to~\cite[Proposition 1.9]{Phillips2005}
the canonical inclusions of $C(\X)$ and $C_c(c_\X^{-1}(0))$ into $C_c(\G_\X)$ extend to injective $^*$-homomorphisms of $C(\X)$ and $\mathrm{C}^*(c_\X^{-1}(0))$ into $\mathrm{C}^*(\G_\X)$. 
We will therefore simply identify $\OO_\X$ with $\mathrm{C}^*(\G_\X)$, $\D_\X$ with $C(\tilde{\X})$, and $\F_\X$ with $\mathrm{C}^*(c_\X^{-1}(0))$. 
The inclusion $\tilde{\X} \LRA \G_\X$ then induces a conditional expectation $p_\X\colon \OO_\X\LRA \D_\X$ given by restriction so that
\[
  p_\X(g)(\tilde{x}) = g(\tilde{x}, 0, \tilde{x}).
\]
for $g\in \OO_\X$ and $\tilde{x}\in \tilde{\X}$.

Any continuous cocycle $c\in B^1(\G_\X)$ induces a strongly continuous action $\beta^c\colon \T\curvearrowright \OO_\X$ satisfying
\[
    \beta^c_z(f) = z^n f
\]
for $z\in \T$ and $n\in \N$ and $f\in C_c(\G_\X)$ with $\supp(f) \subset c^{-1}(\{n\})$.
The canonical gauge action $\gamma^\X = \beta^{\kappa_\X(1)}$ is of the form
\[
    \gamma^\X_z(g)(\tilde{x}, p, \tilde{y}) = z^p g(\tilde{x}, p, \tilde{y}),
\]
for every $z\in \T$, $g\in C_c(\G_\X)$ and $(\tilde{x}, p, \tilde{y})\in \G_{\X}$.

Let $\K$ denote the $\mathrm{C}^*$-algebra of compact operators on separable Hilbert space
and let $c_0$ denote the canonical maximal abelian $\mathrm{C}^*$-subalgebra of diagonal operators in $\K$.

\section{Preserving the diagonal}\label{sec:diagonal-preserving}

Let $\X$ and $\Y$ be one-sided shift spaces.
A $^*$-isomorphism $\Phi\colon \OO_\X\LRA \OO_\Y$ is \emph{diagonal-preserving} if $\Phi(\D_\X) = \D_\Y$.
In this section we prove that any $^*$-isomorphism $\Phi\colon \OO_\X\LRA \OO_\Y$ satisfying $\Phi(C(\X)) = C(\Y)$ is diagonal-preserving (Theorem~\ref{thm:diagonal-preserving}).
First we need some preliminary results.
Recall that~\cite[Proposition 1.9]{Phillips2005} allows us to consider $\mathrm{C^*}(\Iso(\G_\X)^\circ)$ as a subalgebra of $\mathrm{C}^*(\G_{\X}) = \OO_\X$.

\begin{lemma}\label{lem:isotropy-commutator}
  Let $\X$ be a one-sided shift space. 
  We have $\mathrm{C^*}(\Iso(\G_\X)^\circ) = \D_\X' = C(\X)'$.
  If the groupoid $\G_\X$ is effective, then $\D_\X = \D_\X'$.
\end{lemma}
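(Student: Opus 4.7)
The plan is to establish the two identities $\mathrm{C^*}(\Iso(\G_\X)^\circ) = \D_\X'$ and $\D_\X' = C(\X)'$ separately, and then obtain the effective-case corollary at once.

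For the first equality I would invoke the standard Renault-type result for Hausdorff \'etale groupoids: if $\G$ is such a groupoid, then $C_0(\G^{(0)})' \cap \mathrm{C^*}_r(\G) = \mathrm{C^*}_r(\Iso(\G)^\circ)$. Since $\G_\X$ is amenable one has $\mathrm{C^*}(\G_\X) = \mathrm{C^*}_r(\G_\X)$ and $\D_\X = C(\G_\X^{(0)})$, so this general principle (which underlies the reconstruction machinery in \cite{CRST}) immediately specializes to the desired identity inside $\OO_\X$.

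For $\D_\X' = C(\X)'$, the inclusion $\D_\X' \subset C(\X)'$ is automatic from $C(\X) \subset \D_\X$. For the converse, I would take $a \in C(\X)' \cap \OO_\X$ and view it as a continuous function on $\G_\X$. Commutation with $f \circ \pi_\X$ for $f \in C(\X)$ gives $a(\gamma)(f(\pi_\X(r(\gamma))) - f(\pi_\X(s(\gamma)))) = 0$ at every $\gamma$; since $C(\X)$ separates the points of $\X$, this forces $\supp(a) \subset \Delta_\pi := \{\gamma \in \G_\X : \pi_\X(r(\gamma)) = \pi_\X(s(\gamma))\}$. To upgrade this to $\supp(a) \subset \Iso(\G_\X)^\circ$, pick $\gamma = (\tilde{x}, p, \tilde{y}) \in \Delta_\pi$ together with $k, l$ satisfying $p = l - k$ and $\sigma_{\tilde{\X}}^l(\tilde{x}) = \sigma_{\tilde{\X}}^k(\tilde{y})$; when $p = 0$, Lemma~\ref{lem:isotropy-lemma}(i) forces $\tilde{x} = \tilde{y}$, so $\gamma \in \G_\X^{(0)}$. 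For $p \neq 0$, I would Fourier-decompose $a = \sum_n a_n$ under the gauge action $\gamma^\X$: each $a_n$ still commutes with $C(\X)$, is supported on $c_\X^{-1}(n) \cap \Delta_\pi$, and its products $a_n a_n^*$ and $a_n^* a_n$ lie in the already-settled $p = 0$ case, hence in $\D_\X$. Combined with continuity and a local-bisection analysis (any open bisection $V \subset \Delta_\pi$ with $r(V) \cap s(V) = \emptyset$ forces $\pi_\X(r(V))$ into the eventually periodic points of $\X$, which together with the rigidity of $|a_n| \in \D_\X$ and Lemma~\ref{lem:isotropy-lemma} rules out a nonzero $a_n$ there), this pushes $\supp(a_n) \subset \Iso(\G_\X)^\circ$.

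The main obstacle is precisely this bisection/continuity step for $p \neq 0$: one must carefully rule out that some $a_n$ detects off-diagonal elements of $\Delta_\pi$ supported over preimages of eventually periodic points, where Lemma~\ref{lem:isotropy-lemma}(ii) provides no direct leverage. The second assertion of the lemma then falls out as an immediate corollary: if $\G_\X$ is effective then by definition $\Iso(\G_\X)^\circ = \G_\X^{(0)}$, so $\mathrm{C^*}(\Iso(\G_\X)^\circ) = C(\tilde{\X}) = \D_\X$, whence $\D_\X = \D_\X'$.
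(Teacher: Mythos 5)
Your setup matches the paper's: the identity $\mathrm{C^*}(\Iso(\G_\X)^\circ) = \D_\X'$ is quoted from the reconstruction machinery of \cite{CRST} (Corollary 5.3 there), the inclusion $\D_\X' \subset C(\X)'$ is automatic, commutation with $C(\X)$ confines the support of $\xi \in C(\X)'$ to $\Delta_\pi = \{\gamma : \pi_\X(r(\gamma)) = \pi_\X(s(\gamma))\}$, and the case $p = 0$ is killed by Lemma~\ref{lem:isotropy-lemma}(i). But the case $p \neq 0$ is the entire content of the lemma, and you explicitly leave it open: your Fourier decomposition and the observation that $a_na_n^*, a_n^*a_n$ fall into the settled $p=0$ case are correct but do not by themselves rule out a nonzero $a_n$ supported over preimages of eventually periodic points, and you say as much. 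That is a genuine gap, not a sketch of a complete argument.

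The missing idea is a density argument that makes the Fourier decomposition unnecessary. Suppose $\xi(\tilde{x}, p, \tilde{y}) \neq 0$ with $k > l$, $\sigma_{\tilde\X}^k(\tilde x) = \sigma_{\tilde\X}^l(\tilde y)$ and $x := \pi_\X(\tilde x) = \pi_\X(\tilde y)$; then $\sigma_\X^k(x) = \sigma_\X^l(x)$, so $x = \mu\alpha^\infty$ with $|\mu| = l$, $|\alpha| = p$. Since $\xi$ is a continuous function on $\G_\X$, its support is open and contains an open bisection $Z(\tilde U, k, l, \tilde V)$ through $(\tilde x, p, \tilde y)$, which one may shrink so that $\pi_\X(\tilde U), \pi_\X(\tilde V) \subset Z_\X(\mu\alpha)$. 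For every $\tilde u \in \tilde U$ the unique $(\tilde u, p, \tilde v)$ in this bisection lies in $\supp \xi$, so the commutant condition gives $\pi_\X(\tilde u) = \pi_\X(\tilde v) =: u$, and then $\sigma_\X^k(u) = \sigma_\X^l(u)$ together with $u \in Z_\X(\mu\alpha)$ forces $u = \mu\alpha^\infty = x$. Thus $\pi_\X$ is \emph{constant} on $\tilde U$. Since $\iota_\X(\X)$ is dense in $\tilde\X$ and $\pi_\X \circ \iota_\X = \id_\X$, every $\iota_\X(t) \in \tilde U$ satisfies $t = x$, so $\tilde U = \{\iota_\X(x)\}$, and likewise $\tilde V = \{\iota_\X(x)\}$. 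Hence $(\tilde x, p, \tilde y) = (\iota_\X(x), p, \iota_\X(x))$ is an (isolated) isotropy element after all, and $\xi \in \D_\X'$. This is exactly the "rigidity over eventually periodic points" you were looking for; it comes from the interplay of the open support, the constancy of $\pi_\X$ on the bisection, and the density of $\iota_\X(\X)$, rather than from any leverage in Lemma~\ref{lem:isotropy-lemma}(ii).
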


\begin{proof}
  The identification of $\mathrm{C^*(\Iso(\G_\X)^\circ)}$ and $\D_\X'$ follows from \cite[Corollary 5.3]{CRST},
  and $\D_\X'\subset C(\X)'$ is a consequence of $C(\X) \subset \D_\X$.
  It remains to verify the inclusion $C(\X)' \subset \D_\X'$.

  Let $\xi\in C(\X)'$ and observe that
  \[
    \xi(\tilde{x}, p, \tilde{y}) g(\pi_\X(\tilde{y})) = (\xi\star g)(\tilde{x}, p, \tilde{y})
    = (g\star \xi)(\tilde{x}, p, \tilde{y}),
    = g(\pi_\X(\tilde{x})) \xi(\tilde{x}, p, \tilde{y}),
  \]
  for $(\tilde{x}, p, \tilde{y})\in \G_\X$ and all $g\in C(\X)$.
  It follows that $\xi(\tilde{x}, p, \tilde{y}) \neq 0$ only if $\pi_\X(\tilde{x}) = \pi_\X(\tilde{y})$.
  Similarly, if $\xi(\tilde{x}, p, \tilde{y}) \neq 0$ implies that $\tilde{x} = \tilde{y}$, 
  then $\xi\star f = f\star \xi$ for all $f\in \D_\X$, i.e., $\xi\in \D_\X'$.

  Suppose $\xi(\tilde{x}, p, \tilde{y}) \neq 0$, for some $(\tilde{x}, p, \tilde{y})\in \G_\X$.
  Then $x :=\pi_\X(\tilde{x}) = \pi_\X(\tilde{y})$ since $\xi\in C(\X)'$.
  We will show that $\tilde{x} = \tilde{y}$.
  Pick nonnegative integers $k, l\in \N$ such that $p = k - l$ and $\sigma_{\tilde{\X}}^k(\tilde{x}) = \sigma_{\tilde{\X}}^l(\tilde{y})$.
  If $k = l$ then Lemma~\ref{lem:isotropy-lemma}(i) implies that $\tilde{x} = \tilde{y}$, so we may assume that $k > l$.
  It follows that $\sigma_\X^k(x) = \sigma_\X^l(x)$ in $\X$ so $x = \mu \alpha^\infty$, where $\mu$ and $\alpha$ are words with $|\mu| = l$ and $|\alpha| = p$.
  The support of $\xi$ is open in $\G_\X$ so it contains an open bisection of the form $Z(\tilde{U}, k, l, \tilde{V})$, 
  where $\tilde{U}$ and $\tilde{V}$ are open sets in $\tilde{\X}$.
  Since $\tilde{U}$ and $\tilde{V}$ are open neighborhoods around $\tilde{x}$ and $\tilde{y}$, respectively,
  we may assume that $\pi_\X(\tilde{U}), \pi_\X(\tilde{V}) \subset Z(\mu \alpha)$, since $\pi_\X$ is continuous,
  Given any $\tilde{u}\in \tilde{U}$, there is a (unique) element $\gamma = (\tilde{u}, p, \tilde{v})\in Z(\tilde{U}, k, l, \tilde{V})$ and
  $\sigma_{\tilde{\X}}^k(\tilde{u}) = \sigma_{\tilde{\X}}^l(\tilde{v})$.
  Since $\xi(\gamma) \neq 0$ it follows that $\pi_\X(\tilde{u}) = \pi_\X(\tilde{v}) = x$.
  
  The set $\iota(\X)\cap \tilde{U}$ is dense in $\tilde{U}$, and whenever $\iota(t)\in \tilde{U}$ we see from the above argument that
  \[
    t = \pi_\X(\iota(t)) = x.
  \]
  It follows that $\iota(x)$ is dense in $\tilde{U}$ so $\tilde{U} = \left\{ \iota(x) \right\}$.
  A similar argument shows that $\tilde{V} = \left\{ \iota(x) \right\}$.
  Hence $(\tilde{x}, p, \tilde{y}) = (\iota(x), p, \iota(x))$, so $\xi\in \D_\X'$.
   
  Finally, if $\G_\X$ is effective, then $\D_\X$ is maximally abelian in $\OO_\X$ so $\D_\X = \D_\X'$.
\end{proof}

Consider the equivalence relation $\sim$ on the space $\tilde{\X}\times \T$ given by
$(\tilde{x},\zeta) \sim (\tilde{y}, \theta)$ if and only if $\tilde{x} = \tilde{y}$ and $\zeta^p = \theta^p$ for all $p\in \Stab(\tilde{x})$.
Then the quotient $\tilde{\X}\times \T/\sim$ is compact and Hausdorff and as we shall see (homeomorphic to) the spectrum of $\mathrm{C}^*(\Iso(\G_{\X})^{\circ})$.

\begin{lemma}\label{lem:isotropy-spectrum}
    Let $\sim$ be the equivalence relation on $\tilde{\X}\times \T$ defined above.
    There is a $^*$-isomorphism $\Xi\colon \mathrm{C^*}(\Iso(\G_{\X})^{\circ}) \LRA C(\tilde{\X}\times \T/\sim)$ given by 
    \begin{equation} \label{eq:isotropy-spectrum}
        \Xi(f)([\tilde{x},\zeta]) = \sum_{p\in \Stab(\tilde{x})} f(\tilde{x}, p, \tilde{x}) \zeta^n,
    \end{equation}
    for $f\in C_c(\Iso(\G_{\X})^{\circ})$ and $[\tilde{x}, \zeta]\in \tilde{\X}\times \T/\sim$.
\end{lemma}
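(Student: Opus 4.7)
The plan is to identify $\Xi$ with (essentially) the Gelfand transform of $\mathrm{C^*}(\Iso(\G_\X)^\circ)$. Since $\Iso(\G_\X)^\circ$ is an open subgroupoid of the étale groupoid $\G_\X$ that is a group bundle over $\tilde{\X}$ with abelian fibres (each contained in $\Stab(\tilde{x})\subset \Z$), it is amenable and its groupoid $\mathrm{C^*}$-algebra is commutative. The identification of the Gelfand spectrum with $\tilde{\X}\times \T/\sim$ should be realized by $[\tilde{x},\zeta]\mapsto \chi_{[\tilde{x},\zeta]}$, where $\chi_{[\tilde{x},\zeta]}(f):=\sum_p f(\tilde{x},p,\tilde{x})\zeta^p$ is a finite sum (by compact support of $f$ together with étaleness of $\Iso(\G_\X)^\circ$).

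I would first verify that formula~\eqref{eq:isotropy-spectrum} defines a continuous function on $\tilde{\X}\times \T/\sim$ for each $f\in C_c(\Iso(\G_\X)^\circ)$: well-definedness on $\sim$-classes is automatic since $f(\tilde{x},p,\tilde{x})=0$ whenever $p\notin \Stab(\tilde{x})$, so the condition $\zeta^p=\theta^p$ for $p\in \Stab(\tilde{x})$ forces the summands to agree; continuity follows by covering $\supp(f)$ by finitely many open bisections of the form $Z(U,k,l,U)\subset \Iso(\G_\X)^\circ$, on each of which $\Xi(f)$ is a product of a continuous function of $\tilde{x}$ with a character $\zeta\mapsto \zeta^{k-l}$. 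Next, $\Xi$ is a $^*$-algebra homomorphism on $C_c(\Iso(\G_\X)^\circ)$ by direct computation: convolution unfolds as $(f\star g)(\tilde{x},p,\tilde{x}) = \sum_{q+r=p} f(\tilde{x},q,\tilde{x})g(\tilde{x},r,\tilde{x})$, which becomes a pointwise product after summing against $\zeta^p$; involution becomes complex conjugation because $|\zeta|=1$. Amenability of $\Iso(\G_\X)^\circ$ then gives a unique $\mathrm{C^*}$-norm and $\Xi$ extends continuously to all of $\mathrm{C^*}(\Iso(\G_\X)^\circ)$.

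For injectivity, $\Xi(f)=0$ forces the trigonometric polynomial $\zeta\mapsto \sum_p f(\tilde{x},p,\tilde{x})\zeta^p$ to vanish on $\T$ for every $\tilde{x}$, hence $f(\tilde{x},p,\tilde{x})=0$ for every $(\tilde{x},p)$ and $f=0$ on $\Iso(\G_\X)^\circ$. For surjectivity, I would apply Stone--Weierstrass to the image of $\Xi$ inside $C(\tilde{\X}\times \T/\sim)$: the image contains the constants (via $f\equiv 1$ on the unit space) and separates points, since $C(\tilde{\X})\subset \mathrm{C^*}(\Iso(\G_\X)^\circ)$ separates different base points and characteristic functions of open bisections $Z(U,k,l,U)\subset \Iso(\G_\X)^\circ$ produce the characters $\zeta\mapsto \zeta^{k-l}$ needed to separate inequivalent representatives within a single fibre. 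The main obstacle will be establishing that $\tilde{\X}\times \T/\sim$ is genuinely Hausdorff, i.e., that $\sim$ is a closed equivalence relation on $\tilde{\X}\times \T$; this requires a careful semicontinuity analysis of how $\Stab(\tilde{x})$ varies over $\tilde{\X}$ and, equivalently, of the closure of $\Iso(\G_\X)^\circ$ inside $\G_\X$, with particular care at sequences along which the stabilizer changes (such as eventually periodic points accumulating on purely periodic ones).
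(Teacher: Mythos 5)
Your overall strategy---realize $\Xi$ as the Gelfand transform, verify the $^*$-homomorphism identities on $C_c(\Iso(\G_\X)^{\circ})$ by unfolding the convolution fibrewise, and get surjectivity from Stone--Weierstrass---is essentially the paper's, and those parts are sound. The genuine gap is in the passage from the dense subalgebra to the completion. Showing that $\Xi(f)=0$ forces $f=0$ for $f\in C_c(\Iso(\G_\X)^{\circ})$ only gives injectivity on a dense $^*$-subalgebra, and this does \emph{not} imply injectivity of the extension (compare the quotient $\mathrm{C^*}(\mathbb{F}_2)\to \mathrm{C^*_r}(\mathbb{F}_2)$, which is injective on the group ring but not on the completion); likewise ``amenability gives a unique $\mathrm{C^*}$-norm'' yields the upper bound $\|\Xi(f)\|_\infty\leq\|f\|$ needed to extend $\Xi$, but not the reverse inequality. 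What is missing is precisely that lower bound, and the paper supplies it by proving $\Xi$ is \emph{isometric}: by \cite[Lemma 5.1]{CRST}, $\mathrm{C^*}(\Iso(\G_\X)^{\circ})$ is a $C(\tilde{\X})$-algebra, so $\|f\| = \sup_{\tilde{x}}\|\pi_{\tilde{x}}(f)\|$ where $\pi_{\tilde{x}}$ is evaluation onto the fibre $\mathrm{C^*}(\Iso(\tilde{x}))$, and since each fibre group is $\Z$ or trivial one has $\|\pi_{\tilde{x}}(f)\| = \sup_{\zeta\in\T}\bigl|\sum_{p} f(\tilde{x},p,\tilde{x})\zeta^{p}\bigr|$; taking the supremum over $\tilde{x}$ gives $\|\Xi(f)\|_\infty = \|f\|$. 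You need to add this fibrewise norm computation (or, equivalently, show that the characters $\chi_{[\tilde{x},\zeta]}$ exhaust the spectrum) to make the argument complete.

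On the point you flag as the main obstacle: the concern is legitimate, but it resolves more easily than you suggest once $\Stab(\tilde{x})$ in the definition of $\sim$ is read as the stabilizer detected by $\Iso(\G_\X)^{\circ}$---the only reading under which functions supported in the interior can separate the classes, since at an eventually periodic point that is a limit of aperiodic points the full isotropy is strictly larger than its interior. With that reading, openness of $\Iso(\G_\X)^{\circ}$ gives exactly the lower semicontinuity you ask for: if $(\tilde{x},p,\tilde{x})$ lies in the interior, then so does $(\tilde{y},p,\tilde{y})$ for every $\tilde{y}$ in a neighbourhood of $\tilde{x}$, hence $\sim$ is a closed equivalence relation and the quotient is compact Hausdorff. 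Your instinct to worry about sequences of points along which the stabilizer jumps is therefore correct, but the jump can only enlarge the non-interior part of the isotropy, which $\sim$ (properly interpreted) does not see.
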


\begin{proof}
    The map $\Xi\colon C_c(\Iso(\G_\X)^\circ) \LRA C(\tilde{\X}\times \T/\sim)$ given in~\eqref{eq:isotropy-spectrum} 
    is well-defined by the definition of $\sim$ and linear.
    If $f,g\in C_c(\Iso(\G_{\X})^{\circ})$ and $[\tilde{x}, z]\in \tilde{\X}\times \T/\sim$, then
    \begin{align*}
        \Xi(f)([\tilde{x},\zeta]) \Xi(g)([\tilde{x},\zeta]) 
        &= \sum_{k,l\in \Stab(\tilde{x})} f(\tilde{x}, k, \tilde{x}) g(\tilde{x}, l, \tilde{x}) \zeta^{k+l} \\
        &= \sum_{n,m\in \Stab(\tilde{x})} f(\tilde{x}, n-m, \tilde{x}) g(\tilde{x}, m, \tilde{x}) \zeta^n \\
        &= \Xi(f\star g)([\tilde{x}, \zeta]),
    \end{align*}
    so $\Xi$ is multiplicative.
    It is straightforward to see that $\Xi$ also respects the $^*$-involution.
    
    Next, we show that $\Xi$ is isometric.
    It follows from~\cite[Lemma 5.1]{CRST} that $\mathrm{C^*}(\Iso(\G_\X)^\circ)$ is a $\D_\X$-algebra.
    In particular, 
    \[
      || f || = \sup_{\tilde{x}\in \tilde{\X}} || \pi_{\tilde{x}}(f) ||
    \]
    for $f\in \mathrm{C^*}(\Iso(\G_\X)^\circ)$,
    where $\pi_{\tilde{x}}\colon \mathrm{C^*}(\Iso(\G_\X)) \LRA \mathrm{C^*}(\Iso(\tilde{x}))$ is the $^*$-homomorphism given by
    \[
      \pi_{\tilde{x}}(f)(\tilde{x}, p, \tilde{x}) = f(\tilde{x}, p, \tilde{x}),
    \]
    for $(\tilde{x}, p, \tilde{x})\in \Iso(\tilde{x})$.
    Since $\Iso(\tilde{x})$ is (isomorphic to) the integers or the trivial group, we have
    \[
      || \pi_{\tilde{x}}(f) || = \sup_{\zeta\in \T} |\sum_{p\in \Stab(\tilde{x})} f(\tilde{x}, p, \tilde{x}) \zeta^p|
    \]
    from which it follows that $\Xi$ is isometric.

    We show that $\Xi$ separates points.
    First, if $[\tilde{x}, \zeta] \neq [\tilde{x}, \theta]$, then there is $p\in \Iso(\tilde{x})$ such that $\zeta^p \neq \theta^p$.
    Choose a compact open bisection $U \subset \G_\X$ satisfying $U\cap \Iso(\tilde{x}) = \{(\tilde{x}, p, \tilde{x})\}$ 
    and observe that $\Xi(1_U)([\tilde{x}, \zeta]) = \zeta^p$ and $\xi(1_U)([\tilde{x}, \theta]) = \theta^p$.
    Second, if $\tilde{x} \neq \tilde{y}$ in $\tilde{\X}$ 
    then we choose a compact open bisection $U$ satisfying $(\tilde{x}, 0, \tilde{x})\in U$ and $\Iso(\tilde{y})\cap U = \emptyset$.
    Then $\Xi(1_U)([\tilde{x}, \zeta]) = 1$ while $\Xi(1_U)([\tilde{y}, \theta]) = 0$.
    By the Stone--Weierstrass theorem, the image of $\Xi$ is dense in $C(\tilde{\X}\times \T/\sim)$
    and $\Xi$ thus extends to a $^*$-isomorphism as wanted.
\end{proof}

\begin{theorem}\label{thm:diagonal-preserving}
  Let $\X$ and $\Y$ be one-sided shift spaces.
  If $\Phi\colon \OO_\X\LRA \OO_\Y$ is a $^*$-isomorphism satisfying $\Phi(C(\X)) = C(\Y)$, then $\Phi(\D_\X) = \D_\Y$.
\end{theorem}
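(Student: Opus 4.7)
The plan is to characterize $\D_\X$ intrinsically as the closed linear span of all projections in the relative commutant $C(\X)' \subset \OO_\X$, and symmetrically for $\D_\Y$. Since $\Phi$ is a $^*$-isomorphism that preserves both commutants and projections, it will automatically carry $\D_\X$ onto $\D_\Y$.

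First, because $\Phi(C(\X)) = C(\Y)$, taking commutants in the ambient algebras yields $\Phi(C(\X)') = C(\Y)'$. By Lemma~\ref{lem:isotropy-commutator} we have $C(\X)' = \mathrm{C^*}(\Iso(\G_\X)^\circ)$ and $C(\Y)' = \mathrm{C^*}(\Iso(\G_\Y)^\circ)$, so $\Phi$ restricts to a $^*$-isomorphism between these two isotropy $\mathrm{C^*}$-algebras.

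Next, by Lemma~\ref{lem:isotropy-spectrum}, I would identify $\mathrm{C^*}(\Iso(\G_\X)^\circ)$ with $C(\tilde{\X}\times \T/\sim)$, whose projections are precisely characteristic functions of clopen subsets. The key topological observation is that every clopen subset of $\tilde{\X}\times \T/\sim$ has the form $\pi^{-1}(U)$ for some clopen $U \subset \tilde{\X}$, where $\pi\colon \tilde{\X}\times \T/\sim \to \tilde{\X}$ is given by $[\tilde{x}, \zeta]\mapsto \tilde{x}$. Indeed, one lifts to $\tilde{\X}\times \T$: any clopen subset meets each fiber $\{\tilde{x}\}\times \T$ in a clopen subset of $\T$, which, by connectedness of $\T$, is either empty or the whole fiber. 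Hence the lift is of the form $U \times \T$ with $U$ clopen in $\tilde{\X}$ (which is open since the product projection is open, and closed by applying the same argument to the complement). This descends to a clopen subset $\pi^{-1}(U)$ of $\tilde{\X}\times \T/\sim$. Tracing through the isomorphism $\Xi$ of Lemma~\ref{lem:isotropy-spectrum}, each such characteristic function corresponds to $1_U \in C(\tilde{\X}) = \D_\X$.

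Finally, since $\tilde{\X}$ is a projective limit of finite discrete sets it is totally disconnected, so $\D_\X = C(\tilde{\X})$ is the closed linear span of its projections. Combined with the previous step, the projections of $\mathrm{C^*}(\Iso(\G_\X)^\circ)$ coincide with those of $\D_\X$, and $\D_\X$ is the closed linear span of the projections of $\mathrm{C^*}(\Iso(\G_\X)^\circ)$; the analogous statement holds for $\D_\Y$. Since $\Phi$ preserves projections and restricts to a $^*$-isomorphism $\mathrm{C^*}(\Iso(\G_\X)^\circ) \to \mathrm{C^*}(\Iso(\G_\Y)^\circ)$, it follows that $\Phi(\D_\X) = \D_\Y$. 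The only step that requires any real care is the clopen-set analysis in the middle paragraph; the rest is formal and uses only the already-established lemmas.
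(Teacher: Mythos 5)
Your proof is correct and takes essentially the same route as the paper's: both reduce to the commutant $C(\X)' = \mathrm{C^*}(\Iso(\G_\X)^\circ)$ via Lemmas~\ref{lem:isotropy-commutator} and~\ref{lem:isotropy-spectrum} and then exploit the fact that the $\T$-fibres are exactly the connected components of $\tilde{\X}\times\T/\sim$. The paper phrases the last step as ``$f\in\D_\X$ iff $\Xi_\X(f)$ is constant on connected components, and the induced homeomorphism preserves components,'' while you phrase it as ``$\D_\X$ is the closed span of the projections of $C(\X)'$, and $^*$-isomorphisms preserve projections''; these are equivalent descriptions of the same subalgebra, so your clopen-set analysis is the component argument in disguise.
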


\begin{proof}
  If $\Phi\colon \OO_\X\LRA \OO_\Y$ is a $^*$-isomorphism satisfying $\Phi(C(\X)) = C(\Y)$, then $\Phi(C(\X)') = C(\Y)'$. 
  It follows from Lemmas~\ref{lem:isotropy-commutator} and~\ref{lem:isotropy-spectrum} that there is a homeomorphism 
    \[
        h\colon \tilde{\X}\times \T/\sim \LRA \tilde{\Y}\times \T/\sim
    \]
  such that $\Xi_\Y(\Phi(f)) = \Xi_\X(f)\circ h^{-1}$ for $f\in \mathrm{C^*}(\Iso(\G_{\X})^{\circ})$. 
  We see from \eqref{eq:isotropy-spectrum} that if $f\in \mathrm{C^*}(\Iso(\G_{\X})^{\circ})$, 
  then $f\in\D_\X$ if and only if $\Xi_\X(f)([\tilde{x}, z])=\Xi_\X(f)([\tilde{x}, 1])$ for all $\tilde{x}\in\tilde{\X}$ and all $z\in\T$. 
  Since $\X$ is a totally disconnected space, the connected component of $[\tilde{x}, z]\in \tilde{\X}\times \T/\sim$ is the set $\{[\tilde{x}, w] \mid w\in \T\}$. 
  We thus have that $f\in \mathrm{C^*}(\Iso(\G_{\X})^{\circ})$ belongs to $\D_\X$ if and only if $\Xi_\X(f)$ is constant on connected components. 
  Similarly, $\Phi(f)\in\D_\Y$ if and only if $\Xi_\Y(\Phi(f))$ is constant on connected components. 
  Since $h$ is a homeomorphism, it maps connected components onto connected components.
  We conclude that $\Phi(\D_\X) = \D_\Y$.
\end{proof}
   
\begin{remark}\label{rem:sofic}
  Let $\X$ be a strictly sofic one-sided shift space and let $\Y = \tilde{\X}$ be its cover.
  Then $\Y$ is (conjugate to) a shift of finite type.
  Although it is possible that $\X$ and $\Y$ are homeomorphic so that $C(\X)$ and $C(\Y)$ are $^*$-isomorphic,
  there is no $^*$-isomorphism $\Phi\colon \OO_\X \LRA \OO_\Y$ which satisfies $\Phi(C(\X)) = C(\Y)$.
  Indeed, if this were the case then Theorem~\ref{thm:diagonal-preserving} would imply that 
  \[
    \Phi(\D_\X) = \D_\Y = C(\Y) = \Phi(C(\X))
  \]
  so that $C(\X) = \D_\X$ inside $\OO_\X$, and $\pi_\X$ is a homeomorphism. 
  However, this is not possible when $\X$ is strictly sofic.
  Foreshadowing Theorem~\ref{thm:coe-cover} (below) this means that $\X$ and $\tilde{\X}$ do not admit a stabilizer-preserving continuous orbit equivalence.
\end{remark}

Below, we give a stabilized version of Theorem~\ref{thm:diagonal-preserving}.
Consider the product $\tilde{\X}\times \N\times \T$ equipped with the equivalence relation $\approx$ defined by
$(\tilde{x}, m_1, z) \approx (\tilde{y}, m_2, w)$ if and only if $\tilde{x} = \tilde{y}$ and $m_1 = m_2$ and $z^n = w^n$ for all $n\in \Iso(\tilde{x})$.
The spaces $\tilde{\X}\times \N\times \T/\approx$ and $(\tilde{\X}\times \T/\sim) \times \N$ are now homeomorphic.
An argument similar to the above then yields the following result.

\begin{corollary}\label{cor:diagonal-preserving-stable}
    Let $\X$ and $\Y$ be one-sided shift spaces
    and let $\Phi\colon \OO_\X\otimes \K\LRA \OO_\Y\otimes \K$ be a $^*$-isomorphism satisfying $\Phi(C(\X)\otimes c_0) = C(\Y)\otimes c_0$.
    Then $\Phi(\D_\X\otimes c_0) = \D_\Y\otimes c_0$.
\end{corollary}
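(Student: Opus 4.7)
The plan is to stabilize the argument used for Theorem~\ref{thm:diagonal-preserving}, working with the \'etale groupoid $\G_\X\times\R$, where $\R$ is the full pair equivalence relation on $\N$. Its $\mathrm{C^*}$-algebra is canonically $\OO_\X\otimes\K$, its unit space is $\tilde{\X}\times\N$, and inside $\OO_\X\otimes\K$ one has $\D_\X\otimes c_0 = C(\tilde{\X}\times\N)$, while $C(\X)\otimes c_0$ is pulled back from $\X\times\N$ via $\pi_\X\times\id_\N$.

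First I would compute the relative commutant of $C(\X)\otimes c_0$ in $\OO_\X\otimes\K$. Commuting an element $\xi\in\OO_\X\otimes\K\subset C_0(\G_\X\times\R)$ with the matrix units $1_{\tilde{\X}}\otimes e_{kk}\in C(\X)\otimes c_0$ forces the support of $\xi$ to lie in $\G_\X\times\{(m,m)\mid m\in\N\}$, while commuting with elements of the form $g\otimes 1_{\{k\}}$ for $g\in C(\X)$ is exactly the hypothesis under which the argument of Lemma~\ref{lem:isotropy-commutator} applies in the $\G_\X$-component. This yields
\[
(C(\X)\otimes c_0)'\cap(\OO_\X\otimes\K) \;=\; \mathrm{C^*}(\Iso(\G_\X)^\circ)\otimes c_0 \;=\; (\D_\X\otimes c_0)'\cap(\OO_\X\otimes\K),
\]
and therefore the hypothesis $\Phi(C(\X)\otimes c_0)=C(\Y)\otimes c_0$ guarantees that $\Phi$ restricts to a $^*$-isomorphism between the commutants $\mathrm{C^*}(\Iso(\G_\X)^\circ)\otimes c_0$ and $\mathrm{C^*}(\Iso(\G_\Y)^\circ)\otimes c_0$.

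Next, Lemma~\ref{lem:isotropy-spectrum} together with the Gelfand spectrum $\widehat{c_0}=\N$ identifies the spectrum of the commutant with $(\tilde{\X}\times\T/\!\sim)\times\N$, which as remarked immediately before the statement is canonically homeomorphic to $\tilde{\X}\times\N\times\T/\!\approx$. Since $\tilde{\X}\times\N$ is totally disconnected, the connected component of a point $[\tilde{x},m,\zeta]$ in this quotient is precisely $\{[\tilde{x},m,w]\mid w\in\T\}$. Arguing exactly as in the proof of Theorem~\ref{thm:diagonal-preserving}, an element of $\mathrm{C^*}(\Iso(\G_\X)^\circ)\otimes c_0$ lies in $\D_\X\otimes c_0$ if and only if its Gelfand transform is constant on every connected component of the spectrum. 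The homeomorphism of spectra induced by $\Phi$ sends connected components to connected components, and therefore $\Phi(\D_\X\otimes c_0)=\D_\Y\otimes c_0$.

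The only delicate point is the identification of the commutant in the first step, since tensor-product commutants in $\mathrm{C^*}$-algebras are generally subtle; here, however, the groupoid realization together with maximal abelianness of $c_0$ in $\K$ collapses the calculation cleanly onto Lemma~\ref{lem:isotropy-commutator} applied to the $\G_\X$ factor, so no genuinely new ideas beyond those in the proof of Theorem~\ref{thm:diagonal-preserving} are required.
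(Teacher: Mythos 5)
Your proposal is correct and follows essentially the same route as the paper, which only sketches this corollary by observing that $\tilde{\X}\times\N\times\T/\!\approx$ is homeomorphic to $(\tilde{\X}\times\T/\!\sim)\times\N$ and then invoking ``an argument similar to the above.'' Your identification of the commutant of $C(\X)\otimes c_0$ in $\OO_\X\otimes\K$ as $\mathrm{C^*}(\Iso(\G_\X)^\circ)\otimes c_0$ via the groupoid $\G_\X\times\R$, followed by the connected-component argument on the spectrum, is precisely the intended adaptation of Lemmas~\ref{lem:isotropy-commutator} and~\ref{lem:isotropy-spectrum} and the proof of Theorem~\ref{thm:diagonal-preserving}.
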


\section{One-sided conjugacy}\label{sec:one-sided-conjugacy}


A pair of one-sided shift space $\X$ and $\Y$ are one-sided conjugate if there exists a homeomorphism 
$h\colon \X\LRA \Y$ satisfying $h\circ \pi_\X = \pi_\Y\circ h$.
A similar definition applies to the covers.
If $\X$ and $\Y$ are shifts of finite type, then they are conjugate if and only if the groupoids $\G_\X$ and $\G_\Y$ are isomorphic
in a way which preserves a certain endomorphism, if and only if the $\mathrm{C^*}$-algebras $\OO_\X$ and $\OO_\Y$ are $^*$-isomorphic
in a way which preserves a certain completely positive map~\cite[Theorem 3.3]{BC}.
In this section we characterize one-sided conjugacy of general one-sided shift spaces (Theorem~\ref{thm:one-sided-conjugacy}).

We start by lifting a one-sided conjugacy on the shift spaces to a conjugacy on the covers.
The cover construction is therefore \emph{canonical}, cf.~\cite[Theorem 2.13]{Krieger1984}.

\begin{lemma}[Lifting lemma]\label{lem:canonical_cover}
    Let $\X$ and $\Y$ be one-sided shift spaces and let $h\colon \X\LRA \Y$ be a homeomorphism.
    The following are equivalent:
    \begin{enumerate}
        \item[(i)] the map $h\colon \X\LRA \Y$ is a conjugacy;
        \item[(ii)] there is a conjugacy $\tilde{h}\colon \tilde{\X}\LRA \tilde{\Y}$ satisfying $h\circ \pi_{\X} = \pi_{\Y}\circ \tilde{h}$.
    \end{enumerate}
\end{lemma}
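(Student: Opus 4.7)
The plan is to establish the two directions separately, with the forward direction from (i) to (ii) being the substantive one.

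For the easy direction (ii) $\Rightarrow$ (i), I would use that $\pi_\X$ is surjective and intertwines the shift operations. For $x\in\X$, pick $\tilde{x}\in\pi_\X^{-1}(x)$ and compute
\[
    h(\sigma_\X(x)) = h(\pi_\X(\sigma_{\tilde\X}(\tilde x))) = \pi_\Y(\tilde h(\sigma_{\tilde\X}(\tilde x))) = \pi_\Y(\sigma_{\tilde\Y}(\tilde h(\tilde x))) = \sigma_\Y(h(x)),
\]
using the commuting relations provided by (ii) together with $\sigma_\X\circ\pi_\X = \pi_\X\circ\sigma_{\tilde\X}$ and its $\Y$-analog.

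For the hard direction (i) $\Rightarrow$ (ii), first I would invoke a Curtis--Hedlund--Lyndon argument: since $h$ is continuous, shift-equivariant, and $\X$ is compact, there exists $M\in\N$ such that $h(x)_0$ is determined by $x_{[0,M)}$, and hence $h$ is a one-sided sliding block code with window $M$. Applying the same to $h^{-1}$ yields a window $N$. The core technical step is then the \emph{compatibility claim}:
\begin{quote}
For every $(k,l)\in\I$ there exists $(k',l')\in\I$ such that whenever $x\sim_{(k',l')} x'$ in $\X$, one has $h(x)\sim_{(k,l)} h(x')$ in $\Y$.
\end{quote}
The equality $h(x)_{[0,k)} = h(x')_{[0,k)}$ follows easily from the block structure as soon as $k'\geq k+M$. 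The serious part is the predecessor-set condition: I would exploit the fact that the map $\nu\mapsto h^{-1}(\nu\, h(x)_{[k,\infty)})_{[0,|\nu|)}$ is a bijection between $P_{l'}(\sigma_\Y^k h(x))$ and $P_{l'}(\sigma_\X^k x)$, and that this bijection depends only on $h(x)_{[k,k+N)}$ and hence only on $x_{[k,k+N+M)}$. Choosing $(k',l')$ large enough so that $x\sim_{(k',l')}x'$ forces both agreement of this finite initial segment and equality of $\bigcup_{l''\leq l} P_{l''}(\sigma^k x)$ with the corresponding set for $x'$ then does the job; the main subtlety will be bookkeeping the shift between the position $k$ (where past equivalence of $h(x)$ lives) and the position of agreement required by the sliding-block-code argument.

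Granting the compatibility claim, I would choose an order-preserving and cofinal $\phi\colon\I\to\I$ realizing it, and define $\tilde h\colon\tilde\X\to\tilde\Y$ coordinate-wise by
\[
    \tilde h\big(({}_k[{}_k x_l]_l)_{(k,l)}\big)_{(k,l)} = {}_k[h({}_{k'}x_{l'})]_l, \qquad (k',l')=\phi(k,l).
\]
Order-preservation of $\phi$ ensures coherence with the connecting maps $Q$, so $\tilde h$ takes values in $\tilde\Y$; continuity is automatic since each ${}_k\Y_l$ is discrete and each coordinate depends on a single coordinate of $\tilde x$. Applying the whole construction to $h^{-1}$ produces $\widetilde{h^{-1}}$, and functoriality (compositions of sliding block codes behave as expected through $\phi$'s) gives mutually inverse homeomorphisms. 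The relation $\pi_\Y\circ\tilde h = h\circ\pi_\X$ follows because, for arbitrarily large $k'$, the first $k$ letters of $h({}_{k'}x_{l'})$ agree with those of $h(\pi_\X(\tilde x))$. Finally, $\tilde h\circ\sigma_{\tilde\X} = \sigma_{\tilde\Y}\circ\tilde h$ is verified coordinate-wise, using $h\circ\sigma_\X = \sigma_\Y\circ h$ at the level of representatives and the fact that both sides project to ${}_k[h(\sigma_\X({}_{k'}x_{l'}))]_l$ after choosing comparable indices. The primary obstacle throughout is the compatibility claim; once it is in place, everything else is bookkeeping in the projective limit.
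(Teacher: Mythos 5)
Your proposal is correct and follows essentially the same route as the paper: both directions hinge on a Curtis--Hedlund--Lyndon continuity constant for $h$, the compatibility claim that $(k',l')$-equivalence in $\X$ forces $(k,l)$-equivalence of images in $\Y$ (proved, as you do, by transporting predecessor words through $h^{-1}$, which preserves word length because $h$ is a genuine conjugacy), a coordinatewise definition of $\tilde h$ on the projective limit, and the surjectivity of $\pi_\X$ for the converse. The index bookkeeping you flag as the main subtlety is resolved in the paper exactly as you anticipate, by taking $\phi(r,s)=(C+r,C+s)$ and sending ${}_{C+r}[x]_{C+s}\mapsto {}_r[h(x)]_s$.
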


\begin{proof}
    (i) $\implies$ (ii):
    Let $h\colon \X\LRA \Y$ be a conjugacy and choose an integer $C\in \N$ such that
    \[
        x_{[0, C + r)} = x'_{[0, C + r)} \implies h(x)_{[0, r)} = h(x')_{[0, r)}
    \]
    for $r\in \N$ and $x, x'\in \X$.
    Given integers $0\leq r\leq s$, we show that
    \[
        \alpha x \widesim{C + r, C + s} \alpha x' \implies h(\alpha x) = h(\alpha x')
    \]
    for $\alpha x, \alpha x'\in \X$ with $|\alpha| = r$.
    Start by writing $h(\alpha x) = \mu y$ and $h(\alpha x') = \mu y'$ for some $y\in \Y$ and $\mu\in \LL(\Y)$ with $|\mu| = s$ 
    and observe that $h(x) = y$ and $h(x') = y'$ since $h$ is a conjugacy.
    Assume now that $\nu y\in \Y$ for some $\nu\in \LL(\Y)$ with $|\nu| \leq s$.
    We need to show that $\nu y'\in \Y$.

    Observe that $h^{-1}(\nu y) = \beta_\nu x$ for some $\beta_\nu\in \LL(\X)$ with $|\beta_\nu| = |\nu| \leq s$
    and, by hypothesis, $\beta_\nu x'\in \X$.
    It is now easily verified that $h(\beta_\nu x') = \nu y'$ so that $\nu y'\in \Y$ as wanted.
    
    This defines an induced map $\tilde{h}\colon \tilde{\X}\LRA \tilde{\Y}$ determined by
    \[
        \tilde{h}\colon {}_{C + r} [x]_{C + s} \LMT {}_r[h(x)]_s
    \]
    for integers $0\leq r\leq s$.
    It is readily verified that $\tilde{h}$ is a conjugacy satisfying $h\circ \pi_\X = \pi_\Y\circ \tilde{h}$ using that $h$ is a conjugacy.

    (ii) $\implies$ (i):
    Given $x\in \X$ and any $\tilde{x}\in \pi_{\X}^{-1}(x)\subset \tilde{\X}$, we observe that
    \[
        h(\sigma_{\X}(x)) = \pi_{\Y}(\tilde{h}(\sigma_{\tilde{\X}}(\tilde{x})))
        = \pi_{\Y}(\sigma_{\tilde{\Y}}(\tilde{h}(\tilde{x})))
        = \sigma_{\Y}h(x).
    \]
    This shows that $h$ is a conjugacy.
\end{proof}

Let $\X$ be a one-sided shift and let $\G_{\X}$ be the groupoid defined in Section~\ref{sec:approach}.
The map $\epsilon_{\X}\colon \G_{\X}\LRA \G_{\X}$ given by
\[
    \epsilon_{\X}(\tilde{x}, p, \tilde{y}) = (\sigma_{\tilde{\X}}(\tilde{x}), p, \sigma_{\tilde{\X}}(\tilde{y})),
\]
for $(\tilde{x}, p, \tilde{y})\in \G_{\X}$, is a continuous groupoid homomorphism.
There is an induced homomorphism $\epsilon_{\X}^*\colon C_c(\G_{\X})\LRA C_c(\G_{\X})$ given by $\epsilon_{\X}^*(f) = f\circ \epsilon_{\X}$,
for $f\in C_c(\G_{\X})$.
We also consider two completely positive maps on $\OO_{\X}$ as follows:
Let $\{s_a\}_{a\in \A}$ be the canonical generators of $\OO_{\X}$ and consider $\phi_{\X}\colon \OO_{\X}\LRA \OO_{\X}$ given by
\begin{equation}
    \phi_{\X}(y) = \sum_{a\in \A} s_a y s_a^*,
\end{equation}
for $y\in \OO_{\X}$, and map $\tau_{\X}\colon \OO_{\X}\LRA \OO_{\X}$ given by
\begin{equation}
    \tau_{\X}(y) = \sum_{a,b\in \A} s_b y s_a^*,
\end{equation}
for $y\in \OO_{\X}$.
The next lemma describes the relationship between these maps.
Recall that $p_\X\colon \OO_\X\LRA \D_\X$ is the conditional expectation onto the diagonal subalgebra induced by the inclusion $\tilde{\X} \LRA \G_\X$.
The proof of the lemma below is a straightforward computation, cf.~\cite[Lemma 3.1]{BC}.

\begin{lemma}
    We have $\tau_{\X}(f) = f\circ \epsilon_{\X}$ for $f\in C_c(\G_{\X})$.
    Hence $\tau_{\X}$ extends $\epsilon_{\X}^*$ to $\OO_{\X}$.
    Furthermore, $p_\X\circ \tau_\X|_{\D_\X} = \phi_\X|_{\D_\X}$.
\end{lemma}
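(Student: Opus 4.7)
The plan is to unwind both maps at the level of functions on $\G_\X$ and reduce everything to a convolution calculation. Under the canonical identification $\OO_\X \cong \mathrm{C^*}(\G_\X)$ from Section~\ref{sec:approach}, the generator $s_a$ corresponds to the indicator function $1_{V_a}$ of the open bisection
\[
    V_a \;=\; \{(\tilde{x},\,1,\,\sigma_{\tilde{\X}}(\tilde{x})) : \tilde{x} \in U_a\},
\]
so that $s_a^*$ is the indicator of $V_a^{-1} = \{(\sigma_{\tilde{\X}}(\tilde{x}),\,-1,\,\tilde{x}) : \tilde{x}\in U_a\}$. The family $\{U_a\}_{a\in\A}$ is a clopen partition of $\tilde{\X}$, and $\sigma_{\tilde{\X}}$ is injective on each $U_a$; both facts will be used below.

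First I would compute $s_b \star f \star s_a^*$ for $f \in C_c(\G_\X)$. Because $V_b$ and $V_a^{-1}$ are bisections, any factorization $(\tilde{x},p,\tilde{y}) = \gamma_1 \gamma_2 \gamma_3$ with $\gamma_1 \in V_b$ and $\gamma_3 \in V_a^{-1}$ is uniquely determined: necessarily $\gamma_1 = (\tilde{x},1,\sigma_{\tilde{\X}}(\tilde{x}))$ with $\tilde{x} \in U_b$, $\gamma_3 = (\sigma_{\tilde{\X}}(\tilde{y}),-1,\tilde{y})$ with $\tilde{y} \in U_a$, and then $\gamma_2 = (\sigma_{\tilde{\X}}(\tilde{x}),\,p,\,\sigma_{\tilde{\X}}(\tilde{y}))$. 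Evaluating the convolution gives
\[
    (s_b \star f \star s_a^*)(\tilde{x},p,\tilde{y}) \;=\; 1_{U_b}(\tilde{x})\, f\bigl(\sigma_{\tilde{\X}}(\tilde{x}),\,p,\,\sigma_{\tilde{\X}}(\tilde{y})\bigr)\, 1_{U_a}(\tilde{y}).
\]
Summing over $a,b \in \A$ and using $\sum_{a\in\A} 1_{U_a} = 1_{\tilde{\X}}$ yields $\tau_\X(f)(\tilde{x},p,\tilde{y}) = f(\epsilon_\X(\tilde{x},p,\tilde{y}))$, which is precisely $\epsilon_\X^*(f)$. Since $\tau_\X$ is already defined (and continuous) on all of $\OO_\X$, this identifies $\tau_\X$ as the unique continuous extension of $\epsilon_\X^*$ from $C_c(\G_\X)$.

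For the final identity, I would specialize the formula to $f \in \D_\X$. Such an $f$ is supported on the unit space, so $\tau_\X(f)(\tilde{x},p,\tilde{y})$ vanishes unless $p=0$ and $\sigma_{\tilde{\X}}(\tilde{x}) = \sigma_{\tilde{\X}}(\tilde{y})$; in that case it equals $f(\sigma_{\tilde{\X}}(\tilde{x}))$. Applying $p_\X$, i.e.\ restricting to the diagonal $\tilde{x}=\tilde{y}$, gives $(p_\X\circ \tau_\X)(f)(\tilde{x}) = f(\sigma_{\tilde{\X}}(\tilde{x}))$. On the other hand, the same convolution computation applied to $\phi_\X(f) = \sum_a s_a f s_a^*$ shows that each summand is supported on $\{(\tilde{x},0,\tilde{y}) : \tilde{x},\tilde{y}\in U_a,\ \sigma_{\tilde{\X}}(\tilde{x})=\sigma_{\tilde{\X}}(\tilde{y})\}$; since $\sigma_{\tilde{\X}}$ is injective on $U_a$, this forces $\tilde{x}=\tilde{y}$, so $\phi_\X(f) \in \D_\X$ with $\phi_\X(f)(\tilde{x}) = f(\sigma_{\tilde{\X}}(\tilde{x}))$, matching the previous expression.

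The only real subtlety is the bisection bookkeeping in the triple convolution; once one records that $V_b$ and $V_a^{-1}$ are bisections and that $\{U_a\}$ is a partition, the rest is an essentially routine computation. I do not anticipate any serious obstacle.
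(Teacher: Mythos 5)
Your computation is correct and is exactly the ``straightforward computation'' the paper alludes to (it gives no proof, deferring to the analogous Lemma 3.1 of \cite{BC}): identify $s_a$ with the indicator of the bisection over $U_a$, evaluate the triple convolution using that these are bisections and that $\{U_a\}_{a\in\A}$ partitions $\tilde{\X}$ with $\sigma_{\tilde{\X}}$ injective on each $U_a$, and then restrict to $\D_\X$. No gaps.
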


For the next lemma, recall that $\F_\X$ is the AF core inside $\OO_\X$.
A similar result was presented in~\cite[Lemma 3.2]{BC} but we include a proof for the sake of completeness.

\begin{lemma}\label{lem:AFcore-preserving}
    Let $\X$ and $\Y$ be one-sided shift spaces.
    If $\Phi\colon \F_\X\LRA \F_\Y$ is a $^*$-isomorphism satisfying $\Phi(\D_\X) = \D_\Y$,
    then $\Phi\circ p_\X(f) = p_\Y\circ \Phi(f)$ on $\F_\X$.
    If, in addition, $\Phi\circ \tau_\X|_{\F_\X} = \tau_\Y\circ \Phi$, then $\Phi\circ \phi_\X = \phi_\Y\circ \Phi$ on $\D_\X$.
\end{lemma}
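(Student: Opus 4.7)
The plan is to reduce the first statement to the uniqueness of the faithful conditional expectation from $\F_\X$ onto the diagonal MASA $\D_\X$, and then deduce the second statement from the identity $p_\X\circ \tau_\X|_{\D_\X} = \phi_\X|_{\D_\X}$ established in the preceding lemma.

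For the first assertion, I would first observe that $\F_\X$ is canonically identified with $\mathrm{C}^*(c_\X^{-1}(0))$, where the clopen subgroupoid $c_\X^{-1}(0)$ is \'etale and (as noted after the definition of $c_\X$) always principal. Consequently $\D_\X = C(\tilde{\X})$ is a Cartan subalgebra of $\F_\X$, and the restriction of $p_\X$ to $\F_\X$ is precisely the associated Cartan conditional expectation, namely restriction of functions on $c_\X^{-1}(0)$ to the unit space $\tilde{\X}$. Since $\F_\X$ is AF with a Cartan diagonal, the faithful conditional expectation $\F_\X\LRA \D_\X$ is unique. Given a $^*$-isomorphism $\Phi\colon \F_\X\LRA \F_\Y$ with $\Phi(\D_\X) = \D_\Y$, both $p_\X$ and $\Phi^{-1}\circ p_\Y\circ \Phi$ are faithful conditional expectations $\F_\X\LRA \D_\X$, so by uniqueness they agree; this rearranges to $\Phi\circ p_\X = p_\Y\circ \Phi$ on $\F_\X$.

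For the second assertion, fix $f\in \D_\X$. Each term $s_b f s_a^*$ in $\tau_\X(f) = \sum_{a,b\in \A} s_b f s_a^*$ has gauge degree zero, so $\tau_\X(f)\in \F_\X$, and analogously $\tau_\Y(\Phi(f))\in \F_\Y$. Invoking the identity $p_\X\circ \tau_\X|_{\D_\X} = \phi_\X|_{\D_\X}$, the hypothesis $\Phi\circ \tau_\X|_{\F_\X} = \tau_\Y\circ \Phi$, and the first part of the lemma, I would compute
\[
    \Phi\circ \phi_\X(f) = \Phi\circ p_\X\circ \tau_\X(f) = p_\Y\circ \Phi\circ \tau_\X(f) = p_\Y\circ \tau_\Y\circ \Phi(f) = \phi_\Y\circ \Phi(f),
\]
which gives the desired equality on $\D_\X$.

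The delicate step is the uniqueness of the conditional expectation used in the first paragraph; this is the standard fact that a Cartan pair admits a unique faithful conditional expectation, and one can alternatively argue in the AF setting by approximating by finite-dimensional subalgebras of $\F_\X$ inside which the expectation onto the finite-dimensional diagonal is manifestly unique.
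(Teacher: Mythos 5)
Your proof is correct, but the first half takes a genuinely different route from the paper's. The paper proves $\Phi\circ p_\X = p_\Y\circ\Phi$ by invoking the reconstruction machinery for principal \'etale groupoids (Renault's Proposition 4.13 together with Matui's Proposition 5.7): since $c_\X^{-1}(0)$ is principal, the diagonal-preserving $^*$-isomorphism $\Phi$ is implemented explicitly as $\Phi(f)(\gamma)=\xi(\kappa(\gamma))f(\kappa(\gamma))$ for a groupoid isomorphism $\kappa\colon c_\Y^{-1}(0)\LRA c_\X^{-1}(0)$ and a $\T$-valued cocycle $\xi$, after which the intertwining of the expectations is a one-line evaluation at units. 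You instead observe that $\D_\X=C(\tilde{\X})$ is a Cartan subalgebra of $\F_\X=\mathrm{C^*}(c_\X^{-1}(0))$, that $p_\X|_{\F_\X}$ is its Cartan expectation, and that $\Phi^{-1}\circ p_\Y\circ\Phi$ is another faithful conditional expectation onto $\D_\X$, so uniqueness of the Cartan expectation forces the two to coincide. This is a weaker consequence of the same Kumjian--Renault circle of ideas and is arguably more economical here, since it avoids quoting the full Weyl-groupoid reconstruction; what the paper's explicit $(\kappa,\xi)$ description buys is a concrete formula for $\Phi$ that could be reused elsewhere. Two small points to tighten: you should cite the uniqueness of the faithful conditional expectation onto a Cartan subalgebra explicitly (it follows, e.g., from the unique state extension property at points of trivial isotropy, which is all you need since $c_\X^{-1}(0)$ is principal), and your fallback via finite-dimensional approximation is slightly delicate because the natural exhausting subalgebras $\mathrm{C^*}(\mathcal{R}_n)$ for compact open subrelations $\mathcal{R}_n$ are not finite-dimensional --- though the argument still goes through by evaluating any expectation on indicator functions of bisections $V$ with $r(V)\cap s(V)=\emptyset$. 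The second half of your argument, reducing $\Phi\circ\phi_\X=\phi_\Y\circ\Phi$ on $\D_\X$ to the identity $p_\X\circ\tau_\X|_{\D_\X}=\phi_\X|_{\D_\X}$ from the preceding lemma, is exactly the paper's computation.
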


\begin{proof}
  Recall that $\F_\X = \mathrm{C^*}(c_\X^{-1}(0))$ and the subgroupoid $c_\X^{-1}(0) \subset \G_\X$ is principal.
  By~\cite[Proposition 4.13]{Renault08} (see also \cite[Theorem 3.3]{CRST}) and \cite[Proposition 5.7]{Ma2012a} and its proof, 
  there is a groupoid isomorphism $\kappa\colon c_{\Y}^{-1}(0)\LRA c_{\X}^{-1}(0)$ and a groupoid homomorphism $\xi\colon c_{\X}^{-1}(0)\LRA \T$ such that
    \[
        \Phi(f)(\gamma) = \xi(\kappa(\gamma)) f(\kappa(\gamma)),
    \]
    for $f\in \F_{\X}$ and $\gamma\in c_{\Y}^{-1}(0)$. 
    In particular, $\Phi(f)(\tilde{y}) = f(\kappa(\tilde{y}))$ for $\tilde{y}\in \tilde{\Y}$
    since $\xi$ maps any unit in $c_\X^{-1}(0)$ to $1\in \T$.
    Then 
    \[
      \Phi(p_\X(f))(\tilde{y}) = p_\X(f)(\kappa(\tilde{y})) = f(\kappa(\tilde{y})),
    \]
    so that $\Psi\circ d_\X = d_\Y\circ \Psi$ on $\F_\X$. 
    If, in addition, $\Psi\circ \tau_{\X}|_{\F_{\X}} = \tau_{\Y}\circ \Psi$, then
    \[
        \Phi(\phi_{\X}(f)) = \Phi(d_\X(\tau_{\X}(f))) = d_\Y(\tau_\Y(\Phi(f))) = \phi_{\Y}(\Phi(f)),
    \]
    for $f\in \D_\X$ by the above lemma.
\end{proof}

We now characterize one-sided conjugacy of general one-sided shift spaces.

\begin{theorem}\label{thm:one-sided-conjugacy}
    Let $\X$ and $\Y$ be one-sided shift spaces and let $h\colon \X\LRA \Y$ be a homeomorphism.
    The following are equivalent:
    \begin{enumerate}
        \item[(i)] the map $h\colon \X\LRA \Y$ is a one-sided conjugacy;
        \item[(ii)] there is a conjugacy $\tilde{h}\colon \tilde{\X}\LRA \tilde{\Y}$ satisfying $h\circ \pi_\X = \pi_\Y\circ \tilde{h}$;
        \item[(iii)] there is a groupoid isomorphism $\Psi\colon \G_\X\LRA \G_\Y$ satisfying $h\circ \pi_\X = \pi_\Y\circ \Psi^{(0)}$,
            $c_\X = c_\Y\circ \Psi$ and 
	    \begin{equation}\label{eq:epsilon-flet}
	      \Psi\circ \epsilon_\X = \epsilon_\Y\circ \Psi;
	    \end{equation}
        \item[(iv)] there is a groupoid isomorphism $\Psi\colon \G_\X\LRA \G_\Y$ satisfying $h\circ \pi_\X = \pi_\Y\circ \Psi^{(0)}$ and
	  \[
	    \Psi\circ \epsilon_\X = \epsilon_\Y\circ \Psi;
	  \]
        \item[(v)] there is a $^*$-isomorphism $\Phi\colon \OO_\X\LRA \OO_\Y$ satisfying $\Phi(C(\X)) = C(\Y)$ with $\Phi(g) = g\circ h^{-1}$ for $g\in C(\X)$,
            $\Phi\circ p_\X = p_\Y\circ \Phi$, 
            $\Phi\circ \gamma^\X_z = \gamma^\Y_z\circ \Phi$ for $z\in \T$,
            $\Phi\circ \phi_\X|_{\D_\X} = \phi_\Y\circ \Phi|_{\D_\X}$, and
            \begin{align}
                \Phi\circ \tau_\X = \tau_\Y\circ \Phi;
                \label{eq:tau-flet}
            \end{align}
        \item[(vi)] there is a $^*$-isomorphism $\Phi\colon \OO_\X\LRA \OO_\Y$ satisfying  
            $\Phi(C(\X)) = C(\Y)$ with $\Phi(g) = g\circ h^{-1}$ for $g\in C(\X)$, and
	    \[
	      \Phi\circ \tau_\X = \tau_\Y\circ \Phi; 
	    \]
        \item[(vii)] there is a $^*$-isomorphism $\Omega\colon \D_\X \LRA \D_\Y$ satisfying $\Omega(C(\X)) = C(\Y)$, $\Omega(g) = g\circ h^{-1}$ for $g\in C(\X)$
            and $\Omega\circ \phi_{\X}|_{\D_\X} = \phi_{\Y}\circ \Omega$.
    \end{enumerate}
\end{theorem}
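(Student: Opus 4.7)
The natural strategy is to run a cycle of implications that passes through each of the seven statements, using Lemma~\ref{lem:canonical_cover} to handle the equivalence $(\mathrm{i}) \iff (\mathrm{ii})$ at the outset. Concretely, I plan to prove
\[
(\mathrm{i}) \iff (\mathrm{ii}) \implies (\mathrm{iii}) \implies (\mathrm{v}) \implies (\mathrm{vi}) \implies (\mathrm{iv}) \implies (\mathrm{ii}),
\]
together with the side detour $(\mathrm{v}) \implies (\mathrm{vii}) \implies (\mathrm{i})$; here $(\mathrm{iii}) \implies (\mathrm{iv})$ and $(\mathrm{v}) \implies (\mathrm{vi})$ are immediate by dropping conditions, so the cycle closes all seven statements.

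\textbf{Groupoid level.} Given $(\mathrm{ii})$, I would set $\Psi(\tilde{x}, p, \tilde{y}) := (\tilde{h}(\tilde{x}), p, \tilde{h}(\tilde{y}))$. Because $\tilde{h}$ is a conjugacy of Deaconu--Renault systems, $\Psi$ is a well-defined \'etale groupoid isomorphism with $c_\X = c_\Y \circ \Psi$, and the intertwining $\tilde{h}\circ \sigma_{\tilde{\X}} = \sigma_{\tilde{\Y}}\circ \tilde{h}$ translates verbatim into $\Psi\circ \epsilon_\X = \epsilon_\Y\circ \Psi$; the compatibility with $\pi_\X$ and $\pi_\Y$ is inherited from $(\mathrm{ii})$. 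Conversely, given $(\mathrm{iv})$, restricting $\Psi$ to the unit space yields a homeomorphism $\tilde{h} := \Psi^{(0)}\colon \tilde{\X}\LRA \tilde{\Y}$, and the $\epsilon$-intertwining evaluated on units is precisely $\tilde{h}\circ \sigma_{\tilde{\X}} = \sigma_{\tilde{\Y}}\circ \tilde{h}$, recovering $(\mathrm{ii})$.

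\textbf{Passing to the C*-algebra.} For $(\mathrm{iii}) \implies (\mathrm{v})$, amenability of $\G_\X$ and $\G_\Y$ turns $\Psi$ into a $^*$-isomorphism $\Phi\colon \OO_\X\LRA \OO_\Y$ sending $f\mapsto f\circ \Psi^{-1}$. The identifications $\D_\X = C(\tilde{\X})$, $C(\X) \subset \D_\X$ via $\pi_\X^*$, and the hypothesis $h\circ \pi_\X = \pi_\Y\circ \Psi^{(0)}$ give $\Phi(C(\X)) = C(\Y)$ with $\Phi(g) = g\circ h^{-1}$; the cocycle preservation $c_\X = c_\Y\circ \Psi$ gives gauge-equivariance; the fact that $\Psi$ restricts to a homeomorphism of unit spaces yields $\Phi\circ p_\X = p_\Y\circ \Phi$; and the $\epsilon$-intertwining, combined with the identity $\tau_\X(f) = f\circ \epsilon_\X$, gives $\Phi\circ \tau_\X = \tau_\Y\circ \Phi$. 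Lemma~\ref{lem:AFcore-preserving} then produces the $\phi$-intertwining on $\D_\X$.

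\textbf{The reconstruction step $(\mathrm{vi}) \implies (\mathrm{iv})$.} This is where the real content lies. Starting from $\Phi$ in $(\mathrm{vi})$, Theorem~\ref{thm:diagonal-preserving} upgrades $\Phi(C(\X)) = C(\Y)$ to $\Phi(\D_\X) = \D_\Y$. Since $\G_\X$ and $\G_\Y$ are second-countable, locally compact Hausdorff, \'etale and amenable, the reconstruction theorem of~\cite{CRST} (which, crucially, does not require effectiveness) produces a groupoid isomorphism $\Psi\colon \G_\X\LRA \G_\Y$ together with a continuous cocycle $\xi\colon \G_\X\LRA \T$ such that $\Phi(f)(\gamma) = \xi(\Psi^{-1}(\gamma)) f(\Psi^{-1}(\gamma))$. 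The compatibility on $C(\X)$ with $g\mapsto g\circ h^{-1}$ forces $h\circ \pi_\X = \pi_\Y\circ \Psi^{(0)}$. Finally, translating \eqref{eq:tau-flet} back through $\tau_\X(f) = f\circ \epsilon_\X$ and using that $\xi$ is trivial on units yields $\Psi\circ \epsilon_\X = \epsilon_\Y\circ \Psi$, giving $(\mathrm{iv})$.

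\textbf{Placing $(\mathrm{vii})$.} For $(\mathrm{v}) \implies (\mathrm{vii})$, put $\Omega := \Phi|_{\D_\X}$ and note that all relevant conditions are inherited. For $(\mathrm{vii}) \implies (\mathrm{i})$, Gelfand duality turns $\Omega$ into a homeomorphism $\tilde{h}\colon \tilde{\X}\LRA \tilde{\Y}$; the identity $\phi_\X|_{\D_\X}(f) = f\circ \sigma_{\tilde{\X}}$ (which I would check by a short direct computation with the generators $s_a = 1_{U_a}$) makes the $\phi$-intertwining equivalent to $\tilde{h}\circ \sigma_{\tilde{\X}} = \sigma_{\tilde{\Y}}\circ \tilde{h}$, while $\Omega|_{C(\X)}$ being implemented by $h$ forces $\pi_\Y\circ \tilde{h} = h\circ \pi_\X$; this is $(\mathrm{ii})$, hence $(\mathrm{i})$.

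\textbf{Main obstacle.} The delicate step is $(\mathrm{vi}) \implies (\mathrm{iv})$: since the groupoids need not be effective, classical Renault reconstruction is insufficient, so the argument rests on the refined reconstruction result of~\cite{CRST}, and one must verify that the $\tau$-intertwining alone (without any assumed gauge equivariance) exactly encodes the $\epsilon$-intertwining of the reconstructed isomorphism modulo the twisting cocycle. The other step deserving care is the computation identifying $\phi_\X|_{\D_\X}$ with $f\mapsto f\circ \sigma_{\tilde{\X}}$, which is essential for $(\mathrm{vii}) \implies (\mathrm{i})$ to work without passing through groupoids.
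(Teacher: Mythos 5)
Most of your cycle coincides with the paper's: (i) $\iff$ (ii) via Lemma~\ref{lem:canonical_cover}, (ii) $\implies$ (iii) by the obvious formula on arrows, (iii)/(iv) $\implies$ (v)/(vi) via the induced $^*$-isomorphism, (iv) $\implies$ (ii) by evaluating the $\epsilon$-relation on units, and (vii) $\implies$ (i) by Gelfand duality together with $\phi_\X(f)(\tilde x)=f(\sigma_{\tilde\X}(\tilde x))$ for $f\in\D_\X$. The genuine problem is the step you yourself identify as carrying the real content, (vi) $\implies$ (iv), which is where your cycle closes. You invoke a reconstruction statement of the form $\Phi(f)(\gamma)=\xi(\Psi^{-1}(\gamma))\,f(\Psi^{-1}(\gamma))$ for \emph{all} $f\in\OO_\X$, with $\xi$ a globally defined $\T$-valued cocycle on $\G_\X$. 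That pointwise ``twisted composition'' form of a diagonal-preserving $^*$-isomorphism is available only when the groupoid is effective (Renault) or principal; here $\G_\X$ need not be effective, and what \cite{CRST} provides in general is the groupoid isomorphism $\Psi$ together with an identification $\Phi(f)=f\circ(\Psi^{(0)})^{-1}$ on the diagonal and an isomorphism of twists --- not a pointwise formula for $\Phi$ on the whole algebra. Consequently your asserted translation of \eqref{eq:tau-flet} into $\Psi\circ\epsilon_\X=\epsilon_\Y\circ\Psi$ is unjustified, and it is not even clear that the \emph{reconstructed} $\Psi$ (as opposed to some other isomorphism with the same unit-space restriction) satisfies the $\epsilon$-relation on the non-effective part of $\G_\X$.

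The repair is the paper's detour through (vii) instead of (iv). The relation \eqref{eq:tau-flet} forces $\Phi(\F_\X)=\F_\Y$, because $\F_\X$ is generated as a $\mathrm{C^*}$-algebra by $\bigcup_{k}\tau_\X^k(\D_\X)$, and $\F_\X=\mathrm{C^*}(c_\X^{-1}(0))$ sits over a subgroupoid that is \emph{always} principal; on that subgroupoid the twisted-composition form does hold, which is exactly the content of Lemma~\ref{lem:AFcore-preserving}. Combining this with Theorem~\ref{thm:diagonal-preserving} (to get $\Phi(\D_\X)=\D_\Y$) and the identity $p_\X\circ\tau_\X|_{\D_\X}=\phi_\X|_{\D_\X}$ yields $\Phi\circ\phi_\X|_{\D_\X}=\phi_\Y\circ\Phi|_{\D_\X}$, i.e.\ (vii); Gelfand duality then gives (ii), and (iv) follows by building $\Psi$ afresh from the lifted conjugacy rather than by reconstruction. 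Your remaining steps are correct and essentially as in the paper.
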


\begin{proof}
    The equivalence (i)$\iff$(ii) is Lemma~\ref{lem:canonical_cover}.

    (ii) $\implies$ (iii):
    Let $\tilde{h}\colon \tilde{\X}\LRA \tilde{\Y}$ be a conjugacy satisfying $h\circ \pi_\X = \pi_\Y\circ \tilde{h}$.
    The map $\Phi\colon \G_{\X}\LRA \G_{\Y}$ given by
    \[
        \Phi(\tilde{x}, p, \tilde{y}) = (\tilde{h}(\tilde{x}), p, \tilde{h}(\tilde{y})),
    \]
    for $(\tilde{x}, p, \tilde{y})\in \tilde{\X}$, is a groupoid isomorphism. 
    Under the identification of $\Phi^{(0)}$ and $\tilde{h}$, we then have $\pi_\Y\circ \Psi^{(0)} = h\circ \pi_\X$, 
    $c_\X = c_\Y\circ \Psi$ and $\Psi\circ \epsilon_\X = \epsilon_\Y\circ \Psi$.

    The implications (iii) $\implies$ (iv) and (v) $\implies$ (vi) are clear.

    (iv) $\implies$ (vi) and (iii) $\implies$ (v):
    Let $\Psi\colon \G_\X\LRA \G_\Y$ be a groupoid isomorphism as in (iv).
    This induces a $^*$-isomorphism $\Phi\colon \OO_\X\LRA \OO_\Y$ satisfying $\Phi\circ p_\X = p_\Y\circ \Phi$
    and $\Phi(C(\X)) = C(\Y)$ with $\Phi(g) = g\circ h^{-1}$ for $g\in C(\X)$.
    The relation~\eqref{eq:epsilon-flet} ensures that $\Phi\circ \tau_\X = \tau_\Y\circ \Phi$.
    This is (vi).
    If, in addition, $c_\X = c_\Y\circ \Phi$, then $\Phi\circ \gamma^\X_z = \gamma^\Y_z\circ \Phi$ for $z\in \T$.
    In particular, $\Psi(\F_{\X}) = \F_{\Y}$ and Lemma~\ref{lem:AFcore-preserving} implies that $\Phi\circ \phi_\X|_{\D_\X} = \phi_\Y\circ \Psi|_{\D_\X}$.
    This is (v).

    (vi) $\implies$ (vii):
    As $\Phi$ satisfies~\eqref{eq:tau-flet} and $\F_\X$ is generated as a $\mathrm{C^*}$-algebra by $\bigcup_{k = 0}^\infty \tau_\X^k(\D_\X)$,
    we also have $\Phi(\F_\X) = \F_\Y$.
    Furthermore, $\Phi(\D_\X) = \D_\Y$ by Theorem~\ref{thm:diagonal-preserving}.
    It therefore follows from Lemma~\ref{lem:AFcore-preserving} that $\Phi\circ \phi_\X|_{\D_\X} = \phi_\Y\circ \Phi|_{\D_\X}$.

    (vii) $\implies$ (ii):
    Let $\tilde{h}\colon \tilde{\X}\LRA \tilde{\Y}$ be the homeomorphism induced by $\Omega$ via Gelfand duality.
    The relation $\Omega\circ \phi_\X|_{C(\X)} = \phi_\Y\circ \Omega$ 
    and the fact that $\phi_\X(f)(\tilde{x}) = f(\sigma_{\tilde{\X}}(\tilde{x}))$ for $f\in \D_\X$ and $\tilde{x}\in \tilde{\X}$ 
    ensures that $\tilde{h}$ is a conjugacy.
    The condition $\Omega(C(\X)) = C(\Y)$ entails that $h\circ \pi_\X = \pi_\Y\circ \tilde{h}$.
\end{proof}

\begin{corollary}
    Let $\X$ and $\Y$ be one-sided shift spaces.
    The following are equivalent:
    \begin{enumerate}
        \item[(i)] the systems $\X$ and $\Y$ are one-sided conjugate;
        \item[(ii)] there are a groupoid isomorphism $\Psi\colon \G_\X\LRA \G_\Y$ and a homeomorphism $h\colon \X\LRA \Y$ 
            satisfying $h\circ \pi_\X = \pi_\Y\circ \Psi^{(0)}$ and $\Psi\circ \epsilon_\X = \epsilon_\Y\circ \Psi$;
        \item[(iii)] there is a $^*$-isomorphism $\Phi\colon \OO_\X\LRA \OO_\Y$ satisfying  
            $\Phi(C(\X)) = C(\Y)$ and $\Phi\circ \tau_\X = \tau_\Y\circ \Phi$.
    \end{enumerate}
\end{corollary}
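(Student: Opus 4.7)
The strategy is to reduce the corollary directly to Theorem~\ref{thm:one-sided-conjugacy} by supplying or extracting an appropriate homeomorphism $h\colon \X\LRA \Y$ in each direction.

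First I would prove (i)$\implies$(ii) and (i)$\implies$(iii). If $\X$ and $\Y$ are one-sided conjugate via some homeomorphism $h\colon \X\LRA \Y$, then condition~(i) of Theorem~\ref{thm:one-sided-conjugacy} holds for this $h$, so the theorem (via (i)$\implies$(iv) and (i)$\implies$(vi)) produces the groupoid isomorphism required by (ii) and the $^*$-isomorphism required by (iii).

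Next, (ii)$\implies$(i) is immediate: the pair $(\Psi, h)$ in (ii) is exactly the data of condition~(iv) of Theorem~\ref{thm:one-sided-conjugacy}, so the theorem yields that $h$ is a one-sided conjugacy, and in particular $\X$ and $\Y$ are one-sided conjugate.

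The step that requires slightly more care is (iii)$\implies$(i), because the corollary's statement does not mention an explicit homeomorphism $h$. Given a $^*$-isomorphism $\Phi\colon \OO_\X\LRA \OO_\Y$ with $\Phi(C(\X)) = C(\Y)$, the restriction $\Phi|_{C(\X)}\colon C(\X)\LRA C(\Y)$ is a $^*$-isomorphism of commutative $\mathrm{C^*}$-algebras, so by Gelfand duality there is a unique homeomorphism $h\colon \X\LRA \Y$ such that $\Phi(g) = g\circ h^{-1}$ for every $g\in C(\X)$. With this $h$, the hypotheses of condition~(vi) of Theorem~\ref{thm:one-sided-conjugacy} are satisfied, and the theorem gives that $h$ is a one-sided conjugacy, proving~(i). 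The only potential obstacle is checking that this Gelfand-dual $h$ is indeed a well-defined homeomorphism between $\X$ and $\Y$, but this is standard since $\X$ and $\Y$ are compact Hausdorff and $\Phi|_{C(\X)}$ is a unital $^*$-isomorphism onto $C(\Y)$.
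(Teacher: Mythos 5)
Your proposal is correct and follows the same route the paper intends: the corollary is stated without proof precisely because it is the direct quantifier-free consequence of Theorem~\ref{thm:one-sided-conjugacy}, obtained by feeding a given conjugacy into conditions (iv) and (vi), and conversely by reading off $h$ from the data in (ii) or extracting it from $\Phi|_{C(\X)}$ via Gelfand duality in (iii). Your attention to the Gelfand-duality step in (iii)$\implies$(i) is exactly the right point to check, and it goes through since $C(\X)$ embeds in $\OO_\X$ with spectrum $\X$.
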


If $\X_A$ and $\X_B$ are one-sided shifts of finite type determined by finite square $\{0,1\}$-matrices $A$ and $B$ with no zero rows and no zero columns, 
respectively, then we recover~\cite[Theorem 3.3]{BC}.

\section{One-sided eventual conjugacy}\label{sec:eventual-conjugacy}
Matsumoto has studied one-sided eventual conjugacy of shifts of finite type~\cite{Mat2017-circle}.
A pair of shifts of finite type $\X$ and $\Y$ are eventually conjugate if and only if
the groupoids $\G_\X$ and $\G_\Y$ are isomorphic in a way which preserves the canonical cocycle,
if and only if the $\mathrm{C^*}$-algebras $\OO_\X$ and $\OO_\Y$ are $^*$-isomorphic in a way which preserves the canonical gauge actions, cf.~\cite[Theorem 4.1 and Corollary 4.2]{Carlsen-Rout}.
We characterize eventual conjugacy for general shift spaces in terms of groupoids and $\mathrm{C^*}$-algebras (Theorem~\ref{thm:eventual-conjugacy}).
We start by lifting an eventual conjugacy on the shift spaces to an eventual conjugacy on the covers.

\begin{definition}
    Two one-sided shift spaces $\X$ and $\Y$ are \emph{eventually conjugate}
    if there exist a homeomorphism $h\colon \X\LRA \Y$ and an integer $\ell \in \N$ such that
   \begin{align}
       \sigma_\Y^\ell (h(\sigma_\X(x))) &= \sigma_\Y^{\ell + 1} h(x),               \label{eq:eventual-conjugacy1} \\
       \sigma_\X^\ell (h^{-1}(\sigma_\Y(y))) &= \sigma_\X^{\ell + 1} h^{-1}(y),     \label{eq:eventual-conjugacy2}
   \end{align}
   for $x\in \X$ and $y\in \Y$.
   An eventual conjugacy $h$ is a conjugacy if and only if we can choose $\ell = 0$.
\end{definition}

A similar definition applies to the covers.

\begin{lemma}[Lifting lemma]\label{lem:lift-eventual-conjugacy}
    Let $\X$ and $\Y$ be one-sided shift spaces and let $h\colon \X\LRA \Y$ be a homeomorphism.
    The following are equivalent:
    \begin{enumerate}
        \item[(i)] the map $h\colon \X\LRA \Y$ is an eventual conjugacy;
        \item[(ii)] there is an eventual conjugacy $\tilde{h}\colon \tilde{\X}\LRA \tilde{\Y}$ satisfying $h\circ \pi_\X = \pi_\Y\circ \tilde{h}$.
    \end{enumerate}
\end{lemma}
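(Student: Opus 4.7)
The strategy is to mimic the proof of Lemma~\ref{lem:canonical_cover}, with the additional book-keeping required to accommodate the delay parameter $\ell$. The easy direction is (ii) $\implies$ (i), and the substantive work lies in constructing the lift $\tilde h$ from a given eventual conjugacy $h$.

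For (ii) $\implies$ (i): given a lift $\tilde h\colon \tilde\X \LRA \tilde\Y$ with some delay $\tilde\ell$ and any $x \in \X$, choose $\tilde x \in \pi_\X^{-1}(x)$ (which is nonempty since $\pi_\X$ is surjective) and compute
\[
    \sigma_\Y^{\tilde\ell}(h(\sigma_\X x)) = \sigma_\Y^{\tilde\ell}(\pi_\Y(\tilde h(\sigma_{\tilde\X}\tilde x))) = \pi_\Y(\sigma_{\tilde\Y}^{\tilde\ell+1}\tilde h(\tilde x)) = \sigma_\Y^{\tilde\ell+1}h(x),
\]
using $h\circ\pi_\X = \pi_\Y\circ\tilde h$, the eventual conjugacy of $\tilde h$, and $\pi_\Y\circ\sigma_{\tilde\Y} = \sigma_\Y\circ\pi_\Y$. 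The analogous identity for $h^{-1}$ follows by the symmetric argument applied to $\tilde h^{-1}$.

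For (i) $\implies$ (ii), the uniform continuity of $h$ and $h^{-1}$ on the compact spaces $\X$ and $\Y$ yields an integer $N \geq \ell$ such that, in both directions, agreement on the first $N + r$ coordinates forces agreement on the first $r$ coordinates of the image. The crucial technical step I would establish is that $h$ preserves the $\widesim{k,l}$-equivalences up to a shift of indices by constants depending on $N$ and $\ell$: concretely, one shows that for integers $0 \leq r \leq s$ and all $x, x' \in \X$, if $x$ and $x'$ are equivalent at level $(r, s)$ shifted up by the appropriate $N$-dependent amount, then $h(x)$ and $h(x')$ are equivalent at level $(r, s)$. The prefix part of $\sim$ is immediate from the continuity constant; the past-set part is the substantive calculation. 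There, given a past word $\nu$ of $\sigma_\Y^{r} h(x)$ of length at most $s$, I would pull $\nu$ back through $h^{-1}$ using the iterated identity $\sigma_\Y^\ell h^{-1}\sigma_\Y^m = \sigma_\Y^{\ell+m}h^{-1}$ to produce a past word $\mu$ of an appropriate shift of $x$, invoke the hypothesis on $x \sim x'$ to transfer $\mu$ to a past word of the same shift of $x'$, and then push forward through $h$ (using the forward relation $\sigma_\Y^\ell h\sigma_\X^m = \sigma_\Y^{\ell+m}h$) to exhibit $\nu$ as a past word of $\sigma_\Y^r h(x')$. Granted the claim, one defines $\tilde h$ on equivalence classes by the obvious formula, checks it is a well-defined homeomorphism whose inverse is built the same way from $h^{-1}$, verifies the intertwining $\pi_\Y\circ\tilde h = h\circ\pi_\X$ by construction, and finally derives the eventual conjugacy of $\tilde h$ from the claim applied to the pair $\sigma_\X x$ and $x$ combined with the eventual conjugacy relation for $h$.

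The main obstacle is the past-set book-keeping in the key claim. Unlike the conjugacy setting, each pass through $h$ or $h^{-1}$ introduces an additional shift of order $\ell$, so naive translation of past words between $\X$ and $\Y$ yields agreement only far inside the points in question. One must choose $N$ large enough to absorb both the modulus of uniform continuity and the delay $\ell$ \emph{simultaneously on both sides}, and verify that the resulting index shift on the covers produces a genuine eventual conjugacy $\sigma_{\tilde\Y}^{\tilde\ell}\tilde h\sigma_{\tilde\X} = \sigma_{\tilde\Y}^{\tilde\ell+1}\tilde h$ rather than merely an identity involving a doubled shift. Executing this tracking precisely is where the bulk of the work lies.
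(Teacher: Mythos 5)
Your proposal follows essentially the same route as the paper: the easy direction is the identical computation through $\pi_\Y$, and the hard direction is carried out in the paper exactly as you outline it, by choosing a continuity constant absorbing both the modulus of continuity and the delay (the paper uses $K = C + 2\ell + s$), pulling a past word $\nu$ of $\sigma_\Y^{|\nu|}(h(x))$ back through $h^{-1}$ to a word $\beta_\nu$ of length $\ell + |\nu|$, transferring it via the $(K+r,K+s)$-equivalence hypothesis, and pushing forward through $h$ again. Your sketch correctly identifies the key claim and the index-shift bookkeeping as the substantive content, though it leaves the explicit execution (the precise constants and the verification that $h(\beta_\nu x') = \nu y'$) unperformed.
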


\begin{proof}
    (i) $\implies$ (ii):
    Let $h\colon \X\LRA \Y$ be an eventual conjugacy and choose $\ell\in \N$ according to~\eqref{eq:eventual-conjugacy1} and~\eqref{eq:eventual-conjugacy2}.
    Then there is a continuity constant $C\in \N$ with the property that
    \[
        x_{[0, C + r)} = x'_{[0, C + r)} \implies h(x)_{[0, \ell + r)} = h(x')_{[0, \ell + r)},
    \]
    for $x, x'\in \X$ and $r\in \N$.
    Fix integers $0\leq r\leq s$ and put $K = C + 2\ell + s$. 
    We will show that
    \[
        \alpha x \widesim{K + r, K + s} \alpha x' \implies h(\alpha x) \widesim{r, s} h(\alpha x'),
    \]
    where $|\alpha| = \ell + r$.
    Since $K \geq C$, we can write $h(\alpha x) = \mu y$ and $h(\alpha x') = \mu y'$ for some $y, y'\in \Y$ and $\mu\in \LL(\Y)$ with $|\mu| = r$.
    In particular, $y_{[0, 2\ell)} = y'_{[0, 2\ell)}$.
    Assume now that $\nu y\in \Y$ for some $\nu\in \LL(\Y)$ with $|\nu| \leq s$.
    We need to show that $\nu y'\in \Y$.

    First observe that $h^{-1}(\nu y) = \beta_\nu x$ for some word $\beta_\nu\in \LL(\X)$ with $|\beta_\nu| = \ell + |\nu|$.
    This follows from the computation 
    \[
        x = \sigma_\X^{\ell + r}(\alpha x) =  \sigma_\X^{\ell + r}(h^{-1}(\mu y)) = \sigma_\X^\ell(h^{-1}(y)) = \sigma_\X^{\ell + |\nu|}(h^{-1}(\nu y)).
    \]
    By hypothesis, $\beta_\nu x'\in \X$ and we claim that $h(\beta_\nu x') = \nu y'$.

    In order to verify the claim first observe that
    \[
        \sigma_\Y^{2\ell + |\nu|} (h(\beta_\nu x')) = \sigma_\Y^\ell (h(x')) = \sigma_\Y^{2\ell + r} (h(\alpha x')) = \sigma_\Y^{2\ell}(y'),
    \]
    and since ${(\beta_\nu x)}_{[0, C + 2\ell + |\nu|)} = {(\beta_\nu x')}_{[0, C + 2\ell + |\nu|)}$ we have
    \[
        {h(\beta_\nu x)}_{[0, 2\ell + |\nu|)} = {h(\beta_\nu x')}_{[0, 2\ell + |\nu|)}  = {(\nu y)}_{[0, 2\ell + |\nu|)} = {(\nu y')}_{[0, 2\ell + |\nu|)}.
    \]
    Hence $h(\beta_\nu x') = \nu y'$.
    This shows that there is a well-defined map $\tilde{h}\colon \tilde{\X}\LRA \tilde{\Y}$ given by
    \[
        \tilde{h}\colon {}_{K + r}[x]_{K + s} \LMT {}_r [h(x)]_s,
    \]
    for all integers $0\leq r\leq s$.
    It is straightforward to check that $h\circ \pi_\X = \pi_\Y\circ \tilde{h}$ and that 
    $\tilde{h}$ is an eventual conjugacy using the fact that $h$ is an eventual conjugacy.

    (ii) $\implies$ (i): 
    Given $x\in \X$ and any $\tilde{x}\in \pi_\X^{-1}(x) \subset \tilde{\X}$, we have
    \[
        \sigma_\Y^\ell( h( \sigma_\X( x))) =
        \pi_\Y( \sigma_{\tilde{\Y}}^\ell ( \tilde{h}( \sigma_{\tilde{\X}}( \tilde{x})))) =
        \pi_\Y( \sigma_{\tilde{\Y}}^{\ell + 1}( \tilde{h}( \tilde{x}))) =
        \sigma_\Y^{\ell + 1}( h( x)),
    \]
    showing that $h$ is an $L$-conjugacy.
\end{proof}

We can now characterize one-sided eventual conjugacy of general one-sided shifts spaces, cf.~\cite[Theorem 1.4]{Mat2019}.
The proof uses ideas of~\cite{Carlsen-Rout}.

\begin{theorem}\label{thm:eventual-conjugacy}
    Let $\X$ and $\Y$ be one-sided shift spaces and let $h\colon \X\LRA \Y$ be a homeomorphism. 
    The following are equivalent:
    \begin{enumerate}
        \item[(i)] the map $h\colon \X\LRA \Y$ is a one-sided eventual conjugacy;
        \item[(ii)] there is an eventual conjugacy $\tilde{h}\colon \tilde{\X}\LRA \tilde{\Y}$ such that $h\circ \pi_\X = \pi_\Y\circ \tilde{h}$;
        \item[(iii)] there is a groupoid isomorphism $\Psi\colon \G_\X\LRA \G_\Y$ satisfying $h\circ \pi_\X = \pi_\Y\circ \Psi^{(0)}$
            and 
            \[
                c_\X = c_\Y\circ \Psi;
            \]
        \item[(iv)] there is a $^*$-isomorphism $\Phi\colon \OO_\X\LRA \OO_\Y$ satisfying $\Phi\circ p_\X = p_\Y\circ \Phi$,
            $\Phi(C(\X)) = C(\Y)$ with $\Phi(g) = g\circ h^{-1}$ for $g\in C(\X)$ and
            \begin{align}\label{eq:gauge-invariance}
                 \Phi \circ \gamma_z^\X = \gamma_z^\Y\circ \Phi,
            \end{align}
            for $z\in \T$;
        \item[(v)] there is a $^*$-isomorphism $\Phi\colon \OO_\X\LRA \OO_\Y$ satisfying  
            $\Phi(C(\X)) = C(\Y)$ with $\Phi(g) = g\circ h^{-1}$ for $g\in C(\X)$
            and~\eqref{eq:gauge-invariance}.
    \end{enumerate}
\end{theorem}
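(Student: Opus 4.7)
The plan is to close the implication chain \mbox{(i)$\iff$(ii)$\implies$(iii)$\implies$(iv)$\implies$(v)$\implies$(ii)}. The equivalence (i)$\iff$(ii) is precisely Lemma~\ref{lem:lift-eventual-conjugacy}, so I would concentrate on the remaining four implications. The overall strategy mirrors Section~\ref{sec:one-sided-conjugacy}: use the lifted dynamical data on the covers to build groupoid and $\mathrm{C^*}$-algebra isomorphisms directly, and then invoke Theorem~\ref{thm:diagonal-preserving} together with the reconstruction results of~\cite{CRST} to recover the groupoid from the $\mathrm{C^*}$-level data.

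For (ii)$\implies$(iii), I would set $\Psi(\tilde{x}, p, \tilde{y}) := (\tilde{h}(\tilde{x}), p, \tilde{h}(\tilde{y}))$ and verify well-definedness by iterating the eventual conjugacy identities on $\tilde{\X}$: they give $\sigma_{\tilde{\Y}}^{\ell + i}\circ \tilde{h} = \sigma_{\tilde{\Y}}^{\ell}\circ \tilde{h}\circ \sigma_{\tilde{\X}}^i$ for every $i\in \N$, so whenever $\sigma_{\tilde{\X}}^i(\tilde{x}) = \sigma_{\tilde{\X}}^j(\tilde{y})$ with $p = i - j$ one has $\sigma_{\tilde{\Y}}^{\ell + i}(\tilde{h}(\tilde{x})) = \sigma_{\tilde{\Y}}^{\ell + j}(\tilde{h}(\tilde{y}))$ with $(\ell + i) - (\ell + j) = p$, placing the triple in $\G_\Y$ and preserving the canonical cocycle by construction. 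Continuity and the symmetric argument for $\tilde{h}^{-1}$ promote $\Psi$ to a groupoid isomorphism, while $h\circ \pi_\X = \pi_\Y\circ \Psi^{(0)}$ is inherited from (ii). The implication (iii)$\implies$(iv) is then functoriality: the groupoid isomorphism induces a $^*$-isomorphism $\Phi\colon \OO_\X\LRA \OO_\Y$ whose restriction to $\D_\X = C(\tilde{\X})$ is pullback along ${(\Psi^{(0)})}^{-1}$, so $\Phi$ intertwines the conditional expectations $p_\X$ and $p_\Y$, maps $C(\X)$ onto $C(\Y)$ via $g\LMT g\circ h^{-1}$ (using $h\circ \pi_\X = \pi_\Y\circ \Psi^{(0)}$), and translates the cocycle identity $c_\X = c_\Y\circ \Psi$ into the gauge equivariance \eqref{eq:gauge-invariance}. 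The implication (iv)$\implies$(v) is immediate.

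The principal obstacle is (v)$\implies$(ii). Theorem~\ref{thm:diagonal-preserving} upgrades the hypothesis to $\Phi(\D_\X) = \D_\Y$, so $\Phi$ is diagonal- and gauge-preserving; the reconstruction theorem of~\cite{CRST} then yields a groupoid isomorphism $\Psi\colon \G_\X\LRA \G_\Y$ with $c_\X = c_\Y\circ \Psi$ whose unit-space restriction $\tilde{h} := \Psi^{(0)}\colon \tilde{\X}\LRA \tilde{\Y}$ implements $\Phi|_{\D_\X}$, and the condition $\Phi(g) = g\circ h^{-1}$ on $C(\X)$ forces $h\circ \pi_\X = \pi_\Y\circ \tilde{h}$. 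What remains is to promote $\tilde{h}$ from a mere intertwining homeomorphism to an \emph{eventual} conjugacy, which requires extracting a \emph{uniform} shift constant $\ell$. For each $\tilde{x}\in \tilde{\X}$ the element $\Psi(\tilde{x}, 1, \sigma_{\tilde{\X}}(\tilde{x}))$ lies in some open bisection of the form $Z(U_{\tilde{x}}, n_{\tilde{x}} + 1, n_{\tilde{x}}, V_{\tilde{x}})$ inside $c_\Y^{-1}(1)$, and on such a bisection the local identity $\sigma_{\tilde{\Y}}^{n_{\tilde{x}} + 1}(\tilde{h}(\tilde{x})) = \sigma_{\tilde{\Y}}^{n_{\tilde{x}}}(\tilde{h}(\sigma_{\tilde{\X}}(\tilde{x})))$ holds. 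Since the image of the compact set $\{(\tilde{x}, 1, \sigma_{\tilde{\X}}(\tilde{x}))\mid \tilde{x}\in \tilde{\X}\}\subset \G_\X$ under $\Psi$ is compact, finitely many such bisections suffice; taking $\ell$ to be the maximum of the associated integers $n_{\tilde{x}}$ and applying $\sigma_{\tilde{\Y}}^{\ell - n_{\tilde{x}}}$ to each local identity yields $\sigma_{\tilde{\Y}}^{\ell + 1}\tilde{h}(\tilde{x}) = \sigma_{\tilde{\Y}}^{\ell}\tilde{h}(\sigma_{\tilde{\X}}(\tilde{x}))$ for every $\tilde{x}\in \tilde{\X}$. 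The symmetric compactness argument applied to $\Psi^{-1}$ gives \eqref{eq:eventual-conjugacy2} after enlarging $\ell$, completing (ii).
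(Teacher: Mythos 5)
Your proposal is correct and follows essentially the same route as the paper: the same lift via Lemma~\ref{lem:lift-eventual-conjugacy}, the same explicit groupoid isomorphism for (ii)$\implies$(iii), and the same use of Theorem~\ref{thm:diagonal-preserving} plus the reconstruction theorem of~\cite{CRST} to get back from the $\mathrm{C^*}$-level to the groupoid level. Your compactness argument extracting a uniform $\ell$ from finitely many bisections covering $\Psi(\{(\tilde{x},1,\sigma_{\tilde{\X}}(\tilde{x}))\mid \tilde{x}\in\tilde{\X}\})$ is the same mechanism the paper uses in its (iii)$\implies$(ii) step (there organized around the sets $\Psi(A_a)$ for $a\in\A$), so no substantive difference remains.
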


\begin{proof}
    The equivalence (i) $\iff$ (ii) is Lemma~\ref{lem:lift-eventual-conjugacy}.
  
    (ii) $\implies$ (iii):
    Let $\tilde{h}\colon \tilde{\X}\LRA \tilde{\Y}$ be an eventual conjugacy satisfying $h\circ \pi_\X = \pi_\Y\circ \tilde{h}$.
    The map $\Psi\colon \G_\X \LRA \G_\Y$ given by
    \[
        \Psi(\tilde{x}, p, \tilde{y}) = (\tilde{h}(\tilde{x}), p, \tilde{h}(\tilde{y}))
    \]
    for $(\tilde{x}, p, \tilde{y})\in \G_\X$ is a groupoid isomorphism.
    Under the identification $\Psi^{(0)} = \tilde{h}$, we have $h\circ \pi_\X = \pi_\Y\circ \Psi^{(0)}$ and $c_\X = c_\Y\circ \Psi$.

    (iii) $\implies$ (ii):
    Let $\Psi\colon \G_\X \LRA \G_\Y$ be a groupoid isomorphism satisfying $h\circ \pi_\X = \pi_\Y\circ \Psi^{(0)}$ and $c_\X = c_\Y\circ \Psi$.
    Identify $\tilde{\X} = \G_\X^{(0)}$, $\tilde{\Y} = \G_\Y^{(0)}$ and $\tilde{h} = \Psi^{(0)}$.
    Then $\Psi$ is of the form
    \[
        \Psi( \tilde{x}, p, \tilde{y}) = (\tilde{h}(\tilde{x}), p, \tilde{h}(\tilde{y})),
    \]
    for $(\tilde{x}, p, \tilde{y})\in \G_\X$,
    and $h\circ \pi_\X = \pi_\Y\circ \tilde{h}$.
    Let $\A$ be the alphabet of $\X$ and consider the compact open bisection
    \[
        A_a = ( \sigma_{\tilde{\X}}(U_a), 0, 1, U_a)
    \]
    for $a\in \A$.
    Here, $U_a = \bigcup_{x\in Z_\X(a)} U(x, 1,1)$ in $\tilde{\X}$.
    Then
    \[
        \Psi(A_a) = \{ (\tilde{h}(\sigma_{\tilde{\X}}(\tilde{x})), -1, \tilde{h}(\tilde{x})) \mid \tilde{x}\in U_a \}
    \]
    is compact and open and contained in $c_Y^{-1}(\{ - 1\})$.
    Therefore
    \[
        \Psi(A_a) = \bigcup_{j = 1}^n (V_j, k_j, k_j + 1, W_j),
    \]
    for some $n\in \N$ and some compact open and mutually disjoint subsets $V_1,\ldots, V_n$,
    and compact open and mutually disjoint subsets $W_1,\ldots,W_n$ of $\tilde{\X}$
    and integers $k_1,\ldots,k_n\in \N$.
    In particular, $\tilde{h}^{-1}(U_a)$ is the disjoint union $\tilde{h}^{-1}(U_a) = \bigcup_{j = 1}^n \tilde{h}^{-1}(W_j)$ and
    \[
        \sigma_{\tilde{\Y}}^{k_j + 1}(\tilde{h}(\tilde{x})) = \sigma_{\tilde{\Y}}^{k_j} (\tilde{h}(\sigma_{\tilde{\X}}(\tilde{x}))) 
    \]
    for $\tilde{x}\in \tilde{h}^{-1}(W_j) \subset U_a$.
    We can now define a continuous map $k_a\colon U_a \LRA \N$ by $k_a(\tilde{x}) = k_j$ for $\tilde{x}\in \tilde{h}^{-1}(W_j) \subset U_a$.
    Since $\tilde{\X}$ is the disjoint union of $U_a$, $a\in \A$, there is a continuous map $k\colon \tilde{\X}\LRA \N$ 
    given by $k(\tilde{x}) = k_a(\tilde{x})$ for $\tilde{x}\in U_a \subset \tilde{\X}$, and
    \[
        \sigma_{\tilde{\Y}}^{k(\tilde{x}) + 1}(\tilde{h}(\tilde{x})) = \sigma_{\tilde{\Y}}^{k(\tilde{x})} (\tilde{h}(\sigma_{\tilde{\X}}(\tilde{x}))),
    \]
    for $\tilde{x}\in \tilde{\X}$.
    Similarly, there is a continuous map $k'\colon \tilde{\Y}\LRA \N$ which satisfies
    \[
        \sigma_{\tilde{\X}}^{k'(\tilde{y}) + 1}(\tilde{h}^{-1}(\tilde{y})) = \sigma_{\tilde{\X}}^{k'(\tilde{y})}(\tilde{h}^{-1}(\sigma_{\tilde{\Y}}(\tilde{y}))),
    \]
    for $\tilde{y}\in \tilde{\Y}$.
    Let $\ell = \max\{ k(\tilde{\X}), k'(\tilde{\Y}))$.
    Then $\tilde{h}$ is an eventual conjugacy satisfying $h\circ \pi_\X = \pi_\Y\circ \tilde{h}$.

    (iii) $\implies$ (iv):
    A groupoid isomorphism $\Psi\colon \G_\X\LRA \G_\Y$ with $h\circ \pi_\X = \pi_\Y\circ \Psi^{(0)}$ induces a $^*$-isomorphism $\Phi\colon \OO_\X\LRA \OO_\Y$
    satisfying $\Phi\circ p_\X = p_\Y\circ \Phi$ and $\Phi(C(\X)) = C(\Y)$ with $\Phi(g) = g\circ h^{-1}$ for $g\in C(\X)$.
    The relation $c_\X = c_\Y\circ \Psi$ ensures that $\Phi \circ \gamma_z^\X = \gamma_z^\Y\circ \Phi$ for $z\in \T$.

    The implication (iv) $\implies$ (v) is obvious.

    (v) $\implies$ (iii):
    By Theorem~\ref{thm:diagonal-preserving}, $\Phi(\D_\X) = \D_\Y$.
    The reconstruction theorem~\cite[Theorem 6.2]{CRST} ensures the existence of a groupoid isomorphism $\Psi\colon \G_\X\LRA \G_\Y$
    satisfying $\Phi(f) = f\circ \tilde{h}^{-1}$ for $f\in \D_\X$,
    where $\Psi^{(0)} = \tilde{h}\colon \tilde{\X}\LRA \tilde{\Y}$ is the induced homeomorphism on the unit spaces, 
    and $c_\X = c_\Y\circ \Phi$.
    Since $\Phi(C(\X)) = C(\Y)$ with $\Phi(g) = g\circ h^{-1}$ for $g\in C(\X)$, the groupoid isomorphism $\Psi$ satisfies $h\circ \pi_\X = \pi_\Y\circ \Psi^{(0)}$.
\end{proof}

\begin{corollary}
    Let $\X$ and $\Y$ be one-sided shift spaces.
    The following are equivalent:
    \begin{enumerate}
        \item[(i)] the systems $\X$ and $\Y$ are one-sided eventually conjugate;
        \item[(ii)] there exist a groupoid isomorphism $\Psi\colon \G_\X\LRA \G_\Y$ and a homeomorphism $h\colon \X\LRA \Y$ 
            satisfying $h\circ \pi_\X = \pi_\Y\circ \Psi^{(0)}$ and $c_\X = c_\Y\circ \Psi$;
        \item[(iii)] there is $^*$-isomorphism $\Phi\colon \OO_\X\LRA \OO_\Y$ satisfying $\Phi(C(\X)) = C(\Y)$ 
            and $\Phi \circ \gamma_z^\X = \gamma_z^\Y\circ \Phi$ for $z\in \T$.
    \end{enumerate}
\end{corollary}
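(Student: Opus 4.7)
The plan is to deduce the corollary directly from Theorem~\ref{thm:eventual-conjugacy} by quantifying the auxiliary homeomorphism $h$ appearing in its hypotheses. In each direction, the work consists of either producing $h$ from the given data or reading it off from a previously constructed object, and then invoking the appropriate implication of the theorem.

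For (i) $\Rightarrow$ (ii), I would start with a homeomorphism $h\colon \X\LRA \Y$ witnessing the one-sided eventual conjugacy and feed it into the implication (i) $\Rightarrow$ (iii) of Theorem~\ref{thm:eventual-conjugacy}. This yields a groupoid isomorphism $\Psi\colon \G_\X\LRA \G_\Y$ satisfying $h\circ \pi_\X = \pi_\Y\circ \Psi^{(0)}$ and $c_\X = c_\Y\circ \Psi$, which together with $h$ is exactly the data required in (ii). For (ii) $\Rightarrow$ (iii), given such a pair $(\Psi, h)$, the implication (iii) $\Rightarrow$ (iv) of the theorem produces a $^*$-isomorphism $\Phi\colon \OO_\X\LRA \OO_\Y$ satisfying $\Phi(C(\X)) = C(\Y)$ (with $\Phi(g) = g\circ h^{-1}$ for $g\in C(\X)$) and $\Phi\circ \gamma_z^\X = \gamma_z^\Y\circ \Phi$ for $z\in \T$; dropping the explicit form of $\Phi$ on $C(\X)$ gives (iii).

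The only slightly delicate step is (iii) $\Rightarrow$ (i), where no homeomorphism $h$ is supplied in advance. Here I would observe that $\Phi|_{C(\X)}\colon C(\X)\LRA C(\Y)$ is a unital $^*$-isomorphism of commutative $\mathrm{C}^*$-algebras, so Gelfand duality furnishes a unique homeomorphism $h\colon \X\LRA \Y$ with $\Phi(g) = g\circ h^{-1}$ for every $g\in C(\X)$. With this $h$ in hand, the hypotheses of condition (v) of Theorem~\ref{thm:eventual-conjugacy} are satisfied, so the implication (v) $\Rightarrow$ (i) delivers that $h$ is a one-sided eventual conjugacy; in particular $\X$ and $\Y$ are one-sided eventually conjugate.

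There is no real obstacle beyond bookkeeping: the substantive content (the lifting lemma, the reconstruction theorem from~\cite{CRST}, and the diagonal-preservation result of Theorem~\ref{thm:diagonal-preserving}) is already encapsulated in Theorem~\ref{thm:eventual-conjugacy}. The only point worth highlighting is that extracting $h$ from $\Phi$ in the proof of (iii) $\Rightarrow$ (i) uses only $\Phi(C(\X)) = C(\Y)$, not diagonal-preservation; the latter is then obtained as a consequence via Theorem~\ref{thm:diagonal-preserving} inside the proof of the theorem.
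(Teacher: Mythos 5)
Your proposal is correct and follows exactly the intended derivation: the paper states this corollary without proof as an immediate consequence of Theorem~\ref{thm:eventual-conjugacy}, obtained by existentially quantifying the homeomorphism $h$, and your handling of the only nontrivial direction — recovering $h$ from $\Phi|_{C(\X)}$ via Gelfand duality before invoking (v) $\Rightarrow$ (i) of the theorem — is precisely what is needed.
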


When $\X_A$ and $\X_B$ are one-sided shifts of finite type determined by finite square $\{0,1\}$-matrices $A$ and $B$ with no zero rows and no zero columns, 
respectively, we recover~\cite[Corollary 4.2]{Carlsen-Rout} (see also~\cite[Theorem 1.2]{Mat2017-circle}).

\section{Continuous orbit equivalence}\label{sec:coe}

The notion of continous orbit equivalence among one-sided shift spaces was introduced by Matsumoto in~\cite{Mat2010, Mat2010b}.
It is proven in \cite[Corollary 6.1]{CEOR} (see also \cite[Theorem 2.3]{MM14}) that if $\X$ and $\Y$ are shifts of finite type, 
then $\X$ and $\Y$ are continuously orbit equivalent if and only if $\G_\X$ and $\G_\Y$ are isomorphic, 
and if and only if there is a $^*$-isomorphism $\OO_\X \LRA \OO_\Y$ which maps $C(\X)$ onto $C(\Y)$.
In this section, we shall for general shift spaces $\X$ and $\Y$ look at the relationship between continuous orbit equivalence of $\X$ and $\Y$, 
isomorphism of $\G_\X$ and $\G_\Y$, and $^*$-isomorphisms $\OO_\X \LRA \OO_\Y$ which map $C(\X)$ onto $C(\Y)$ (Theorems~\ref{thm:coe} and~\ref{thm:coe-cover}).
When the groupoids are effective, the results simplify (Theorem~\ref{thm:coe-essentially-principal}).

\begin{definition}\label{def:oe}
    Two one-sided shift spaces $\X$ and $\Y$ are \emph{continuously orbit equivalent} if there exist a homeomorphism $h\colon \X\LRA \Y$
    and continuous maps $k_\X, l_\X\colon \X\LRA \N$ and $k_\Y, l_\Y\colon \Y\LRA \N$ satisfying
    \begin{align}
        \sigma_\Y^{l_\X(x)}(h(x)) &= \sigma_\Y^{k_\X(x)}( h(\sigma_\X(x))), \label{eq:coe1} \\
        \sigma_\X^{l_\Y(y)}(h^{-1}(y)) &= \sigma_\X^{k_\Y(y)}( h^{-1}(\sigma_\Y(y))), \label{eq:coe2}
    \end{align}
    for $x\in \X$ and $y\in \Y$.
    The underlying homeomorphism $h$ is a \emph{continuous orbit equivalence} and $(k_\X, l_\X)$ and $(k_\Y, l_\Y)$ are \emph{cocycle pairs} for $h$.
\end{definition}

Similar definitions apply to the covers of one-sided shift spaces. 
Our first aim is to show that a continuous orbit equivalence between $\X$ and $\Y$ can be lifted to a continuous orbit equivalence between $\tilde{\X}$ and $\tilde{\Y}$.

Observe that if $h\colon \X\LRA \Y$ is an orbit equivalence with cocycles $k_\X, l_\X\colon \X\LRA \N$ and we define
\[
    k_\X^{(n)}(x) = \sum_{i = 0}^{n - 1} k_\X(\sigma_\X^i(x)), \qquad
    l_\X^{(n)}(x) = \sum_{i = 0}^{n - 1} l_\X(\sigma_\X^i(x)),
\]
then $\sigma_\Y^{l_\X^{(n)}} (h(x)) = \sigma_\Y^{k_\X^{(n)}}( h(\sigma_\X^n (x)) )$, for $x\in \X$.

We need some additional terminology.
Let $\X$ and $\Y$ be one-sided shift spaces and let $h\colon \X\LRA \Y$ be a continuous orbit equivalence with continuous cocycles $k_\X, l_\X\colon \X\LRA \N$ and $k_\Y, l_\Y\colon \Y\LRA \N$. We say that $(h,l_\X,k_\X,l_\Y,k_\Y)$ is \emph{least period preserving}
if $h$ maps eventually periodic points to eventually periodic points,
\[
    \lp(h(x)) = l_\X^{(p)}(x) - k_\X^{(p)}(x), 
\]
for any periodic point $x\in \X$ with $\lp(x) = p$, 
$h^{-1}$ maps eventually periodic points to eventually periodic points, and
\[
    \lp(h^{-1}(y)) = l_\Y^{(q)}(y) - k_\Y^{(q)}(y), 
\]
for any periodic point $y\in \Y$ with $\lp(y) = q$.
We say that $(h,l_\X,k_\X,l_\Y,k_\Y)$ is \emph{stabilizer-preserving}
if $h$ maps eventually periodic points to eventually periodic points,
\[
    \lp(h(x)) = |l_\X^{(p)}(x) - k_\X^{(p)}(x)|, 
\]
for any periodic point $x\in \X$ with $\lp(x) = p$, 
$h^{-1}$ maps eventually periodic points to eventually periodic points, and
\[
    \lp(h^{-1}(y)) = |l_\Y^{(q)}(y) - k_\Y^{(q)}(y)|, 
\]
for any periodic point $y\in \Y$ with $\lp(y) = q$.
cf.~\cite[p. 1093]{CEOR} and \cite[Definition 8.1]{CRST}.
There are analogous definitions for a continuous orbit equivalence between covers.

\begin{remark}
    Not every cocycle pair of a continuous orbit equivalence (even between finite type shifts) is least period preserving, cf.~Remark~\ref{rem:stab-preserving}.
    However, we do not know if there is a continuous orbit equivalence which does not admit a least period/stabilizer-preserving cocycle pair.
    In Example~\ref{ex:coe-notLPP}, we exhibit an example of a continuous orbit equivalence between shifts of finite type 
    which does not admit a cocycle pair which is least period preserving on all \emph{eventually periodic} points.
\end{remark}

\begin{lemma}[Lifting lemma]\label{lem:lift-coe}
    Let $\X$ and $\Y$ be one-sided shift spaces and let $h\colon \X\LRA \Y$ be a stabilizer-preserving continuous orbit equivalence
    with continuous cocycles $l_\X, k_\X\colon \X\LRA \N$ and $l_\Y, k_\Y\colon \Y\LRA \N$.
    Then there is a stabilizer-preserving continuous orbit equivalence $\tilde{h}\colon \tilde{\X}\LRA \tilde{\Y}$
    with continuous cocycles $l_{\tilde{\X}} = l_\X\circ \pi_\X, k_{\tilde{\X}} = k_\X\circ \pi_\X \colon \tilde{\X}\LRA \N$ and
    $l_{\tilde{\Y}} = l_\Y\circ \pi_\Y, k_{\tilde{\Y}} = k_\Y\circ \pi_\Y \colon \tilde{\Y}\LRA \N$.
\end{lemma}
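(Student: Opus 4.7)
The plan is to extend the lifting strategy of Lemmas~\ref{lem:canonical_cover} and~\ref{lem:lift-eventual-conjugacy} to the more flexible setting of continuous orbit equivalence, where the shift differences on the $\Y$-side are now governed by the cocycles $k_\X, l_\X$ (and symmetrically by $k_\Y, l_\Y$) rather than by a single integer $\ell$. By uniform continuity of $h$, $k_\X$ and $l_\X$ on the compact space $\X$, I first fix integers $C, L, M \in \N$ so that whenever $x_{[0, C + r)} = x'_{[0, C + r)}$ we have $h(x)_{[0, L + r)} = h(x')_{[0, L + r)}$ and $k_\X^{(j)}(x) = k_\X^{(j)}(x')$, $l_\X^{(j)}(x) = l_\X^{(j)}(x')$ for every $0 \le j \le r$, with both cocycles bounded by $M$ on $\X$; analogous constants are chosen for $h^{-1}$.

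Next, for fixed integers $0 \le r \le s$ I would introduce $K = K(C, L, M, r, s)\in \N$ large enough to absorb all cocycle gaps below, and establish the implication
\[
    \alpha x \widesim{K + r, K + s} \alpha x' \implies h(\alpha x) \widesim{r', s'} h(\alpha x'),
\]
where $|\alpha| = r$ and $r', s'$ are determined by the cocycle sums $l_\X^{(r)}, k_\X^{(r)}$ evaluated on $\alpha x$. Writing $h(\alpha x) = \mu y$ and $h(\alpha x') = \mu y'$, the agreement of $\mu$ and the equation $y' = \sigma_\Y^{l_\X^{(r)}(\alpha x) - k_\X^{(r)}(\alpha x)}(h(x'))$ come from the continuity data. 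The main work is the past-equivalence: given $\nu\in \LL(\Y)$ with $\nu y\in \Y$ and $|\nu| \le s'$, I would iterate~\eqref{eq:coe2} along the $\Y$-orbit of $\nu y$ to produce $h^{-1}(\nu y) = \beta x^\sharp$ where $\beta$ depends only on a controlled prefix of $\nu y$ (and hence is the same word when $\nu y'$ replaces $\nu y$) and $x^\sharp$ is a fixed shift of $x$ through $k_\Y^{(|\nu|)}$; the hypothesis places $\beta (x')^\sharp \in \X$, and reapplying $h$ via~\eqref{eq:coe1} yields $\nu y' \in \Y$.

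With the past-equivalence implication in hand, the map
\[
    \tilde{h}\colon {}_{K + r}[x]_{K + s} \LMT {}_{r'}[h(x)]_{s'}
\]
is well defined, and the symmetric construction applied to $h^{-1}$ produces a two-sided inverse. A routine check with the projective-limit topology shows $\tilde{h}$ is a homeomorphism satisfying $h\circ \pi_\X = \pi_\Y\circ \tilde{h}$. The orbit-equivalence identities for $\tilde{h}$ with the pulled-back cocycles $l_{\tilde{\X}} = l_\X\circ \pi_\X$ and $k_{\tilde{\X}} = k_\X\circ \pi_\X$ then follow at once by applying $\pi_\Y$ and $\pi_\X$ to~\eqref{eq:coe1} and~\eqref{eq:coe2} and using the intertwining $\sigma_\X\circ \pi_\X = \pi_\X\circ \sigma_{\tilde{\X}}$.

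For the stabilizer-preserving assertion, I would use that a $p$-periodic point $\tilde{x}\in \tilde{\X}$ projects under $\pi_\X$ to an eventually periodic point $x$ whose least period divides $p$, and conversely that $\iota_\X$ lifts an eventually periodic point to one of the same least period; combined with Lemma~\ref{lem:isotropy-lemma}(i), this reduces the identity $\lp(\tilde{h}(\tilde{x})) = |l_{\tilde{\X}}^{(p)}(\tilde{x}) - k_{\tilde{\X}}^{(p)}(\tilde{x})|$ to the corresponding identity for $(h, l_\X, k_\X)$ at $x$, and symmetrically for $\tilde{h}^{-1}$. The main obstacle is the bookkeeping in the past-equivalence step, where one must choose $K$ large enough that the cocycle sums, the word $\beta$, and all image prefixes are determined purely by the length-$(K + r)$ prefix of the input; once this is organised the remaining verifications are straightforward transcriptions of the downstairs identities.
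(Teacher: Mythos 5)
There is a genuine gap in the past-equivalence step, which is where all the real work of this lemma lives. Your plan transcribes the finite-lag argument of Lemma~\ref{lem:lift-eventual-conjugacy}: from $\nu y\in\Y$ you want to write $h^{-1}(\nu y)=\beta x^{\sharp}$ with $\beta$ a word determined by a controlled prefix and $x^{\sharp}$ a \emph{fixed} shift of $x$, so that the hypothesis $\alpha x\widesim{K+r,K+s}\alpha x'$ hands you $\beta (x')^{\sharp}\in\X$. But the cocycle identities~\eqref{eq:coe1} and~\eqref{eq:coe2} only tell you that \emph{some} shift of $h^{-1}(\nu y)$ agrees with \emph{some} shift of $h^{-1}(y)$, with exponents that differ between the $\mu$- and $\nu$-branches. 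When the common tail $z'$ of $h^{-1}(\mu y)$ and $h^{-1}(\nu y)$ is eventually periodic, the two prefixes line up against $x'$ only up to an ambiguity which is a multiple of $\lp(h(z))$ for the periodic part $z$; to convert that ambiguity into an actual admissible word of $\X$ that can be prepended to $\sigma_\X^{|\beta|}(x')$, one needs precisely the stabilizer-preserving identity $\lp(h(z))=|l_\X^{(\lp(z))}(z)-k_\X^{(\lp(z))}(z)|$, which produces the auxiliary point $x''=\alpha z_{[0,m\lp(z))}\sigma_\X^{|\beta|}(x')$ in the paper's Claim~1. Your proposal never uses the stabilizer-preserving hypothesis in the construction of $\tilde{h}$ at all (only at the very end, for the stabilizer-preserving property of $\tilde{h}$ itself), and without it this step fails at eventually periodic points.

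Two smaller issues. First, because $k_\X$ and $l_\X$ are merely continuous rather than constant, the target indices $r',s'$ you propose vary with the point, so a single uniform constant $K=K(r,s)$ does not directly yield a well-defined map of the projective systems; the paper instead proves a pointwise statement (constants depending on $\mu$, $\nu$, $x$) and then extracts uniform constants by a compactness argument before taking maxima over $(k',l')\preceq(k,l)$ to get compatibility with the bonding maps $Q$. Second, for the stabilizer-preserving property of $\tilde{h}$, Lemma~\ref{lem:isotropy-lemma}(i) does not by itself pin down $\lp(\tilde{h}(\tilde{x}))$: one only gets that $n\lp(h(x))$ is \emph{a} period of $\tilde{h}(\tilde{x})$ where $\lp(\tilde{x})=n\lp(x)$, so $\lp(\tilde{h}(\tilde{x}))=m\lp(h(x))$ with $m\mid n$, and you must run the symmetric argument with $\tilde{h}^{-1}$ to conclude $n\mid m$ and hence $n=m$. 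Your sketch of this step is plausible in outline but needs that divisibility argument to close.
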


\begin{proof}
    We first verify two claims which will allow us to define the map $\tilde{h}\colon \tilde{\X}\LRA \tilde{\Y}$.
    Then we show that $\tilde{h}$ with the prescribed cocycles is stabilizer-preserving.

    Let $x\in \X$ and $\mu, \nu\in \LL(\X)$ and suppose $h(x) \in C_\Y(\mu, \nu)$.
    \begin{center}
        \emph{Claim 1.} There are integers $0\leq k\leq l$ such that $x' \widesim{k, l} x \implies h(x')\in C_\Y(\mu, \nu)$.
    \end{center}
    Let $y := \sigma_\Y^{|\nu|}(h(x))$.
    Then $\nu y, \mu y\in \Y$ and $h(x) = \nu y$.
    From the cocycle relations~\eqref{eq:coe1} and~\eqref{eq:coe2} we have
    \[
        \sigma_\X^{l_\Y^{(|\nu|)}(\nu y)} (h^{-1}\nu y)) = \sigma_\X^{k_\Y^{(|\nu|)}(\nu y)} (h^{-1}(y)),
    \]
    and
    \[
        \sigma_\X^{l_\Y^{(|\mu|)}(\mu y)} (h^{-1}\mu y)) = \sigma_\X^{k_\Y^{(|\mu|)}(\mu y)} (h^{-1}(y)).
    \]
    Hence if
    \[
        \alpha' = {h^{-1}(\mu y)}_{[0, l_\Y^{(|\mu|)}(\mu y) + k_\Y^{(|\nu|)}(\nu y))}, \quad
        \beta' = {(h^{-1}(\nu y))}_{[0, l_\Y^{(|\nu|)}(\nu y) + k_\Y^{(|\mu|)}(\mu y))},
    \]
    then $h^{-1}(\mu y) = \alpha' z'$ and $h^{-1}(\nu y) = \beta' z'$, for some $z'\in \X$.
    If $z'$ is eventually periodic, pick $q\in \N$ such that $z := \sigma_\X^q(z')$ is periodic and $\gamma := z'_{[0, q)}$;
    if $z'$ is aperiodic, let $q = 0$ and let $\gamma$ be the empty word.
    Set $\alpha := \alpha' \gamma$ and $\beta := \beta' \gamma$ and observe that $h(\alpha z) = \mu y$ and $h(\beta z) = \nu y$, and $x = \beta z$.

    By the cocycle relations we have
    \[
        \sigma_\Y^{l_\X^{(|\alpha|)}(\alpha z)}(\mu y) = \sigma_\Y^{l_\X^{(|\alpha|)}(\alpha z)}(h(\alpha z)) = \sigma_\Y^{k_\X^{(|\alpha|)}(\alpha z)}(h(z)),
    \]
    and 
    \[ 
        \sigma_\Y^{l_\X^{(|\beta|)}(\beta z)}(\nu y) = \sigma_\Y^{l_\X^{(|\beta|)}(\beta z)}(h(\beta z)) = \sigma_\Y^{k_\X^{(|\beta|)}(\beta z)}(h(z)),
    \]
    from which we deduce that 
    \[
        \sigma_\Y^{k_\X^{(|\alpha|)}(\alpha z)+|\mu|+l_\X^{(|\beta|)}(\beta z)}(h(z))
        = \sigma_\Y^{l_\X^{(|\alpha|)}(\alpha z)+l_\X^{(|\beta|)}(\beta z)}(y) 
        = \sigma_\Y^{k_\X^{(|\beta|)}(\beta z)+|\nu|+l_\X^{(|\alpha|)}(\alpha z)}(h(z)).
    \]
    It now follows that
    In particular,
    \[
        k_\X^{(|\alpha|)}(\alpha z)+|\mu|+l_\X^{(|\beta|)}(\beta z) - \big( k_\X^{(|\beta|)}(\beta z)+|\nu|+l_\X^{(|\alpha|)}(\alpha z) \big)
    \]
    is a multiple of $\lp(h(z))$ --- if $h(z)$ is aperiodic, we set $\lp(h(z)) = 0$.
    Without loss of generality, we may assume there is a nonnegative integer $m$ such that
    \[
        k_\X^{(|\alpha|)}(\alpha z)+|\mu|+l_\X^{(|\beta|)}(\beta z) - \big( k_\X^{(|\beta|)}(\beta z)+|\nu|+l_\X^{(|\alpha|)}(\alpha z) \big)
            = m \lp(h(z))
            = m\big( l_\X^{(\lp(z))}(z) - k_\X^{(\lp(z))}(z) \big).
    \]
    The final equality follows from the hypothesis that $h$ is stabilizer-preserving.
    Set 
    \[
        N := l_\X^{(|\beta|)}(\beta z) + l_{\X}^{(|\alpha|)}(\alpha z) + m l_\X^{(\lp(z))}(z).
    \]
    Pick $r\in \N$ such that $k_\X^{(|\beta|)}$ and $l_\X^{(|\beta|)}$ are constant on $Z_\X(x_{[0, r)})$ and
    \[
        h(Z_\X( x_{[0, r)}))) \subset Z_\Y( (\nu y)_{[0, |\nu| + N}).
    \]
    Pick also $s\in \N$ such that $k_\X^{(|\alpha|)}$ and $l_\X^{(|\alpha|)}$ are constant on $Z_\X( (\alpha z)_{[0, s)})$ and
    \[
            h(Z_\X( (\alpha z)_{[0, s)})) \subset Z_\Y( (\mu y)_{[0, |\mu| + N)}),
    \]
    and such that $l_\X^{(\lp(z))}$ and $k_\X^{(\lp(z))}$ are constant on $Z_\X(z_{[0, s)})$.
    Set $k := r + s + |\beta|$ and $l := r + s + |\beta| + |\alpha|$.

    Let $x'\in \X$ and suppose $x' \widesim{k, l} x$.
    Then $x'\in Z_\X(x_{[0, r)})$, so $h(x')\in Z_\Y( (\nu y)_{[0, |\nu| + N)})$ and $k_\X^{(|\alpha|)}(x') = k_\X^{(|\alpha|)}(x)$
    and $l_\X^{(|\alpha|)}(x') = l_\X^{(|\alpha|)}(x)$.
    Put $x'' := \alpha z_{[0, m \lp(z))} \sigma_\X^{(|\beta|)}(x')$.
    Then $\sigma_\X^{|\alpha| + m \lp(z)}(x'') = \sigma_\X^{(|\beta|)}(x')$ and $x''\in Z_\X( (\alpha z)_{[0, s)})$ so $h(x'') \in Z_\Y( (\mu y)_{[0, |\mu| + N)})$.
    \begin{align*} 
        \sigma_\Y^{|\nu| + N}(h(x'))
        &= \sigma_\Y^{|\nu| + l_\X^{(|\beta|)}(x') + l_\X^{(|\alpha|)}(\alpha z) + m l_\X^{(\lp(z))}(z)}(h(x'))\\
        &= \sigma_\Y^{|\nu| + k_\X^{(|\beta|)}(x') + l_\X^{(|\alpha|)}(\alpha z) + m l_\X^{(\lp(z))}(z)}(h(\sigma_\X^{|\beta|}(x')))\\
        &= \sigma_\Y^{|\nu| + k_\X^{(|\beta|)}(\beta z) + l_\X^{(|\alpha|)}(\alpha z) + m l_\X^{(\lp(z))}(z)}(h(\sigma_\X^{|\beta|}(x'))),
    \end{align*}
    and 
    \begin{align*}
        \sigma_\Y^{|\mu| + N}(h(x''))
        &= \sigma_\Y^{|\mu| + l_\X^{(|\beta|)}(\beta z) + l_\X^{(|\alpha|)}(x'') + m l_\X^{(\lp(z))}(z)}(h(x''))\\
        &= \sigma_\Y^{|\mu| + l_\X^{(|\beta|)}(\beta z) + k_\X^{(|\alpha|)}(x'') + m k_\X^{(\lp(z))}(z)}(h(\sigma_\X^{|\alpha| + m\lp(z)}(x'')))\\
        &= \sigma_\Y^{|\mu| + l_\X^{(|\beta|)}(\beta z) + k_\X^{(|\alpha|)}(\alpha z) + m k_\X^{(\lp(z))}(z)}(h(\sigma_\X^{|\beta|}(x')))\\
        &= \sigma_\Y^{|\nu| + k_\X^{(|\beta|)}(\beta z) + l_\X^{(|\alpha|)}(\alpha z) + m l_\X^{(\lp(z))}(z)}(h(\sigma_\X^{|\beta|}(x'))).
    \end{align*}  
    Thus $\sigma_\Y^{|\nu| + N}(h(x')) = \sigma_\Y^{|\mu| + N}(h(x''))$ so 
    \[
        h(x'')\in C_\Y( (\mu y)_{[0, |\mu| + N)}, (\nu y)_{[0, |\nu| + N)}) \subset C_\Y(\mu, \nu)
    \]
    and this proves Claim 1.

    \begin{center}
        \emph{Claim 2.} For each $(k, l)\in \I$ there is $(m(k, l), n(k,l))\in \I$ such that
        \[
            x \widesim{m(k,l), n(k,l)} x' \implies h(x) \widesim{k, l} h(x').
        \]
    \end{center}
    Let $(k, l)\in \I$ and take $\mu, \nu\in \LL(\X)$ with $|\nu| = k$ and $|\mu|\leq l$ and $x\in C_\X(\mu, \nu)$.
    By Claim 1, we may choose $\big(r(\mu, \nu, x), s(\mu, \nu, x) \big) \in \I$ such that
    \[
        h({}_{r(\mu, \nu, x)} [x]_{s(\mu, \nu, x)}) \subset C_\Y(\mu, \nu).
    \]
    The topology on $\X$ generated by the sets $\{ {}_r [x]_s \mid x\in \X, (r, s)\in \I\}$ is compact,
    so there is a finite set $F \subset \LL(\X)\times \LL(\X)\times \X$ such that
    \[
        \bigcup_{(\mu, \nu, x)\in F} {}_{r(\mu, \nu, x)} [x]_{s(\mu, \nu, x)} = \X.
    \]
    Set $m(k, l) := \max \{ r(\mu, \nu, x) \mid (\mu, \nu, x)\in F\}$ and $n(k, l) := \max\{ s(\mu, \nu, x) \mid (\mu, \nu, x)\in F\}$.
    Then the implication of Claim 2 holds.

    We are now ready to prove the lemma.
    Let $(k, l)\in \I$ and set
    \[
        \tilde{m}(k,l) := \max\{ m(k',l') \mid (k',l')\preceq (k,l)\}, \quad
        \tilde{n}(k,l) := \max\{ n(k',l') \mid (k',l')\preceq (k,l)\}.
    \]
    Then there is a well-defined and continuous map $\tilde{h}\colon \tilde{\X}\LRA \tilde{\Y}$ given by
    \[
        {}_k(\tilde{h}(\tilde{x}))_l = h({}_{\tilde{m}(k,l)} [x]_{\tilde{n}(k,l)}),
    \]
    for $(k,l)\in \I$ and $\tilde{x} = {({}_r [x]_s)}_{(r, s)\in \I}$.
    A similar argument shows that there for $(k,l)\in \I$ is $(m'(k,l), n'(k,l))\in \I$ such that
    \[
        y \widesim{m'(k,l), n'(k,l)} y' \implies h^{-1}(y) \widesim{k,l} h^{-1}(y'),
    \]
    and that there is a continuous map $\tilde{h}'\colon \tilde{\Y}\LRA \tilde{\X}$ given by 
    \[ 
        {}_k (h'(\tilde{y}))_l = h^{-1}({}_{\tilde{m}'(k,l)} [y]_{\tilde{n}'(k,l)}),
    \]
    for $(k,l)\in\I$ and $\tilde{y} = ({}_r [y]_s)_{(r,s)\in\I}\in \tilde{\Y}$, 
    where $\tilde{m}'(k,l)=\max\{m'(k',l')\mid (k',l')\preceq (k,l)\}$ and $\tilde{n}'(k,l)=\max\{n'(k',l')\mid (k',l')\preceq (k,l)\}$. 
    Since $h'$ is the inverse of $\tilde{h}$, the latter map is a homeomorphism.

    It is straightforward to check that $h\circ\pi_\X = \pi_\Y\circ\tilde{h}$.
    Define $k_{\tilde{\X}}, l_{\tilde{\X}}\colon \tilde{\X}\LRA \N$ and $k_{\tilde{\Y}}, l_{\tilde{\Y}}\colon \tilde{\Y}\LRA \N$ 
    by $k_{\tilde{\X}} = k_\X\circ \pi_\X$, $l_{\tilde{\X}} = l_\X\circ\pi_\X$ and $k_{\tilde{\Y}} = k_\Y\circ\pi_\Y$, $l_{\tilde{\Y}} = l_\Y\circ\pi_\Y$. 
    They are continuous.
    It is straightforward to check that 
    $\sigma_{\tilde{\Y}}^{l_{\tilde{\X}}(\tilde{x})}(\tilde{h}(\tilde{x})) = \sigma_{\tilde{\Y}}^{k_{\tilde{\X}}(\tilde{x})}(\tilde{h}(\sigma_{\tilde{\X}}(\tilde{x})))$
    for $\tilde{x}\in\tilde{\X}$, and that 
    $\sigma_{\tilde{\X}}^{l_{\tilde{\Y}}(\tilde{y})}(\tilde{h}^{-1}(\tilde{y})) = \sigma_{\tilde{\X}}^{k_{\tilde{\Y}}(\tilde{y})}(\tilde{h}^{-1}(\sigma_{\tilde{\X}}(\tilde{x})))$
    for $\tilde{y}\in\tilde{\Y}$. 
    Thus, $(\tilde{h},l_{\tilde{\X}},k_{\tilde{\X}},l_{\tilde{\Y}},k_{\tilde{\Y}})$ is a continuous orbit equivalence. 

    We will now show that $(\tilde{h},l_{\tilde{\X}},k_{\tilde{\X}},l_{\tilde{\Y}},k_{\tilde{\Y}})$ is stabilizer-preserving. 
    Pick a periodic element $\tilde{x}\in \tilde{\X}$ and let $x = \pi_\X(\tilde{x})\in \X$.
    Then $x$ is periodic and if $\lp(x) = p$, then $\lp(\tilde{x}) = n p$ for some $n\in \N_+$. 
    Since $(h,l_\X,k_\X,l_\Y,k_\Y)$ is stabilizer-preserving, $h(x)\in \Y$ is eventually periodic and $|l_\X^{(p)}(x) - k_\X^{(p)}(x)| = \lp(h(x))$.
    Furthermore, 
    \[
        |l_{\tilde{\X}}^{(\lp(\tilde{x}))}(\tilde{x}) - \tilde{k}_{\X}^{(\lp(\tilde{x}))}(\tilde{x})| = n \lp(h(x))
    \]
    is a period for $\tilde{h}(\tilde{x})$.
    In particular, $\tilde{h}(\tilde{x})$ is eventually periodic and as above $\lp(\tilde{h}(\tilde{x})) = m \lp(h(x))$ for some $m\in \N_+$.
    The above computation shows that $m$ divides $n$.
    A similar argument using $h^{-1}$ instead of $h$ shows that $n$ divides $m$ and thus that $n = m$. 
    This shows that $\lp(\tilde{h}(\tilde{x})) = |l_{\tilde{\X}}^{(\lp(\tilde{x}))}(\tilde{x}) - k_{\tilde{\X}}^{(\lp(\tilde{x}))}(\tilde{x})|$. 
    Since $\tilde{h}$ maps periodic points to eventually periodic points, it also maps eventually periodic points to eventually periodic points.
\end{proof}

We shall next find conditions on $\G_\X$ and $\G_\Y$ and for $\OO_\X$ and $\OO_\Y$ that are equivalent to the existence of a 
stabilizer-preserving continuous orbit equivalence between $\X$ and $\Y$.
Recall the definition of $\kappa_\X\colon C(\X, \Z) \LRA B^1(\G_\X)$ from \eqref{eq:kappa}.

\begin{theorem}\label{thm:coe}
    Let $\X$ and $\Y$ be one-sided shift spaces, let $h\colon \X\LRA \Y$ be a homeomorphism and let $d_\X\colon \X\LRA\Z$ and $d_\Y\colon \Y\LRA\Z$ be continuous maps.
    The following conditions are equivalent:
    \begin{enumerate}
        \item[(i)] there are continuous maps $k_\X, l_\X\colon \X\LRA \N$ and $k_\Y, l_\Y\colon \Y\LRA \N$ with $d_\X = l_\X - k_\X$ and $d_\Y = l_\Y - k_\Y$
            such that $(h,l_\X,k_\X,l_\Y,k_\Y)$ is a stabilizer-preserving continuous orbit equivalence;
        \item[(ii)] there are continuous maps $k_\X, l_\X\colon \X\LRA \N$ and $k_\Y, l_\Y\colon \Y\LRA \N$ with $d_\X = l_\X - k_\X$ and $d_\Y = l_\Y - k_\Y$
            and continuous maps $k_{\tilde{\X}}, l_{\tilde{\X}} \colon \tilde{\X}\LRA \N$ and $k_{\tilde{\Y}}, l_{\tilde{\Y}} \colon \tilde{\Y}\LRA \N$ 
            with $l_{\tilde{\X}} = l_\X\circ\pi_\X$, 
            $k_{\tilde{\X}} = k_\X\circ\pi_\X$, 
            $l_{\tilde{\Y}} = l_\Y\circ\pi_\Y$, 
            $k_{\tilde{\Y}} = k_\Y\circ\pi_\Y$,
            and a homeomorphism $\tilde{h}\colon\tilde{\X}\LRA\tilde{\Y}$ such that
            $(\tilde{h},l_{\tilde{\X}},k_{\tilde{\X}},l_{\tilde{\Y}},k_{\tilde{\Y}})$ is a stabilizer-preserving continuous orbit equivalence 
            satisfying $h\circ\pi_\X = \pi_\Y\circ\tilde{h}$;
        \item[(iii)] there are
            \begin{itemize}
                \item a groupoid isomorphism $\Psi\colon \G_\X\LRA \G_\Y$ such that $h\circ \pi_\X = \pi_\Y\circ \Psi^{(0)}$ and
                    $\kappa_\X(d_\X) = \kappa_\Y(1)\circ\Psi$; and
                \item a groupoid isomorphism $\Psi'\colon\G_\Y\to \G_\X$ such that $h^{-1}\circ\pi_\Y = \pi_\X\circ(\Psi')^{(0)}$ 
                    and $\kappa_\Y(d_\Y) = \kappa_\X(1)\circ\Psi'$;
            \end{itemize}           
        \item[(iv)] there are
            \begin{itemize}
                \item a $^*$-isomorphism $\Phi\colon \OO_\X\LRA \OO_\Y$ such that $\Phi(C(\X)) = C(\Y)$, $\Phi(f)=f\circ h^{-1}$ for $f\in C(\X)$ 
                    and $\Phi\circ\gamma^\X_z = \beta^{\kappa_\Y(d_\Y)}_z\circ\Phi$ for each $z\in\mathbb{T}$; and
                \item a $^*$-isomorphism $\Phi'\colon \OO_\Y\LRA \OO_\X$ such that $\Phi'(C(\Y)) = C(\X)$, $\Phi'(f) = f\circ h$ for $f\in C(\Y)$ and
                    $\Phi'\circ\gamma^\Y_z = \beta^{\kappa_\X(d_\X)}_z\circ\Phi'$ for each $z\in\mathbb{T}$.
            \end{itemize}
    \end{enumerate}
\end{theorem}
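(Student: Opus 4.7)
The plan is to close the chain (i) $\iff$ (ii) $\iff$ (iii) $\iff$ (iv) by following the template already used for Theorems~\ref{thm:one-sided-conjugacy} and~\ref{thm:eventual-conjugacy}: first lift the shift-space relation to the covers, then encode it as a groupoid isomorphism, and finally pass to the groupoid $\mathrm{C^*}$-algebra. The direction (i) $\implies$ (ii) is precisely Lemma~\ref{lem:lift-coe}. For the converse I would push the cocycle equations on $\tilde{\X}$ down to $\X$ using the commutation relations $h\circ\pi_\X = \pi_\Y\circ\tilde{h}$ and $\sigma_\X\circ\pi_\X = \pi_\X\circ\sigma_{\tilde{\X}}$, and transfer stabilizer preservation using Lemma~\ref{lem:isotropy-lemma} together with the hypotheses $l_{\tilde{\X}} = l_\X\circ\pi_\X$ and $k_{\tilde{\X}} = k_\X\circ\pi_\X$ (and their $\Y$-counterparts).

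For (ii) $\implies$ (iii) I would define $\Psi\colon \G_\X\LRA\G_\Y$ by
\[
\Psi(\tilde{x}, m-n, \tilde{y}) = \bigl(\tilde{h}(\tilde{x}),\ l_{\tilde{\X}}^{(m)}(\tilde{x}) - k_{\tilde{\X}}^{(m)}(\tilde{x}) - l_{\tilde{\X}}^{(n)}(\tilde{y}) + k_{\tilde{\X}}^{(n)}(\tilde{y}),\ \tilde{h}(\tilde{y})\bigr)
\]
whenever $\sigma_{\tilde{\X}}^m(\tilde{x}) = \sigma_{\tilde{\X}}^n(\tilde{y})$, and build $\Psi'$ symmetrically from $(\tilde{h}^{-1}, l_{\tilde{\Y}}, k_{\tilde{\Y}})$. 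Continuity of the cocycles makes $\Psi$ continuous, while the stabilizer-preserving hypothesis is exactly what ensures that $\Psi$ and $\Psi'$ are mutually inverse on isotropy. The identity $\kappa_\X(d_\X) = \kappa_\Y(1)\circ\Psi$ follows from the characterisation of $\kappa_\X(d_\X)$ as the unique cocycle taking value $d_\X\circ\pi_\X$ on the generators $(\tilde{x}, 1, \sigma_{\tilde{\X}}(\tilde{x}))$, and symmetrically for $\Psi'$. For the reverse implication (iii) $\implies$ (ii), I would adapt the bisection-partition argument from the proof of Theorem~\ref{thm:eventual-conjugacy}(iii) $\implies$ (ii): for each $a\in\A$, partition $\Psi(A_a)\subset \G_\Y$ into finitely many compact open bisections of the form $Z(V_j, k_j, l_j, W_j)$, read off continuous $\N$-valued maps $k_{\tilde{\X}}, l_{\tilde{\X}}\colon \tilde{\X}\LRA\N$, and then invoke Lemma~\ref{lem:continuity} together with $\kappa_\X(d_\X) = \kappa_\Y(1)\circ\Psi$ to conclude that these maps descend from continuous maps on $\X$ whose difference is $d_\X$.

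The equivalence (iii) $\iff$ (iv) is the standard passage through groupoid $\mathrm{C^*}$-algebras. A groupoid isomorphism $\Psi$ induces a $^*$-isomorphism $\Phi\colon \OO_\X\LRA\OO_\Y$ that sends $C(\X)$ onto $C(\Y)$ by $f\mapsto f\circ h^{-1}$ (because $h\circ\pi_\X = \pi_\Y\circ\Psi^{(0)}$), and a direct computation on $C_c(\G_\X)$ shows that $\kappa_\X(d_\X) = \kappa_\Y(1)\circ\Psi$ translates to the gauge intertwining $\Phi\circ\beta^{\kappa_\X(d_\X)}_z = \gamma^\Y_z\circ\Phi$; the corresponding identity coming from $\Psi' = \Psi^{-1}$ yields $\Phi\circ\gamma^\X_z = \beta^{\kappa_\Y(d_\Y)}_z\circ\Phi$. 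Conversely, Theorem~\ref{thm:diagonal-preserving} upgrades $\Phi(C(\X)) = C(\Y)$ to $\Phi(\D_\X) = \D_\Y$, after which the reconstruction theorem~\cite[Theorem 6.2]{CRST} produces a groupoid isomorphism $\Psi$ underlying $\Phi$, and the two gauge intertwinings translate back to the two cocycle identities.

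The main obstacle I anticipate is the implication (iii) $\implies$ (ii): although $\kappa_\X(d_\X) = \kappa_\Y(1)\circ\Psi$ pins down only the difference $l_{\tilde{\X}} - k_{\tilde{\X}}$, one must independently produce continuous $\N$-valued lifts of both $l_{\tilde{\X}}$ and $k_{\tilde{\X}}$, and then verify (via Lemma~\ref{lem:continuity}) that each descends from a continuous map on $\X$ rather than only on the cover. A second subtlety is maintaining stabilizer preservation under the descent, which must be checked by evaluating the cocycle identity on isotropy at periodic points; here one must take care at periodic points isolated in past equivalence, because the fibre $\pi_\X^{-1}(x)$ can then contain several eventually periodic points whose least periods are integer multiples of $\lp(x)$.
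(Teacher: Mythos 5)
Your overall architecture is the paper's (lift via Lemma~\ref{lem:lift-coe}, encode as groupoid isomorphisms, upgrade $\Phi(C(\X))=C(\Y)$ to diagonal preservation with Theorem~\ref{thm:diagonal-preserving}, reconstruct with \cite[Theorem 6.2]{CRST}), and your explicit formula for $\Psi$ in (ii)$\implies$(iii) is correct. But there is a genuine error in your treatment of the pair $(\Psi,\Psi')$, respectively $(\Phi,\Phi')$: you assert that $\Psi'=\Psi^{-1}$, and in (ii)$\implies$(iii) that stabilizer preservation makes $\Psi$ and $\Psi'$ ``mutually inverse on isotropy''. This is exactly what Remark~\ref{rem:stab-preserving} rules out: for the one-point shift with the stabilizer-preserving orbit equivalence $(\id,1,0,0,1)$ one has $d_\X=1$, $d_\Y=-1$, the symmetric construction gives $\Psi=\id_{\Z}$ but $\Psi'=-\id_{\Z}$, and no single groupoid isomorphism (hence no single $^*$-isomorphism) satisfies both cocycle, respectively gauge, identities. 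Stabilizer preservation only guarantees that each of $\Psi$ and $\Psi'$ is bijective on isotropy, not that they compose to the identity; conditions (iii) and (iv) are deliberately stated with two independent isomorphisms, and in the correct bookkeeping the $\Phi$ of (iv) is induced by $(\Psi')^{-1}$ while $\Phi'$ is induced by $\Psi^{-1}$. As written, your derivation of (iv) proves a strictly stronger statement which is false.

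The second gap is in (iii)$\implies$(ii): you rightly flag the descent of the cocycles from $\tilde{\X}$ to $\X$ as the main obstacle, but Lemma~\ref{lem:continuity} only converts continuity on the cover into continuity on the base \emph{after} one knows the map factors through $\pi_\X$; it cannot establish the factorisation, and the bisection-partition construction does not produce fibrewise-constant cocycles by itself. What is needed (and what the paper does, closing the loop as (iii)$\implies$(i) instead) is to pass to the \emph{minimal} cocycle $l_{\tilde{\X}}(\tilde{x})=\min\{n\in\N \mid n\geq d_\X(\pi_\X(\tilde{x})),\ \sigma_{\tilde{\Y}}^{n}(\tilde{h}(\tilde{x}))=\sigma_{\tilde{\Y}}^{n-d_\X(\pi_\X(\tilde{x}))}(\tilde{h}(\sigma_{\tilde{\X}}(\tilde{x})))\}$ and prove that this minimum depends only on $\pi_\X(\tilde{x})$ by identifying it with the analogous minimum computed downstairs; the nontrivial inequality uses Lemma~\ref{lem:isotropy-lemma}(i) to promote $\pi_\Y(\tilde{y})=\pi_\Y(\tilde{y}')$ together with eventual agreement under $\sigma_{\tilde{\Y}}$ to the equality $\tilde{y}=\tilde{y}'$. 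Without this step the implication does not close. Your remaining ingredients (the formula for $\Psi$, the use of the reconstruction theorem, and the caution about fibres over periodic points isolated in past equivalence when transferring stabilizer preservation) are sound.
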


\begin{proof}
    (i) $\implies$ (ii):
    This is Lemma~\ref{lem:lift-coe}.

    (ii) $\implies$ (iii):
    It follows from~\cite[Proposition 8.3]{CRST} that there is a groupoid isomorphism $\Psi\colon \G_\X\LRA \G_\Y$ satisfying
    \[
        \Psi((\tilde{x}, 1, \sigma_{\tilde{\X}}(\tilde{x}))) = 
        (\tilde{h}(\tilde{x}), l_{\tilde{\X}}(\tilde{x}) - k_{\tilde{\Y}}(\tilde{x}), \tilde{h}(\sigma_{\tilde{\X}}(\tilde{x})))
    \]
    for $\tilde{x}\in\tilde{\X}$,
    and a groupoid isomorphism $\Phi'\colon\G_\Y\to \G_\X$ satisfying 
    \[
        \Phi'((\tilde{y},1,\sigma_{\tilde{\Y}}(\tilde{y}))) = 
        (\tilde{h}^{-1}(\tilde{y}), l_{\tilde{\Y}}(y) - k_{\tilde{\Y}}(\tilde{y}), \tilde{h}^{-1}(\sigma_{\tilde{\Y}}(\tilde{y})))
    \]
    for $\tilde{y}\in\tilde{\Y}$. 
    We then have that $h\circ \pi_\X = \pi_\Y\circ \Phi^{(0)}$, $\kappa_\X(d_\X) = \kappa_\Y(1)\circ\Phi$, 
    $h^{-1}\circ\pi_\Y = \pi_\X\circ(\Phi')^{(0)}$ and $\kappa_\Y(d_\Y) = \kappa_\X(1)\circ\Phi'$.

    (iii) $\implies$ (i):
    Let $\Psi\colon \G_\X \LRA \G_\Y$ be a groupoid isomorphism satisfying $h\circ \pi_\X = \pi_\Y\circ \Psi^{(0)}$ and $\kappa_\X(d_\X) = \kappa_\Y(1)\circ\Psi$.
    Put $\tilde{h} := \Psi^{(0)}$.
    Then $\Psi(\tilde{x}, 1, \sigma_{\tilde{\X}}(\tilde{x})) = (\tilde{h}(\tilde{x}), d_\X(\pi_\X(\tilde{x})), \tilde{h}(\sigma_{\tilde{\X}}(\tilde{x})))$ and
    it follows from~\cite[Lemma 8.4]{CRST} that the map $l_{\tilde{\X}}\colon \tilde{\X}\LRA \N$ given by
    \[
        l_{\tilde{\X}}(\tilde{x}) = 
        \min\{ n\in \N \mid n\geq d_\X(\pi_\X(\tilde{x})),~ 
        \sigma_{\tilde{\Y}}^{n}(\tilde{h}(\tilde{x})) = \sigma_{\tilde{\Y}}(\tilde{h}(\sigma_{\tilde{\X}}(\tilde{x}))) \}
    \]
    is continuous.
    We claim that 
    \begin{align}\label{eq:l-minimum}
        l_{\tilde{\X}}(\tilde{x}) = 
        \min\{ n\in \N \mid n\geq d_\X(\pi_\X(\tilde{x})),~ 
        \sigma_{\tilde{\Y}}^{n}(\tilde{h}(\pi_\X(\tilde{x}))) = \sigma_{\tilde{\Y}}(\tilde{h}(\sigma_{\tilde{\X}}(\pi_\X(\tilde{x})))) \}.
    \end{align}
    By applying $\pi_\Y$, it is easy to see that the left hand side is less than the right hand side. 
    For the converse inequality, fix $\tilde{x}\in \tilde{\X}$ and suppose the right hand side of~\eqref{eq:l-minimum} equals $n$.
    Set $\tilde{y} := \sigma_{\tilde{\Y}}^n(\tilde{h}(\tilde{x}))$ and 
    $\tilde{y}' := \sigma_{\tilde{\Y}}^{n - d_\X(\pi_\X(\tilde{x}))}(\tilde{h}(\sigma_{\tilde{\X}}(\tilde{x})))$.
    Then $\pi_\Y(\tilde{y}) = \pi_\Y(\tilde{y}')$ by hypothesis, 
    and since $(\tilde{h}(\tilde{x}), d_\X(\pi_\X(\tilde{x})), \tilde{h}(\sigma_{\tilde{\X}}(\tilde{x}))) \in \G_\Y$
    there is an $m\in \N$ such that $\sigma_{\tilde{\Y}}^m(\tilde{y}) = \sigma_{\tilde{\Y}}(\tilde{y}')$.
    It now follows from Lemma~\ref{lem:isotropy-lemma}(i) that $\tilde{y} = \tilde{y'}$.
    This means that there is a map $l_\X\colon \X\LRA \N$ such that $l_{\tilde{\X}} = l_\X\circ \pi_\X$.
    This map is continuous by Lemma~\ref{lem:continuity}.
    Set $k_\X := d_\X - l_\X$.
    Then $k_\X$ is a continuous map satisfying $d_\X = l_\X - k_\X$ and $\sigma_\Y^{l(x)}(h(x)) = \sigma_\Y^{k(x)}(h(\sigma_\X(x)))$ for $x\in \X$.
    A similar argument shows that there are continuous maps $l_\Y, k_\Y\colon \Y\LRA \N$ satisfying $d_\Y = l_\Y - k_\Y$ and 
    $\sigma_\X^{l_\Y(y)}(h^{-1}(y)) = \sigma_\X^{k_\Y(y)}(h^{-1}(\sigma_\Y(y)))$ for $y\in \Y$. 
    Then $(h,l_\X,k_\X,l_\Y,k_\Y)$ is a continuous orbit equivalence. 

    Finally, we show that $(h,l_\X,k_\X,l_\Y,k_\Y)$ is stabilizer-preserving.
    Observe first that an argument similar to the one used in the proof of~\cite[Lemma 8.6]{CRST} shows that $(\tilde{h},l_{\tilde{\X}},k_{\tilde{\X}},l_{\tilde{\Y}},k_{\tilde{\Y}})$ is stabilizer-preserving.
    Fix an eventually periodic element $x\in \X$.
    Then $\tilde{x} = \iota_\X(x)\in \tilde{\X}$ is eventually periodic, so $\tilde{h}(\tilde{x})$ is eventually periodic.
    Hence $h(x) = \pi_\Y(\tilde{h}(\tilde{x}))\in \Y$ is eventually periodic.
    Now suppose $x$ is periodic with $\lp(x) = p$.
    Then $\iota_\X(x)\in \tilde{\X}$ is periodic with $\lp(\iota_\X(x)) = p$.
    Since $\tilde{h}(\iota_\X(x)) = \iota_\Y(h(x))$ we also have $\lp(h(x)) = \lp(\tilde{h(\tilde{x})})$,
    and using that $\tilde{h}$ is stabilizer-preserving in the middle equality below we see that
    \[
        |l_\X^{(p)}(x) - k_\X^{(p)}(x)| = |l_{\tilde{\X}}^{(p)}(\tilde{x}) - k_{\tilde{\X}}^{(p)}(\tilde{x})| = \lp(\tilde{h}(\tilde{x})) = \lp(h(x))
    \]
    which shows that $(h,l_\X,k_\X,l_\Y,k_\Y)$ is stabilizer-preserving.

    (iii) $\implies$ (iv):
    It follows from~\cite[Theorem 6.2]{CRST} that there is $^*$-isomorphism $\Phi\colon \OO_\X\LRA \OO_\Y$ such that $\Phi(\mathcal{D}_\X) = \mathcal{D}_\Y$, 
    $\Phi(f) = f\circ\tilde{h}^{-1}$ for $f\in\mathcal{D}_\X$, and $\Phi\circ\gamma^\X_z = \beta^{\kappa_\Y(d_\Y)}_z\circ\Phi$ for each $z\in\mathbb{T}$. 
    Since $h\circ\pi_\X = \pi_\Y\circ\Phi^{(0)}$, it follows that $\Phi(C(\X)) = C(\Y)$ and $\Psi(f) = f\circ h^{-1}$ for $f\in C(\X)$. 
    Similarly, there is a $^*$-isomorphism $\Phi'\colon \OO_\Y\LRA \OO_\X$ such that $\Phi'(C(\Y)) = C(\X)$, $\Phi'(f) = f\circ h$ for $f\in C(\Y)$, 
    and $\Phi'\circ\gamma^\Y_z = \beta^{\kappa_\X(d_\X)}_z\circ\Phi'$ for each $z\in\mathbb{T}$.

    (iv) $\implies$ (iii):
    An application of~\cite[Theorem 6.2]{CRST} shows that there is a groupoid isomorphism $\Psi\colon \G_\X\LRA \G_\Y$ such that $\kappa_\X(d_\X) = \kappa_\Y(1)\circ\Psi$. 
    Since $\Phi(C(\X)) = C(\Y)$ and $\Psi(f) = f\circ h^{-1}$ for $f\in C(\X)$, it follows that $h\circ\pi_\X = \pi_\Y\circ\Psi^{(0)}$. 
    Similarly, there is a groupoid isomorphism $\Psi'\colon\G_\Y\LRA \G_\X$ such that $h^{-1}\circ\pi_\Y = \pi_\X\circ(\Psi')^{(0)}$ and $\kappa_\Y(d_\Y) = \kappa_\X(1)\circ\Psi'$.
\end{proof}

\begin{remark}\label{rem:stab-preserving}
  It is natural to ask if in Theorem~\ref{thm:coe}(iii) the groupoid isomorphisms $\Psi$ and $\Psi'$ can be chosen to be inverses of each other,
  and if in (iv) the $^*$-isomorphisms $\Phi$ and $\Phi'$ can be chosen to be inverses of each other.
  This is not the case in general.

  Let $\X = \Y$ be the shift space with only one point.
  Then $(\id,1,0,0,1)$ is a stabilizer-preserving continuous orbit equivalence from $\X$ to $\Y$ (which is not least period preserving), where $d_\X = 1$ and $d_\Y = -1$.
  The groupoid $\G_\X$ is canonically isomorphic to the integer group $\Z$.
  If $\Psi\colon \G_\X \LRA \G_\Y$ is a group isomorphism, then the conditions $\kappa_\X(d_\X) = \kappa_\Y(1)\circ\Psi$ and $\kappa_\Y(d_\Y) = \kappa_\X(1)\circ\Psi^{-1}$ 
  imply that $\Psi$ maps the generator $1$ to both $1$ and $-1$ and this cannot be the case.
  Similarly, there is no $^*$-isomorphism $\Phi\colon \OO_\X \LRA \OO_\Y$ satisfying $\Phi(C(\X)) = C(\Y)$ and $\Phi\circ \gamma_z^\X = \beta_z^{\kappa_\Y(d_\Y)}\circ\Phi$ 
  and $\Phi^{-1}\circ\gamma_z^\Y = \beta_z^{\kappa_\X(d_\X)}\circ \Phi^{-1}$.
    
  We do not know if there are similar examples where $\X$ and $\Y$ are not of finite type.
\end{remark}

In~\cite[p. 61]{Mat2010b} (see also~\cite[p. 2]{Mat2019}), Matsumoto introduces the notion of a continuous orbit equivalence 
between factor maps of two one-sided shift spaces $\X$ and $\Y$ satisfying condition (I)
(implying that the groupoids $\G_\X$ and $\G_\Y$ are effective).
His factor maps can be more general than our $\pi_\X$ and $\pi_\Y$.
In this case, he proves a result (\cite[Theorem 1.2]{Mat2010b} and~\cite[Theorem 1.3]{Mat2019}) which is similar to the theorem below.
Our results applies to all one-sided shifts.

\begin{theorem}\label{thm:coe-cover}
    Let $\X$ and $\Y$ be one-sided shift spaces and let $h\colon \X\LRA \Y$ be a homeomorphism.
    The following conditions are equivalent:
    \begin{enumerate}
        \item[(i)] there is a stabilizer-preserving continuous orbit equivalence $\tilde{h}\colon \tilde{\X}\LRA \tilde{\Y}$ 
            satisfying $h\circ \pi_\X = \pi_\Y\circ \tilde{h}$;
        \item[(ii)] there is a groupoid isomorphism $\Psi\colon \G_\X\LRA \G_\Y$ satisfying $h\circ \pi_\X = \pi_\Y\circ \Psi^{(0)}$;
        \item[(iii)] there is a $^*$-isomorphism $\Phi\colon \OO_\X\LRA \OO_\Y$ satisfying $\Phi(C(\X)) = C(\Y)$ with $\Phi(f) = f\circ h^{-1}$ for $f\in C(\X)$.
    \end{enumerate}
    Moreover, if $h\colon \X\LRA \Y$ is a stabilizer-preserving continuous orbit equivalence, then the equivalent conditions above hold.
\end{theorem}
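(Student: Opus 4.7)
The plan is to prove (i) $\iff$ (ii) $\iff$ (iii) by invoking the groupoid reconstruction machinery from~\cite{CRST} applied to the covers $(\tilde{\X}, \sigma_{\tilde{\X}})$ and $(\tilde{\Y}, \sigma_{\tilde{\Y}})$, which are Deaconu--Renault systems. The moreover statement will then follow immediately from the lifting lemma (Lemma~\ref{lem:lift-coe}). The proof parallels that of Theorem~\ref{thm:coe} but is substantially simpler, since no cocycles are specified and, in particular, there is no need to verify identities such as $\kappa_\X(d_\X) = \kappa_\Y(1)\circ\Psi$.

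For (i) $\iff$ (ii) I would apply~\cite[Proposition 8.3]{CRST} directly to the covers: since $\sigma_{\tilde{\X}}$ and $\sigma_{\tilde{\Y}}$ are local homeomorphisms, stabilizer-preserving continuous orbit equivalences $\tilde{h}\colon \tilde{\X}\LRA \tilde{\Y}$ are in bijection with groupoid isomorphisms $\Psi\colon \G_\X\LRA \G_\Y$ via $\tilde{h} = \Psi^{(0)}$. Under this identification, the compatibility $h\circ \pi_\X = \pi_\Y\circ \tilde{h}$ on the one side translates verbatim to $h\circ \pi_\X = \pi_\Y\circ \Psi^{(0)}$ on the other.

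For (ii) $\implies$ (iii), a groupoid isomorphism $\Psi\colon \G_\X\LRA \G_\Y$ induces a $^*$-isomorphism $\Phi\colon \OO_\X\LRA \OO_\Y$ mapping $\D_\X = C(\tilde{\X})$ onto $\D_\Y = C(\tilde{\Y})$ by $\Phi(f) = f\circ (\Psi^{(0)})^{-1}$. Under Gelfand duality, the compatibility $h\circ \pi_\X = \pi_\Y\circ \Psi^{(0)}$ forces $\Phi(g\circ\pi_\X) = g\circ h^{-1}\circ\pi_\Y$ for $g\in C(\X)$, hence $\Phi(C(\X)) = C(\Y)$ with $\Phi(g) = g\circ h^{-1}$. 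Conversely, for (iii) $\implies$ (ii), Theorem~\ref{thm:diagonal-preserving} upgrades the hypothesis $\Phi(C(\X)) = C(\Y)$ to $\Phi(\D_\X) = \D_\Y$; then~\cite[Theorem 6.2]{CRST} produces a groupoid isomorphism $\Psi\colon \G_\X\LRA \G_\Y$ whose induced map $\Psi^{(0)} = \tilde{h}\colon \tilde{\X}\LRA \tilde{\Y}$ satisfies $\Phi(f) = f\circ \tilde{h}^{-1}$ on $\D_\X$. The identity $\Phi(g) = g\circ h^{-1}$ on $C(\X)$ then forces $h\circ \pi_\X = \pi_\Y\circ \tilde{h}$. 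The moreover clause is an immediate application of Lemma~\ref{lem:lift-coe}: a stabilizer-preserving continuous orbit equivalence $h\colon \X\LRA \Y$ lifts to one on the covers compatible with $\pi_\X$ and $\pi_\Y$, which is exactly (i).

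The main obstacle is the step (iii) $\implies$ (ii): one must pass from a $^*$-isomorphism preserving only the subalgebra $C(\X) \cong C(\X)$ (corresponding to the shift space itself, not to the cover) to one preserving the full diagonal $\D_\X = C(\tilde{\X})$. This passage is precisely the content of Theorem~\ref{thm:diagonal-preserving}, whose proof relies on the isotropy analysis in Lemmas~\ref{lem:isotropy-commutator} and~\ref{lem:isotropy-spectrum}. Without that upgrade, one would be stuck reconstructing data at the level of $\X$ rather than of the cover $\tilde{\X}$, and the groupoids $\G_\X$ and $\G_\Y$ would be out of reach.
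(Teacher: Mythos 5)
Your proposal is correct and follows essentially the same route as the paper: (i)~$\iff$~(ii) via the correspondence in \cite[Proposition 8.3, Lemmas 8.5, 8.6 and 8.8]{CRST} between stabilizer-preserving continuous orbit equivalences of the covers and groupoid isomorphisms, (ii)~$\iff$~(iii) via the reconstruction theorem of \cite{CRST} combined with Theorem~\ref{thm:diagonal-preserving} to upgrade $\Phi(C(\X))=C(\Y)$ to $\Phi(\D_\X)=\D_\Y$, and the moreover clause from Lemma~\ref{lem:lift-coe}. The only cosmetic difference is that the paper invokes \cite[Theorem 8.2]{CRST} (the Deaconu--Renault specialization) rather than \cite[Theorem 6.2]{CRST} for (ii)~$\iff$~(iii), which changes nothing of substance.
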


\begin{proof}
    (i)$\implies$(ii):
    Let  $\tilde{h}\colon \tilde{\X}\LRA \tilde{\Y}$ be a continuous orbit equivalence 
    and let $k_{\tilde{\X}}, l_{\tilde{\X}}\colon \tilde{\X} \LRA \N$ be continuous cocycles for $\tilde{h}$.
    There is a groupoid homomorphism $\Phi\colon \G_\X\LRA \G_\Y$ given by
    \[
        \Phi(\tilde{x}, m - n, \tilde{y}) = 
        \big( \tilde{h}(\tilde{x}), l_{\tilde{\X}}^{(m)}(\tilde{x}) - k_{\tilde{\X}}^{(m)}(\tilde{x}) - l_{\tilde{\X}}^{(n)}(\tilde{y}) + k_{\tilde{\X}}^{(n)}(\tilde{y}),
        \tilde{h}(\tilde{y})) \big),
    \]
    for $(\tilde{x}, m - n, \tilde{y})\in \G_\X$, 
    The assumption that $\tilde{h}$ be stabilizer-preserving ensures that $\Phi$ is bijective, cf.~\cite[Lemma 8.8 and Proposition 8.3]{CRST}.

    (ii)$\implies$(i):
    A groupoid isomorphism $\Phi\colon \G_\X\LRA \G_\Y$ restricts to a homeomorphism $\tilde{h} = \Phi^{(0)}\colon \tilde{\X}\LRA \tilde{\Y}$.
    If $c_\X$ is the canonical cocycle for $\G_\X$ and $c_\Y$ is the canonical cocycle for $\G_\Y$, then the maps 
    \begin{align*}
        l_{\tilde{\X}}(\tilde{x}) &= 
        \min \{ 
            l\in \N \mid \sigma_{\tilde{\Y}}^{l}(\tilde{h}(\tilde{x})) = 
            \sigma_{\tilde{\Y}}^{l - c_\Y\circ \Phi(\tilde{x}, 1, \sigma_{\tilde{\X}}(\tilde{x}))}(\tilde{h}(\sigma_{\tilde{\X}}(\tilde{x})))\}, \\
        k_{\tilde{\X}}(\tilde{x}) &= l_{\tilde{\X}}(\tilde{x}) - c_\Y\circ \Phi(\tilde{x}, 1, \sigma_{\tilde{\X}}(\tilde{x})),\\
        l_{\tilde{\Y}}(\tilde{y}) &= \min \{ 
            l\in \N \mid \sigma_{\tilde{\X}}^{l}(\tilde{h}^{-1}(\tilde{y})) = 
            \sigma_{\tilde{\X}}^{l - c_\X\circ \Phi^{-1}(\tilde{y}, 1, \sigma_{\tilde{\Y}}(\tilde{y}))}(\tilde{h}^{-1}(\sigma_{\tilde{\Y}}(\tilde{y})))\}, \\
            k_{\tilde{\Y}}(\tilde{y}) &= l_{\tilde{\Y}}(\tilde{y}) - c_\X\circ \Phi^{-1}(\tilde{y}, 1, \sigma_{\tilde{\Y}}(\tilde{y}))
    \end{align*}
    constitute continuous cocycles for $\tilde{h}$ such that $(\tilde{h},l_{\tilde{\X}},k_{\tilde{\X}},l_{\tilde{\Y}},k_{\tilde{\Y}})$ 
    is a stabilizer-preserving continuous orbit equivalence, cf.~\cite[Lemma 8.5 and Lemma 8.6]{CRST}.
    The condition $h\circ \pi_\X = \pi_\Y\circ \Phi^{(0)}$ implies that $h\circ \pi_\X = \pi_\Y\circ \tilde{h}$.

    The equivalence (ii) $\iff$ (iii) is~\cite[Theorem 8.2]{CRST}.
    If $\Phi\colon \G_\X\LRA \G_\Y$ is a groupoid isomorphism, then the condition $h\circ \pi_\X = \pi_\Y\circ \Phi^{(0)}$ translates to the condition 
    that the $^*$-isomorphism $\Psi\colon \OO_\X\LRA \OO_\Y$ satisfies $\Psi(C(\X)) = C(\Y)$.
    The latter condition implies that $\Psi(\D_\X) = \D_\Y$ by Theorem~\ref{thm:diagonal-preserving}.

    The final remark follows from Lemma~\ref{lem:lift-coe}.
\end{proof}

\begin{remark}
We do not know if there exist shift spaces $\X$ and $\Y$ such that the conditions in Theorem~\ref{thm:coe-cover} are satisfied, 
but there is no stabilizer-preserving continuous orbit equivalence between $\X$ and $\Y$. 
\end{remark}

Next, we show that we can relax the conditions in Theorem~\ref{thm:coe} for certain classes of shift spaces.
First we need a preliminary result concerning eventually periodic points.

In~\cite[Proposition 3.5]{MM-zeta}, Matsumoto and Matui show that any continuous orbit equivalence between shift spaces containing 
a dense set of aperiodic points maps eventually periodic points to eventually periodic points.
The result is only stated for shifts of finite type associated with irreducible and nonpermutation $\{0,1\}$-matrices,
but the proof holds in this generality, cf.~\cite[Remark 3.1]{CEOR}.
Below, we give a \emph{pointwise} version of this result applicable to all shift spaces.
We do not know if any continuous orbit equivalence between shift spaces preserves eventually periodic points,
but we show that this problem hinges on whether there exists a continuous orbit equivalence which maps an aperiodic isolated point 
to an eventually periodic isolated point.

\begin{proposition}\label{prop:nonisolated-ev-per}
    Let $\X$ and $\Y$ be one-sided shift spaces and let $h\colon \X\LRA \Y$ be a continuous orbit equivalence.
    Then $h$ maps nonisolated eventually periodic points to nonisolated eventually periodic points.
\end{proposition}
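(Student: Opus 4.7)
The plan is to show first that $h(x)$ is eventually periodic; the nonisolated conclusion is then automatic since $h$ is a homeomorphism, and homeomorphisms preserve the property of being isolated. I would then reduce to the case where $x$ itself is periodic: if $\sigma_\X^M(x)$ is periodic and $x_n \to x$ is a sequence of distinct points, then having $\sigma_\X^M(x_n) = \sigma_\X^M(x)$ combined with $(x_n)_{[0, M)} = x_{[0, M)}$ for large $n$ would force $x_n = x$, a contradiction. Hence $\sigma_\X^M(x)$ is again nonisolated, and the cocycle identity
\[
    \sigma_\Y^{l_\X^{(M)}(x)}(h(x)) = \sigma_\Y^{k_\X^{(M)}(x)}(h(\sigma_\X^M(x)))
\]
shows that eventual periodicity of $h(\sigma_\X^M(x))$ transfers to eventual periodicity of $h(x)$.

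Assume now that $x$ is $p$-periodic with $p = \lp(x)$. Iterating the cocycle equation at $x$ yields
\[
    \sigma_\Y^{l_\X^{(p)}(x)}(h(x)) = \sigma_\Y^{k_\X^{(p)}(x)}(h(\sigma_\X^p(x))) = \sigma_\Y^{k_\X^{(p)}(x)}(h(x)),
\]
so if $l_\X^{(p)}(x) \neq k_\X^{(p)}(x)$ then $h(x)$ is eventually periodic and we are done. The main obstacle is to rule out the degenerate case $l_\X^{(p)}(x) = k_\X^{(p)}(x)$, in which the cocycle equation at $x$ alone contains no information about the periodicity of $h(x)$; here the nonisolated hypothesis must be exploited via nearby points.

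Suppose for contradiction that $l_\X^{(p)}(x) = k_\X^{(p)}(x) = l$. By continuity of $l_\X^{(p)}$ and $k_\X^{(p)}$, I fix $N \geq p$ so that both functions take the constant value $l$ on the cylinder $Z_\X(x_{[0, N)})$. This cylinder must contain a point which is not $p$-periodic: any $p$-periodic $x' \in Z_\X(x_{[0, N)})$ satisfies $x' = (x'_{[0, p)})^\infty = (x_{[0, p)})^\infty = x$, so if every element were $p$-periodic the cylinder would collapse to $\{x\}$, contradicting the nonisolation of $x$. Hence I can choose a sequence $x_n \in Z_\X(x_{[0, n)})$ with $\sigma_\X^p(x_n) \neq x_n$ for all $n \geq N$, giving $x_n \to x$ with each $x_n$ not $p$-periodic.

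Applying the cocycle equation to each $x_n$ gives $\sigma_\Y^l(h(x_n)) = \sigma_\Y^l(h(\sigma_\X^p(x_n)))$, so $h(x_n)$ and $h(\sigma_\X^p(x_n))$ agree from coordinate $l$ onwards. Both $h(x_n) \to h(x)$ and $h(\sigma_\X^p(x_n)) \to h(x)$ by continuity of $h$ and $\sigma_\X^p$ together with $\sigma_\X^p(x) = x$. Hence for large $n$ the two sequences also agree on the initial segment of length $l$, forcing $h(x_n) = h(\sigma_\X^p(x_n))$. Injectivity of $h$ yields $x_n = \sigma_\X^p(x_n)$, contradicting the choice of $x_n$. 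Therefore $l_\X^{(p)}(x) \neq k_\X^{(p)}(x)$, and the argument of the second paragraph completes the proof.
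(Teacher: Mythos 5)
Your proof is correct and follows essentially the same route as the paper: reduce to nonisolated periodic points, observe that the cocycle identity at $x$ gives eventual periodicity of $h(x)$ whenever $l_\X^{(p)}(x)\neq k_\X^{(p)}(x)$, and rule out equality by taking nearby non-$p$-periodic points $x_n\to x$, on which the constant cocycle values force $h(x_n)=h(\sigma_\X^p(x_n))$ and hence $x_n=\sigma_\X^p(x_n)$, contradicting their choice. The only difference is cosmetic: you spell out the reduction from eventually periodic to periodic points (which the paper asserts in one line) and phrase the key step as a direct contradiction rather than as the paper's contrapositive ``if $l_\X^{(p)}(x)=k_\X^{(p)}(x)$ then $x$ is isolated.''
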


\begin{proof}
    It suffices to verify the claim for nonisolated periodic points.
    Suppose $x = \alpha^\infty\in \X$ for some word $\alpha\in \LL(\X)$ with $|\alpha| = p\in \N_+$,
    and let ${(x_n)}_n$ be a sequence in $\X$ converging to $x$.
    We may assume that $x_n \in Z_\X(\alpha)$ for all $n$.

    Suppose now that $k := k_\X^{(p)}(x) = l_\X^{(p)}(x)$.
    The cocycles $k_\X, l_\X\colon \X\LRA \N$ for $h$ are continuous, so there exists $N\in \N$ such that $k_\X^{(p)}(x_n) = l_\X^{(p)}(x_n) = k$ for $n\geq N$.
    In particular,
    \begin{align}\label{eq:nonisolated-relation}
        \sigma_\Y^k( h( x_n)) = \sigma_\Y^k( h( \sigma_\X^{p}( x_n))),
    \end{align}
    for $n\geq N$.
    The sequences ${h(x_n)}_n$ and ${(h(\sigma_\X^p(x_n)))}_n$ both converge to $h(x)$ in $\Y$,
    so there exists an integer $M\geq N$ such that 
    \[
        {h(x_n)}_{[0, k)} = {h(x)}_{[0, k)} = {h(\sigma_\X^p(x_n))}_{[0, k)},
    \]
    whenever $n\geq M$.
    This together with~\eqref{eq:nonisolated-relation} means that $h(x_n) = h(\sigma_\X^p(x_n))$ and hence $x_n = \sigma_\X^p(x_n)$, for $n\geq M$.
    Since $x_n \in Z_\X(\alpha)$, the sequence ${(x_n)}_n$ is eventually equal to $x$, so we conclude that $x$ is an isolated point.
    If $x$ is not isolated, then $l_\X^{(p)}(x) \neq k_\X^{(p)}(x)$, and the observation
    \[
        \sigma_\Y^{l_\X^{(p)}(x)}( h(x)) = \sigma_\Y^{k_\X^{(p)}(x)}( h( \sigma_\X^{(p)}( x))) = \sigma_\Y^{k_\X^{(p)}(x)}( h(x))
    \]
    shows that $h(x)$ is eventually periodic.
\end{proof}

A similar result holds for covers.

Since any homeomorphism respects isolated points, we obtain the corollary below.
Sofic shifts contain no aperiodic isolated points (cf.~Lemma~\ref{lem:isolated-points-sofic}), so this result resolves the problem for this class of shift spaces.

\begin{corollary}
    Let $\X$ and $\Y$ be one-sided shift spaces 
    either containing no aperiodic isolated points or no isolated eventually periodic points,
    then any continuous orbit equivalence $h\colon \X\LRA \Y$ maps eventually periodic points to eventually periodic points.
    In particular, this applies to sofic shift spaces.
\end{corollary}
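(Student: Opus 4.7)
The plan is to reduce everything to Proposition~\ref{prop:nonisolated-ev-per} via a short case analysis on whether a given eventually periodic point is isolated. Fix an eventually periodic point $x\in\X$ and split into two cases according to whether $x$ is an isolated point of $\X$.

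If $x$ is not isolated in $\X$, then Proposition~\ref{prop:nonisolated-ev-per} applies directly and yields that $h(x)$ is a nonisolated eventually periodic point of $\Y$, and there is nothing more to do. If instead $x$ is isolated in $\X$, then $h(x)$ is isolated in $\Y$ because $h\colon\X\LRA\Y$ is a homeomorphism. At this point the hypothesis on the two shift spaces kicks in: if $\X$ has no isolated eventually periodic points, this subcase is vacuous and never occurs; while if $\Y$ has no aperiodic isolated points, then the isolated point $h(x)\in\Y$ cannot be aperiodic, hence must be eventually periodic. In both alternatives of the hypothesis we conclude that $h(x)$ is eventually periodic.

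The ``in particular'' claim for sofic shifts is immediate from Lemma~\ref{lem:isolated-points-sofic}, which states that every isolated point of a sofic shift space is eventually periodic --- equivalently, a sofic shift has no aperiodic isolated points --- so the second alternative of the hypothesis is satisfied whenever $\X$ and $\Y$ are sofic. The genuine content has already been established in Proposition~\ref{prop:nonisolated-ev-per}; the corollary is essentially bookkeeping that combines that proposition with the elementary fact that homeomorphisms send isolated points to isolated points, and I do not expect any real obstacle beyond cleanly writing out the two-line case split.
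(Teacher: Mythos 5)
Your proof is correct and is exactly the argument the paper intends: the paper derives the corollary from Proposition~\ref{prop:nonisolated-ev-per} with the one-line observation that homeomorphisms respect isolated points, and invokes Lemma~\ref{lem:isolated-points-sofic} for the sofic case, precisely as you do. The only thing you add is making the two-case split and the role of each disjunct of the hypothesis explicit, which the paper leaves implicit.
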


If $\X$ and $\Y$ contain no periodic points isolated in past equivalence,
then $\X$ and $\Y$ as well as the covers $\tilde{\X}$ and $\tilde{\Y}$ contain dense sets of aperiodic points.
Hence the condition that a continuous orbit equivalence be stabilizer-preserving is superfluous.

\begin{theorem}\label{thm:coe-essentially-principal}
    Let $\X$ and $\Y$ be one-sided shift spaces with no periodic points isolated in past equivalence
    and let $h\colon \X\LRA \Y$ be a homeomorphism.
    The following are equivalent:
    \begin{enumerate}
        \item[(i)] the map $h\colon \X\LRA \Y$ is a continuous orbit equivalence;
        \item[(ii)] there is a continuous orbit equivalence $\tilde{h}\colon \tilde{\X}\LRA \tilde{\Y}$ satisfying $h\circ \pi_\X = \pi_\Y\circ \tilde{h}$;
        \item[(iii)] there is a groupoid isomorphism $\Psi\colon \G_\X \LRA \G_\Y$ satisfying $h\circ \pi_\X = \pi_\Y\circ \Psi^{(0)}$;
        \item[(iv)] there is a $^*$-isomorphism $\Phi\colon \OO_\X \LRA \OO_\Y$ satisfying $\Phi(C(\X)) = C(\Y)$ and $\Phi(f) = f\circ h^{-1}$ for $f\in C(\X)$.
    \end{enumerate}
\end{theorem}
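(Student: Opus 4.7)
The hypothesis on $\X$ and $\Y$ translates, via Proposition~\ref{prop:essentially-principal}, into the equivalent statements that $\G_\X$ and $\G_\Y$ are effective and that both $\X, \Y$ and the covers $\tilde\X, \tilde\Y$ contain dense sets of aperiodic points. This density is the crucial feature exploited throughout the argument.

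The equivalence (iii)$\iff$(iv) is immediate from Theorem~\ref{thm:coe-cover} (the equivalence of its (ii) and (iii)), which is valid without any hypothesis on $\X$ and $\Y$. The implication (iii)$\implies$(ii) also follows directly from Theorem~\ref{thm:coe-cover}: its (ii)$\implies$(i) produces a stabilizer-preserving continuous orbit equivalence on the covers, which in particular is a continuous orbit equivalence.

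For (i)$\implies$(iv), the plan is to show that under the hypothesis every continuous orbit equivalence $(h, l_\X, k_\X, l_\Y, k_\Y)$ is automatically stabilizer-preserving, and then invoke Theorem~\ref{thm:coe}(i)$\implies$(iv) with $d_\X := l_\X - k_\X$ and $d_\Y := l_\Y - k_\Y$ to obtain the required $^*$-isomorphism. Since aperiodic points are dense in $\X$, every periodic point of $\X$ is nonisolated, so Proposition~\ref{prop:nonisolated-ev-per} guarantees that $h$ sends periodic (and hence eventually periodic) points to eventually periodic points; the same applies to $h^{-1}$. For a periodic $x\in\X$ with $\lp(x) = p$, the cocycle identity forces $\delta := l_\X^{(p)}(x) - k_\X^{(p)}(x)$ into $\Stab(h(x))$, so $\lp(h(x))$ divides $|\delta|$. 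To rule out $|\delta|$ being a proper multiple of $\lp(h(x))$, one approximates $x$ by aperiodic points $x_n$, uses continuity of the cocycles to keep $\delta$ locally constant, and exploits the fact that $h(x_n)\neq h(\sigma_\X^p(x_n))$ to derive a contradiction in the spirit of Proposition~\ref{prop:nonisolated-ev-per}.

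The final implication (ii)$\implies$(i) requires descending a continuous orbit equivalence $\tilde h\colon\tilde\X\to\tilde\Y$, satisfying $h\circ\pi_\X = \pi_\Y\circ\tilde h$, to $\X$ and $\Y$. The plan is to show, again exploiting dense aperiodic points together with Lemma~\ref{lem:isotropy-lemma}(ii), that the continuous map $d_{\tilde\X} := l_{\tilde\X} - k_{\tilde\X}\colon \tilde\X\to\Z$ factors as $d_\X\circ\pi_\X$ for some continuous $d_\X\colon\X\to\Z$, and analogously on the $\Y$-side via Lemma~\ref{lem:continuity}. Combined with the groupoid isomorphism $\G_\X\to\G_\Y$ induced by $\tilde h$ (the construction from the proof of Theorem~\ref{thm:coe-cover}(i)$\implies$(ii) going through once effectiveness compensates for the a priori lack of stabilizer-preservation), the data $(d_\X, d_\Y)$ feeds into Theorem~\ref{thm:coe}(iii)$\implies$(i) to produce continuous cocycles $l_\X, k_\X$ and $l_\Y, k_\Y$ with $d_\X = l_\X - k_\X$ and $d_\Y = l_\Y - k_\Y$, realising $h$ as a continuous orbit equivalence. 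The main obstacles are (a) the automatic stabilizer-preservation in (i)$\implies$(iv) and (b) the descent of cocycles in (ii)$\implies$(i); both rely on the density of aperiodic points supplied by Proposition~\ref{prop:essentially-principal} and the continuity-of-integer-valued-map arguments used in Proposition~\ref{prop:nonisolated-ev-per}.
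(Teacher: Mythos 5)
Your logical skeleton ((i)$\Rightarrow$(iv)$\Rightarrow$(iii)$\Rightarrow$(ii)$\Rightarrow$(i)) is sound, and the steps (iii)$\iff$(iv), (iii)$\Rightarrow$(ii) and the descent idea behind (ii)$\Rightarrow$(i) are essentially what the paper does. The genuine gap is in (i)$\Rightarrow$(iv): you reduce everything to the claim that under the standing hypothesis every continuous orbit equivalence $(h,l_\X,k_\X,l_\Y,k_\Y)$ is automatically stabilizer-preserving, but the argument you offer for this does not deliver it. The approximation-by-aperiodic-points argument in the style of Proposition~\ref{prop:nonisolated-ev-per} only yields that $\delta:=l_\X^{(p)}(x)-k_\X^{(p)}(x)\neq 0$, hence that $\lp(h(x))$ divides $|\delta|$; it is a purely local argument and gives no mechanism for excluding $|\delta|=m\,\lp(h(x))$ with $m\geq 2$, since the relation $\sigma_\Y^{k_\X^{(p)}(x_n)+mq}(h(x_n))=\sigma_\Y^{k_\X^{(p)}(x_n)}(h(\sigma_\X^p(x_n)))$ is perfectly consistent for nearby aperiodic $x_n$ whatever $m$ is. Pinning down $|\delta|=\lp(h(x))$ requires global information, e.g.\ composing with the cocycles of $h^{-1}$ and using that the cocycle difference of the identity orbit equivalence is forced to equal $1$ on a space with dense aperiodic points (equivalently, that the induced map on isotropy groups sends generators to generators). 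Neither appears in your sketch, and the paper itself is careful never to assert this automatic stabilizer-preservation; it only records the analogous fact for shifts of finite type with no isolated points via the proof of \cite[Theorem 5.11]{MM-zeta}.

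The paper's route avoids the issue entirely: it proves (i)$\Rightarrow$(ii) by running the lifting construction of Lemma~\ref{lem:lift-coe} directly, then (ii)$\Rightarrow$(iii) by checking that the map $(\tilde{x},m-n,\tilde{y})\mapsto\big(\tilde{h}(\tilde{x}),\,l_{\tilde{\X}}^{(m)}(\tilde{x})-k_{\tilde{\X}}^{(m)}(\tilde{x})-l_{\tilde{\X}}^{(n)}(\tilde{y})+k_{\tilde{\X}}^{(n)}(\tilde{y}),\,\tilde{h}(\tilde{y})\big)$ is a groupoid isomorphism, with effectiveness (dense aperiodic points in $\tilde{\X}$) substituting for stabilizer-preservation precisely where bijectivity would otherwise fail, and only then passes to $\mathrm{C^*}$-algebras via \cite[Theorem 8.2]{CRST} together with Theorem~\ref{thm:diagonal-preserving}. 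If you want to keep your architecture you must either supply the composition argument for automatic stabilizer-preservation in full, or replace (i)$\Rightarrow$(iv) by (i)$\Rightarrow$(ii)$\Rightarrow$(iii) along the paper's lines.
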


\begin{proof}
    (i) $\implies$ (ii):
    Suppose $h\colon \X\LRA \Y$ is a continuous orbit equivalence with continuous cocycles $k_\X, l_\X\colon \X\LRA \N$.
    Since $\X$ and $\Y$ contain no periodic points isolated in past equivalence, it follows from Proposition~\ref{prop:essentially-principal} that 
    $\X$ and $\Y$ contain dense sets of aperiodic points.
    The proof of Theorem~\ref{thm:coe} (i) $\implies$ (ii) shows that there is a continuous orbit equivalence $\tilde{h}\colon \tilde{\X} \LRA \tilde{\Y}$
    with continuous cocycles $k_{\tilde{\X}} = k_\X\circ \pi_\X$ and $l_{\tilde{\X}} = l_\X\circ \pi_\X$ which satisfies $h\circ \pi_\X = \pi_\Y\circ \tilde{h}$.

    (ii) $\implies$ (iii):
    Let  $\tilde{h}\colon \tilde{\X}\LRA \tilde{\Y}$ be a continuous orbit equivalence 
    and let $k_{\tilde{\X}}, l_{\tilde{\X}}\colon \tilde{\X} \LRA \N$ be continuous cocycles for $\tilde{h}$.
    The map $\Phi\colon \G_\X\LRA \G_\Y$ given by
    \[
        \Phi(\tilde{x}, m - n, \tilde{y}) = 
        \big( \tilde{h}(\tilde{x}), l_{\tilde{\X}}^{(m)}(\tilde{x}) - k_{\tilde{\X}}^{(m)}(\tilde{x}) - l_{\tilde{\X}}^{(n)}(\tilde{y}) + k_{\tilde{\X}}^{(n)}(\tilde{y}),
        \tilde{h}(\tilde{y})) \big),
    \]
    for $(\tilde{x}, m - n, \tilde{y})\in \G_\X$, is a groupoid isomorphism satisfying $h\circ \pi_\X = \pi_\Y\circ \Psi^{(0)}$.

    (iii) $\implies$ (i):
    Let $\Psi\colon \G_\X \LRA \G_\Y$ be a groupoid isomorphism satisfying $h\circ \pi_\X = \pi_\Y\circ \Psi^{(0)}$.
    Then $\tilde{h} := \Psi^{(0)}\colon \tilde{\X} \LRA \tilde{\Y}$ is a continuous orbit equivalence with continuous cocycles 
    $l_{\tilde{\X}}, k_{\tilde{\X}}\colon \tilde{\X} \LRA \N$ and $l_{\tilde{\Y}}, k_{\tilde{\Y}}\colon \tilde{\Y} \LRA \N$
    given as in the proof of Theorem~\ref{thm:coe-cover} (ii) $\implies$ (i).
    We will show that there are continuous maps $l_\X, k_\X\colon \X\LRA \N$ and $l_\Y, k_\Y\colon \Y\LRA \N$
    which are continuous cocycles for $h$ such that $l_{\tilde{\X}} = l_\X\circ \pi_\X$, $k_{\tilde{\X}} = k_\X\circ \pi_\X$,
    $l_{\tilde{\Y}} = l_\Y\circ \pi_\Y$ and $k_{\tilde{\Y}} = k_\Y\circ \pi_\Y$.
    By Proposition~\ref{prop:essentially-principal}, $\X$ and $\Y$ have dense sets of aperiodic points, 
    so it suffices to show that $l_{\tilde{\X}}$ and $k_{\tilde{\X}}$ are constant on $\pi_\X^{-1}(x)$ for an aperiodic $x\in \X$.

    Let $x\in \X$ be aperiodic and take $\tilde{x}, \tilde{x}'\in \pi_\X^{-1}(x)$.
    Set $c_{\tilde{\X}} = l_{\tilde{\X}} - k_{\tilde{\X}}$ and $k := \max\{ k_{\tilde{\X}}(\tilde{x}), k_{\tilde{\X}}(\tilde{x}')\}$.
    If $x$ is isolated, then $\pi_\X^{-1}(x)$ is a singleton, so we may assume that $x$ is not isolated.
    Since $\sigma_{\tilde{\Y}}^{c_{\tilde{\X}}(\tilde{x}) + k}(\tilde{h}(\tilde{x})) = \sigma_{\tilde{\Y}}^k(\tilde{h}(\sigma_{\X}(\tilde{x})))$
    and $\sigma_{\tilde{\Y}}^{c_{\tilde{\X}}(\tilde{x}') + k}(\tilde{h}(\tilde{x}')) = \sigma_{\tilde{\Y}}^k(\tilde{h}(\sigma_{\X}(\tilde{x}')))$,
    it follows that
    \[
        \sigma_\Y^{c_{\tilde{\X}}(\tilde{x}) + k}(h(x)) = \sigma_{\tilde{\Y}}^{c_{\tilde{\X}}(\tilde{x}') + k}(h(x)).
    \]
    By Proposition~\ref{prop:nonisolated-ev-per}, we know that $h(x)$ is aperiodic, so $c_{\tilde{\X}}(\tilde{x}) = c_{\tilde{\X}}(\tilde{x}')$.

    Set
    \[
        \tilde{v} := \sigma_{\tilde{\Y}}^{l_{\tilde{\X}}(\tilde{x}')}(\tilde{h}(\tilde{x})), \quad
        \tilde{w} := \sigma_{\tilde{\Y}}^{l_{\tilde{\X}}(\tilde{x}') - c_{\tilde{\X}}(\tilde{x})}(\tilde{h}(\sigma_{\tilde{\X}}(\tilde{x}))).
    \]
    Since $c_{\tilde{\X}}(\tilde{x}) = c_{\tilde{\X}}(\tilde{x}')$ we have 
    $\tilde{w} = \sigma_{\tilde{\Y}}^{k_{\tilde{\X}}(\tilde{x}')}(\tilde{h}(\sigma_{\tilde{\X}}(\tilde{x})))$, and $\pi_\Y(\tilde{v}) = \pi_\Y(\tilde{w})$.
    The point $x$ is aperiodic, so Lemma~\ref{lem:isotropy-lemma} implies that $\tilde{v} = \tilde{w}$.
    By minimality in the definition of $l_{\tilde{\X}}$, it follows that $l_{\tilde{\X}}(\tilde{x}) \leq l_{\tilde{\X}}(\tilde{x}')$.
    A symmetric argument shows that $l_{\tilde{\X}}(\tilde{x}') \leq l_{\tilde{\X}}(\tilde{x})$.
    Hence $l_{\tilde{\X}}$ and $k_{\tilde{\X}}$ are constant on $\pi_\X^{-1}(x)$.
    There are therefore cocycles $l_\X, k_\X\colon \X\LRA \N$ for $h$ which satisfy $l_{\tilde{\X}} = l_\X\circ \pi_\X$ and $k_{\tilde{\X}} = k_\X\circ \pi_\X$,
    and they are continuous by Lemma~\ref{lem:continuity}.
    A similar argument shows that there are continuous cocycles $l_\Y, k_\Y\colon \Y\LRA \N$ satisfying 
    $l_{\tilde{\Y}} = l_\Y\circ \pi_\Y$ and $k_{\tilde{\Y}} = k_\Y\circ \pi_\Y$.
    Hence $h$ is a continuous orbit equivalence.

    (iii) $\iff$ (iv):
    This is~\cite[Theorem 8.2]{CRST}.
    Note that if $\Phi\colon \OO_\X \LRA \OO_\Y$ is a $^*$-isomorphism as in (iv),
    then $\Phi(\D_\X) = \D_\Y$ by Theorem~\ref{thm:diagonal-preserving}.
\end{proof}

\begin{corollary}\label{cor:coe-sofic}
    Let $\X$ and $\Y$ be one-sided shift spaces with no periodic points which are isolated in past equivalence.
    The following are equivalent:
    \begin{enumerate}
        \item[(i)] the systems $\X$ and $\Y$ are continuously orbit equivalent;
        \item[(ii)] there is a groupoid isomorphism $\Psi\colon \G_\X\LRA \G_\Y$ and a homeomorphism $h\colon \X\LRA \Y$ such that $h\circ \pi_\X = \pi_\Y\circ \Psi^{(0)}$;
        \item[(iii)] there is a $^*$-isomorphism $\Phi\colon \OO_\X\LRA \OO_\Y$ satisfying $\Phi(C(\X)) = C(\Y)$.
    \end{enumerate}
\end{corollary}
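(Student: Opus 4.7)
The strategy is to reduce each of the three implications to the corresponding part of Theorem~\ref{thm:coe-essentially-principal} by producing the homeomorphism $h\colon \X \LRA \Y$ that is asked for explicitly in the hypotheses of that theorem. The hypothesis that $\X$ and $\Y$ contain no periodic points isolated in past equivalence is exactly what is required to apply the theorem.

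For (i)$\Rightarrow$(ii): By definition of continuous orbit equivalence, there is already a homeomorphism $h\colon \X \LRA \Y$ that is a continuous orbit equivalence. Theorem~\ref{thm:coe-essentially-principal}~(i)$\Rightarrow$(iii) then produces the groupoid isomorphism $\Psi\colon \G_\X \LRA \G_\Y$ with $h\circ \pi_\X = \pi_\Y\circ \Psi^{(0)}$.

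For (ii)$\Rightarrow$(iii): We are given both $\Psi$ and $h$ satisfying the compatibility relation, so the hypotheses of Theorem~\ref{thm:coe-essentially-principal}~(iii) are met verbatim; the implication (iii)$\Rightarrow$(iv) of that theorem produces the desired $^*$-isomorphism $\Phi$ with $\Phi(C(\X)) = C(\Y)$.

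For (iii)$\Rightarrow$(i), which is the only step requiring a small construction: given a $^*$-isomorphism $\Phi\colon \OO_\X\LRA \OO_\Y$ with $\Phi(C(\X)) = C(\Y)$, the restriction $\Phi|_{C(\X)}\colon C(\X) \LRA C(\Y)$ is a unital $^*$-isomorphism of commutative $\mathrm{C}^*$-algebras. By Gelfand duality, there is a unique homeomorphism $h\colon \X \LRA \Y$ such that $\Phi(f) = f\circ h^{-1}$ for all $f\in C(\X)$. With this $h$, the pair $(h, \Phi)$ satisfies the hypothesis of Theorem~\ref{thm:coe-essentially-principal}~(iv), and so by (iv)$\Rightarrow$(i) of that theorem, $h$ is a continuous orbit equivalence between $\X$ and $\Y$. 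Since there are no calculations beyond invoking Gelfand duality, no step should present a real obstacle; the corollary is essentially a repackaging of Theorem~\ref{thm:coe-essentially-principal} in which the homeomorphism $h$ is existentially quantified rather than given.
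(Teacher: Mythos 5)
Your proposal is correct and is exactly the argument the paper intends: the corollary is stated without proof as an immediate consequence of Theorem~\ref{thm:coe-essentially-principal}, and your three reductions (including the Gelfand-duality step to recover $h$ from $\Phi|_{C(\X)}$ in (iii)$\Rightarrow$(i)) fill in the details in the standard way.
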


\begin{remark}
  In~\cite{Carlsen2003}, the second-named author showed that when $\X$ is a sofic shift, then $\OO_\X$ is $^*$-isomorphic to a Cuntz--Krieger algebra even in a diagonal-preserving way.
  However, Corollary~\ref{cor:coe-sofic} shows that for a strictly sofic shift $\X$ the $\mathrm{C^*}$-algebra $\OO_\X$ together with $C(\X)$ 
  can be distinguished from a Cuntz--Krieger algebra, cf.~Remark~\ref{rem:sofic}.
\end{remark}

\subsection{Examples}

We consider a few examples.

\begin{example}\label{ex:coe-notLPP}
    Let $\X_E$ and $\X_F$ be the vertex shifts of the reducible graphs 
    \begin{figure}[H]
    \begin{center}
    \begin{tikzpicture}
    [scale=5, ->-/.style={thick, decoration={markings, mark=at position 0.6 with {\arrow{Straight Barb[line width=0pt 1.5]}}},postaction={decorate}},
    node distance =2cm,
    thick,
    vertex/.style={inner sep=0pt, circle, fill=black}]
        \node (E) {$E:$};
        \node[vertex, right of = E, label=left:{1}] (E1) {.};
        \node[vertex, right of = E1, label=right:{2}] (E2) {.};
        \node[vertex, below of = E1, label=right:{3}] (E3)  {.};
        \node[vertex, left of = E3, label=left:{4}] (E4)  {.};

        \draw[->-, looseness=30, out=135, in=45] (E1) to (E1);
        \draw[->-, bend left] (E1) to  (E2);
        \draw[->-] (E1) to (E3);
        \draw[->-, bend left] (E2) to  (E1);
        \draw[->-, bend right] (E3) to (E4);
        \draw[->-, bend right] (E4) to (E3);

        \node (F) [right of = E2] {$F:$};
        \node[vertex, right of = F, label=left:{1}] (F1)  {.};
        \node[vertex, right of = F1, label=right:{2}] (F2)  {.};
        \node[vertex, below of = F1, label=right:{3}] (F3)  {.};
        \node[vertex, left of = F3, label=left:{4}] (F4)  {.};

        \draw[->-, looseness=30, out=135, in=45] (F1) to (F1);
        \draw[->-, bend left] (F1) to  (F2);
        \draw[->-] (F1) to  (F3);
        \draw[->-, looseness=30, out=135, in=45] (F2) to (F2);
        \draw[->-, bend left] (F2) to (F1);
        \draw[->-, bend right] (F3) to (F4);
        \draw[->-, bend right] (F4) to (F3);
    \end{tikzpicture}
    \end{center}
\end{figure}
Define a map $h\colon \X_E\LRA \X_F$ by exchanging the word $(21)$ with the word $2$
except in the case $h(21 {(34)}^\infty) = 21 {(34)}^\infty$.
Furthermore, $1 {(34)}^\infty$ is fixed by $h$ and $h({(34)}^\infty) = {(43)}^\infty$ and $h({(43)}^\infty) = {(34)}^\infty$.
This is a homeomorphism.
Consider the cocycles $k_E, l_E\colon \X_E\LRA \N$ and $k_F, l_F\colon \X_F\LRA \N$ given by
\[
    \begin{cases}
        k_E|_{\X_E\setminus Z(2)} &= 0 \\
        k_E|_{Z(2)} &= 1 \\
        l_E(1 {(34)}^\infty) &= 2 \\
        l_E|_{Z(1)\setminus \{1 {(34)}^\infty\}} &= 1 \\
        l_E(21 {(34)}^\infty) &= 2 \\
        l_E|_{Z(2)\setminus \{ 21 {(34)}^\infty\}} &= 1 \\
        l_E( {(34)}^\infty) &= 1 \\
        l_E( {(43)}^\infty) &= 1 
    \end{cases}, \qquad
    \begin{cases}
        k_F &= 0 \\
        l_F(1 {(34)}^\infty) &= 2 \\
        l_F|_{Z(1)\setminus \{1 {(34)}^\infty\}} &= 1 \\
        l_F(21 {(34)}^\infty) &= 1 \\
        l_F|_{Z(2)\setminus \{21 {(34)}^\infty\}} &= 2 \\
        l_F({(34)}^\infty) &= 1 \\
        l_F({(43)}^\infty) &= 1.
    \end{cases}
\]
They are continuous and $h$ is a continuous orbit equivalence with the specified cocycles.
Hence $\X_A$ and $\X_B$ are continuously orbit equivalent.

We will show that no choice of continuous cocycles of $h$ can be least period preserving on eventually periodic points.
Let $k_E, l_E\colon \X_E\LRA \N$ be any choice of continuous cocycles for $h$.
Let $x = 1 {(34)}^\infty\in \X_E$ and $z = \sigma_E(x)$.
The computation
\[
    \sigma_F^{l_E(x)}(1 {(34)}^\infty) =
    \sigma_F^{l_E(x)}( h(x)) = 
    \sigma_F^{k_E(x)}(h(\sigma_E(x))) =
    \sigma_F^{k_E(x)}{(43)}^\infty
\]
shows that $k_E(x)$ and $l_E(x)$ have the same parity.
On the other hand,
\[
    \sigma_F^{l_A(z)}( {(43)}^\infty) =
    \sigma_F^{l_A(z)}( h(z)) = 
    \sigma_F^{k_A(z)}(h(\sigma_E(z))) =
    \sigma_F^{k_A(z)}{(34)}^\infty
\]
shows that $k_E(z)$ and $l_E(z)$ have different parity.
Then $l_E^{(2)}(x) - k_E^{(2)}(x)$ is odd while $\lp(x) = 2$.
\end{example}

Below we revisit an example of Matsumoto~\cite{Mat2010} of infinite and irreducible shifts of finite type that are continuously orbit equivalent.
We show that they are \emph{not} eventually conjugate.

\begin{example}\label{ex:Matsumoto-coe}
    Let $\X$ be the full shift on the alphabet $\{1,2\}$ and let $\Y$ be the golden mean shift determined by the single forbidden word $\{22\}$.
    Then $\X$ and $\Y$ are infinite and irreducible shifts of finite type which are continuously orbit equivalent, cf.~\cite[p. 213]{Mat2010}.

    Suppose $h\colon \X\LRA \Y$ is an eventual conjugacy and that $\ell\in \N$ is an integer in accordance 
    with~\eqref{eq:eventual-conjugacy1} and~\eqref{eq:eventual-conjugacy2}.
    Then both $\sigma_\Y^\ell(h(1^{\infty}))$ and $\sigma_\Y^\ell(h(2^{\infty}))$ are constant sequences in $\Y$, 
    so they are both equal to $1^{\infty}\in \Y$.
    However, then
    \[
        1^{\infty} = \sigma_\X^\ell (h^{-1}(\sigma_\Y^\ell(h(1^{\infty})))) = \sigma_\X^\ell (h^{-1}(\sigma_\Y^\ell(h(2^{\infty})))) = 2^{\infty},
    \]
    which cannot be the case.
    Therefore, $\X$ and $\Y$ are not eventually conjugate.
\end{example}

\begin{example}\label{ex:even-odd-coe}
    Let $\X = \X_{\mathrm{even}}$ and $\Y = \Y_{\mathrm{odd}}$ be the even and the odd shift 
    defined by the following sets of forbidden words
    \[
        \mathscr{F}_{\mathrm{even}} = \{1 0^{2n + 1} 1 : n\in \N\}, \qquad
        \mathscr{F}_{\mathrm{odd}}  = \{1 0^{2n} 1 : n\in \N\},
    \]
    respectively.
    The shift spaces are represented in the labeled graphs $E$ and $F$ below.

    \begin{figure}[H]
    \begin{center}
    \begin{tikzpicture}
    [scale=5, ->-/.style={thick, decoration={markings, mark=at position 0.6 with {\arrow{Straight Barb[line width=0pt 1.5]}}},postaction={decorate}},
    node distance =2cm,
    thick,
    vertex/.style={inner sep=0pt, circle, fill=black}]
        \node (E) {Even:};
        \node[vertex] (E1) [right of = E] {.};
        \node[vertex] (E2) [right of = E1] {.};
        \node[vertex] (E3) [below of = E1] {.};

        \draw[->-, looseness=30, out=135, in=45] (E1) to node[above] {$1$} (E1);
        \draw[->-, bend left] (E1) to node[above] {$0$} (E2);
        \draw[->-] (E1) to node[left] {$1$} (E3);
        \draw[->-, bend left] (E2) to node[below] {$0$} (E1);
        \draw[->-, looseness=30, out=225, in=315] (E3) to node[below] {$0$} (E3);

        \node (F) [right of = E2] {Odd:};
        \node[vertex] (F1) [right of = F] {.};
        \node[vertex] (F2) [right of = F1] {.};
        \node[vertex] (F3) [below of = F1] {.};

        \draw[->-, bend left] (F1) to node[above] {$0$} (F2);
        \draw[->-, bend left = 60, looseness=2] (F1) to node[above] {$1$} (F2);
        \draw[->-] (F1) to node[left] {$1$} (F3);
        \draw[->-, bend left] (F2) to node[below] {$0$} (F1);
        \draw[->-, looseness=30, out=225, in=315] (F3) to node[below] {$0$} (F3);
    \end{tikzpicture}
    \end{center}
\end{figure}

    Define a map $h\colon \X\LRA \Y$ by exchanging the word $1$ by the word $(10)$.
    This is a homeomorphism.
    Furthermore, the cocycles $k_\X, l_\X\colon \X\LRA \N$ and $k_\Y, l_\Y\colon \Y\LRA \N$ given by
    \[
        \begin{cases}
            k_\X|_{Z(0)} = 0, &l_\X|_{Z(0)} = 1, \\
            k_\X|_{Z(1)} = 0, &l_\X|_{Z(1)} = 2
        \end{cases}, \qquad
        \begin{cases}
            k_\Y|_{Z(0)} = 0, &l_\Y|_{Z(0)} = 1, \\
            k_\Y|_{Z(1)} = 1, &l_\Y|_{Z(1)} = 1
        \end{cases}
    \]
    are continuous.
    Hence $h$ is a continuous orbit equivalence and $\X$ and $\Y$ are continuously orbit equivalent.
    An argument similar to that of Example~\ref{ex:Matsumoto-coe} shows that $\X$ and $\Y$ are not one-sided eventually conjugate.

    Observe that $h(0^\infty) = 0^\infty$ and $h(1^\infty) = {(10)}^\infty$ and
    \begin{align*}
        l_\X(0^\infty) - k_\X(0^\infty) &= 1 = \lp(0^\infty), \\
        l_\X(1^\infty) - k_\X(1^\infty) &= 2 = \lp({(10)}^\infty),
    \end{align*}
    so $(k_\X, l_\X)$ is least period preserving.
    A similar computation shows that $(k_\Y, l_\Y)$ is also least period preserving.
\end{example}

\begin{example}\label{ex:even-odd-cover-coe}
    Let $\X_E$ and $\X_F$ be the edge shifts determined by the reducible graphs
    \begin{figure}[H]
    \begin{center}
    \begin{tikzpicture}
    [scale=5, ->-/.style={thick, decoration={markings, mark=at position 0.6 with {\arrow{Straight Barb[line width=0pt 1.5]}}},postaction={decorate}},
    node distance =2cm,
    thick,
    vertex/.style={inner sep=0pt, circle, fill=black}]
        \node (E) {$E:$};
        \node[vertex] (E1) [right of = E] {.};
        \node[vertex] (E2) [right of = E1] {.};
        \node[vertex] (E3) [below of = E1] {.};

        \draw[->-, looseness=30, out=135, in=45] (E1) to node[above] {$e$} (E1);
        \draw[->-, bend left] (E1) to node[above] {$c$} (E2);
        \draw[->-] (E1) to node[left] {$b$} (E3);
        \draw[->-, bend left] (E2) to node[below] {$d$} (E1);
        \draw[->-, looseness=30, out=225, in=315] (E3) to node[below] {$a$} (E3);

        \node (F) [right of = E2] {$F:$};
        \node[vertex] (F1) [right of = F] {.};
        \node[vertex] (F2) [right of = F1] {.};
        \node[vertex] (F3) [below of = F1] {.};

        \draw[->-, bend left] (F1) to node[above] {$c'$} (F2);
        \draw[->-, bend left = 60, looseness=2] (F1) to node[above] {$e'$} (F2);
        \draw[->-] (F1) to node[left] {$b'$} (F3);
        \draw[->-, bend left] (F2) to node[below] {$d'$} (F1);
        \draw[->-, looseness=30, out=225, in=315] (F3) to node[below] {$a'$} (F3);
    \end{tikzpicture}
    \end{center}
\end{figure}
Define a map $h\colon \X_E\LRA \X_F$ by exchanging any occurance of $e$ by $e'd'$.
This is a homeomorphism.
The maps $k_E, l_E\colon \X_E\LRA \N$ and $k_F, l_F\colon \X_F\LRA \N$ given by 
\[
    \begin{cases}
        k_E &= 0, \\
        l_F|_{Z(a)\cup Z(c)\cup Z(d)} &= 1, \\
        l_F|_{Z(b)\cup Z(e)} &= 2,
    \end{cases},
        \begin{cases}
            k_F|_{Z(a')\cup Z(c')\cup Z(d')} &= 0, \\
            k_F|_{Z(b')\cup Z(e')} &= 1,\\
            l_F &= 1,
        \end{cases}
\]
are continuous cocycles for $h$.
Hence $h$ is a continuous orbit equivalence and $\X_E$ and $\X_F$ are continuously orbit equivalent.
A computation shows that $(h, k_E, l_E, k_F, l_F)$ is least period preserving on periodic points but not on eventually periodic points.

In light of Example~\ref{ex:even-odd-coe} we can identify $\X_E$ and $\X_F$ with the covers $\tilde{\X}_{\mathrm{even}}$ and $\tilde{\Y}_{\mathrm{odd}}$, respectively,
and the cocycles above are induced from the cocycles on the even and odd shifts.
The maps $k_1, l_1, k_2, l_2\colon \X_E\LRA \N$ given by
\[
    \begin{cases}
        k_1 &= 0 \\
        l_1|_{Z(a)} &= 0 \\
        l_1|_{Z(b)\cup Z(c)\cup Z(d)} &= 1 \\
        l_1|_{Z(e)} &= 2
    \end{cases},
    \begin{cases}
        k_2 &= 0 \\
        l_2|_{\X_E\setminus Z(e)} &= 1 \\
        l_2|_{Z(e)} &= 2
    \end{cases}
\]
are continuous cocycles for $h$.
Then $(h, k_1, l_1, k_F, l_F)$ is not least period preserving and not constant on the preimages of $\pi_{\mathrm{even}}$
while $(h, k_2, l_2, k_F, l_F)$ is least period preserving on all eventually periodic points but not constant on the preimages under $\pi_{\mathrm{even}}$.
\end{example}

\section{Two-sided conjugacy}\label{sec:two-sided-conjugacy}

In~\cite[Corollary 5.2]{Carlsen-Rout}, the second-named author and Rout show that two-sided subshifts of finite type $\Lambda_\X$ and $\Lambda_\Y$
are conjugate if and only if the groupoids $\G_\X\times \R$ and $\G_\Y\times \R$ are isomorphic in a way which respects the canonical cocycle
and if and only if $\OO_\X\times \K$ and $\OO_\Y\otimes \K$ are $^*$-isomorphic in a way which intertwines the gauge actions suitably stabilized.
In this section, we characterize when a pair of general two-sided shift spaces are conjugate in terms of 
isomorphism of the groupoids $\G_\X\times \R$ and $\G_\Y\times \R$ and $^*$-isomorphism of $\OO_\X\otimes \K$ and $\OO_\Y\otimes \K$ (Theorem~\ref{thm:two-sided-conjugacy}).

Recall that if $\X$ is a one-sided shift space and $\sigma_\X$ is surjective, then the corresponding two-sided shift space $\Lambda_\X$ is constructed as the projective limit
\[
    \Lambda_\X = \varprojlim (\X, \sigma_\X).
\]
We shall write elements of $\X$ as $x, y, z\cdots$ and elements of $\Lambda_\X$ as $\textrm{x}, \textrm{y}, \textrm{z}\cdots$.

\begin{lemma}
    Let $\X$ be a one-sided shift space and let $\tilde{\X}$ be the associated cover.
    Then $\sigma_{\X}$ is surjective if and only if $\sigma_{\tilde{\X}}$ is surjective.
\end{lemma}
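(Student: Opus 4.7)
The plan is to use the intertwining relation $\sigma_\X\circ \pi_\X = \pi_\X\circ \sigma_{\tilde{\X}}$ together with the surjectivity of the canonical factor map $\pi_\X\colon \tilde{\X}\LRA \X$, and to recycle the explicit preimage construction from the proof that $\sigma_{\tilde{\X}}$ is a local homeomorphism.

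For the easy direction, assume $\sigma_{\tilde{\X}}$ is surjective. Given $x\in \X$, pick any $\tilde{x}\in \pi_\X^{-1}(x)$ (possible since $\pi_\X$ is surjective) and any $\tilde{y}\in \sigma_{\tilde{\X}}^{-1}(\tilde{x})$. Then $\sigma_\X(\pi_\X(\tilde{y})) = \pi_\X(\sigma_{\tilde{\X}}(\tilde{y})) = \pi_\X(\tilde{x}) = x$, so $\sigma_\X$ is surjective.

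For the converse, suppose $\sigma_\X$ is surjective and fix $\tilde{x}\in \tilde{\X}$ with representatives $({}_k[{}_k x_l]_l)_{(k,l)\in\I}$, and set $x = \pi_\X(\tilde{x})$. Using surjectivity of $\sigma_\X$, choose $y\in \X$ with $\sigma_\X(y) = x$, and let $a := y_0\in \A$; then $ax\in \X$, i.e.\ $a\in P_1(x)$. Since every representative ${}_k x_l$ satisfies ${}_k x_l \widesim{0,1} x$ by compatibility and the definition of $\pi_\X$, the letter $a$ is also a valid predecessor of each ${}_k x_l$, so $a\cdot {}_k x_{l+1}\in \X$ for every $(k,l)\in \I$. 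Following the construction in the proof that $\sigma_{\tilde{\X}}$ is a local homeomorphism, I would then set ${}_k y_l := a\cdot {}_k x_{l+1}$ and verify that $\tilde{y} := ({}_k[{}_k y_l]_l)_{(k,l)\in\I}$ lies in $\tilde{\X}$, i.e.\ that the $Q$-compatibility $a\cdot {}_{k_1}x_{l_1+1} \widesim{k_1,l_1} a\cdot {}_{k_2}x_{l_2+1}$ holds whenever $(k_1,l_1)\preceq (k_2,l_2)$; this is exactly the compatibility computation carried out in the earlier lemma. Finally, applying the formula for $\sigma_{\tilde{\X}}$, one computes $\sigma_{\tilde{\X}}(\tilde{y})_{(k,l)} = {}_k[\sigma_\X({}_{k+1} y_l)]_l = {}_k[{}_{k+1}x_{l+1}]_l = {}_k[{}_k x_l]_l$, the last equality following from $(k,l)\preceq (k+1,l+1)$, so $\sigma_{\tilde{\X}}(\tilde{y}) = \tilde{x}$.

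The only nontrivial step is the compatibility verification for the prescribed representatives of $\tilde{y}$, but this is precisely the calculation already performed (for the same formula ${}_r y_s = a\cdot{}_r x_{s+1}$) inside the proof that $\sigma_{\tilde{\X}}$ is a local homeomorphism, so it can simply be invoked. Everything else is bookkeeping with the intertwining $\sigma_\X\circ \pi_\X = \pi_\X\circ \sigma_{\tilde{\X}}$ and the surjectivity of $\pi_\X$.
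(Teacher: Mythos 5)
Your easy direction is fine and is exactly the paper's argument. The converse direction, however, contains a genuine gap: the claim that every representative ${}_k x_l$ of $\tilde{x}$ satisfies ${}_k x_l \widesim{0,1} \pi_\X(\tilde{x})$ does not follow from ``compatibility and the definition of $\pi_\X$'', and it is false in general. Compatibility only tells you that the sets $P_1({}_k x_l)$ agree \emph{with one another} (whenever $l>k$), and the definition of $\pi_\X$ only tells you that $x=\pi_\X(\tilde{x})$ agrees with ${}_k x_l$ on the first $k$ symbols; neither controls $P_1(x)$. Concretely, in the even shift let $\tilde{x}$ be a limit point of the sequence $\iota(0^{2n+1}10^{\infty})$. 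Then $\pi_\X(\tilde{x})=0^{\infty}$ and $P_1(0^{\infty})=\{0,1\}$, but for each $(k,l)$ the $(k,l)$-component of $\tilde{x}$ is ${}_k[0^{2n+1}10^{\infty}]_l$ for all large $n$, and every member of that class (for $l>k$) has $P_1=\{0\}$ because $10^{2n+1}1$ is forbidden. So if your predecessor of $x$ happens to be $a=1$, then $a\cdot{}_k x_{l+1}\notin\X$ and the candidate $\tilde{y}$ you want to build does not exist: you have chosen a predecessor of the wrong point.

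The repair is small but essential: apply surjectivity of $\sigma_\X$ to a \emph{representative} of $\tilde{x}$ rather than to $\pi_\X(\tilde{x})$. Surjectivity of $\sigma_\X$ says $P_1(z)\neq\emptyset$ for every $z\in\X$, so you may pick $a\in P_1({}_0x_1)$; by compatibility $a\in P_1({}_k x_{l+1})$ for all $(k,l)\in\I$, and then your definition ${}_k y_l = a\cdot{}_k x_{l+1}$, the compatibility check borrowed from the local-homeomorphism lemma, and the final computation $\sigma_{\tilde{\X}}(\tilde{y})=\tilde{x}$ all go through verbatim. This is in substance what the paper does: it chooses $x$ and $r<s$ with $\tilde{x}\in U(x,r,s)$ (so that $x$ is a representative of the relevant class, not $\pi_\X(\tilde{x})$), picks $a$ with $ax\in\X$, and invokes the identity $\sigma_{\tilde{\X}}(U(ax,r+1,s))=U(x,r,s)$ already established in the local-homeomorphism lemma.
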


\begin{proof}
    If $\sigma_{\tilde{\X}}$ is surjective, 
    then the relation $\sigma_{\X}\circ \pi_{\X} = \pi_{\X}\circ \sigma_{\tilde{\X}}$ ensures that $\sigma_{\X}$ is surjective.
    On the other hand, suppose $\sigma_{\X}$ is surjective and let $\tilde{x}\in \tilde{\X}$.
    Take $x\in \X$ and integers $0\leq r < s$ such that $\tilde{x}\in U(x, r, s)$.
    Since $\sigma_{\X}$ is surjective, there exists $a\in \A$ such that $ax\in \X$.
    We have
    \[
        \tilde{x}\in U(x,r,s) = U(\sigma_\X(ax),r,s) = \sigma_{\tilde{\X}}( U(ax, r + 1, s)).
    \]
    In particular, we may pick $\tilde{y}\in U(ax,r + 1, s)$ such that $\sigma_{\tilde{\X}}(\tilde{y}) = \tilde{x}$.
\end{proof}

Following~\cite{Carlsen-Ruiz-Sims}, we let $\R$ be the full countable equivalence relation on $\N\times \N$.
The product of $(n,m),(n',m')\in \R$ is defined exactly when $m = n'$ in which case
\[
    (n,m)(n',m') = (n,m').
\]
Inversion is given as $(n,m)^{-1} = (m,n)$, and the source and range maps are
\[
    s(n,m) = m, \qquad r(n,m) = n,
\]
respectively, for $(n,m)\in \R$.

Given a one-sided shift space $\X$, we consider the product groupoid $\G_{\X}\times \R$ whose unit space we shall identify with $\tilde{\X}\times \N$
via the correspondence $( (\tilde{x},0,\tilde{x}), (0,0)) \LMT (\tilde{x},0)$.
The canonical cocycle is the continuous map $\bar{c}_{\X}\colon \G_{\X}\times \R\LRA \Z$ given by $\bar{c}_{\X}\big( (\tilde{x},k,\tilde{y}),(n,m)\big) = k$,
for $\big( (\tilde{x},k,\tilde{y}),(n,m)\big)\in \G_\X\times \R$.

We start by describing two-sided conjugacy in terms of sliding block codes on the corresponding one-sided shift spaces.
Recall that a sliding block code $\varphi\colon \X\LRA \Y$ between one-sided shift spaces $\X$ and $\Y$ 
is a continuous map satisfying $\varphi\circ \pi_\X = \pi_\Y\circ \varphi$.

\begin{definition}
    Let $\X$ and $\Y$ be one-sided shift spaces and let $\varphi\colon \X\LRA \Y$ be a sliding block code.
    We say that $\varphi$ is \emph{almost injective} (with lag $\ell_1$) if there exists $\ell_1\in \N$ such that
    \[
        \varphi(x) = \varphi(x') \implies \sigma_{\X}^{\ell_1}(x) = \sigma_{\X}^{\ell_1}(x'),
    \]
    for every $x,x'\in \X$.
    We say that $\varphi$ is \emph{almost surjective} (with lag $\ell_2$) if there exists $\ell_2\in \N$ such that
    for each $y\in \Y$ there exists $x\in \X$ such that $\sigma_{\Y}^{\ell_2}(\varphi(x)) = \sigma_{\Y}^{\ell_2}(y)$.
\end{definition}

Almost injective and almost surjective sliding block codes between covers is defined analogously.

\begin{lemma}\label{lem:two-sided-conj}
    Let $\Lambda_{\X}$ and $\Lambda_{\Y}$ be two-sided subshifts.
    If $\Lambda_{\X}$ and $\Lambda_{\Y}$ are two-sided conjugate, then there is a surjective sliding block code $\varphi\colon \X\LRA \Y$ which is almost injective.
    Conversely, if there exists a sliding block code $\varphi\colon \X\LRA \Y$ which is almost injective and almost surjective,
    then $\Lambda_{\X}$ and $\Lambda_{\Y}$ are conjugate.
\end{lemma}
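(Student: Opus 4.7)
The plan is to translate between the one- and two-sided pictures via the projective limit $\Lambda_\X = \varprojlim(\X,\sigma_\X)$. For the forward direction I would invoke the Curtis--Hedlund--Lyndon theorem to realize a two-sided conjugacy $h\colon \Lambda_\X\LRA\Lambda_\Y$ as a block code with memory $M$ and anticipation $N$, meaning $h(\textrm{x})_i$ depends only on $\textrm{x}_{[i-M,i+N]}$. I would then define $\varphi\colon\X\LRA\Y$ by $\varphi(x)_i = h(\textrm{x})_{M+i}$ for any $\textrm{x}\in\Lambda_\X$ with $\textrm{x}_{[0,\infty)} = x$; such a lift exists because $\sigma_\X$ is surjective. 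Since $h(\textrm{x})_{M+i}$ depends only on $\textrm{x}_{[i,M+i+N]}\subseteq x$, this is independent of the chosen lift, continuous, and intertwines the one-sided shifts, so $\varphi$ is a sliding block code. Surjectivity follows by taking, for $y\in\Y$, any $\textrm{z}\in\Lambda_\Y$ with $\textrm{z}_{[M,\infty)} = y$ and setting $x = h^{-1}(\textrm{z})_{[0,\infty)}\in\X$. Almost injectivity with lag $M+M'$, where $M'$ is the memory of $h^{-1}$, follows because $\varphi(x) = \varphi(x')$ forces $h(\textrm{x})$ and $h(\textrm{x}')$ to agree on $[M,\infty)$ for lifts $\textrm{x},\textrm{x}'$, and applying the block code $h^{-1}$ then forces $\textrm{x}$ and $\textrm{x}'$ to agree on $[M+M',\infty)$, i.e., $\sigma_\X^{M+M'}(x) = \sigma_\X^{M+M'}(x')$.

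For the converse, given $\varphi\colon\X\LRA\Y$ which is almost injective with lag $\ell_1$ and almost surjective with lag $\ell_2$, I would extend $\varphi$ to $\bar\varphi\colon\Lambda_\X\LRA\Lambda_\Y$ via $\bar\varphi(\textrm{x})_i = \varphi(\textrm{x}_{[i,\infty)})_0$. Continuity of $\varphi$ together with the shift-intertwining relation ensure that $\bar\varphi$ is well-defined, continuous, and intertwines the two-sided shifts. Injectivity is immediate: if $\bar\varphi(\textrm{x}) = \bar\varphi(\textrm{x}')$, then $\varphi(\textrm{x}_{[i,\infty)}) = \varphi(\textrm{x}'_{[i,\infty)})$ for every $i\in\Z$, and almost injectivity yields $\textrm{x}_{[i+\ell_1,\infty)} = \textrm{x}'_{[i+\ell_1,\infty)}$ for every $i$, hence $\textrm{x} = \textrm{x}'$.

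Surjectivity of $\bar\varphi$ is the part that requires a compactness argument. Given $\textrm{y}\in\Lambda_\Y$, for each $n\in\N$ I would use almost surjectivity applied to $\textrm{y}_{[-n,\infty)}\in\Y$ to find $\xi_n\in\X$ with $\varphi(\xi_n)_j = \textrm{y}_{-n+j}$ for $j\geq \ell_2$, and then lift $\xi_n$ to $\textrm{x}^{(n)}\in\Lambda_\X$ with $\textrm{x}^{(n)}_{[-n,\infty)} = \xi_n$, using surjectivity of $\sigma_\X$ to prescribe the coordinates at negative indices smaller than $-n$. By compactness of $\Lambda_\X$ I pass to a convergent subsequence with limit $\textrm{x}$; for each fixed $i\in\Z$ and $n$ with $i+n\geq \ell_2$ we have $\bar\varphi(\textrm{x}^{(n)})_i = \varphi(\xi_n)_{i+n} = \textrm{y}_i$, so $\bar\varphi(\textrm{x}) = \textrm{y}$ by continuity. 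Thus $\bar\varphi$ is a continuous bijection between compact Hausdorff spaces intertwining the shifts, hence a two-sided conjugacy. The main obstacle is the careful bookkeeping of the various lag and block code parameters $M, N, M', \ell_1, \ell_2$ and verifying that the compactness argument in the converse produces the intended preimage; once these indices are tracked properly, both directions proceed routinely from the universal property of the projective limit.
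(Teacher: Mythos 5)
Your proof is correct and follows essentially the same route as the paper: the forward direction restricts a (suitably shifted) two-sided conjugacy to the nonnegative coordinates, and the converse extends $\varphi$ coordinatewise to the projective limit, with injectivity falling out of almost injectivity exactly as in the paper. The only real divergence is in the surjectivity of $\bar\varphi$: you extract a preimage of $\textrm{y}\in\Lambda_\Y$ as a limit point of lifts $\textrm{x}^{(n)}$ obtained from almost surjectivity applied to longer and longer tails, whereas the paper constructs the preimage sequence $(\ldots,z_1,z_0)$ directly by induction, using almost injectivity to verify that the choices at successive stages are compatible ($\sigma_\X(z_{i+1})=z_i$); your compactness argument sidesteps that compatibility check at the cost of passing to a subsequence, and both are valid.
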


\begin{proof}
    If $\Lambda_{\X}$ and $\Lambda_{\Y}$ are two-sided conjugate, 
    we may assume that there exist a two-sided conjugacy $H\colon \Lambda_{\X}\LRA \Lambda_{\Y}$ and $\ell\in \N$ such that
    \begin{align*}
        \textrm{x}_{[0,\infty)} &= \textrm{x}'_{[0,\infty)} \implies H(\textrm{x})_{[0,\infty)} = H(\textrm{x}')_{[0,\infty)}, \\
        \textrm{y}_{[0,\infty)} &= \textrm{y}'_{[0,\infty)} \implies H^{-1}(\textrm{y})_{[\ell,\infty)} = H^{-1}(\textrm{y}')_{[\ell,\infty)}
    \end{align*}
    for $\textrm{x},\textrm{x}'\in \Lambda_{\X}$, $\textrm{y},\textrm{y}'\in \Lambda_{\Y}$.
    Therefore, there is a well-defined map $\varphi\colon \X\LRA \Y$ given by
    \begin{align}\label{eq:surj-sbc-def}
        \varphi(x) = H(\textrm{x})_{[0,\infty)},
    \end{align}
    for every $x\in \X$ and $\textrm{x}\in \Lambda_{\X}$ with $x = \textrm{x}_{[0,\infty)}$. 
    The map $\varphi$ is a surjective sliding block code.
    Furthermore, if $\varphi(x) = \varphi(x')$ then $\sigma_{\X}^{\ell}(x) = \sigma_{\X}^{\ell}(x')$ for $x,x'\in \X$.

    Conversely, suppose $\varphi\colon \X\LRA \Y$ is an almost injective and almost surjective sliding block code with lag $\ell$.
    Define a map $h\colon \Lambda_{\X}\LRA \Lambda_{\Y}$ by 
    \[
        h(\ldots,x_2,x_1,x_0) = (\ldots, \varphi(x_2), \varphi(x_1), \varphi(x_0)),
    \]
    for $(\ldots,x_2,x_1,x_0)\in \Lambda_{\X}$.
    Note that $\sigma_{\Y}(\varphi(x_{i+1})) = \varphi(x_i)$ for $i\in \N$.
    Therefore $h$ is a well-defined sliding block code.
    We will show that $h$ is injective and surjective.

    Suppose first that $(\ldots,x_2,x_1,x_0), (\ldots,x'_2,x'_1,x'_0)\in \Lambda_{\X}$ and $\varphi(x_i) = \varphi(x'_i)$ for every $i\in \N$.
    Then 
    \[
        x_i = \sigma_{\X}^{\ell}(x_{i+\ell}) = \sigma_{\X}^{\ell}(x'_{i+\ell}) = x'_i
    \]
    for $i\in \N$ so $h$ is injective.
    
    Now let $(\ldots,y_2,y_1,y_0)\in \Lambda_{\Y}$ and choose $x,x'\in \X$ such that
    \[
        \sigma_{\Y}^{\ell}(\varphi(x)) = \sigma_{\X}^{\ell}(y_{2\ell}) = y_{\ell}, \qquad
        \sigma_{\Y}^{\ell}(\varphi(x')) = \sigma_{\X}^{\ell}(y_{2\ell + 1}) = y_{\ell +1}.
    \]
    Note that 
    \[
        \varphi(\sigma_{\X}^{\ell + 1}(x')) = y_{\ell} = \varphi(\sigma_{\X}^{\ell}(x))
    \]
    so $\sigma_{\X}^{2\ell + 1}(x') = \sigma_{\X}^{2\ell}(x)$ since $\varphi$ is almost surjective with lag $\ell$.
    Put $z_0 = \sigma_{\X}^{2\ell}(x)$ and $z_1 = \sigma_{\X}^{2\ell}(x')$ and observe that $\sigma_{\X}(z_1) = z_0$.
    Continuing this process inductively defines an sequence $(\ldots,z_2,z_1,z_0)\in \Lambda_{\X}$ which is sent to $(\ldots,y_2,y_1,y_0)$ via $h$.
    Hence $h$ is surjective and thus a two-sided conjugacy.
\end{proof}

Next we lift surjective sliding block codes on one-sided shift spaces to surjective sliding block codes on the covers.

\begin{lemma}\label{lem:lift-surj-sbc}
    Let $\X$ and $\Y$ be one-sided shift spaces and let $\varphi\colon \X\LRA \Y$ be a surjective sliding block code.
    Then there exists a surjective sliding block code $\tilde{\varphi}\colon \tilde{\X}\LRA \tilde{\Y}$ 
    satisfying $\varphi\circ \pi_\X = \pi_\Y\circ \tilde{\varphi}$.

    If, in addition, $\sigma_{\X}$ is surjective and $\varphi$ is almost injective with lag $\ell$, 
    then $\tilde{\varphi}$ is almost injective with lag $\ell$.
\end{lemma}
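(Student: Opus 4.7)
The plan is to mimic the lifting strategy of Lemma~\ref{lem:canonical_cover} and Lemma~\ref{lem:lift-eventual-conjugacy}: construct $\tilde\varphi$ from a compatible family of maps between the finite quotients defining the projective limits. Concretely, I aim to show that for every $(k,l)\in\I$ there exist $(K,L)\in\I$ such that
\[
    x \widesim{K,L} x' \implies \varphi(x)\widesim{k,l}\varphi(x'),\qquad x,x'\in\X.
\]
Once this is in place, the induced maps ${}_K\X_L\LRA{}_k\Y_l$ are well defined and compatible with the projection maps $Q$, so they assemble into a continuous $\tilde\varphi\colon\tilde\X\LRA\tilde\Y$ satisfying $\varphi\circ\pi_\X=\pi_\Y\circ\tilde\varphi$ by construction.

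I would fix a continuity constant $C\in\N$ for $\varphi$ so that $x_{[0,r+C)}=x'_{[0,r+C)}$ forces $\varphi(x)_{[0,r)}=\varphi(x')_{[0,r)}$ for all $r\in\N$. Taking $K\geq k+C$ immediately handles the equality of the first $k$ coordinates of $\varphi(x)$ and $\varphi(x')$. The substantive work is to choose $L$ so that the predecessor-set condition transfers. Given $\nu\in P_{l'}(\sigma_\Y^k(\varphi(x)))$ with $l'\leq l$, surjectivity of $\varphi$ produces $z\in\X$ with $\varphi(z)=\nu\varphi(\sigma_\X^k(x))$. From $z$ I would extract an $\X$-predecessor $\beta$ of $\sigma_\X^k(x)$ of bounded length depending only on $l'$ and $C$ with the property that $\varphi(\beta\,\sigma_\X^k(x))$ begins with $\nu$. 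Choosing $L$ large enough so that $x\widesim{K,L}x'$ controls the $\X$-predecessors of $\sigma_\X^k(x)$ and $\sigma_\X^k(x')$ up to length $|\beta|$, the same $\beta$ is a predecessor of $\sigma_\X^k(x')$, and the sliding-block-code property then shows its $\varphi$-image begins with $\nu$, yielding $\nu\in P_{l'}(\sigma_\Y^k(\varphi(x')))$. I expect this predecessor-transfer bookkeeping, and in particular the extraction of $\beta$ in the absence of injectivity of $\varphi$, to be the main technical hurdle.

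Shift equivariance, continuity, and the identity $\varphi\circ\pi_\X=\pi_\Y\circ\tilde\varphi$ will be immediate from the construction. Surjectivity of $\tilde\varphi$ is deduced from surjectivity of $\varphi$: given $\tilde y\in\tilde\Y$, choose $x\in\varphi^{-1}(\pi_\Y(\tilde y))$ using surjectivity of $\varphi$, and then invoke surjectivity of each finite quotient map (together with compactness of $\tilde\X$ and the projective-limit description) to produce $\tilde x\in\pi_\X^{-1}(x)\subset\tilde\X$ with $\tilde\varphi(\tilde x)=\tilde y$.

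For the almost-injective case, suppose $\varphi$ has lag $\ell$ and that $\tilde\varphi(\tilde x)=\tilde\varphi(\tilde x')$. Applying $\pi_\Y$ gives $\varphi(\pi_\X(\tilde x))=\varphi(\pi_\X(\tilde x'))$, hence $\sigma_\X^\ell(\pi_\X(\tilde x))=\sigma_\X^\ell(\pi_\X(\tilde x'))$ by almost injectivity of $\varphi$; equivalently $\pi_\X(\sigma_{\tilde\X}^\ell(\tilde x))=\pi_\X(\sigma_{\tilde\X}^\ell(\tilde x'))$. Combining this with the agreement of the predecessor data of $\sigma_\Y^k(\varphi(\pi_\X(\tilde x)))$ and $\sigma_\Y^k(\varphi(\pi_\X(\tilde x')))$ for all $(k,l)\in\I$ (encoded in $\tilde\varphi(\tilde x)=\tilde\varphi(\tilde x')$), and pulling these predecessor relations back through $\varphi$ using almost injectivity, should force agreement of the $(k,l)$-representatives of $\sigma_{\tilde\X}^\ell(\tilde x)$ and $\sigma_{\tilde\X}^\ell(\tilde x')$ for every $(k,l)\in\I$, so that $\sigma_{\tilde\X}^\ell(\tilde x)=\sigma_{\tilde\X}^\ell(\tilde x')$ as required.
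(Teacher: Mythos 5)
Your plan for the first half follows the paper's route (induce maps ${}_{K}\X_{L}\LRA{}_{k}\Y_{l}$ on the finite quotients and pass to the projective limit), but the one step you defer --- ``the extraction of $\beta$'' --- is the entire content of the lemma, and your outline contains no mechanism for it. Surjectivity gives a point $z\in\X$ with $\varphi(z)=\nu\,\varphi(\sigma_\X^k(x))$; writing $z=\beta u$ with $|\beta|=|\nu|$ yields $\varphi(u)=\varphi(\sigma_\X^k(x))$, but since $\varphi$ need not be injective there is no reason to have $u=\sigma_\X^k(x)$, so $\beta$ is a predecessor of $u$ rather than of $\sigma_\X^k(x)$, and nothing in your sketch replaces $u$ by $\sigma_\X^k(x)$. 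The paper's proof resolves exactly this point by asserting that surjectivity together with shift-equivariance allows one to choose the preimage of $\nu\,\varphi(\sigma_\X^k(x))$ of the form $\mu\,\sigma_\X^k(x)$, i.e.\ ending in the \emph{given} tail; once $\mu$ precedes $\sigma_\X^k(x)$ itself, the hypothesis $x\widesim{r+K,s+K}x'$ transfers $\mu$ to a predecessor of $\sigma_\X^k(x')$ and the sliding-block estimate identifies $\varphi(\mu\,\sigma_\X^k(x'))$ with $\nu\,\sigma_\Y^k(\varphi(x'))$. Until you can produce a predecessor of $\sigma_\X^k(x)$ itself, and not merely of some other point of the fibre $\varphi^{-1}(\varphi(\sigma_\X^k(x)))$, the well-definedness of the induced quotient maps is not established. (Your claim that surjectivity of $\tilde\varphi$ yields a preimage of $\tilde y$ lying over the chosen $x\in\varphi^{-1}(\pi_\Y(\tilde y))$ also overstates what the finite-quotient argument gives, though surjectivity itself does follow by the usual compactness argument for projective limits of finite sets.)

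The second half is where your route genuinely diverges from the paper's, and where the gap is most serious. The paper does not argue pointwise: it observes that almost injectivity with lag $\ell$, together with surjectivity of $\varphi$ and of $\sigma_\X$, is equivalent to the existence of a \emph{surjective sliding block code} $\rho\colon\Y\LRA\X$ with $\rho\circ\varphi=\sigma_\X^\ell$; it then applies the first part of the lemma to lift $\rho$ to $\tilde\rho\colon\tilde\Y\LRA\tilde\X$ and checks $\tilde\rho\circ\tilde\varphi=\sigma_{\tilde\X}^\ell$, which is the desired almost injectivity of $\tilde\varphi$. Your direct argument correctly reaches $\pi_\X(\sigma_{\tilde\X}^\ell(\tilde x))=\pi_\X(\sigma_{\tilde\X}^\ell(\tilde x'))$, but the remaining step --- ``pulling these predecessor relations back through $\varphi$'' --- is not an argument: equality of the $\Y$-predecessor data of $\tilde\varphi(\tilde u)$ and $\tilde\varphi(\tilde u')$ gives no a priori control on the $\X$-predecessor data of $\tilde u$ and $\tilde u'$, again because $\varphi$ is not injective on predecessors; and Lemma~\ref{lem:isotropy-lemma}(i) cannot be invoked without already knowing that some shifts of $\tilde u$ and $\tilde u'$ agree, which is what you are trying to prove. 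A telling symptom is that your argument never uses the hypothesis that $\sigma_\X$ is surjective, which the paper needs precisely to make $\rho$ surjective so that the first part applies to it. I would adopt the paper's reduction to $\rho$ here.
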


\begin{proof}
    Since $\varphi$ is a sliding block code there exists an integer $K\in \N$ such that
    \begin{equation}\label{eq:K-cts-const}
        x_{[0, r + K)} = x'_{[0, r + K)} \implies \varphi(x)_{[0,r)} = \varphi(x')_{[0,r)}
    \end{equation}
    for $r\in \N$ and $x,x\in \X$.
    We want to show that 
    \begin{equation}\label{eq:induced-sbc}
       x \widesim{r + K, s + K} x' \implies \varphi(x) \widesim{r,s} \varphi(x'),
    \end{equation}
    for $x,x'\in \X$ and integers $0\leq r\leq s$.
    
    Suppose $\nu \sigma_\Y^r(\varphi(x)) = \nu \varphi(\sigma_{\X}^k(x))$ where $\nu\in \LL(\Y)$ with $|\nu|\leq s$.
    We need to show that $\nu \sigma_\Y^k(\varphi(x'))\in \Y$.
    As $\varphi$ is surjective and commutes with the shift, there exists a word $\mu\in \LL(\X)$ with $|\mu| = |\nu|$ such that $\mu \sigma_{\X}^k(x)\in \X$ 
    and $\nu \varphi(\sigma_{\X}^k(x)) = \varphi(\mu \sigma_{\X}^k(x))$.
    By hypothesis, $\mu \sigma_\X^k(x')\in \X$ and we claim that $\varphi(\mu \sigma_\X^k(x')) = \nu \sigma_\Y^k(\varphi(x'))$.
    Indeed, ${\mu \sigma_{\X}^k(x)}_{[0, |\mu| + K)} = {\mu \sigma_{\X}^k(x')}_{[0, |\mu| + K)}$, so
    \[
        \varphi(\mu \sigma_{\X}^k(x'))_{[0,|\nu|)} = \varphi(\mu \sigma_{\X}^k(x))_{[0,|\nu|)} = |\nu|
    \]
    by the choice of $K$.
    This proves the claim.
    
    Define $\tilde{\varphi}\colon \tilde{\X}\LRA \tilde{\Y}$ by
    \[
        {}_r \tilde{\varphi}(\tilde{x})_s = \varphi({}_{r + K} x_{s + K}),
    \]
    for $\tilde{x}\in \tilde{\X}$ and integers $0\leq r\leq s$.
    It is straightforward to check that the induced map $\tilde{\varphi}\colon \tilde{\X}\LRA \tilde{\Y}$ is a surjective sliding block code 
    satisfying $\varphi\circ \pi_\X = \pi_\Y\circ \tilde{\varphi}$.
   
    Suppose now that $\sigma_{\X}$ is surjective and that there is $\ell\in \N$ 
    such that $\varphi(x) = \varphi(x')$ implies $\sigma_\X^{\ell}(x) = \sigma_\X^{\ell}(x')$ for all $x,x'\in \X$.
    Equivalently, there exists a surjective sliding block code $\rho\colon \Y\LRA \X$ satisfying $\sigma_\X^{\ell} = \rho\circ \varphi$.
    An argument similar to the one above shows that there is an induced surjective sliding block code $\tilde{\rho}\colon \tilde{\Y}\LRA \tilde{\X}$
    with $\rho\circ \pi_\Y = \pi_\X\circ \tilde{\rho}$.
    It is straightforward to verify that $\sigma_{\tilde{\X}}^{\ell} = \tilde{\rho}\circ \tilde{\varphi}$.
    Hence $\tilde{\varphi}$ is almost injective.
\end{proof}

We now arrive at the main theorem of this section which characterizes two-sided conjugacy of general shift spaces.
The proof uses ideas of~\cite{Carlsen-Rout}.
Let $\pi_{\X\times \N}\colon \tilde{\X}\times \N \LRA \X\times \N$ be the map $\pi_{\X\times \N}(\tilde{x}, n) = (\pi_\X(\tilde{x}), n)$,
for $(\tilde{x}, n)\in \tilde{\X}\times \N$.

\begin{theorem}\label{thm:two-sided-conjugacy}
    Let $\Lambda_\X$ and $\Lambda_\Y$ be two-sided shift spaces.
    The following are equivalent:
    \begin{enumerate}
        \item[(i)] there is a sliding block code $\varphi\colon \X\LRA \Y$ which is almost injective and almost surjective;
        \item[(ii)] there is a two-sided conjugacy $h\colon \Lambda_{\X}\LRA \Lambda_{\Y}$;
        \item[(iii)] there are a groupoid isomorphism $\Psi\colon \G_\X\times \R\LRA \G_\Y\times \R$ and a homeomorphism $\psi\colon \X\times \N\LRA \Y\times \N$
            satisfying $\psi\circ \pi_{\X\times \N} = \pi_{\Y\times \N}\circ \Psi^{(0)}$ and
            \[
                \bar{c}_\X = \bar{c}_\Y\circ \Psi;
            \]
        \item[(iv)] there is a $^*$-isomorphism $\Phi\colon \OO_\X\otimes \K\LRA \OO_\Y\otimes \K$ satisfying 
            $\Phi(C(\X)\otimes c_0) = C(\Y)\otimes c_0$ and
            \begin{align}\label{eq:stable-gauge-intertwining}
                \Phi\circ (\gamma^{\X}\otimes \id) = (\gamma^{\Y}\otimes \id)\circ \Phi.
            \end{align}
    \end{enumerate}
\end{theorem}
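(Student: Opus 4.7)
The equivalence (i)~$\iff$~(ii) is precisely Lemma~\ref{lem:two-sided-conj}, so the real content of the theorem is the connection between these dynamical statements and the groupoid condition (iii) and the $\mathrm{C}^*$-algebraic condition (iv). The plan is to establish the chain (i)~$\Rightarrow$~(iii)~$\Rightarrow$~(iv)~$\Rightarrow$~(iii)~$\Rightarrow$~(ii); the two implications through (iv) are essentially formal consequences of the groupoid/$\mathrm{C}^*$-algebra machinery already set up, while the real dynamical content is in the passage between (i) and (iii).

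For (i)~$\Rightarrow$~(iii), the first step is to lift the almost injective (lag $\ell_1$) and almost surjective (lag $\ell_2$) sliding block code $\varphi\colon \X \LRA \Y$ to a sliding block code $\tilde{\varphi}\colon \tilde{\X} \LRA \tilde{\Y}$ using Lemma~\ref{lem:lift-surj-sbc}, and a symmetric argument (interchanging the roles of $\X$ and $\Y$) gives almost surjectivity of $\tilde{\varphi}$. The almost injectivity produces an auxiliary sliding block code $\tilde{\rho}\colon \tilde{\Y} \LRA \tilde{\X}$ satisfying $\tilde{\rho}\circ\tilde{\varphi} = \sigma_{\tilde{\X}}^{\ell_1}$ and $\tilde{\varphi}\circ \tilde{\rho} = \sigma_{\tilde{\Y}}^{\ell_2}$. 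From this pair, a standard Deaconu--Renault construction (in the spirit of \cite{Carlsen-Rout}) assembles a groupoid isomorphism $\Psi\colon \G_\X\times \R \LRA \G_\Y\times \R$ respecting the canonical cocycle $\bar{c}_\X = \bar{c}_\Y\circ \Psi$: the $\R$-coordinate is used precisely to absorb the lags $\ell_1,\ell_2$. The homeomorphism $\psi\colon \X\times \N \LRA \Y\times \N$ is then induced by pushing $\Psi^{(0)}$ through the factor maps $\pi_\X$ and $\pi_\Y$.

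The implications (iii)~$\Leftrightarrow$~(iv) are formal given the identifications $\mathrm{C}^*(\G_\X\times \R) \cong \OO_\X\otimes \K$ and $C_0(\tilde{\X}\times \N)\cong \D_\X\otimes c_0$: the cocycle condition $\bar{c}_\X = \bar{c}_\Y\circ \Psi$ translates into the gauge intertwining~\eqref{eq:stable-gauge-intertwining}, while the $\psi$-compatibility at the level of unit spaces translates into $\Phi(C(\X)\otimes c_0) = C(\Y)\otimes c_0$. For (iv)~$\Rightarrow$~(iii), Corollary~\ref{cor:diagonal-preserving-stable} first upgrades this preservation to $\Phi(\D_\X\otimes c_0) = \D_\Y\otimes c_0$, after which \cite[Theorem~6.2]{CRST} produces $\Psi$ together with the cocycle-preservation, and restriction to the relevant unit-space subalgebras yields $\psi$.

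The main obstacle is (iii)~$\Rightarrow$~(ii). My approach is to extract from $(\Psi,\psi)$ an almost injective and almost surjective sliding block code $\varphi\colon \X\LRA \Y$ and then invoke Lemma~\ref{lem:two-sided-conj}. The idea is that $\Psi^{(0)}$, combined with $\bar{c}_\X = \bar{c}_\Y\circ \Psi$, yields upon restriction to a slice $\tilde{\X}\times \{n\}$ for $n$ large enough a map into $\tilde{\Y}\times \{m\}$ that intertwines the shift up to a fixed power, hence a sliding block code $\tilde{\varphi}\colon \tilde{\X}\LRA \tilde{\Y}$. The $\psi$-compatibility forces this to descend through the factor maps to a sliding block code $\varphi\colon \X\LRA \Y$, and the same construction applied to $\Psi^{-1}$ controls the lags on both sides and yields almost injectivity and almost surjectivity. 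The delicate point will be to verify that the descent produces a genuine (continuous, shift-commuting) sliding block code with explicit lags, and this is where the stabilization by $\R$ together with the careful bookkeeping of the $\N$-coordinate via $\psi$ is indispensable.
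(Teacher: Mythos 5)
Your overall architecture coincides with the paper's: (i)$\iff$(ii) via Lemma~\ref{lem:two-sided-conj}, lifting to the covers via Lemma~\ref{lem:lift-surj-sbc}, an explicit groupoid isomorphism of $\G_\X\times\R$ onto $\G_\Y\times\R$, the translation to $\OO_\X\otimes\K$ via Corollary~\ref{cor:diagonal-preserving-stable} and \cite[Theorems 6.2 and 8.10]{CRST}, and, for the return direction, extracting a sliding block code from the restriction of $\Psi^{(0)}$ to a slice. However, there is a genuine gap at the heart of (i)$\Rightarrow$(iii). You assert that ``the $\R$-coordinate is used precisely to absorb the lags $\ell_1,\ell_2$,'' but the lags need no absorbing: shift-tail equivalence is already built into $\G_\X$ and $\G_\Y$. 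What the $\R$-factor must absorb is the failure of $\varphi$ (and $\tilde\varphi$) to be \emph{injective} as a point map, since $\Psi^{(0)}$ and $\psi$ are required to be bijections. This is where the paper does real combinatorial work: it partitions $\N=\coprod_{\nu\in[\mu]}\N_\nu$ over the $\sim$-classes of initial words of length $L$ that $\varphi$ collapses, fixes bijections $f_\nu\colon\N_\nu\LRA\N$, and uses the resulting map $\omega\colon\X\times\N\LRA\N$ to define $\psi(x,n)=(\varphi(x),\omega(x,n))$ and $\Psi$ simultaneously. Your ``standard Deaconu--Renault construction'' does not produce a bijective $\Psi$ without this bookkeeping, and your plan to obtain $\psi$ by ``pushing $\Psi^{(0)}$ through the factor maps'' reverses the logical order: descent along $\pi_{\X\times\N}$ requires $\Psi^{(0)}$ to be constant on fibers, which is exactly what the explicit construction of $\omega$ through $\pi_\X$ guarantees, not something you can assume.

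Two smaller points. First, Lemma~\ref{lem:lift-surj-sbc} is stated for \emph{surjective} sliding block codes, whereas (i) only gives almost surjectivity; you should route (i)$\Rightarrow$(iii) through (ii), whose conjugacy yields a genuinely surjective, almost injective $\varphi$ as in~\eqref{eq:surj-sbc-def} (this is what the paper does). Second, in (iii)$\Rightarrow$(ii) the restriction of $\Psi^{(0)}$ to $\tilde\X\times\{0\}$ does not land in a single slice $\tilde\Y\times\{m\}$; one only records the first coordinate, $\Psi^{(0)}(\tilde x,0)=(\tilde\kappa(\tilde x),n)$ with $n$ depending on $\tilde x$, and then composes with a power of $\sigma_{\tilde\Y}$ to get a sliding block code. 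With these repairs your outline becomes the paper's proof.
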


\begin{proof}
The equivalence (i) $\iff$ (ii) is Lemma~\ref{lem:two-sided-conj}.
 
(ii)$\implies$(iii):
Let $h\colon \Lambda_{\X}\LRA \Lambda_{\Y}$ be a conjugacy as in the proof of Lemma~\ref{lem:two-sided-conj} and let $\varphi\colon \X\LRA \Y$
be the surjective and almost injective sliding block code of~\eqref{eq:surj-sbc-def}.
By Lemma~\ref{lem:lift-surj-sbc} there exists a surjective and almost injective sliding block code $\tilde{\varphi}\colon \tilde{\X}\LRA \tilde{\Y}$
satisfying $\varphi\circ \pi_\X = \pi_\Y\circ \tilde{\varphi}$.
Since $\varphi$ is continuous, there exists $L\in \N$ such that
\[
    x_{[0,L)} = x'_{[0,L)} \implies \varphi(x)_{[0,\ell)} = \varphi(x')_{[0,\ell)},
\]
for $x,x'\in \X$.
Define an equivalence relation $\sim$ on words of length $L$ in the following way: 
Two words $\mu,\nu\in \LL_L(\X)$ are $\sim$-equivalent, if there are $x\in Z(\mu)$ and $x'\in Z(\nu)$ such that $\varphi(x) = \varphi(x')$.
Then $\varphi(x) = \varphi(x')$ if and only if $\sigma_{\X}^{\ell}(x) = \sigma_{\X}^{\ell}(x')$ and $x_{[0,L)} \sim x'_{[0,L)}$.
For every $\sim$-equivalence class $[\mu] = \{\nu \in \LL_L(\X)\mid \mu \sim \nu\}$, fix a partition
\[
    \N = \coprod_{\nu \in [\mu]} \N_{\nu}
\]
and bijections $f_{\nu}\colon \N_{\nu}\LRA \N$.
Define $\omega\colon \X\times \N \LRA \N$ by 
\[
    \omega(x, n) = f_{x_{[0, L)}}^{-1}(n),
\]
for $(x, n)\in \X\times \N$.
Then $\psi\colon \X\times \N\LRA \Y\times \N$ given by 
\[
    \psi(x,n) = (\varphi(x), \omega(x, n)),
\]
for $(x,n)\in \X\times \N$, is a homeomorphism.
Furthermore, the map $\Psi\colon \G_\X\times \R \LRA \G_\Y\times \R$ given by
\[
    \Psi\big( (\tilde{x}, k), n, (\tilde{y}, l) \big) =
    \big( (\tilde{\varphi}(\tilde{x}), \omega(\pi_\X(\tilde{x}), k)), n, (\tilde{\varphi}(\tilde{y}), \omega(\pi_\X(\tilde{y}), l)) \big)
\]
for $\big( (\tilde{x}, k), n, (\tilde{y}, l) \big)\in \G_\X\times \R$, is a groupoid isomorphism satisfying 
$\psi\circ \pi_{\X\times \N} = \pi_{\Y\times \N}\circ \Psi^{(0)}$.

(iii)$\implies$(i): 
Suppose $\Psi\colon \G_{\X}\times \R\LRA \G_{\Y}\times \R$ is a groupoid isomorphism satisfying the hypotheses of (iii).
Define a map $\tilde{\kappa}\colon \tilde{\X}\LRA \tilde{\Y}$ by $\Psi^{(0)} (\tilde{x},0) = (\tilde{\kappa}(\tilde{x}), n)$ 
for $\tilde{x}\in \tilde{\X}$ and some $n\in \N$.
Then $\tilde{\kappa}$ is well-defined and continuous since $\Psi^{(0)}$ is continuous.
By an argument similar to one in the proof of~\cite[Theorem 5.1]{Carlsen-Rout}, 
there exists $L\in \N$ such that $\tilde{\varphi} := \sigma_{\tilde{\Y}}^L\circ \tilde{\kappa}$ is a sliding block code which is almost injective and almost surjective,
say with lag $\ell$.
Define also $\kappa\colon \X\LRA \Y$ by $\psi(x,0) = (\varphi(x), m)$ for $x\in \X$ and some $m\in \N$.
Then $\kappa$ is continuous and $\kappa\circ \pi_{\X} = \pi_{\Y}\circ \tilde{\kappa}$.
It follows that $\varphi := \sigma_{\Y}^L\circ \kappa\colon \X\LRA \Y$ is a sliding block code.

Let $y\in \Y$ and choose $\tilde{y}\in \pi_{\Y}^{-1}(y)$.
Pick $\tilde{x}\in \tilde{\X}$ such that $\sigma_{\tilde{\Y}}^{\ell}(\tilde{\varphi}(\tilde{x})) = \sigma_{\tilde{\Y}}^{\ell}(\tilde{y})$.
If $x = \pi_{\X}(\tilde{x})$, then $\sigma_{\Y}^{\ell}(\varphi(x)) = \sigma_{\Y}^{\ell}(y)$ and $\varphi$ is almost surjective.

In order to see that $\varphi$ is almost injective, choose distinct $x,x'\in \X$ such that $y := \varphi(x) = \varphi(x')$.
Choose distinct $n,n'\in \N$ such that $\psi(x,0) = (y,n)$ and $\psi(x',0) = (y,n')$ and pick $\tilde{y}\in \pi_{\Y}^{-1}(y)$.
Since $\Phi^{(0)}$ is a homeomorphism, there are unique and distinct $\tilde{x},\tilde{x'}\in \X$
such that $\Phi^{(0)}(\tilde{x},0) = (\tilde{y}, n)$ and $\Phi^{(0)}(\tilde{x}',0) = (\tilde{y}, n')$.
It follows that $\sigma_{\tilde{\X}}^{\ell} (\tilde{x}) = \sigma_{\tilde{\X}}^{\ell} (\tilde{x}')$ since $\tilde{\varphi}$ is almost injective. 
Since $\psi\circ \pi_{\X\times \N} = \pi_{\Y\times \N}\circ \Phi^{(0)}$, we have $\pi_{\X}(\tilde{x}) = x$ and $\pi_{\X}(\tilde{x}') = x'$. 
Hence $\sigma_{\X}^{\ell}(x) = \sigma_{\X}^{\ell}(x')$ and $\varphi$ is almost injective.

(iii)$\implies$(iv):
A groupoid isomorphism $\Psi\colon \G_{\X}\times \R\LRA \G_{\Y}\times \R$ induces a $^*$-isomorphism $\Phi\colon \OO_\X\otimes \K\LRA \OO_\Y\otimes \K$
satisfying $\Phi(\D_\X\otimes c_0) = \D_\Y\otimes c_0$.
Since $\psi\circ \pi_{\X\times \N} = \pi_{\Y\times \N}\circ \Psi^{(0)}$, we also have $\Phi(C(\X)\otimes c_0) = C(\Y)\otimes c_0$.
The relation $\bar{c}_\X = \bar{c}_\Y\circ \Phi$ ensures that~\eqref{eq:stable-gauge-intertwining} is satisfied.

(iv) $\implies$ (ii):
By Corollary~\ref{cor:diagonal-preserving-stable}, $\Phi(\D_\X\otimes c_0) = \D_\Y\otimes c_0$.
From~\cite[Theorem 8.10]{CRST} there is a groupoid isomorphism $\Psi\colon \G_\X\times \R\LRA \G_\Y\times \R$ 
satisfying $\Phi(f) = f\circ \Psi^{-1}\in \D_\Y\otimes c_0$ for $f\in \D_\X\otimes c_0$, and $\bar{c}_\X = \bar{c}_\Y \circ \Psi $.
Since $\Phi(g) = g\circ \tilde{\psi}^{-1}\in C(\Y)\otimes c_0$ for $g\in C(\X)\otimes c_0$, there is a homeomorphism $\psi\colon \X\times \N\LRA \Y\times \N$
such that $\psi\circ \pi_{\X\times \N} = \pi_{\Y\times \N} \circ \Psi^{(0)}$.
\end{proof}

\begin{corollary}
    Let $\Lambda_\X$ and $\Lambda_\Y$ be two-sided shift spaces.
    The following are equivalent:
    \begin{enumerate}
        \item[(i)] the two-sided subshifts $\Lambda_{\X}$ and $\Lambda_{\Y}$ are two-sided conjugate;
        \item[(ii)] there is a groupoid isomorphism $\Psi\colon \G_\X\times \R\LRA \G_\Y\times \R$ and a homeomorphism $\psi\colon \X\times \N\LRA \Y\times \N$ 
            satisfying $\psi\circ \pi_{\X\times \N} = \pi_{\Y\times \N}\circ \Psi^{(0)}$ and $\bar{c}_\X = \bar{c}_\Y\circ \Psi$;
        \item[(iii)] there is a $^*$-isomorphism $\Phi\colon \OO_\X\otimes \K\LRA \OO_\Y\otimes \K$ satisfying 
            $\Phi(C(\X)\otimes c_0) = C(\Y)\otimes c_0$ and $\Phi\circ (\gamma^{\X}\otimes \id) = (\gamma^{\Y}\otimes \id)\circ \Phi$.
    \end{enumerate}
\end{corollary}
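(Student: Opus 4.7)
The plan is to deduce the corollary directly from Theorem~\ref{thm:two-sided-conjugacy}. Conditions (ii) and (iii) of the corollary are exactly the existential versions of conditions (iii) and (iv) of that theorem---they no longer single out a specific homeomorphism $h\colon \Lambda_\X \LRA \Lambda_\Y$ or sliding block code $\varphi\colon \X \LRA \Y$---while condition (i) of the corollary is the existential version of condition (ii) there. So at the level of content the corollary is simply a repackaging of the theorem without the explicit data.

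For the forward implications, starting from a two-sided conjugacy $h\colon \Lambda_\X \LRA \Lambda_\Y$, I would apply the implications (ii) $\Rightarrow$ (iii) and (ii) $\Rightarrow$ (iv) of Theorem~\ref{thm:two-sided-conjugacy} to produce, respectively, the desired groupoid isomorphism $\Psi\colon \G_\X\times\R \LRA \G_\Y\times\R$ together with the homeomorphism $\psi\colon \X\times\N \LRA \Y\times\N$ intertwining the canonical cocycles, and the desired $^*$-isomorphism $\Phi\colon \OO_\X\otimes\K \LRA \OO_\Y\otimes\K$ intertwining the stabilized gauge actions.

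For the converses, given the data in (ii) of the corollary, I would invoke the implication (iii) $\Rightarrow$ (i) of the theorem, which produces an almost injective and almost surjective sliding block code $\X \LRA \Y$, and then Lemma~\ref{lem:two-sided-conj} (equivalently, (i) $\Rightarrow$ (ii) of the theorem) to obtain a two-sided conjugacy. Given the data in (iii) of the corollary, I would invoke (iv) $\Rightarrow$ (ii) of the theorem directly.

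There is no genuine obstacle here, as Theorem~\ref{thm:two-sided-conjugacy} already performs all the technical work---the lifting of sliding block codes to the covers, the cocycle bookkeeping on $\G_\X\times\R$, and the reconstruction arguments on the C*-algebra side via Corollary~\ref{cor:diagonal-preserving-stable} and \cite[Theorem 8.10]{CRST}. The corollary merely forgets the concrete conjugacy/block code data and presents the equivalence at the level of pure existence statements, and each of the four implications reduces to a one-line invocation of the corresponding implication in the theorem.
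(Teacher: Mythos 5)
Your proposal is correct and matches the paper's (implicit) argument: the corollary is an immediate restatement of Theorem~\ref{thm:two-sided-conjugacy}, with the corollary's conditions (i)--(iii) being exactly the theorem's conditions (ii)--(iv), so each implication is a direct invocation of the theorem. The only small inaccuracy is your framing that the theorem ``singles out'' a specific homeomorphism or block code --- unlike the one-sided conjugacy and eventual conjugacy theorems, the two-sided theorem is already stated purely existentially, so no ``forgetting of data'' is even needed here.
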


If $\Lambda_A$ and $\Lambda_B$ are the two-sided subshifts associated to finite square $\{0,1\}$-matrices with no zero rows and no zero columns,
then we recover~\cite[Corollary 5.2]{Carlsen-Rout}.
See also~\cite[Theorem 1.5]{Mat2019}.

\section{Flow equivalence}\label{sec:flow}

It is proven in~\cite[Corollary 6.3]{CEOR} (see also~\cite[Corollary 3.8]{MM14}) that if $\Lambda_\X$ and $\Lambda_\Y$ are two-sided subshifts of finite type, 
then $\Lambda_\X$ and $\Lambda_\Y$ are flow equivalent if and only if $\G_\X\times \R$ and $\G_\Y \times \R$ are isomorphic, 
and if and only if there is a $^*$-isomorphism $\OO_\X\otimes \K \LRA \OO_\Y\otimes\K$ which maps $C(\X)\otimes c_0$ onto $C(\Y)\otimes c_0$.
In this section, we shall for general shift spaces $\X$ and $\Y$ look at the relationship between flow equivalence of $\Lambda_\X$ and $\Lambda_\Y$, 
isomorphism of $\G_\X\times\R$ and $\G_\Y\times\R$, and $^*$-isomorphisms $\OO_\X\otimes\K \LRA \OO_\Y\otimes\K$ which map $C(\X)\otimes c_0$ onto $C(\Y)\otimes c_0$.

The \emph{ordered cohomology}~\cite[p. 868]{MM14} of $\X$ is the group
\[
    H^\X = C(\X, \Z)/\{ f - f\circ \sigma_\X : f\in C(\X, \Z)\},
\]
with the positive cone
\[
    H^\X_+ = \{[f]\in H^\X \mid f\geq 0\}.
\]
The ordered cohomology of the cover $\tilde{\X}$ is defined analogously.
An isomorphism of cohomology groups is \emph{positive} if it maps the positive cone onto the positive cone,
and two maps $f, f'\in C(\X, \Z)$ are \emph{cohomologous} if $[f] = [f']$ in $H^\X$.

\begin{remark}
Recall that $B^1(\G_\X)$ is the collection of groupoid homomorphisms from $\G_\X$ to $\Z$.
The first cohomology group of $\G_\X$ is the group
\[
    H^1(\G_\X) = B^1(\G_\X) / \{ \partial(f) \mid f\in C(\G_\X^{(0)}, \Z)\},
\]
where $\partial \colon C(\G_\X^{(0)}, \Z) \LRA B^1(\G_\X)$ is $\partial(f)(\gamma) = f(r(\gamma)) - f(s(\gamma))$, 
for $f\in C(\G_\X^{(0)}, \Z)$ and $\gamma\in \G_\X$, cf.~\cite[p. 870]{MM14}.
There is a canonical isomorphism $\Theta\colon H^1(\G_\X) \LRA H^{\tilde{\X}}$ given by $\Theta([f]) = [g]$, where
\[
    g(\tilde{x}) = f(\tilde{x}, 1, \sigma_{\tilde{\X}}(\tilde{x})),
\]
for $\tilde{x}\in \tilde{\X}$, cf.~\cite[Proposition 4,7]{CEOR}.
\end{remark}

The factor map $\pi_\X\colon \tilde{\X}\LRA \X$ induces a well-defined injective map $\pi_\X^*\colon H^\X \LRA H^{\tilde{\X}}$
given by $\pi_\X^*([f]) = [f\circ \pi_\X]$, for $f\in C(\X, \Z)$.
Note $\pi_\X^*(H^\X_+) \subset H^{\tilde{\X}}_+$ and $\pi_\X^*([1_\X]) = 1_{\tilde{\X}}$.
The ordered cohomology $(H^{\Lambda_\X}, H^{\Lambda_\X}_+)$ of a two-sided subshift $\Lambda_\X$ is defined analogously,
and there is a canonical isomorphism $(H^\X, H^\X_+) \cong (H^{\Lambda_\X}, H^{\Lambda_\X}_+)$.
This was shown in~\cite[Lemma 3.1]{MM14} for infinite irreducible shifts of finite type but as noted in~\cite[Section 2.5]{CEOR} the proof holds for general shifts.

If $\X$ is a one-sided shift space, then the \emph{stabilization of $\X$} is the space $\X\times \N$ with the shift operation $S_\X\colon \X\times \N\LRA \X\times \N$
given by
\[
    S_\X(x, n) = 
    \begin{cases}
        (x, n - 1) & \mathrm{if}~n > 0, \\
        (\sigma_\X(x), 0) & \mathrm{if}~n = 0,
    \end{cases}
\]
for $(x, n)\in \X\times \N$.
We define $S_{\tilde{\X}}\colon \tilde{\X}\times \N\LRA \tilde{\X}\times \N$ in a similar way.

The ordered cohomology for the stabilized system is the group
\[
    H^{\X\times\N} = C(\X\times\N, \Z)/\{ f - f\circ S_\X : f\in C(\X\times\N, \Z)\},
\]
with the positive cone
\[
    H^{\X\times\N}_+ = \{[f]\in H^{\X\times\N} \mid f\geq 0\}.
\]

The ordered cohomology is stable in the following sense.

\begin{lemma}
    Let $\X$ be a one-sided shift space and let $\iota_0\colon \X\LRA \X\times \N$ be the inclusion given by $\iota_0(x) = (x, 0)$.
    There is a surjective homomorphism $\iota_0^*\colon C(\X\times \N, \Z) \LRA C(\X, \Z)$ defined by $\iota_0^*(\xi)(x) = \xi(x, 0)$, 
    and an isomorphism $H(\iota_0)\colon H^{\X\times \N}\LRA H^\X$ such that $H(\iota_0)([\xi]) = [\iota_0^*(\xi)]$. 
    Moreover, $H(\iota_0)(H^{\X\times \N}_+) = H^\X_+$ and $H(\iota_0)([1_{\X\times \N}]) = [1_{\X}]$.
\end{lemma}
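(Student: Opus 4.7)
My plan is to exploit the fact that $\N$ is discrete, so that every continuous function $\xi\colon \X\times\N\LRA\Z$ is nothing more than a sequence $\xi_n := \xi(\cdot,n)\in C(\X,\Z)$ with no compatibility condition between different $n$'s. This turns the entire argument into a routine telescoping calculation. First I would verify that $\iota_0^*$ is a well-defined ring homomorphism (immediate from the formula), and is surjective: given $g\in C(\X,\Z)$, the function $\tilde g(x,0)=g(x)$, $\tilde g(x,n)=0$ for $n\geq 1$, is continuous since $\{0\}$ is clopen in $\N$, and satisfies $\iota_0^*(\tilde g)=g$.

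Next I would check that $\iota_0^*$ sends $S_\X$-coboundaries to $\sigma_\X$-coboundaries, so that $H(\iota_0)$ is well defined. This is a one-line computation: if $\xi=f-f\circ S_\X$, then evaluating at $(x,0)$ and using $S_\X(x,0)=(\sigma_\X(x),0)$ gives $\iota_0^*(\xi)=g-g\circ\sigma_\X$ with $g=\iota_0^*(f)$. Surjectivity of $H(\iota_0)$ then follows from surjectivity of $\iota_0^*$.

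The only slightly substantive step is injectivity of $H(\iota_0)$, and this is where the telescoping enters. Suppose $\xi\in C(\X\times\N,\Z)$ with $\iota_0^*(\xi)=g-g\circ\sigma_\X$ for some $g\in C(\X,\Z)$. Writing $\xi_n=\xi(\cdot,n)$, I would define $f\in C(\X\times\N,\Z)$ by $f_0:=g$ and inductively $f_n:=f_{n-1}+\xi_n$ for $n\geq 1$. Continuity is automatic, and a direct check using the definition of $S_\X$ recovers $\xi=f-f\circ S_\X$ by construction. Hence $[\xi]=0$ in $H^{\X\times\N}$, proving injectivity; combined with surjectivity, $H(\iota_0)$ is an isomorphism.

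Finally, the order and unit statements are short. For $H(\iota_0)([1_{\X\times\N}])=[1_\X]$ simply note $\iota_0^*(1_{\X\times\N})=1_\X$. For the positive cone, one inclusion is obvious: if $\xi\geq 0$ then $\iota_0^*(\xi)\geq 0$. For the reverse inclusion, given $[g]\in H^\X_+$ with $g\geq 0$, the extension $\tilde g$ constructed above is nonnegative and satisfies $H(\iota_0)([\tilde g])=[g]$, so $[g]\in H(\iota_0)(H^{\X\times\N}_+)$. No genuine obstacle arises anywhere in the argument; the discreteness of $\N$ trivializes what would otherwise be a stabilization/suspension style computation.
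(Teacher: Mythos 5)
Your proof is correct and follows essentially the same route as the paper's: everything reduces to a telescoping computation along the discrete factor $\N$, with the surjectivity, positivity and unit statements handled by the same zero-extension $\tilde g$. Your injectivity step, which constructs the primitive $f_n = g + \sum_{j=1}^{n}\xi_j$ directly, is if anything slightly cleaner than the paper's two-step version (first normalizing a representative to be supported on $\X\times\{0\}$ and then exhibiting it as a coboundary).
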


\begin{proof}
It is straightforward to check that $\iota_0^*\colon C(\X\times \N, \Z) \LRA C(\X, \Z)$ is a surjective homomorphism.
Since $\iota_0$ is a sliding block code, there is a well-defined surjective map $H(\iota_0)\colon H^{\X\times \N} \LRA H^\X$ 
given by $H(\iota_0)([\xi]) = [\iota_0^*(\xi)]$.
Any class $[\xi]\in H^{\X\times \N}$ can be represented by a map $\xi\in C(\X\times \N, \Z)$ which is supported on $\X\times \{0\}$.
If $\xi(x, 0) = b(x) - b(\sigma_\X(x))$, for some $b\in C(\X, \Z)$ and $x\in \X$, 
we may take $\eta\in C(\X\times \N, \Z)$ supported on $\X\times \{0\}$ such that $\eta(x, 0) = b(x)$.
Then $\xi = \eta - \eta\circ S_\X$ on $\X\times \{0\}$, so $H(\iota_0)$ is injective.

It is clear the $\iota_0^*(\xi)\geq 0$ if $\xi\geq 0$. 
Conversely, let $g\in C(\X, \Z)$ and assume that $g\geq 0$.
Take $\eta\in C(\X\times \N, \Z)$ supported on $\X\times \{0\}$ such that $\xi(x, 0) = g(x)$ for $x\in \X$, and note that $\iota_0^*(\xi) = g$ and $\xi\geq 0$.
Hence $H(\iota_0)(H^{\X\times \N}_+) = H^\X_+$.
Finally, $\iota_0^*(1_{\X\times \N}) = 1_\X$.
\end{proof}

We will write an element of $\G_\X\times\R$ as $\big((\tilde{x}, k), n, (\tilde{y}, l)\big)$ instead of $\big((\tilde{x}, n, \tilde{y}), (k, l)\big)$,
where $(\tilde{x}, k), (\tilde{y}, l)\in \tilde{\X}\times \N$ and $\sigma_{\tilde{\X}}^j(\tilde{x}) = \sigma_{\tilde{\X}}^i(\tilde{y})$
for some $i, j\in \N$ with $n = j - i$.
We then have that 
\begin{align*}
    \G_\X\times\R = 
    \{ \big( (\tilde{x}, k), n, (\tilde{y}, l) \big) \mid \exists i, j\in \N: n = j - i,~S_{\tilde{\X}}^{k + j}(\tilde{x}, k) = S_{\tilde{\X}}^{l + i}(\tilde{y}, l),
    ~\sigma_{\tilde{\X}}^j(\tilde{x}) = \sigma_{\tilde{\X}}^i(\tilde{y})\}.
\end{align*}

Let $\pi_{\X\times\N}\colon \tilde{\X}\times\N\LRA \X\times\N$ be the map defined by $\pi_{\X\times\N}(\tilde{x},n) = (\pi_\X(\tilde{x}),n)$,
for $(\tilde{x}, n)\in \tilde{\X}\times \N$.
There is an injective homomorphism $\kappa_{\X\times\N}\colon C(\X\times\N,\Z)\LRA B^1(\G_\X\times\R)$ defined by 
\[
    \kappa_{\X\times\N}(f)\big( (\tilde{x},k), n, (\tilde{y},l) \big)
    =\sum_{r = 0}^{j + k} f\big( \pi_{\X\times\N}(S_{\tilde{\X}}^r (\tilde{x},k)) \big) - \sum_{r = 0}^{i+l} f\big( \pi_{\X\times\N}(S_{\tilde{\X}}^r(\tilde{y},l)) \big) 
\]
where $i, j\in\N$ are such that $\sigma_{\tilde{\X}}^j(\tilde{x}) = \sigma_{\tilde{\X}}^i(\tilde{y})$ and $n = j - i$. 
In particular, $\kappa_{\X\times\N}(f)\colon \G_\X\times \R \LRA \Z$ it the unique cocycle satisfying 
\[
    \kappa_{\X\times\N}(f)\big( (\tilde{x}, k), 1, S_{\tilde{\X}}(\tilde{x}, k) \big) = f\big(\pi_{\X\times\N}(\tilde{x}, k)\big),
\]
for $(\tilde{x}, k)\in \tilde{\X}\times \N$.

If $\Lambda_\X$ and $\Lambda_\Y$ are conjugate subshifts, then they have isomorphic ordered cohomology.
We give a one-sided decription below.

\begin{lemma}\label{lem:conjugacy,SBC-homeo}
    Let $\Lambda_\X$ and $\Lambda_\Y$ be two-sided subshifts that are conjugate.
    Then there exist
    \begin{enumerate}
        \item[(i)] 
            \begin{itemize}
                \item a surjective and almost injective sliding block code $\varphi\colon \X\LRA \Y$
                    and an injective homomorphism $\varphi^*\colon C(\Y, \Z) \LRA C(\X, \Z)$ given by $\varphi^*(g) = g\circ \varphi$;
                \item a positive isomorphism $H(\varphi)\colon H^\Y\LRA H^\X$ satisfying $H(\varphi)([1_\Y]) = [1_\X]$ 
                    and $H(\varphi)[g] = [g\circ \varphi]$ for $g\in C(\Y, \Z)$;
            \end{itemize}
        \item[(ii)] a groupoid isomorphism $\Psi\colon \G_{\X}\times \R\LRA \G_\Y\times \R$ and a homeomorphism $\psi\colon \X\times \N \LRA \Y\times \N$
            satisfying $\psi\circ \pi_{\X\times \N} = \pi_{\Y\times \N}\circ \Psi^{(0)}$;
        \item[(iii)] 
            \begin{itemize}
                \item a homomorphism $\psi^*\colon C(\Y\times \N, \Z) \LRA C(\X\times \N, \Z)$ such that 
                    $\kappa_{\X\times\N}(\psi^*(\eta)) = \kappa_{\Y\times\N}(\eta)\circ\Psi$ for $\eta\in C(\Y\times \N, \Z)$,
                \item a homomorphism $\psi^\#\colon C(\X\times \N, \Z) \LRA C(\Y\times \N, \Z)$ such that
                    $\kappa_{\Y\times\N}(\psi^\#(\zeta)) = \kappa_{\X\times\N}(\zeta)\circ\Psi^{-1}$ for $\zeta\in C(\X\times \N, \Z)$; and
                \item a positive isomorphism $H(\psi)\colon H^{\Y\times \N} \LRA H^{\X\times \N}$ such that 
                    $H(\psi)([\eta]) = [\psi^*(\eta)]$ for $\eta\in C(\Y\times \N, \Z)$, 
                    $H(\psi)^{-1}([\zeta]) = [\psi^\#(\zeta)]$ for $\zeta\in C(\X\times \N, \Z)$ and $H(\varphi)\circ H(\iota_0) = H(\iota_0)\circ H(\psi)$.
            \end{itemize}
    \end{enumerate}
\end{lemma}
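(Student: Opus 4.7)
The plan is to construct the data requested in all three items from a common two-sided conjugacy $H\colon\Lambda_\X\LRA\Lambda_\Y$, using a single sliding block code $\varphi$ as a common building block so that the compatibilities line up.

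First, I would apply Lemma~\ref{lem:two-sided-conj} to $H$ to obtain a surjective and almost injective sliding block code $\varphi\colon\X\LRA\Y$. Since $\varphi$ is surjective it is also almost surjective (with lag $0$), so part (i) provides the injective homomorphism $\varphi^*(g):=g\circ\varphi$ (injectivity from surjectivity of $\varphi$) and the induced map $H(\varphi)([g]):=[g\circ\varphi]$, which is well-defined and positive because $\varphi$ intertwines the shifts and which sends $[1_\Y]$ to $[1_\X]$. That $H(\varphi)$ is an order isomorphism is verified by identifying it, through the canonical isomorphisms $(H^\X,H^\X_+)\cong(H^{\Lambda_\X},H^{\Lambda_\X}_+)$ and its counterpart for $\Y$, with the positive isomorphism $H^{\Lambda_\Y}\LRA H^{\Lambda_\X}$ induced by the two-sided conjugacy $H$. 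For (ii), I would apply the implication (ii)$\Longrightarrow$(iii) of Theorem~\ref{thm:two-sided-conjugacy} to $H$, choosing the $\varphi$ appearing in that proof to coincide with the $\varphi$ from (i); this yields $\Psi$ (built from the lifted sliding block code $\tilde{\varphi}\colon\tilde{\X}\LRA\tilde{\Y}$ furnished by Lemma~\ref{lem:lift-surj-sbc}) and $\psi$ with $\psi\circ\pi_{\X\times\N}=\pi_{\Y\times\N}\circ\Psi^{(0)}$ and $\bar{c}_\X=\bar{c}_\Y\circ\Psi$.

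For the homomorphism $\psi^*$ in (iii), injectivity of $\kappa_{\X\times\N}$ and the demanded cocycle identity force the formula
\[
  \psi^*(\eta)(x,k) := \kappa_{\Y\times\N}(\eta)\bigl(\Psi\bigl((\tilde{x},k),1,S_{\tilde{\X}}(\tilde{x},k)\bigr)\bigr)
\]
for any lift $\tilde{x}\in\pi_\X^{-1}(x)$, and $\psi^\#$ is defined symmetrically via $\Psi^{-1}$. Independence of the right-hand side from the chosen lift, which is the main technical point, follows because $\psi\circ\pi_{\X\times\N}=\pi_{\Y\times\N}\circ\Psi^{(0)}$ forces the $\pi_{\Y\times\N}$-images of both $\Psi^{(0)}(\tilde{x},k)$ and $\Psi^{(0)}(S_{\tilde{\X}}(\tilde{x},k))$ to depend only on $(x,k)$, while the defining formula for $\kappa_{\Y\times\N}(\eta)$ shows that its value on an arrow of $\G_\Y\times\R$ with fixed middle integer depends on its source and range only through their $\pi_{\Y\times\N}$-images. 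Continuity of $\psi^*(\eta)$ then follows from Lemma~\ref{lem:continuity}, and the required cocycle identity holds by the uniqueness of cocycles with a prescribed value on shift arrows.

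Setting $H(\psi)([\eta]):=[\psi^*(\eta)]$, well-definedness is checked by a direct computation: if $\eta=f-f\circ S_\Y$ then $\kappa_{\Y\times\N}(\eta)=\partial(f\circ\pi_{\Y\times\N})$, so $\kappa_{\X\times\N}(\psi^*(\eta))=\partial(f\circ\psi\circ\pi_{\X\times\N})$, and evaluating at a shift arrow shows $\psi^*(\eta)=(f\circ\psi)-(f\circ\psi)\circ S_\X$ is a coboundary; the same argument for $\psi^\#$ then forces $H(\psi)$ to be a group isomorphism with the stated inverse. The compatibility $H(\varphi)\circ H(\iota_0)=H(\iota_0)\circ H(\psi)$ I would verify by evaluating both sides on a class $[\eta]$ and using the explicit description of $\Psi$ at shift arrows $((\tilde{x},0),1,(\sigma_{\tilde{\X}}(\tilde{x}),0))$ inherited from the $\tilde{\varphi}$-based construction: the resulting expression for $\iota_0^*(\psi^*(\eta))(x)$ differs from $\eta(\varphi(x),0)$ by an explicit coboundary on $\X$ of the form $g-g\circ\sigma_\X$, where $g$ is a finite sum over the bijection data used to define $\psi$. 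Positivity of $H(\psi)$ is then an immediate consequence: $H(\iota_0)\circ H(\psi)=H(\varphi)\circ H(\iota_0)$ is a composition of positive order isomorphisms, and since $H(\iota_0)^{-1}$ is a positive order isomorphism by the preceding lemma, so is $H(\psi)$. The main obstacle will be this compatibility computation, because $\Psi$ does not in general map shift arrows of $\G_\X\times\R$ to shift arrows of $\G_\Y\times\R$, and the resulting discrepancy must be identified and absorbed into the aforementioned coboundary.
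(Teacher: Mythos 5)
Your proposal is correct, and for part (iii) it takes a genuinely different route from the paper. Parts (i) and (ii) match the paper's proof, except that for surjectivity of $H(\varphi)$ the paper argues directly: given $f\in C(\X,\Z)$ it sets $g=f\circ\sigma_\X^\ell\circ\varphi^{-1}$ (single-valued by almost injectivity with lag $\ell$) and notes $H(\varphi)[g]=[f\circ\sigma_\X^\ell]=[f]$; your detour through the canonical identification $(H^\X,H^\X_+)\cong(H^{\Lambda_\X},H^{\Lambda_\X}_+)$ also works but requires the extra check that this identification intertwines $H(\varphi)$ with the pullback along the two-sided conjugacy. The substantive divergence is in (iii): the paper writes down explicit combinatorial formulas for $\psi^*$ and $\psi^\#$ (windowed sums of $\eta\circ S_\Y^r\circ\psi$ governed by the reindexing maps $\omega$, $\omega'$ and the lag $\ell$) and verifies the cocycle identities, coboundary-preservation, and the compatibility $\iota_0^*\circ\psi^*=\varphi^*\circ\iota_0^*$ by direct computation, whereas you characterize $\psi^*(\eta)$ and $\psi^\#(\zeta)$ as the unique functions forced by injectivity of $\kappa_{\X\times\N}$ and $\kappa_{\Y\times\N}$, reducing everything to well-definedness of the lift-independent formula. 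Your well-definedness argument is the right one and is sound: the middle integer of $\Psi\bigl((\tilde{x},k),1,S_{\tilde{\X}}(\tilde{x},k)\bigr)$ is lift-independent because the $\Psi$ produced by Theorem~\ref{thm:two-sided-conjugacy} preserves $\bar{c}$, and the value of $\kappa_{\Y\times\N}(\eta)$ on an arrow with fixed middle integer depends only on the $\pi_{\Y\times\N}$-images of its source and range, since the ambiguity in the choice of $(i,j)$ cancels using $\sigma_\Y^j(y)=\sigma_\Y^i(y')$. Your route buys shorter definitions and the stronger fact that $\psi^*$ and $\psi^\#$ are literally mutually inverse; note also that since both approaches produce the \emph{same} $\psi^*$ (uniqueness via injectivity of $\kappa_{\X\times\N}$), the discrepancy you brace for in the final compatibility is actually zero: for $\eta$ supported on $\Y\times\{0\}$ one gets the exact identity $\iota_0^*(\psi^*(\eta))=\varphi^*(\iota_0^*(\eta))$, so the ``main obstacle'' you flag dissolves and positivity of $H(\psi)$ follows from $H(\psi)=H(\iota_0)^{-1}\circ H(\varphi)\circ H(\iota_0)$ exactly as you say.
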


\begin{proof}
    (i): 
    Since $\Lambda_\X$ and $\Lambda_\Y$ are conjugate there is a surjective and almost injective sliding block code $\varphi\colon \X\LRA \Y$, 
    cf. Lemma~\ref{lem:two-sided-conj}.
    The map $\varphi^*\colon C(\Y, \Z) \LRA C(\X, \Z)$ given by $\varphi^*(g) = g\circ \varphi$ for $g\in C(\Y, \Z)$ is an injective homomorphism.

    Since $\varphi$ is a sliding block code the map $H(\varphi)\colon H^\Y\LRA H^\X$ given by $H(\varphi)[g] = [g\circ \varphi]$ is well-defined and injective.
    In order to see that $H(\varphi)$ is surjective, recall that $\varphi$ is almost injective and pick $\ell\in \N$ accordingly.
    Take $f\in C(\X, \Z)$.
    Define a map $g\colon \Y\LRA \N$ by $g(y) = f\circ \sigma_\X^\ell(\varphi^{-1}(y))$, for $y\in \Y$.
    Since $\varphi$ is almost injective with lag $\ell$ this is well-defined and $g$ is continuous,
    and $H(\varphi)[g] = [f\circ \sigma_\X^\ell] = [f]$.
    Hence $H(\varphi)$ is surjective.
    It is straightforward to verify that $H(\varphi)(H^\Y_+) = H^\X_+$ and $H(\varphi)([1_\Y]) = [1_\X]$.

    (ii): 
    By (the proof of) Theorem~\ref{thm:two-sided-conjugacy}, there is a surjective sliding block code $\tilde{\varphi}\colon \tilde{\X}\LRA\tilde{\Y}$ such that
    $\varphi\circ \pi_\X = \pi_\Y \circ \tilde{\varphi}$ and a map $\omega\colon \X\times\N \LRA \N$ such that the map 
    $\Psi\colon \G_\X\times \R \LRA \G_\Y\times \R$ defined by 
    \begin{align}\label{eq:omega}
        \Psi\big( (\tilde{x},k), n, (\tilde{y},l) \big) = 
        \big( (\tilde{\varphi}(\tilde{x}), \omega(\pi_\X(\tilde{x}),k)), n, (\tilde{\varphi}(\tilde{y}), \omega(\pi_\X(\tilde{y}),l)) \big)
    \end{align}
    is a groupoid isomorphism, and the map $\psi\colon \X\times \N\LRA \Y\times \N$ defined by 
    \[
        \psi(x, n) = (\varphi(x), \omega(x,n)),
    \]
    is a homeomorphism satisfying $\psi\circ \pi_{\X\times \N} = \pi_{\Y\times \N}\circ \Psi^{(0)}$.

    (iii): 
    Choose $\ell\in\N$ such that $\varphi(x) = \varphi(x') \implies \sigma_\X^\ell(x) = \sigma_\X^\ell(x')$
    and let $\omega\colon \X\times \N \LRA \N$ be the map from~\eqref{eq:omega}.
    Define $\omega'\colon \Y\times\N\LRA\N$ by letting $\omega'(y,n) = m$ where $\psi^{-1}(y,n) = (x,m)$ for some $x\in \X$. 
    Since $\psi$ is a homeomorphism, $\omega'$ is continuous. 
    Let $\psi^*\colon C(\Y\times \N, \Z) \LRA C(\X\times \N, \Z)$ be the map defined by
    \[
        \psi^*(\eta)(x, n) = \sum_{r = 0}^{\omega(x,n)} \eta(S_\Y^r(\psi(x, n))) - \sum_{r = 0}^{\omega(x,n-1)} \eta(S_\Y^r(\psi(x, n-1))),
    \]
    for $\eta\in C(\Y\times \N, \Z)$ and $(x, n)\in \X\times \N$ with $n\ge 1$, and  
    \[
        \psi^*(\eta)(x, 0) = \sum_{r = 0}^{\omega(x,0)+1} \eta(S_\Y^r(\psi(x, 0))) - \sum_{r = 0}^{\omega(\sigma_\X(x),0)} \eta(S_\Y^r(\psi(\sigma_\X(x),0))),
    \]
    for $\eta\in C(\Y\times \N, \Z)$ and $x\in\X$. 
    Let $\psi^\#\colon C(\X\times \N, \Z) \LRA C(\Y\times \N, \Z)$ be the map defined by
    \[
        \psi^\#(\zeta)(y, n) = \sum_{r = 0}^{\omega'(y,n) + \ell} \zeta(S_\X^r(\psi^{-1}(y, n))) - \sum_{r = 0}^{\omega'(y,n-1) + \ell} \zeta(S_\X^r(\psi^{-1}(y, n-1))),
    \]
    for $\zeta\in C(\X\times \N, \Z)$ and $(y, n)\in \Y\times \N$ with $n\ge 1$, and  
     \[
        \psi^\#(\zeta)(y, 0) = 
        \sum_{r = 0}^{\omega'(y,0) + \ell + 1} \zeta(S_\X^r(\psi^{-1}(y, 0))) - \sum_{r = 0}^{\omega'(\sigma_\Y(y),0) + \ell} \zeta(S_\X^r(\psi^{-1}(\sigma_\Y(y),0))),
    \]
    for $\zeta\in C(\X\times \N, \Z)$ and $y\in\Y$. 
    It is straightforward to check that $\psi^*$ and $\psi^\#$ are homomorphisms, 
    and that $\kappa_{\X\times\N}(\psi^*(\eta)) = \kappa_{\Y\times\N}(\eta)\circ\Psi$ for $\eta\in C(\Y\times \N, \Z)$, 
    and $\kappa_{\Y\times\N}(\psi^\#(\zeta)) = \kappa_{\X\times\N}(\zeta)\circ\Psi^{-1}$ for $\zeta\in C(\X\times \N, \Z)$.
    
    Let $\eta\in C(\Y\times \N, \Z)$ and observe that
    \[
        \psi^*(\eta - \eta\circ S_\Y)(x, n) = \eta(\omega(x,n)) - \eta(\omega(x,n-1)) =  (\eta-\eta\circ S_\X)(x,n)
    \]
    for $(x, n)\in \X\times \N$ with $n\ge 1$, and 
    \[
        \psi^*(\eta - \eta\circ S_\Y)(x, 0) = \eta(\omega(x,0)) - \eta(\omega(\sigma_\X(x),0)) = (\eta-\eta\circ S_\X)(x,0)
    \]
    for $x\in \X$.
    Hence $\psi^*$ induces a map $H(\psi)\colon H^{\Y\times \N} \LRA H^{\X\times \N}$ given by $H(\psi)([\eta]) = [\psi^*(\eta)]$ for $\eta\in C(\Y\times \N, \Z)$.

    Suppose $\eta\in C(\Y\times \N, \Z)$ is supported on $\Y\times \{0\}$. 
    Then
    \[
        \iota_0^*(\psi^*(\eta))(x) = \eta(\varphi(x),0) = \varphi^*(\iota_0^*(\eta))(x)
    \]
    for $x\in \X$. 
    Since any element in $H^{\Y\times \N}$ can be represented by a map $\eta\in C(\Y\times \N, \Z)$ which is supported on $\Y\times \{0\}$,
    it follows that $H(\varphi)\circ H(\iota_0) = H(\iota_0)\circ H(\psi)$.
    Therefore, $H(\psi)$ is a positive isomorphism.

    Suppose $\zeta\in C(\X\times \N, \Z)$ is is supported on $\Y\times \{0\}$. 
    Then
    \[
        \phi^*(\iota_0^*(\psi^\#(\zeta)))(x) = \psi^\#(\zeta)(\varphi(x),0) = \zeta(x,0) = \iota_0^*(\zeta)(x)
    \]
    for $x\in \X$. 
    It follows that $(H(\varphi)\circ H)(\iota_0)([\psi^\#(\zeta)]) = H(\iota_0)([\zeta])$. 
    Since $H(\varphi)\circ H(\iota_0) = H(\iota_0)\circ H(\psi)$ and $H(\iota_0)$ is an isomorphism, we conclude that $H(\psi)^{-1}([\zeta]) = [\psi^\#(\zeta)]$. 
    Any element in $H^{\Y\times \N}$ can be represented by a map $\eta\in C(\Y\times \N, \Z)$ which is supported on $\Y\times \{0\}$, 
    so it follows that $H(\psi)^{-1}([\zeta]) = [\psi^\#(\zeta)]$ for every $\zeta\in C(\X\times \N, \Z)$.
\end{proof}

Let $f\colon \X\LRA \N_+$ be a continuous map.
Following~\cite{Ma2012a}, we consider the space
\[
    \X_f = \{ (x, i)\in \X\times \N \mid i < f(x)\}
\]
with the shift operation $\sigma_f\colon \X_f\LRA \X_f$ given by
\[
    \sigma_f(x, i) = 
    \begin{cases}
        (x, i - 1) & i > 0, \\
        \big(\sigma_\X(x), f(\sigma_\X(x)) - 1\big) & i = 0,
    \end{cases}
\]
for $(x, i)\in \X_f$.
We equip $\X_f$ with the subspace topology of $\X\times \N$ with the product topology where $\N$ is endowed with the discrete topology.
Then $\X_f$ is compact and Hausdorff, and $\sigma_f$ is surjective if and only if $\sigma_\X$ is surjective.
If $\A$ is the alphabet of $\X$, then the pair $(\X_f, \sigma_f)$ is conjugate to a shift space $\X^f = j(\X)$ 
over $\A\times \{0, 1,\ldots,\max\{f(x) \mid x\in \X\} - 1\}$ where $j\colon \X_f\LRA {(\A\times \{0, 1,\ldots,\max\{f(x) \mid x\in \X\} - 1\})}^\N$ 
is the injective sliding block code given by
\[
    j(x, i) = (x_0, i)(x_0, i - 1)\cdots (x_0, 0) (x_1, f(\sigma_\X(x)) - 1) \cdots (x_1, 0) \cdots
\]
for $x = x_0 x_1 \cdots\in \X$ and $i = 0, 1,\ldots, f(x) - 1$.
By a slight abuse of notation, we shall identify $\X_f$ and $\X^f$ and consider the two-sided subshift $\Lambda_{\X_f}$ as well as the cover $\tilde{\X_f}$.
Note that $\Lambda_\X$ and $\Lambda_{\X_f}$ are flow equivalent, cf.~\cite[Section 5]{CEOR}.
A similar construction applies to two-sided subshifts.

We shall make use of the following characterization of flow equivalence.
This is probably known to experts but we have not been able to find a proper reference.

\begin{lemma}
    A pair of two-sided subshifts $\Lambda_\X$ and $\Lambda_\Y$ are flow equivalent, if and only if there are 
    continuous maps $f\in C(\X, \N_+)$ and $g\in C(\Y, \N_+)$ such that $\Lambda_{\X_f}$ and $\Lambda_{\Y_g}$ are conjugate.
\end{lemma}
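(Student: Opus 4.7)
The plan is to establish both directions via the characterization of flow equivalence of two-sided subshifts by orientation-preserving homeomorphisms of their mapping tori $M_{\Lambda_\X}$ and $M_{\Lambda_\Y}$, each built with constant ceiling function $1$ over the two-sided system.

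For the ``if'' direction, the key observation is that $\Lambda_\X$ and $\Lambda_{\X_f}$ are flow equivalent for any $f\in C(\X, \N_+)$. Indeed, the mapping torus of $\Lambda_{\X_f}$ with constant ceiling $1$ is homeomorphic, as an oriented topological flow, to the mapping torus of $\Lambda_\X$ with positive continuous ceiling function $\textrm{x}\mapsto f(\textrm{x}_{[0,\infty)})$; this homeomorphism telescopes the successive unit-length intervals above each $\textrm{x}$ in the former into a single interval of length $f(\textrm{x}_{[0,\infty)})$ in the latter. Since any two mapping tori of a common base with positive continuous ceilings are flow equivalent by the standard time-change argument, $\Lambda_\X$ and $\Lambda_{\X_f}$ are flow equivalent. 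Combining this for both $\X$ and $\Y$ with the hypothesised conjugacy between $\Lambda_{\X_f}$ and $\Lambda_{\Y_g}$, together with transitivity of flow equivalence, yields that $\Lambda_\X$ and $\Lambda_\Y$ are flow equivalent.

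For the ``only if'' direction, let $h\colon M_{\Lambda_\X}\LRA M_{\Lambda_\Y}$ be an orientation-preserving flow equivalence. The canonical base $B_\X = \Lambda_\X\times\{0\}\subset M_{\Lambda_\X}$ is a clopen cross-section whose first-return map is $\sigma_{\Lambda_\X}$. I would use $h$ to build a refined clopen cross-section $D\subset M_{\Lambda_\X}$ containing $B_\X$ such that $D\setminus B_\X$ depends only on the forward coordinates of $\Lambda_\X$, while $h(D)$ is a clopen cross-section of $M_{\Lambda_\Y}$ containing $B_\Y$ with $h(D)\setminus B_\Y$ depending only on the forward coordinates of $\Lambda_\Y$. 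Since $h$ preserves the flow direction, it conjugates the first-return map on $D$ to that on $h(D)$. The first-return time from $B_\X$ back to $D$ is then a continuous function $\Lambda_\X\LRA\N_+$ depending only on forward coordinates and thus descends to some $f\in C(\X, \N_+)$; the first-return system on $D$ is conjugate to $\Lambda_{\X_f}$. A symmetric construction produces $g\in C(\Y, \N_+)$ such that the first-return system on $h(D)$ is conjugate to $\Lambda_{\Y_g}$, giving the desired conjugacy.

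The main obstacle is the construction of the refined cross-section with the required forward-cylindrical structure, so that the resulting return-time function descends from $\Lambda_\X$ to $\X$. One approach exploits that $M_{\Lambda_\X}$ is the projective limit of the mapping tori of the one-sided system $(\X, \sigma_\X)$, so any clopen subset can be approximated by pullbacks from finite stages; one approximates $h^{-1}(B_\Y)$ by such a pullback, adjusted by a small time-shift if necessary to maintain transversality to the flow. Alternatively, one may establish a Parry--Sullivan style decomposition asserting that flow equivalence of general two-sided subshifts is generated by conjugacy together with the discrete-suspension operation $\X\mapsto\X_f$ for $f\in C(\X,\N_+)$, and then combine a finite sequence of such operations (finite by compactness of $\X$ and boundedness of any continuous $\N_+$-valued function) into a single discrete suspension.
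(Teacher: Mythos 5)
Your ``if'' direction is fine and coincides with the paper's (which simply cites the well-known fact that $\Lambda_\X$ and $\Lambda_{\X_f}$ are flow equivalent). Your ``only if'' direction also follows the same overall route as the paper --- realize $\Lambda_\X$ and $\Lambda_\Y$ as cross-sections $\mathcal{X},\mathcal{Y}$ of a common flow, pass to the union $\mathcal{X}\cup\mathcal{Y}$, and identify the first-return systems as discrete suspensions $\Lambda_{\X_f}$ and $\Lambda_{\Y_g}$ --- but the step you yourself flag as ``the main obstacle'' is a genuine gap, and neither of your two proposed fixes closes it. The return-time/crossing-count function $\bar f\colon\Lambda_\X\LRA\N_+$ obtained from the union of cross-sections is continuous, hence depends on coordinates in some window $[-n,n]$, but it need not depend only on forward coordinates, so it does not descend to $\X$. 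Your first fix --- perturbing $h^{-1}(B_\Y)$ to a forward-cylindrical clopen set --- changes the cross-section, so its image under $h$ is no longer $B_\Y$ and the symmetric construction on the $\Y$ side breaks; you would need to arrange forward-cylindricity of $D\setminus B_\X$ and of $h(D)\setminus B_\Y$ \emph{simultaneously}, and you give no mechanism for that. Your second fix --- a Parry--Sullivan-style generation theorem for flow equivalence of \emph{general} subshifts --- is not available off the shelf and is at least as hard as the lemma itself (the paper remarks it could not find a reference even for the lemma).

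The missing idea, which is how the paper resolves this, is to leave the cross-section $A=\mathcal{X}\cup\mathcal{Y}$ alone and instead modify the \emph{ceiling function}: for a two-sided subshift the discrete suspensions with ceilings $\bar f$ and $\bar f\circ\sigma_{\Lambda_\X}$ are conjugate (via $(\mathrm{x},i)\mapsto(\sigma_{\Lambda_\X}^{-1}(\mathrm{x}),i)$), hence so are those with ceilings $\bar f$ and $\bar f\circ\sigma_{\Lambda_\X}^{n}$. Choosing $n$ so that $\bar f$ depends only on coordinates in $[-n,n]$, the function $\bar f\circ\sigma_{\Lambda_\X}^{n}$ depends only on coordinates in $[0,2n]$ and therefore descends to a continuous $f\in C(\X,\N_+)$ with $(\Lambda_\X)_{\bar f}$ conjugate to $\Lambda_{\X_f}$; the symmetric argument on the $\Y$ side then gives $g$ with both suspensions conjugate to the first-return system on $A$. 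This one-line conjugacy of shifted suspensions is exactly what replaces your perturbation argument, and without it (or an equivalent device) the proof is incomplete.
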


\begin{proof}
Suppose first that there are continuous maps $f\in C(\X, \N_+)$ and $g\in C(\Y, \N_+)$ such that $\Lambda_{\X_f}$ and $\Lambda_{\Y_g}$ are conjugate.
It is well-known that $\Lambda_\X$ is flow equivalent to $\Lambda_{\X_f}$,
and that $\Lambda_\Y$ is flow equivalent to $\Lambda_{\Y_g}$, cf.~\cite[Section 5]{CEOR}, so it follows that $\Lambda_{\X_f}$ and $\Lambda_{\Y_g}$ are flow equivalent.

If $\Lambda_\X$ and $\Lambda_\Y$ are flow equivalent,
then there is a compact metric space $Z$ with a flow $\gamma\colon Z\times \mathbb{R}\LRA Z$
and cross sections $\mathcal{X}$ and $\mathcal{Y}$ which are conjugate to $\Lambda_\X$ and $\Lambda_\Y$, respectively,
cf., e.g.,~\cite{Parry-Sullivan, Boyle-Carlsen-Eilers}.
Let $h_\X\colon \Lambda_\X\LRA \mathcal{X}$ and $h_\Y\colon \Lambda_\Y\LRA \mathcal{Y}$ be such conjugacies.

Set $A = \mathcal{X} \cup \mathcal{Y}$.
Consider the return time function $\tau_{\mathcal{X}}\colon Z\LRA \mathbb{R}$ given by
\[
    \tau_{\mathcal{X}}(z) = \min \{ t > 0 \mid \gamma(z, t)\in \mathcal{X}\},
\]
for $z\in Z$, and define the map $\bar{f}\colon \Lambda_\X \LRA \N$ by
\[
    \bar{f}(\textrm{x}) = |\{ t\in (0,\tau_{\mathcal{X}}(h_\X(\textrm{x}))) \mid \gamma(h_\X(\textrm{x}), t)\in \mathcal{Y} \}|
\]
for $\textrm{x}\in \Lambda_\X$.
Then $\bar{f}$ is continuous and $f\geq 1$.
Moreover, $(\Lambda_\X)_{\bar{f}}$ is conjugate to $A$ by construction.
By continuity, there is an integer $n\in \N$ such that $\textrm{x}_{[-n, n]} = \textrm{x}_{[-n, n]}$ implies $\bar{f}(\textrm{x}) = \bar{f}(\textrm{x})$.
It follows that there is a well-defined continuous map $f\colon \X\LRA \N$ satisfying 
\[
    f(\textrm{x}_{[0, \infty)}) = \bar{f}(\sigma_{\Lambda_\X}^n(\textrm{x}))
\]
for $\textrm{x}\in \Lambda_\X$.
Then $(\Lambda_\X)_{\bar{f}}$ is conjugate to $(\Lambda_\X)_{\bar{f}\circ \sigma_{\Lambda_\X}^n}$,
and $(\Lambda_\X)_{\bar{f}\circ \sigma_{\Lambda_\X}^n}$ is conjugate to $\Lambda_{\X_f}$.
In particular, $\Lambda_{\X_f}$ is conjugate to $A$.

A similar argument shows that there is a continuous map $g\colon C(\Y, \N_+)$ such that $\Lambda_{\Y_g}$ is conjugate to $A$.
It follows that $\Lambda_{\X_f}$ and $\Lambda_{\Y_g}$ are conjugate.
\end{proof}

\begin{lemma}\label{lem:X_f}
    Let $\X$ be a one-sided shift space and let $f\colon \X\LRA \N_+$ be continuous.
    Then
    \begin{enumerate}
        \item[(i)] there are an injective sliding block code $\iota_f\colon \X\LRA \X_f$ and a surjective homomorphism $\iota_f^*\colon C(\X_f, \Z) \LRA C(\X, \Z)$ given by
            \begin{align}\label{eq:iotastar}
            \iota_f^*(\xi)(x) = \sum_{r = 0}^{f(\sigma_\X(x)) - 1} \xi(\sigma_f^r(\iota_f(x))),
        \end{align}
        and a positive isomorphism $H(\iota_f)\colon H^{\X_f}\LRA H^\X$ given by $H(\iota_f)([\xi]) = [\iota_f^*(\xi)]$;
        \item[(ii)] there are  
            \begin{itemize}
                \item a groupoid isomorphism $\Psi_f\colon \G_\X\times \R \LRA \G_{\X_f}\times \R$ and a homeomorphism $\psi\colon \X\times \N \LRA \X_f\times \N$
                    satisfying $\psi\circ \pi_{\X\times \N} = \pi_{\Y\times \N} \circ \Psi_f^{(0)}$;
                \item a homomorphism $\psi^*\colon C(\X_f\times \N, \Z) \LRA C(\X\times \N, \Z)$ 
                    satisfying $\kappa_{\X\times\N}(\psi^*(\xi)) = \kappa_{\X_f\times\N}(\xi)\circ\Psi_f$ for $\xi\in C(\X_f\times \N, \Z)$;
                \item a homomorphism $\psi^\#\colon C(\X\times \N, \Z) \LRA C(\X_f\times \N, \Z)$ 
                    satisfying $\kappa_{\X_f\times\N}(\psi^\#(\zeta)) = \kappa_{\X\times\N}(\zeta)\circ\Psi_f^{-1}$ for $\zeta\in C(\X\times \N, \Z)$;
                \item a positive isomorphism $H(\psi)\colon H^{\X_f\times \N} \LRA H^{\X\times \N}$
                    such that $H(\psi)([\xi])=[\psi^*(\xi)]$, $H(\psi)^{-1}([\zeta])=[\psi^\#(\zeta)]$,
                    and $H(\iota_f)\circ H(\iota_0) = H(\iota_0)\circ H(\psi)$.
            \end{itemize}
        \end{enumerate}
\end{lemma}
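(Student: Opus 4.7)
The plan is to prove (i) by a direct Kakutani-type cohomological computation and to prove (ii) by identifying the cover of $\X_f$ with the $\tilde{f}$-suspension of $\tilde{\X}$ (where $\tilde{f} = f \circ \pi_\X$), then arguing as in Lemma~\ref{lem:conjugacy,SBC-homeo} via the equivalence of these suspended groupoids.

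For (i), set $\iota_f(x) = (x, 0)$; this is a continuous injection satisfying the key identity
\[
    \sigma_f^{f(\sigma_\X(x))}(\iota_f(x)) = \iota_f(\sigma_\X(x)).
\]
The map $\iota_f^*$ defined in~\eqref{eq:iotastar} is surjective: given $g \in C(\X, \Z)$, the function $\xi$ supported on $\iota_f(\X)$ with $\xi(x, 0) = g(x)$ satisfies $\iota_f^*(\xi) = g$. Well-definedness of $H(\iota_f)$ on cohomology follows from a telescoping sum: for $\xi = \eta - \eta\circ \sigma_f$, the key identity gives $\iota_f^*(\xi)(x) = \eta(\iota_f(x)) - \eta(\iota_f(\sigma_\X(x)))$. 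For injectivity, given $\iota_f^*(\xi) = b - b\circ \sigma_\X$, define
\[
    \eta(x, j) := b(x) + \sum_{s = 1}^{j} \xi(x, s), \qquad (x, j) \in \X_f;
\]
a direct computation using the expanded formula $\iota_f^*(\xi)(x) = \xi(x, 0) + \sum_{j = 1}^{f(\sigma_\X(x)) - 1} \xi(\sigma_\X(x), j)$ together with the hypothesis on $\iota_f^*(\xi)$ shows $\xi = \eta - \eta\circ \sigma_f$, and continuity of $\eta$ is immediate since $\xi$ and $b$ are continuous. Positivity of $H(\iota_f)$ in both directions is automatic from the sum formula and the surjectivity construction.

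For (ii), first note that $\tilde{f} := f\circ \pi_\X \in C(\tilde{\X}, \N_+)$ by Lemma~\ref{lem:continuity}, and that $\tilde{\X_f}$ is canonically homeomorphic to $\tilde{\X}_{\tilde{f}}$ as a Deaconu--Renault system, with $\pi_{\X_f}$ corresponding to $\pi_\X \times \id$; consequently $\G_{\X_f}$ is the suspension of $\G_\X$ by $\tilde{f}$, which is equivalent (in the sense of Muhly--Renault--Williams) to $\G_\X$ via the clopen transversal $\iota_{\tilde{f}}(\tilde{\X})$. Stabilising by $\R$ then yields an explicit groupoid isomorphism $\Psi_f\colon \G_\X\times \R \LRA \G_{\X_f}\times \R$ built from a Kakutani--Rokhlin-type homeomorphism $\tilde{\psi}\colon \tilde{\X}\times \N\LRA \tilde{\X_f}\times \N$ on unit spaces; projecting through $\pi$ gives the desired $\psi\colon \X\times \N \LRA \X_f\times \N$ with $\psi\circ \pi_{\X\times \N} = \pi_{\X_f\times \N}\circ \Psi_f^{(0)}$.

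The homomorphisms $\psi^*$ and $\psi^\#$ are defined by telescoping formulas analogous to those in the proof of Lemma~\ref{lem:conjugacy,SBC-homeo}, summing along the orbit segment in the codomain corresponding to each unit step in the domain under $\Psi_f^{\pm 1}$; the cocycle intertwining relations $\kappa_{\X\times \N}(\psi^*(\xi)) = \kappa_{\X_f\times \N}(\xi)\circ \Psi_f$ and $\kappa_{\X_f\times \N}(\psi^\#(\zeta)) = \kappa_{\X\times \N}(\zeta)\circ \Psi_f^{-1}$ then follow by evaluating both sides on generators of $\G_\X \times \R$ of the form $\big((\tilde{x}, k), 1, S_{\tilde{\X}}(\tilde{x}, k)\big)$ and tracking their $\Psi_f$-images as products of generators in $\G_{\X_f}\times \R$. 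The final compatibility $H(\iota_f)\circ H(\iota_0) = H(\iota_0)\circ H(\psi)$ is verified on classes represented by $\xi$ supported on $\X_f\times \{0\}$, where both sides reduce to $[\iota_f^*(\iota_0^*(\xi))]$. The principal obstacle is producing $\Psi_f$ (equivalently, $\tilde{\psi}$) explicitly enough that the cocycle identities can be verified by generator-matching: the existence of $\Psi_f$ is conceptually clean from the suspension equivalence, but an explicit Kakutani--Rokhlin partition of $\tilde{\X}\times \N$ must be chosen with care so that the formulas for $\psi^*$ and $\psi^\#$ are well-defined and cocycle-preserving.
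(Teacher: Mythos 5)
Your part (i) is correct and essentially the paper's argument: same $\iota_f$, same surjectivity and positivity checks, and your injectivity argument (building $\eta(x,j)=b(x)+\sum_{s=1}^{j}\xi(x,s)$ for a general $\xi$) is a valid minor variant of the paper's reduction to $\xi$ supported on $\X\times\{0\}$; I verified that your $\eta$ does satisfy $\xi=\eta-\eta\circ\sigma_f$ using the expanded formula $\iota_f^*(\xi)(x)=\xi(x,0)+\sum_{j=1}^{f(\sigma_\X(x))-1}\xi(\sigma_\X(x),j)$.

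Part (ii) is where the gap lies, and you have flagged it yourself: you never actually produce $\Psi_f$, $\psi^*$ or $\psi^\#$. The appeal to the suspension/Muhly--Renault--Williams equivalence only yields the \emph{existence} of some isomorphism $\G_\X\times\R\LRA\G_{\X_f}\times\R$ after stabilisation; it does not deliver the four additional properties that carry all the content of the lemma, namely the compatibility $\psi\circ\pi_{\X\times\N}=\pi_{\X_f\times\N}\circ\Psi_f^{(0)}$ with the factor maps on the \emph{base} spaces (not the covers), the two cocycle intertwinings for $\psi^*$ and $\psi^\#$, the positivity of $H(\psi)$, and the identity $H(\iota_f)\circ H(\iota_0)=H(\iota_0)\circ H(\psi)$. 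An abstractly obtained equivalence isomorphism need not satisfy any of these, so the argument must be run with an explicit choice. The paper's (and evidently your intended) choice is $\psi(x,j)=\big((x,i),k\big)$ with $j=kf(x)+i$ and $0\le i<f(x)$, and then
\[
\Psi_f\big((\tilde{x},j),p,(\tilde{x}',j')\big)=\Big(\big((\tilde{x},i),k\big),\,l-l',\,\big((\tilde{x}',i'),k'\big)\Big),
\qquad l=i+\sum_{r=1}^{s}f(\sigma_\X^r(\pi_\X(\tilde{x}))),
\]
with $l'$ defined analogously from $s'$, where $\sigma_{\tilde{\X}}^{s}(\tilde{x})=\sigma_{\tilde{\X}}^{s'}(\tilde{x}')$ and $p=s-s'$; the maps $\psi^*$ and $\psi^\#$ then require a genuine case split between $n\geq 1$ and $n=0$ (e.g.\ $\psi^*(\xi)(x,0)=\sum_{r=0}^{f(\sigma_\X(x))-1}\xi(S_f^r(\psi(x,0)))$ versus a difference of two partial orbit sums for $n\geq 1$) before the cocycle relations and the identity $\iota_f^*\circ\iota_0^*=\iota_0^*\circ\psi^*$ --- which is what actually proves $H(\iota_f)\circ H(\iota_0)=H(\iota_0)\circ H(\psi)$ and hence positivity --- can be checked. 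Until those formulas are written down and verified, part (ii) is an outline rather than a proof. A secondary caveat: your identification of $\tilde{\X_f}$ with $\tilde{\X}_{\tilde{f}}$ is also asserted without argument, although the paper relies on the same identification implicitly, so I would not count that against you relative to the paper's own standard.
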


\begin{proof}
    (i):
    The inclusion $\iota_f\colon \X\LRA \X_f$ given by $\iota_f(x) = (x, 0)$ is an injective sliding block code,
    and $\iota_f^*\colon C(\X_f, \Z) \LRA C(\X, \Z)$ given by~\eqref{eq:iotastar} is a surjective homomorphism.
    Since 
    \[
        \iota_f^*(\xi - \xi\circ \sigma_\X)(x) = \xi(x, 0) - \xi(\sigma_\X(x), 0) = \iota_0^*(\xi)(x) - \iota_f^*(\xi)(\sigma_\X(x)),
    \]
    for $\xi\in C(\X_f, \Z)$ and $x\in \X$, 
    the map $\iota_f^*$ induces a well-defined surjective map $H(\iota_f)\colon H^{\X_f} \LRA H^\X$ given by
    $H(\iota_f)([\xi]) = [\iota_f^*(\xi)]$ for $\xi\in C(\X_f, \Z)$.
  
    To see that $H(\iota_f)$ is injective, notice that any element of $H^{\X_f}$ can be represented by a map $\xi\in C(\X_f, \Z)$ 
    which is supported on $\X\times \{0\} \subset \X_f$. 
    Suppose $\xi\in C(\X_f, \Z)$ is supported on $\X\times \{0\} \subset \X_f$ and $\iota_f^*(\xi)(x) = b(x) - b(\sigma_\X(x))$ for some $b\in C(\X, \Z)$. 
    Let $\eta\in C(\X_f, \Z)$ be given by $\eta(x, n) = 0$ for $n > 0$ and $\eta(x, 0) = b(x)$.
    Then $\xi(x, 0) = \eta(x, 0) - \eta\circ \sigma_f^{f(\sigma_\X(x))}(x, 0)$, so $\xi$ is cohomologous to zero.

    Note that $\iota_f^*(\xi)\geq 0$ when $\xi \geq 0$.
    Conversely, let $g\in C(\X, \Z)$ and take $\xi\in C(\X_f, \Z)$ such that $\xi(x, i) = 0$ for all $i > 0$ and $\xi(x, 0) = g(x)$.
    Then $\iota_f^*(\xi) = g$ and $\xi \geq 0$ if $g\geq 0$.
    Hence $H(\iota_f^*)$ is a positive isomorphism.

    (ii):
    Define $\psi\colon \X\times \N\LRA \X_f\times \N$ by
    \[
        \psi(x,j) = \big( (x,i),k \big)
    \]
    where $i,k\in \N$ with $i<f(x)$ and $j = kf(x) + i$. 
    Then $\psi$ is a homeomorphism. 
    
    Define $\Psi_f\colon \G_\X\times \R\LRA \G_{\X_f}\times \R$ by
    \[
        \Psi_f\left( (\tilde{x},j), p, (\tilde{x}',j') \right) = \left( \big( (\tilde{x},i),k \big), l - l', \big( (\tilde{x}',i'),k' \big) \right)
    \]
    for $\big( (\tilde{x},j), p, (\tilde{x}',j') \big)\in \G_\X\times \R$ and $s, s'\in \N$ 
    such that $\sigma_{\tilde{\X}}^s(\tilde{x}) = \sigma_{\tilde{\X}}^{s'}(\tilde{x}')$ and $p = s - s'$.
    Here, $i, i', k, k'\in \N$ with $i < f(\pi_\X(\tilde{x}))$ and $i'<f(\pi_\X(\tilde{x}'))$,
    and $j = k f(\pi_\X(\tilde{x})) + i$ and $j' = k' f(\pi_\X(\tilde{x}')) + i'$, and
    \[
        l  = i  + \sum_{r = 1}^s f(\sigma_\X^r(\pi_\X(\tilde{x}))), \quad
        l' = i' + \sum_{r = 1}^{s'} f(\sigma_\X^r(\pi_\X(\tilde{x}'))).
    \]
    Then $\Psi_f$ is a groupoid isomorphism such that $\psi\circ \pi_{\X\times \N} = \pi_{\Y\times \N} \circ \Psi_f^{(0)}$.
    
    (iii):
    Let $\psi^*\colon C(\X_f\times \N, \Z) \LRA C(\X\times \N, \Z)$ be defined by
    \[
        \psi^*(\xi)(x,j) = \sum_{r=0}^{k + 1} \xi(S_f^r(\psi(x,j))) - \sum_{r = 0}^{k} \xi(S_f^r(\psi(x,j - 1)))
    \]
    for $\xi\in C(\X_f\times \N, \Z)$ and $(x, j)\in \X\times \N$ with $j\ge 1$, where $k$ is the integer part of $j/f(x)$, and 
    \[
        \psi^*(\xi)(x,0) = \sum_{r = 0}^{f(\sigma_\X(x)) - 1} \xi(S_f^r(\psi(x,0)))
    \]
    for $\xi\in C(\X_f\times \N, \Z)$ and $x\in \X$. 
    Then $\psi^*$ is a homomorphism such that $\kappa_{\X\times\N}(\psi^*(\xi)) = \kappa_{\X_f\times\N}(\xi)\circ \Psi_f$ for $\xi\in C(\X_f\times \N, \Z)$.

    Define $\psi^\#\colon C(\X\times \N, \Z) \LRA C(\X_f\times \N, \Z)$ by
    \[
        \psi^\#(\zeta)((x,i),k) = \sum_{j = (k-1)f(x) + i + 1}^{k f(x) + i} \zeta(x,j)
    \]
    for $\zeta\in C(\X\times \N, \Z)$ and $\big( (x,i), k \big)\in \X_f\times \N$ with $k\ge 1$,
    \[
        \psi^\#(\zeta)\big( (x,i),0 \big) = \zeta(x,i)
    \]
    for $\zeta\in C(\X\times \N, \Z)$ and $(x,i)\in \X_f$ with $i\ge 1$, and
    \[
        \psi^\#(\zeta)\big( (x,0),0 \big) = \zeta(x,0) - \sum_{j = 1}^{f(\sigma_\X(x)) - 1} \zeta(\sigma_\X(x),j)
    \]
    for $\zeta\in C(\X\times \N, \Z)$ and $x\in\X$. 
    Then $\psi^\#$ is a homomorphism such that $\kappa_{\X_f\times\N}(\psi^\#(\zeta)) = \kappa_{\X\times\N}(\zeta)\circ\Psi_f^{-1}$ for $\zeta\in C(\X\times \N, \Z)$

    Since
    \[
        \psi^*(\xi - \xi\circ S_f)(x, j) = \xi((x, 0), 0) - \xi( (\sigma_\X(x), 0), 0) = \xi( (x, 0), 0) - \xi\circ S_\X^{f(\sigma_\X(x))}( (x, 0), 0),
    \]
    for $\xi\in C(\X_f\times \N, \Z)$ and $(x, j)\in \X\times \N$, 
    $\psi^*$ induces a well-defined map $H(\psi)\colon H^{\X_f\times \N}\LRA H^{\X\times \N}$ given by
    $H(\psi)([\xi]) = [\psi^*(\xi)]$ for $\eta\in C(\X_f\times \N, \Z)$. 
    Since $\iota_f^*\circ\iota_0^* = \iota_0^*\circ\psi^*$, it follows that $H(\iota_f)\circ H(\iota_0) = H(\iota_0)\circ H(\psi)$. 
    Since $H(\iota_0)$ and $H(\iota_f)$ are positive isomorphisms, $H(\psi)$ is also a positive isomorphism.

    Suppose $\zeta\in C(\X\times \N, \Z)$ is supported on $\X\times\{0\}$. 
    Then 
    \[
        \iota_f^*(\iota_0^*(\psi^\#(\zeta)))(x) = \zeta(x,0) = \iota_0^*(\zeta)(x),
    \]
    for every $x\in\X$. 
    Since every element of $H_+^{\X\times\N}$ can be represented by a map $\zeta\in C(\X\times \N, \Z)$ which is supported on $\X\times\{0\}$, 
    this shows that $H(\iota_f)\circ H(\iota_0)([\psi^\#(\zeta)]) = H(\iota_0)([\zeta])$ for every $\zeta\in C(\X\times \N, \Z)$. 
    Since $H(\iota_f)\circ H(\iota_0) = H(\iota_0)\circ H(\psi)$ and $H(\iota_0)$ is an isomorphism, 
    it follows that $H(\psi)^{-1}([\zeta])=[\psi^\#(\zeta)]$ for $\zeta\in C(\X\times \N, \Z)$. 
\end{proof}

Let us say that a stabilizer-preserving continuous orbit equivalence $(h,l_\X,k_\X,l_\Y,k_\Y)$ from $\X$ to $\Y$ is \emph{positive} 
if $[l_\X - k_\X]\in H^\X_+$ and $[l_\Y - k_\Y]\in H^\Y_+$. 

\begin{lemma}\label{lem:pos}
Let $\X$ and $\Y$ be one-sided shift spaces and let $(h,l_\X,k_\X,l_\Y,k_\Y)$ be a positive stabilizer-preserving continuous orbit equivalence from $\X$ to $\Y$. 
Then $(h,l_\X,k_\X,l_\Y,k_\Y)$ is least period preserving.
\end{lemma}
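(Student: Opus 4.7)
The plan is to observe that the lemma is essentially immediate once the meaning of positivity is unpacked on periodic orbits. The assumption that $(h,l_\X,k_\X,l_\Y,k_\Y)$ is stabilizer-preserving already gives $\lp(h(x)) = |l_\X^{(p)}(x) - k_\X^{(p)}(x)|$ for every periodic $x\in\X$ with $\lp(x)=p$, and the analogous statement for $h^{-1}$. To upgrade this to least-period preservation, all that is needed is to remove the absolute value by showing $l_\X^{(p)}(x) - k_\X^{(p)}(x)\ge 0$ (and symmetrically for $\Y$).

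The main step is the following cohomological computation. Since $[l_\X-k_\X]\in H^\X_+$, there exist $g\in C(\X,\N)$ with $g\ge 0$ and $b\in C(\X,\Z)$ such that
\[
  l_\X - k_\X \;=\; g + b - b\circ\sigma_\X.
\]
For a periodic point $x\in\X$ with $\lp(x)=p$ we have $\sigma_\X^p(x)=x$, so the coboundary part telescopes to zero when summed along the orbit, and
\[
  l_\X^{(p)}(x) - k_\X^{(p)}(x) \;=\; \sum_{i=0}^{p-1}(l_\X-k_\X)(\sigma_\X^i(x)) \;=\; \sum_{i=0}^{p-1} g(\sigma_\X^i(x)) \;\ge\; 0.
\]
Combining this with the stabilizer-preserving identity yields $\lp(h(x)) = l_\X^{(p)}(x) - k_\X^{(p)}(x)$.

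The same argument applied to $(h^{-1},l_\Y,k_\Y,l_\X,k_\X)$ with positivity of $[l_\Y-k_\Y]$ in $H^\Y_+$ gives $\lp(h^{-1}(y)) = l_\Y^{(q)}(y) - k_\Y^{(q)}(y)$ for every periodic $y\in\Y$ with $\lp(y)=q$. Since $h$ and $h^{-1}$ already send eventually periodic points to eventually periodic points by the stabilizer-preserving hypothesis, this finishes the verification that $(h,l_\X,k_\X,l_\Y,k_\Y)$ is least period preserving. No real obstacle is anticipated; the only thing to be careful about is to represent the positive class by a genuine nonnegative cocycle so that the telescoping sum over a periodic orbit delivers a nonnegative value.
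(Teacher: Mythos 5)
Your proof is correct and follows essentially the same route as the paper: represent the positive class $[l_\X-k_\X]$ by a nonnegative function plus a coboundary, telescope the coboundary over a periodic orbit to conclude $l_\X^{(p)}(x)-k_\X^{(p)}(x)\ge 0$, and then drop the absolute value in the stabilizer-preserving identity. Nothing is missing.
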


\begin{proof}
Since $[l_\X-k_\X]\in H^\X_+$ and $[l_\Y-k_\Y]\in H^\Y_+$, there are $b_\X\in C(\X,\Z)$ and $n_\X\in C(\X,\N)$ such that $l_\X - k_\X = n_\X + b_\X - b_\X\circ\sigma_\X$,
and $b_\Y\in C(\Y,\Z)$ and $n_\Y\in C(\Y,\N)$ such that $l_\Y - k_\Y = n_\Y + b_\Y - b_\Y\circ \sigma_\Y$. 
If $x\in\X$ is periodic with $\lp(x) = p$, then 
\begin{align*}
    l_\X^{(p)}(x) - k_\X^{(p)}(x)
    &= \sum_{i = 0}^{p - 1}\left( l_\X(\sigma_\X^i(x)) - k_\X(\sigma_\X^i(x)) \right)\\
    &= \sum_{i = 0}^{p - 1}\left( n_\X(\sigma_\X^i(x) +b_\X(\sigma_\X^i(x) -b_\X(\sigma_\X^{i+1}(x) \right)\\
    &= \sum_{i = 0}^{p - 1} n_\X(\sigma_\X^i(x)\geq 0.  
\end{align*}
Since $(h,l_\X,k_\X,l_\Y,k_\Y)$ is stabilizer-preserving, we thus have that 
\[
    l_\X^{(p)}(x) - k_\X(p)(x) = |l_\X^{(p)}(x) - k_\X(p)(x)| = \lp(h(x)).
\]
A similar argument shows that $l_\Y^{(\lp(y))}(y) - k_\Y(\lp(y))(y) = \lp(h^{-1}(y))$ for any periodic $y\in \Y$. 
\end{proof}

\begin{corollary}\label{cor:flow}
Let $\Lambda_\X$ and $\Lambda_\Y$ be two-sided subshifts and suppose there is a positive stabilizer-preserving continuous orbit equivalence from $\X$ to $\Y$. 
Then $\Lambda_\X$ and $\Lambda_\Y$ are flow equivalent. 
\end{corollary}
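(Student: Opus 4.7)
The plan is to translate the given orbit data into the stabilized-groupoid/positive-cohomology data required by Theorem~\ref{thm:flow} (which, based on the surrounding machinery, should characterize flow equivalence of $\Lambda_\X$ and $\Lambda_\Y$ in terms of a groupoid isomorphism $\G_\X\times\R\cong \G_\Y\times\R$ respecting positive classes in $H^{\X\times\N}$ and $H^{\Y\times\N}$). By Lemma~\ref{lem:pos} the positive stabilizer-preserving continuous orbit equivalence $(h,l_\X,k_\X,l_\Y,k_\Y)$ is least period preserving, so Theorem~\ref{thm:coe} yields groupoid isomorphisms $\Psi_0\colon \G_\X\LRA \G_\Y$ and $\Psi_0'\colon \G_\Y\LRA \G_\X$ satisfying $h\circ\pi_\X=\pi_\Y\circ\Psi_0^{(0)}$, $h^{-1}\circ\pi_\Y=\pi_\X\circ(\Psi_0')^{(0)}$, $\kappa_\X(d_\X)=\kappa_\Y(1)\circ\Psi_0$, and $\kappa_\Y(d_\Y)=\kappa_\X(1)\circ\Psi_0'$, where $d_\X=l_\X-k_\X$ and $d_\Y=l_\Y-k_\Y$.

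Next I would stabilize. Set $\Psi:=\Psi_0\times\id_\R\colon \G_\X\times\R\LRA \G_\Y\times\R$ (a groupoid isomorphism) and $\psi:=h\times\id_\N\colon \X\times\N\LRA \Y\times\N$ (a homeomorphism); they automatically satisfy $\psi\circ\pi_{\X\times\N}=\pi_{\Y\times\N}\circ\Psi^{(0)}$. Let $\tilde{d}_\X\in C(\X\times\N,\Z)$ be the extension of $d_\X$ supported on $\X\times\{0\}$ (so $\tilde{d}_\X(x,0)=d_\X(x)$ and $\tilde{d}_\X(x,n)=0$ for $n\geq 1$), let $\tilde{1}_\Y$ be the indicator function of $\Y\times\{0\}$, and define $\tilde{d}_\Y$ and $\tilde{1}_\X$ analogously. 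Unwinding the definition of $\kappa_{\X\times\N}$ and using that $\Psi$ acts trivially on the $\R$-coordinate yields $\kappa_{\X\times\N}(\tilde{d}_\X)=\kappa_{\Y\times\N}(\tilde{1}_\Y)\circ\Psi$ and the symmetric identity $\kappa_{\Y\times\N}(\tilde{d}_\Y)=\kappa_{\X\times\N}(\tilde{1}_\X)\circ\Psi^{-1}$.

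The positivity hypothesis then transfers cleanly to the stabilization: since $H(\iota_0)\colon H^{\X\times\N}\LRA H^\X$ is a positive isomorphism mapping $[\tilde{d}_\X]$ to $[d_\X]\in H^\X_+$, we have $[\tilde{d}_\X]\in H^{\X\times\N}_+$, and symmetrically $[\tilde{d}_\Y]\in H^{\Y\times\N}_+$. This package $(\Psi,\psi,\tilde{d}_\X,\tilde{d}_\Y)$ is exactly the data needed to invoke Theorem~\ref{thm:flow}, from which we conclude that $\Lambda_\X$ and $\Lambda_\Y$ are flow equivalent. The main obstacle I anticipate is the bookkeeping around the precise form of Theorem~\ref{thm:flow}: matching the cocycle and positivity conditions may require replacing $\tilde{d}_\X$ and $\tilde{1}_\Y$ by cohomologous representatives produced via the homomorphisms $\psi^*$ and $\psi^\#$ of Lemma~\ref{lem:X_f}, but such replacements preserve both the class in $H^{\X\times\N}_+$ and the underlying groupoid isomorphism, so the argument goes through.
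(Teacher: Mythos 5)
Your proposal takes a fundamentally different route from the paper, and it does not work as written. The paper's proof is two lines: Lemma~\ref{lem:pos} upgrades the positive stabilizer-preserving continuous orbit equivalence to a \emph{least period preserving} one, and flow equivalence then follows directly from the cited result \cite[Proposition 3.2]{CEOR}; no groupoids, stabilization, or $\mathrm{C^*}$-algebras enter. Your route through Theorem~\ref{thm:flow} is circular within the paper's logical structure: Corollary~\ref{cor:flow} is stated and proved \emph{before} Theorem~\ref{thm:flow}, and the implication (iv) $\Rightarrow$ (i) of that theorem (on which (ii) $\Rightarrow$ (i) also rests) is itself proved by invoking Corollary~\ref{cor:flow}. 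So you cannot use Theorem~\ref{thm:flow} to establish the corollary.

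Even setting the circularity aside, the data you construct falls well short of condition (ii) of Theorem~\ref{thm:flow}. That condition demands homomorphisms $\psi^*\colon C(\Y\times\N,\Z)\LRA C(\X\times\N,\Z)$ and $\psi^\#\colon C(\X\times\N,\Z)\LRA C(\Y\times\N,\Z)$ satisfying the intertwining relations \eqref{eq:flow-psistar} and \eqref{eq:flow-psihash} for \emph{every} $\eta$ and $\zeta$, together with a positive isomorphism $H(\psi)$ of the full ordered cohomology groups induced by them; you only verify two cocycle identities for the specific functions $\tilde{d}_\X,\tilde{1}_\Y$ and $\tilde{d}_\Y,\tilde{1}_\X$, and it is not clear how to extend these to the required homomorphisms without already knowing the two-sided shifts are flow equivalent. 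Moreover, your ``symmetric identity'' $\kappa_{\Y\times\N}(\tilde{d}_\Y)=\kappa_{\X\times\N}(\tilde{1}_\X)\circ\Psi^{-1}$ does not follow from Theorem~\ref{thm:coe}: that theorem produces two isomorphisms $\Psi_0$ and $\Psi_0'$ which in general cannot be chosen to be mutually inverse (Remark~\ref{rem:stab-preserving}), so the relation $\kappa_\Y(d_\Y)=\kappa_\X(1)\circ\Psi_0'$ cannot simply be rewritten with $\Psi_0^{-1}$ in place of $\Psi_0'$. The short, non-circular path is the paper's: positivity plus stabilizer preservation forces least period preservation, which is exactly the hypothesis under which a continuous orbit equivalence is known to induce a flow equivalence.
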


\begin{proof}
Let $(h,l_\X,k_\X,l_\Y,k_\Y)$ be a positive stabilizer-preserving continuous orbit equivalence from $\X$ to $\Y$. 
It follows from Lemma~\ref{lem:pos} that $(h,l_\X,k_\X,l_\Y,k_\Y)$ is least period preserving, 
and thus from~\cite[Proposition 3.2]{CEOR} that $\Lambda_\X$ and $\Lambda_\Y$ are flow equivalent. 
\end{proof}

The proof of~\cite[Theorem 5.11]{MM-zeta} shows that any continuous orbit equivalence between shifts of finite type with no isolated points
is least period preserving and positive. 
However, if $\X = \Y$ is the shift space with only one point, then $(\id,1,0,0,1)$ is a stabilizer-preserving continuous orbit equivalence from $\X$ to $\Y$
which is not positive. 
It follows from~\cite[Proposition 4.5 and Proposition 4.7]{CEOR} that if $\X$ and $\Y$ are shifts of finite type that are continuously orbit equivalent, 
then there is a least period preserving continuous orbit equivalence between $\X$ and $\Y$. 
We do not know if there are shifts spaces $\X$ and $\Y$ that are continuously orbit equivalent, 
but for which there is no positive stabilizer-preserving continuous orbit equivalence between $\X$ and $\Y$.

\begin{remark}\label{remark:CRST}
  Suppose $\G$ is a second-countable locally compact Hausdorff \'etale groupoid such that $\Iso(\G)^\circ$ is abelian and torsion-free, $\Gamma$ is an abelian group,
  and that $c\colon \G\LRA\Gamma$ is a cocycle. 
  Then $c$ induces an action $\beta^c$ of the dual $\widehat{\Gamma}$ of $\Gamma$ on $\mathrm{C^*_r}(\G)$ such that $\beta^c_\gamma(f)(\eta) = \gamma(c(\eta))f(\eta)$ 
  for $\gamma\in\widehat{\Gamma}$, $f\in \mathrm{C^*_r}(\G)$, and $\eta\in\G$. 
  In~\cite[Section 4]{CRST}, a groupoid $\mathcal{H}(\mathrm{C^*_r}(\G),C_0(\G^{(0)}),\beta^c)$ consisting of equivalence classes of pairs $(n,\phi)$, 
  where $n$ is normalizer of $C_0(\G^{(0)})$ in $\mathrm{C^*_r}(\G)$ that is homogeneous with respect to $\beta^c$, and $\phi$ is a character of $C_0(\G^{(0)})$, is constructed, 
  and it is shown in~\cite[Proposition 6.5]{CRST} that there is an isomorphism $\theta_{(\mathrm{C^*_r}(\G),C_0(\G^{(0)}),\beta^c)}\colon \G \LRA \mathcal{H}(\mathrm{C^*_r}(\G),C_0(\G^{(0)}),\beta^c)$ 
  (it is in~\cite{CRST} not assumed that $\Gamma$ is abelian and $\beta^c$ is a coaction of $\Gamma$ rather than an action of $\widehat{\Gamma}$). 

  This is used in~\cite[Theorem 6.2]{CRST} to prove that if $\G'$ is another second-countable locally compact Hausdorff étale groupoid, and $d\colon \G'\to\Gamma$ is a cocycle 
  such that there is a $^*$-isomorphism $\Phi\colon \mathrm{C^*_r}(\G)\LRA \mathrm{C^*_r}(\G')$ such that $\Phi(C_0(\G^{(0)})) = C_0((\G')^{(0)})$ and 
  $\beta_\gamma^d\circ\Phi = \Phi\circ\beta_\gamma^c$ for all $\gamma\in\hat{\Gamma}$, then there is an isomorphism $\Psi\colon \G\LRA\G'$ such that $d\circ\Psi = c$.

  If we let $c_0$ denote the unique cocycle from $\G$ to the abelian group $\{0\}$, then any normalizer of $C_0(\G^{(0)}$ in $\mathrm{C^*_r}(\G)$ is homogeneous with respect to $c_0$. 
  In particular, a normalizer $n$ that is homogeneous with respect to $c$, is also homogeneous with respect to $c_0$, 
  and there is a homomorphism $\Phi_\pi\colon \mathcal{H}(\mathrm{C^*_r}(\G),C_0(\G^{(0)}),\beta^c)\LRA \mathcal{H}(\mathrm{C^*_r}(\G),C_0(\G^{(0)}),\beta^{c_0})$ that sends $[n,\phi]$ to $[n,\phi]$. 
  Since $\theta_{(\mathrm{C^*_r}(\G),C_0(\G^{(0)}),\beta^{c_0})} = \Phi_\pi\circ\theta_{(\mathrm{C^*_r}(\G),C_0(\G^{(0)}),\beta^c)}$, it follows that $\Phi_\pi$ is an isomorphism. 

  Therefore, the isomorphism $\Psi'\colon \G\LRA\G'$ constructed in~\cite[Theorem 3.3]{CRST} is equal to the isomorphism $\Psi\colon \G\LRA\G'$ constructed 
  in~\cite[Theorem 6.2]{CRST} such that $d\circ \Psi = c$.
\end{remark}

We are now ready to characterize flow equivalence of general two-sided subshifts.
The equivalence (i) $\iff$ (iv) in Theorem~\ref{thm:flow} below is a generalization of~\cite[Theorem 5.3 (5) $\iff$ (6)]{CEOR}
which is formulated for shifts of finite type.

\begin{theorem}\label{thm:flow}
    Let $\Lambda_\X$ and $\Lambda_\Y$ be two-sided subshifts.
    The following are equivalent:
    \begin{enumerate}
        \item[(i)] the two-sided subshifts $\Lambda_\X$ and $\Lambda_\Y$ are flow equivalent;
        \item[(ii)] there are
            \begin{itemize}
                \item a groupoid isomorphism $\Psi\colon \G_\X\times \R \LRA \G_\Y\times \R$ and 
                    a homeomorphism $\psi\colon \X\times \N \LRA \Y\times \N$ such that $\psi\circ \pi_{\X\times \N} = \pi_{\Y\times \N}\circ \Psi^{(0)}$;
                \item a homomorphism $\psi^*\colon C(\Y\times\N,\Z)\LRA C(\X\times\N,\Z)$ such that
                    \begin{align}\label{eq:flow-psistar}
                        \kappa_{\X\times\N}(\psi^*(\eta)) = \kappa_{\Y\times\N}(\eta)\circ\Psi,
                    \end{align}
                    for $\eta\in C(\Y\times\N,\Z)$;
                \item a homomorphism $\psi^\#\colon C(\X\times\N,\Z)\LRA C(\Y\times\N,\Z)$ such that 
                    \begin{align}\label{eq:flow-psihash}
                        \kappa_{\Y\times\N}(\psi^\#(\zeta)) = \kappa_{\X\times\N}(\zeta)\circ\Psi^{-1},
                    \end{align}
                    for $\zeta\in C(\X\times\N,\Z)$; and
                \item a positive isomorphism $H(\psi)\colon H^{\Y\times\N}\LRA H^{\X\times\N}$ such that 
                    $H(\psi)([\eta]) = [\psi^*(\eta)]$ $\eta\in C(\Y\times\N,\Z)$, and $H(\psi)^{-1}([\zeta]) = [\psi^\#(\zeta)]$ for $\zeta\in C(\X\times\N,\Z)$.
            \end{itemize}     
        \item[(iii)] there are
            \begin{itemize}
                \item a $^*$-isomorphism $\Phi\colon \OO_{\X}\otimes \K\LRA \OO_{\Y}\otimes \K$ such that $\Phi(C(\X)\otimes c_0) = C(\Y)\otimes c_0$;
                \item a homomorphism $\psi^*\colon C(\Y\times\N,\Z)\LRA C(\X\times\N,\Z)$ such that
                    \[
                        \Phi\circ\beta_z^{\kappa_{\X\times\N}(\psi^*(\eta))} = \beta_z^{\kappa_{\Y\times\N}(\eta)}\circ\Phi,
                    \]
                    for $\eta\in C(\Y\times\N,\Z)$ and $z\in\T$;
                \item a homomorphism $\psi^\#\colon C(\X\times\N,\Z)\LRA C(\Y\times\N,\Z)$ such that 
                    \[
                        \Phi\circ\beta_z^{\kappa_{\X\times\N}(\zeta)} = \beta_z^{\kappa_{\Y\times\N}(\psi^\#(\zeta))}\circ\Phi,
                    \]
                    for $\zeta\in C(\X\times\N,\Z)$ and $z\in\T$; and
                \item a positive isomorphism $H(\psi)\colon H^{\Y\times\N}\LRA H^{\X\times\N}$ such that 
                    $H(\psi)([\eta]) = [\psi^*(\eta)]$ $\eta\in C(\Y\times\N,\Z)$, and $H(\psi)^{-1}([\zeta])=[\psi^\#(\zeta)]$ for $\zeta\in C(\X\times\N,\Z)$;
            \end{itemize}
        \item[(iv)] there are $f\in C(\X,\N_+)$ and $g\in C(\Y,\N_+)$ such that 
            there is a positive stabilizer-preserving continuous orbit equivalence between $\X_f$ and $\Y_g$.  
    \end{enumerate}
\end{theorem}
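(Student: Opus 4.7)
The plan is to establish the chain of equivalences using the discrete suspensions $\X_f$ and $\Y_g$ as bridges between the one-sided, two-sided, groupoid, and $\mathrm{C^*}$-algebraic pictures, leveraging the preceding lemmas.

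First, I would dispatch (iv) $\implies$ (i) directly: Corollary~\ref{cor:flow} turns a positive stabilizer-preserving continuous orbit equivalence between $\X_f$ and $\Y_g$ into a flow equivalence of $\Lambda_{\X_f}$ and $\Lambda_{\Y_g}$, which combined with the standard flow equivalences between $\Lambda_\X$ and $\Lambda_{\X_f}$ and between $\Lambda_\Y$ and $\Lambda_{\Y_g}$ yields (i). For (i) $\implies$ (iv), I would invoke the preceding lemma to produce $f \in C(\X,\N_+)$ and $g \in C(\Y,\N_+)$ making $\Lambda_{\X_f}$ and $\Lambda_{\Y_g}$ two-sided conjugate. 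A two-sided conjugacy can, after replacing $f$ and $g$ by suitably large multiples $f'\geq f$ and $g'\geq g$ (so that $\X_{f'}$ and $\Y_{g'}$ arise as higher-block presentations of $\X_f$ and $\Y_g$), be realized by a one-sided conjugacy $h\colon \X_{f'}\LRA \Y_{g'}$. Such a one-sided conjugacy is automatically a positive stabilizer-preserving continuous orbit equivalence, realized by cocycles $k=0$ and $l=1$, and $[l-k]=[1]\in H^{\X_{f'}}_+$.

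For (i) $\implies$ (ii), I would apply Theorem~\ref{thm:two-sided-conjugacy} to the conjugacy $\Lambda_{\X_f}\cong \Lambda_{\Y_g}$ to obtain a groupoid isomorphism $\Psi_0\colon \G_{\X_f}\times \R \LRA \G_{\Y_g}\times \R$ and a homeomorphism $\psi_0\colon \X_f\times\N\LRA \Y_g\times\N$ with $\psi_0\circ \pi_{\X_f\times\N} = \pi_{\Y_g\times\N}\circ \Psi_0^{(0)}$, and apply Lemma~\ref{lem:conjugacy,SBC-homeo} (to the two-sided conjugacy of the suspensions) to supply the maps $\psi_0^*$, $\psi_0^\#$ and the positive isomorphism $H(\psi_0)$ on ordered cohomology. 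Composing with the isomorphisms, homomorphisms, and positive cohomology isomorphisms provided by Lemma~\ref{lem:X_f} on both the $\X$-side and the $\Y$-side produces the desired quintuple $(\Psi,\psi,\psi^*,\psi^\#,H(\psi))$. The equivalence (ii) $\iff$ (iii) follows from the reconstruction theorem~\cite[Theorem~6.2]{CRST} combined with Corollary~\ref{cor:diagonal-preserving-stable}; Remark~\ref{remark:CRST} ensures that the two cocycle-intertwining conditions can be realized by a single groupoid isomorphism, and the translation $\Phi\circ \beta^{c\circ \Psi} = \beta^{c}\circ \Phi$ converts \eqref{eq:flow-psistar}--\eqref{eq:flow-psihash} into the gauge-intertwining conditions of (iii) and conversely.

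The main obstacle is (ii) or (iii) $\implies$ (iv), where I must reconstruct $f\in C(\X,\N_+)$ and $g\in C(\Y,\N_+)$ and a positive stabilizer-preserving continuous orbit equivalence between $\X_f$ and $\Y_g$ from groupoid-level data. The plan is to exploit the positive isomorphism $H(\psi)$: choose representatives $\tilde g := \psi^\#(1_{\X\times\N})\in C(\Y\times\N,\Z)$ and $\tilde f := \psi^*(1_{\Y\times\N})\in C(\X\times\N,\Z)$, which are positive in ordered cohomology since $H(\psi)^{-1}([1_{\X\times\N}])=[\tilde g]$ lies in $H^{\Y\times\N}_+$ and similarly for $\tilde f$. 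Using $H(\iota_0)$ these descend (after a coboundary adjustment and a shift) to strictly positive maps $f\colon \X\LRA \N_+$ and $g\colon \Y\LRA \N_+$, and the groupoid isomorphism $\Psi$ conjugated by the isomorphisms of Lemma~\ref{lem:X_f} becomes a groupoid isomorphism $\G_{\X_f}\times \R\LRA \G_{\Y_g}\times \R$ which by construction preserves the canonical cocycles $\bar c_{\X_f}$ and $\bar c_{\Y_g}$. Applying Theorem~\ref{thm:two-sided-conjugacy} to this yields two-sided conjugacy of $\Lambda_{\X_f}$ and $\Lambda_{\Y_g}$; passing (as in the argument for (i) $\implies$ (iv) above) to a one-sided conjugacy between suitable higher-block presentations then delivers the required positive stabilizer-preserving continuous orbit equivalence. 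The delicate points are verifying that the coboundary adjustment can be chosen so that $\tilde f$ and $\tilde g$ become strictly positive functions of the first coordinate, and checking that the descended cocycle-preserving groupoid isomorphism is compatible with the factor maps $\pi_{\X_f\times\N}$ and $\pi_{\Y_g\times\N}$.
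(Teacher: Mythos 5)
Your overall architecture agrees with the paper's for (iv)$\implies$(i) (via Corollary~\ref{cor:flow}), for (i)$\implies$(ii) (suspend, apply Lemmas~\ref{lem:conjugacy,SBC-homeo} and~\ref{lem:X_f}), and for (ii)$\iff$(iii) (Corollary~\ref{cor:diagonal-preserving-stable}, \cite[Theorem 6.2]{CRST} and Remark~\ref{remark:CRST}). The gaps are in your two routes to (iv). First, the recurring claim that a two-sided conjugacy of $\Lambda_{\X_f}$ and $\Lambda_{\Y_g}$ can be ``realized by a one-sided conjugacy'' after replacing $f,g$ by multiples is not tenable: by Lemma~\ref{lem:two-sided-conj} a two-sided conjugacy only yields an almost injective, almost surjective sliding block code, which is not a homeomorphism and hence not a continuous orbit equivalence at all; one-sided conjugacy is strictly finer than two-sided conjugacy, higher-block presentations are one-sided conjugate to the original shift, and $\X_{Nf}$ is not a higher-block presentation of $\X_f$ (indeed $\Lambda_{\X_{Nf}}$ is not even conjugate to $\Lambda_{\X_f}$, having different periodic point counts). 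This breaks both your direct (i)$\implies$(iv) and the last step of your (ii)$\implies$(iv).

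Second, in (ii)$\implies$(iv) your choice of $f$ as a representative of $H(\iota_0)(H(\psi)([1_{\Y\times\N}]))$ has two problems: membership in $H^{\X}_+$ only provides a \emph{nonnegative} representative, not one in $C(\X,\N_+)$ (a class can vanish on a periodic orbit, in which case no strictly positive representative exists, and you give no argument ruling this out); and even if one existed, conjugating $\Psi$ by the isomorphisms of Lemma~\ref{lem:X_f} produces a groupoid isomorphism intertwining $\bar c_{\Y_g}$ with a cocycle merely \emph{cohomologous} to $\bar c_{\X_f}$, whereas Theorem~\ref{thm:two-sided-conjugacy} requires the exact identity $\bar c_{\X_f}=\bar c_{\Y_g}\circ\Psi'$. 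The paper's proof avoids both difficulties: it takes $g$ constant equal to $n$ (with $n$ chosen so that $\Psi^{(0)}(\tilde\X\times\{0\})\subset\tilde\Y\times\{0,\dots,n-1\}$), defines $f(x)$ as the number of $k$ with $\Psi^{(0)}(\tilde x,k)\in\tilde\Y\times\{0,\dots,n-1\}$ --- automatically continuous and $\geq 1$ --- and then applies Theorem~\ref{thm:coe} (not the conjugacy theorem) to obtain a stabilizer-preserving continuous orbit equivalence $h=\phi_{\Y_g}\circ\psi\circ\phi_{\X_f}$ whose cocycle difference $d_{\X_f}$ need only be positive \emph{in cohomology}, which is exactly what the positivity of $H(\psi)$ supplies. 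You would need to replace your reduction to two-sided conjugacy by an argument of this kind.
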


\begin{proof}
    (i) $\implies$ (ii):
    Suppose $\Lambda_\X$ and $\Lambda_\Y$ are flow equivalent.
    Then there are $f\in C(\X, \N_+)$ and $g\in C(\Y, \N_+)$ such that $\Lambda_{\X_f}$ and $\Lambda_{\Y_g}$ are conjugate. 
    It therefore follows from Lemmas~\ref{lem:conjugacy,SBC-homeo} and~\ref{lem:X_f} that (ii) holds.

    (ii)$\implies$(iv): 
    We shall identify ${\G_\X\times \R}^{(0)} = \tilde{\X}\times \N$.
    Since $\tilde{\X}$ is compact and $\Psi$ is continuous, there is an integer $n\in \N$ such that 
    $\Psi^{(0)}(\tilde{\X}\times \{0\}) \subset \tilde{\Y}\times \{0,\ldots, n -1\}$.

    Define $g\in C(\Y,\N_+)$ to be constantly equal to $n$. Then $\phi_{\Y_g}\colon\Y\times\{0,\dots,n-1\}\LRA \Y_g$ given by
    \begin{align}\label{eq:phi_(Y_g)}
        \phi_{\Y_g}(y,k) = (y,k)    
    \end{align}
    for $(y,k)\in \Y_g$, is a homeomorphism and $\Phi_{\Y_g}\colon \G_\Y\times \R|_{\tilde{\Y}\times \{0,\ldots,n - 1\}}\LRA \G_{\Y_g}$ defined by 
    \[
        \Phi_{\Y_g}\big( (\tilde{y},k), m, (\tilde{y}',l) \big) = \big( (\tilde{y},k), k + mn - l, (\tilde{y}',l)\big),
    \]
    for $\big( (\tilde{y},k), m, (\tilde{y}',l) \big)\in \G_\Y\times \R|_{\tilde{\Y}\times \{0,\ldots,n - 1\}}$,
    is an isomorphism such that $\phi_{\Y_g}\circ \pi_{\Y\times \N} = \pi_{\Y_g}\circ\Phi_{\Y_g}^{(0)}$.

    Define $\tilde{f}\colon \tilde{\X}\LRA \N_+$ by
    \[
        \tilde{f}(\tilde{x}) = | \{ k\in \N : \Psi^{(0)}(\tilde{x}, k)\in \tilde{\Y}\times \{0,\ldots, n - 1\} \}|
    \]
    for $\tilde{x}\in \tilde{\X}$.
    Then $\tilde{f}$ is continuous and $\tilde{f}\geq 1$.
    Note that if $\pi_\X(\tilde{x}) = \pi_\X(\tilde{x}')$ then the condition $\psi\circ \pi_{\X\times \N} = \pi_{\Y\times \N}\circ \Psi^{(0)}$ ensures that
    \[
        \{ k\in \N \mid \Psi^{(0)}(\tilde{x}, k)\in \tilde{\Y}\times \{0,\ldots, n - 1\}\} = 
        \{ k'\in \N \mid \Psi^{(0)}(\tilde{x}', k')\in \tilde{\Y}\times \{0,\ldots, n - 1\}\},
    \]
    so $\tilde{f}(\tilde{x}) = \tilde{f}(\tilde{x}')$.
    By Lemma~\ref{lem:continuity}, there is a continuous map $f\colon \X\LRA \N_+$ satisfying $\tilde{f} = f\circ \pi_\X$.

    For each $x\in \X$, there are exactly $f(x)$ integers $k(x,0),\ldots,k(x,f(x) - 1)\in \N$ such that $\psi(x, k(x,i))\in \Y\times \{0,\ldots,n - 1\}$.
    Arrange the integers in increasing order and define $\phi_{\X_f}\colon\X_f\LRA \psi^{-1}(\Y\times \{0,\ldots,n - 1\})$ by 
    \begin{align}\label{eq:phi_(X_f)}
        \phi_{\X_f}(x, i) = (x, k(x,i)),
    \end{align}
    for $(x,i)\in \X_f$.
    Define $\Phi_{\X_f}\colon\G_{\X_f}\LRA {\G_\X\times \R}|_{\pi_\X^{-1}(\psi^{-1}(\Y\times \{0,\ldots,n - 1\}))}$ by 
    \[
        \Phi_{\X_f}\big( (\tilde{x},i), m, (\tilde{x}',i') \big) 
        = \big( (\tilde{x}, k(\pi_\X(\tilde{x}),i)), k - k', (\tilde{x}, k(\pi_\X(\tilde{x}'),i')) \big)
    \] 
    where $k,k',\in\N$ are such that $\sigma_{\tilde{\X}}^{k}(\tilde{x}) = \sigma_{\tilde{\X}}^{k'}(\tilde{x}')$ and 
    \[
        m = i + \sum_{r = 1}^{k} f(\sigma_\X^r(\pi_X(\tilde{x}))) - i'- \sum_{r = 1}^{k'} f(\sigma_\X^r(\pi_X(\tilde{x}'))).
    \]
    Then $\Phi_{\X_f}$ is an isomorphism such that $\phi_{\X_f}\circ\pi_{\X_f} = \pi_{\X\times \N}\circ \Phi_{\X_f}^{(0)}$.

    We have that $\Phi:=\Phi_{\Y_g}\circ\Psi\circ\Phi_{\X_f}\colon\G_{\X_f}\LRA\G_{\Y_g}$ is an isomorphism and 
    $h := \phi_{\Y_g}\circ \psi\circ \phi_{\X_f} \colon \X_f\LRA\Y_g$ is a homeomorphism such that $h\circ\pi_{\X_f} = \pi_{\Y_g}\circ\Phi^{(0)}$.

    Let $\xi\in C(\Y\times\N,\Z)$ be defined by 
    \[
        \xi(y,i) =
        \begin{cases}
            1 & \text{if}~i > 0,\\
            n & \text{if}~i = 0,
        \end{cases}
    \]
    for $(y,i)\in \Y\times \N$.
    Then $\kappa_{\Y_g}(1)\circ \Phi_{\Y_g} = \kappa_{\Y\times\N}(\xi)$. 
    Set $\eta := \psi^*(\xi)\in C(\X\times\N,\Z)$ and define $d_{\X_f}\in C(\X_f,\Z)$ by 
    \[
        d_{\X_f}(x,i) = 
        \begin{cases}
            \sum_{j = k(x,i-1) + 1}^{k(x,i)} \eta(x,j) & \textrm{if}~i > 0, \\
            \eta(x,0) - \sum_{j = 1}^{k(\sigma_\X(x), f(\sigma_\X(x)) - 1)} \eta(\sigma_\X(x),j) & \textrm{if}~i = 0,
        \end{cases}       
    \]
    for $(x,i)\in \X_f$.    
    Then $\kappa_{\Y\times\N}(\xi)\circ\Psi = \kappa_{\X\times\N}(\eta)$ and $\kappa_{\X\times\N}(\eta)\circ \Phi_{\X_f} = \kappa_{\X_f}(d_{\X_f})$. 
    We thus have $\kappa_{\Y_g}(1)\circ\Phi = \kappa_{\X_f}(d_{\X_f})$.

    Similarly, $\kappa_{\X_f}(1)\circ\Phi_{\X_f}^{-1}=\kappa_{\X\times\N}(\rho)$ where $\rho\in C(\X\times\N,\Z)$ is defined by
    \[
        \rho(x,j) =
        \begin{cases}
            f(\sigma(x))    & \text{if}~j = 0, \\
            1               & \text{if}~j = k(x,i)~\text{for some}~i\in\{1,\dots,f(x)-1\}, \\
            0               & \text{otherwise}.
        \end{cases}
    \]
    Let $\chi = \psi^\#(\rho)\in C(\Y\times\N,\Z)$, and let $d_{\Y_g}\in C(\Y_g,\Z)$ be defined by 
    \[
        d_{\Y_g}(y,i) =
        \begin{cases}
            \chi(y,i)                                                           & \textrm{if}~i > 0, \\
            \chi(y,0) - \sum_{j = 1}^{n - 1} \chi(\sigma_\Y(y),j)   & \textrm{if}~i = 0,
        \end{cases}
    \]
    Then $\kappa_{\X\times\N}(\rho)\circ\Psi^{-1} = \kappa_{\Y\times\N}(\chi)$ and $\kappa_{\Y\times\N}(\chi)\circ \Phi_{\Y_g} = \kappa_{\Y_g}(d_{\Y_g})$. 
    Hence, $\kappa_{\X_f}(1)\circ \Phi^{-1} = \kappa_{\Y_g}(d_{\Y_g})$.

    It now follows from Theorem~\ref{thm:coe} that there are continuous maps $k_{\X_f},l_{\X_f}\colon \X_f\LRA \N$ and $k_{\Y_g}, l_{\Y_g}\colon \Y_g\LRA \N$ 
    such that $(h,k_{\X_f},l_{\X_f},k_{\Y_g},l_{\Y_g})$ is a stabilizer-preserving continuous orbit equivalence from $\X_f$ to $\Y_g$
    and $l_{\X_f} - k_{\X_f} = d_{\X_f}$ and $l_{\Y_g} - k_{\Y_g} = d_{\Y_g}$.

    Note that $[\xi]\in H_+^{\Y\times\N}$ and $[\eta] = [\psi^*(\xi)] = H(\psi)([\xi])\in H_+^{\X\times\N}$. 
    Since $H(\iota_0)\colon H^{\X\times\N}\LRA H^\X$ is a positive isomorphism, it follows that there are continuous maps $\alpha,\beta\colon\X\times\N\LRA\N$ 
    such that $\alpha$ is supported on $\X\times\{0\}$ and $\eta = \alpha + \beta - \beta\circ S_\X$. 
    Then $d_{\X_f}(x,i) = \beta(x,k(x,i))-\beta(x,k(x,i-1))$ for $i > 0$ and
    \begin{align*}
        d_{\X_f}(x,0) 
        &= \alpha(x,0) + \beta(x,0) - \beta(\sigma_\X(x),k(x,f(\sigma_\X(x))) - 1) \\
        &= \alpha(x,0) + \beta(x,0) - \beta\circ \sigma_f(x,0),
    \end{align*}
    for $x\in \X$.
    Thus, $[l_{\X_f} - k_{\X_f}] = [d_{\X_f}]\in H_+^{\X_f}$.

    Similarly, $[\rho]\in H_+^{\X\times\N}$ and $[\chi] = [\psi^\#(\rho)] = H(\psi)^{-1}([\rho])\in H_+^{\Y\times\N}$, 
    so there are continuous maps $\alpha', \beta'\colon \Y\times\N \LRA \N$ such that $\gamma$ is supported on $\Y\times\{0\}$ 
    and $\chi = \alpha' + \beta' - \beta'\circ S_\Y$, and then $d_{\Y_g}(y,i) = \theta(y,i) - \theta(y,i - 1)$ for $i > 0$, 
    and $\tau(y,0) = \alpha(y,0) + \beta'(y,0) - \beta'(\sigma_\Y(y), n - 1)$, for $y\in \Y$.
    This shows that $[l_{\Y_g} - k_{\Y_g}] = [\tau]\in H_+^{\Y_g}$. 

    We conclude that $(h,k_{\X_f},l_{\X_f},k_{\Y_g},l_{\Y_g})$ is a positive stabilizer-preserving continuous orbit equivalence.

    (iv) $\implies$ (i): 
    We have that $\Lambda_{\X_f}$ and $\Lambda_{\Y_g}$ are flow equivalent according to Corollary~\ref{cor:flow}. 
    Since $\Lambda_\X$ and $\Lambda_{\X_f}$ are flow equivalent, and $\Lambda_\Y$ and $\Lambda_{\Y_g}$ are flow equivalent, 
    it follows that $\Lambda_\X$ and $\Lambda_\Y$ are flow equivalent.

    (ii)$\implies$(iii): 
    The isomorphism $\Psi\colon \G_\X\times \R \LRA \G_\Y\times \R$ induces a $^*$-isomorphism 
    $\Phi\colon \OO_\X\otimes \K = \mathrm{C_r^*}(\G_\X\times \R)\LRA \mathrm{C_r^*}(\G_\Y\times \R) = \OO_\Y\otimes \K$ satisfying $\Phi(f) = f\circ \Psi^{-1}$,
    for $f\in C_c(\G_\X\times \R)$.
    In particular, $\Phi(\D_\X\otimes c_0) = \D_\Y\otimes c_0$.
    The hypothesis, $\psi\circ \pi_{\X\times \N} = \pi_{\Y\times \N}\circ \Psi^{(0)}$ ensures that $\Phi(f) = f\circ\psi^{-1}$, 
    for $f\in C(\X)\otimes c_0\subseteq C(\tilde{\X})\otimes c_0 = \D_\X\otimes c_0$, 
    and that $\Phi^{-1}(g) = g\circ\psi$ for $g\in C(\Y)\otimes c_0\subseteq C(\tilde{\Y})\otimes c_0 = \D_\Y\otimes c_0$.  
    Therefore, $\Phi(C(\X)\otimes c_0) = C(\Y)\otimes c_0$.

    Let $\eta\in C(\Y\times\N, \Z)$ and suppose $f\in C_c(\G_\X\times\R)$ has support in $\kappa_{\X\times\N}(\psi^*(\eta))^{-1}(\{1\})$. 
    By~\eqref{eq:flow-psistar}, $\Phi(f) = f\circ\Psi^{-1}$ has support in $\Psi(\kappa_{\X\times\N}(\psi^*(\eta))^{-1}(\{1\})) = \kappa_{\Y\times\N}(\eta)^{-1}(\{1\})$. 
    It follows that 
    \[
        \Phi\circ\beta_z^{\kappa_{\X\times\N}(\psi^*(\eta))} = \beta_z^{\kappa_{\Y\times\N}(\eta)}\circ\Phi,
    \]
    for $z\in\T$. 
    A similar argument using~\eqref{eq:flow-psihash} shows that $\Phi\circ\beta_z^{\kappa_{\X\times\N}(\zeta)} = \beta_z^{\kappa_{\Y\times\N}(\psi^\#(\zeta))}\circ\Phi$,
    for $\zeta\in C(\X\times\N,\Z)$ and $z\in\T$.

    (iii) $\implies$ (ii): 
    By Corollary~\ref{cor:diagonal-preserving-stable}, we have $\Phi(C(\X)\otimes c_0) = C(\Y)\otimes c_0$,
    so it follows from~\cite[Theorem 3.3]{CRST} that there is an isomorphism $\Psi\colon \G_\X\times \R \LRA \G_\Y\times \R$.

    Let $\eta\in C(\Y\times\N,\Z)$. 
    It then follows from~\cite[Theorem 6.2]{CRST} that there is an isomorphism $\Psi_\eta\colon \G_\X\times \R \LRA \G_\Y\times \R$ satisfying 
    $\kappa_{\X\times\N}(\psi^*(\eta)) = \kappa_{\Y\times\N}(\eta)\circ\Psi_\eta$,
    and according to Remark~\ref{remark:CRST}, we have $\Psi = \Psi_\eta$. 
    Therefore, $\kappa_{\X\times\N}(\psi^*(\eta)) = \kappa_{\Y\times\N}(\eta)\circ\Psi$, for every $\eta\in C(\Y\times\N,\Z)$. 
    A similar argument shows that $\kappa_{\Y\times\N}(\psi^\#(\zeta)) = \kappa_{\X\times\N}(\zeta)\circ\Psi^{-1}$, for every $\zeta\in C(\X\times\N,\Z)$.
    Finally, the restriction $\Phi|_{C(\X)\otimes c_0}\colon C(\X)\otimes c_0\LRA C(\Y)\otimes c_0$ induces a homeomorphism $\psi\colon \X\times \N \LRA \Y\times \N$ 
    such that $\psi\circ \pi_{\X\times \N} = \pi_{\Y\times \N}\circ \Psi^{(0)}$.

    The final remark follows from Corollary~\ref{cor:diagonal-preserving-stable}.
\end{proof}

If we restrict to the class of shift spaces which produce effective groupoids, we can relax some of the conditions of Theorem~\ref{thm:flow}.

\begin{theorem}\label{thm:flow-essentially-principal}
    Let $\Lambda_\X$ and $\Lambda_\Y$ be two-sided shift spaces such that $\X$ and $\Y$ contain no periodic points isolated in past equivalence.
    The following are equivalent:
    \begin{enumerate}
        \item[(i)] the systems $\Lambda_\X$ and $\Lambda_\Y$ are flow equivalent;
        \item[(ii)] there is an isomorphism of groupoids $\Psi\colon \G_\X\times \R \LRA \G_\Y\times \R$ and a homeomorphism 
            $\psi\colon \X\times \N \LRA \Y\times \N$ satisfying $\psi\circ \pi_{\X\times \N} = \pi_{\Y\times \N}\circ \Psi^{(0)}$
            and a positive isomorphism $\theta\colon H^{\X\times \N} \LRA H^{\Y\times \N}$ satisfying 
            $\theta\circ \kappa_{\X\times \N} = \kappa_{\Y\times \N}\circ H^1(\Psi)$.
    \end{enumerate}
\end{theorem}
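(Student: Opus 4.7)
The plan is to reduce Theorem~\ref{thm:flow-essentially-principal} to the general flow equivalence criterion Theorem~\ref{thm:flow} by exploiting effectiveness to collapse the extra data $\psi^*, \psi^\#, H(\psi)$ into a single positive isomorphism $\theta$. The hypothesis that $\X$ and $\Y$ contain no periodic points isolated in past equivalence, together with Proposition~\ref{prop:essentially-principal}, makes $\G_\X$ and $\G_\Y$ effective; since $\R$ is principal, the product groupoids $\G_\X\times\R$ and $\G_\Y\times\R$ also have empty interior of isotropy and are thus effective.

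For (i) $\implies$ (ii), I would invoke Theorem~\ref{thm:flow}(i) $\implies$ (ii) to produce $\Psi$, $\psi$, homomorphisms $\psi^*, \psi^\#$, and the positive isomorphism $H(\psi)\colon H^{\Y\times\N}\LRA H^{\X\times\N}$. Set $\theta := H(\psi)^{-1}$; it is a positive isomorphism $H^{\X\times\N}\LRA H^{\Y\times\N}$. Descending the cocycle identity $\kappa_{\X\times\N}(\psi^*(\eta)) = \kappa_{\Y\times\N}(\eta)\circ\Psi$ to cohomology, and using $H(\psi)[\eta] = [\psi^*(\eta)]$, yields the required compatibility $\theta\circ\kappa_{\X\times\N} = \kappa_{\Y\times\N}\circ H^1(\Psi)$.

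For (ii) $\implies$ (i), I need to manufacture $\psi^*$ and $\psi^\#$ satisfying the hypotheses of Theorem~\ref{thm:flow}(ii) from the data $\Psi$ and $\theta$. Given $\eta\in C(\Y\times\N,\Z)$, form the pulled-back cocycle $\kappa_{\Y\times\N}(\eta)\circ\Psi\in B^1(\G_\X\times\R)$. Since $\G_\X\times\R$ is effective, the map $H^{\X\times\N}\LRA H^1(\G_\X\times\R)$ induced by $\kappa_{\X\times\N}$ is an isomorphism (this is the Deaconu--Renault analogue of the identification recalled in Section~\ref{sec:prelim}). By the assumed compatibility $\theta\circ\kappa_{\X\times\N} = \kappa_{\Y\times\N}\circ H^1(\Psi)$, the class of $\kappa_{\Y\times\N}(\eta)\circ\Psi$ equals $\kappa_{\X\times\N}(\theta^{-1}[\eta])$, so choosing a representative $\xi\in C(\X\times\N,\Z)$ of $\theta^{-1}[\eta]$ produces $\kappa_{\X\times\N}(\xi)$ cohomologous to $\kappa_{\Y\times\N}(\eta)\circ\Psi$. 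A further coboundary adjustment upgrades this to exact equality, at which point we define $\psi^*(\eta) := \xi$. A symmetric construction using $\Psi^{-1}$ and $\theta^{-1}$ yields $\psi^\#$, and the positivity of $\theta$ provides the positivity of $H(\psi)$ required by Theorem~\ref{thm:flow}(ii).

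The main obstacle is the passage from equality of cohomology classes to equality of cocycles in constructing $\psi^*$ and $\psi^\#$. Any coboundary on $\G_\X\times\R$ is $\partial h$ for a continuous $h\colon \tilde{\X}\times\N\LRA\Z$, and the compatibility $\psi\circ\pi_{\X\times\N} = \pi_{\Y\times\N}\circ\Psi^{(0)}$, combined with Lemma~\ref{lem:continuity}, should force the relevant $h$ to factor through $\pi_{\X\times\N}$ so that the coboundary can be absorbed into a modification of $\xi$ inside $C(\X\times\N,\Z)$. Verifying that the resulting assignment $\eta\mapsto \psi^*(\eta)$ is an honest group homomorphism (not merely a choice of representatives per class), that the symmetric $\psi^\#$ can be chosen compatibly with (\ref{eq:flow-psistar}) and (\ref{eq:flow-psihash}) simultaneously, and that the induced map on cohomology is exactly $\theta^{-1}$, is where the careful bookkeeping lies. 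Once this is done, Theorem~\ref{thm:flow}(ii) $\implies$ (i) delivers flow equivalence of $\Lambda_\X$ and $\Lambda_\Y$.
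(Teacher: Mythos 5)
Your (i)~$\implies$~(ii) direction is fine and is exactly what the paper does: run Theorem~\ref{thm:flow}(i)~$\implies$~(ii) and take $\theta = H(\psi)^{-1}$. The problem is in (ii)~$\implies$~(i), where you propose to rebuild the data $\psi^*,\psi^\#$ of Theorem~\ref{thm:flow}(ii) from $\Psi$ and $\theta$. The crucial step --- passing from equality of cohomology classes to an exact identity $\kappa_{\X\times\N}(\psi^*(\eta)) = \kappa_{\Y\times\N}(\eta)\circ\Psi$ --- is precisely where the standing hypothesis enters, and you have not supplied it. The obstruction is not really the coboundary $h$ you point to: it is that the symbol $(\tilde{x},k)\LMT \kappa_{\Y\times\N}(\eta)\circ\Psi\bigl((\tilde{x},k),1,S_{\tilde{\X}}(\tilde{x},k)\bigr)$ is a priori only a continuous function on $\tilde{\X}\times\N$, and one must show it is constant on the fibres of $\pi_{\X\times\N}$ before Lemma~\ref{lem:continuity} can be invoked. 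Lemma~\ref{lem:continuity} only converts continuity upstairs into continuity downstairs \emph{once} the set-theoretic factorization is known; it cannot ``force'' the factorization. Establishing constancy on fibres requires the density-of-aperiodic-points argument (Proposition~\ref{prop:essentially-principal} together with Lemma~\ref{lem:isotropy-lemma}), i.e.\ essentially re-proving the descent step of Theorem~\ref{thm:coe-essentially-principal} in the stabilized setting. Relatedly, your ``choose a representative of $\theta^{-1}[\eta]$ and adjust by a coboundary'' recipe does not obviously produce a group homomorphism in $\eta$; to get additivity you would have to define $\psi^*(\eta)$ directly from the pulled-back cocycle, which again presupposes the fibrewise constancy you have not proved.

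For comparison, the paper avoids this reconstruction entirely. It discretizes: choose $n$ with $\Psi^{(0)}(\tilde{\X}\times\{0\})\subset\tilde{\Y}\times\{0,\dots,n-1\}$, build $f\in C(\X,\N_+)$ and the constant $g=n$, and transport $\Psi$ to a groupoid isomorphism $\G_{\X_f}\LRA\G_{\Y_g}$ compatible with a homeomorphism $h\colon\X_f\LRA\Y_g$. Theorem~\ref{thm:coe-essentially-principal} (which is where the no-isolated-periodic-points hypothesis does its work) then upgrades this to a continuous orbit equivalence, the positivity of $\theta$ makes it positive, and Corollary~\ref{cor:flow} yields flow equivalence of $\Lambda_{\X_f}$ and $\Lambda_{\Y_g}$, hence of $\Lambda_\X$ and $\Lambda_\Y$. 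If you want to salvage your route, you must prove the fibrewise-constancy/descent statement for the cocycles $\kappa_{\Y\times\N}(\eta)\circ\Psi$; as written, the argument has a genuine gap at exactly the point you flagged as ``careful bookkeeping.''
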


\begin{proof}
    (i)$\implies$(ii):
    This follows from the proof of Theorem~\ref{thm:flow} (i) $\implies$ (ii).

    (ii) $\implies$ (i):
    Let $\Psi\colon \G_\X\times \R \LRA \G_\Y\times \R$ be a groupoid isomorphism and $\psi\colon \X\times \N \LRA \Y\times \N$ be a homeomorphism
    satisfying $h\circ \pi_{\X\times \N} = \pi_{\Y\times \N}\circ \Psi^{(0)}$.
    As in the proof of Theorem~\ref{thm:flow} (ii) $\implies$ (iv) we choose $n\in \N_+$ and $f\in C(\X, \N_+)$.
    Let $g\colon \Y\LRA \N$ be constantly equal to $n$.
    Then there is a groupoid isomorphism $\Psi'\colon \G_{\X_f} \LRA \G_{\Y_g}$ and a homeomorphism $h = \phi_{\Y_g}\circ \psi\circ \phi_{\X_f}$
    such that $h\circ \pi_{\X_f} = \pi_{\Y_g}\circ (\Psi')^{(0)}$.

    It is not hard to see that the maps $\phi_{\X_f}\colon \X_f\LRA \psi^{-1}(\Y\times \{0, \ldots, n - 1\})$ and $\phi_{\Y_g}\colon \Y_g\LRA \Y\times \{0, \ldots, n - 1\}$
    defined in~\eqref{eq:phi_(X_f)} and~\eqref{eq:phi_(Y_g)}, respectively, are positive continuous orbit equivalences.
    Since $\X$ and $\Y$ contain dense sets of aperiodic points,
    it follows from Theorem~\ref{thm:coe-essentially-principal} that 
    $\psi\colon \psi^{-1}(\Y\times \{0, \ldots, n - 1\}) \LRA \Y\times \{0,\ldots, n - 1\}$ is a continuous orbit equivalence.
    By the hypothesis in (ii), $\psi$ is also positive.
    Hence $h$ is a positive continuous orbit equivalence.
    It this follows from Corollary~\ref{cor:flow} that $\Lambda_{\X_f}$ and $\Lambda_{\Y_g}$ are flow equivalent.
    Since $\Lambda_\X$ and $\Lambda_{\X_f}$ are flow equivalent, and $\Lambda_\Y$ and $\Lambda_{\Y_g}$ are flow equivalent,
    we conclude that $\Lambda_\X$ and $\Lambda_\Y$ are flow equivalent.
\end{proof}

Finally, we restrict to the class of sofic shifts whose groupoids are effective.

\begin{theorem}\label{thm:flow-sofic}
    Let $\Lambda_\X$ and $\Lambda_\Y$ be two-sided sofic shift spaces such that $\X$ and $\Y$ contain no periodic points isolated in past equivalence.
    The following are equivalent:
    \begin{enumerate}
        \item[(i)] the two-sided subshifts $\Lambda_\X$ and $\Lambda_\Y$ are flow equivalent;
        \item[(ii)] there is an isomorphism $\Psi\colon \G_\X\times \R \LRA \G_\Y\times \R$ and a homeomorphism 
            $\psi\colon \X\times \N \LRA \Y\times \N$ satisfying $\psi\circ \pi_{\X\times \N} = \pi_{\Y\times \N}\circ \Psi^{(0)}$;
        \item[(iii)] there is a $^*$-isomorphism $\Phi\colon \OO_\X\otimes \K\LRA \OO_\Y\otimes \K$ satisfying $\Phi(C(\X)\otimes c_0) = C(\Y)\otimes c_0$.
    \end{enumerate}
\end{theorem}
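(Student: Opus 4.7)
I establish (i) $\Rightarrow$ (iii) $\Leftrightarrow$ (ii) $\Rightarrow$ (i). The first is immediate from Theorem~\ref{thm:flow}. The equivalence (iii) $\Leftrightarrow$ (ii) combines Corollary~\ref{cor:diagonal-preserving-stable} (giving $\Phi(\D_\X\otimes c_0) = \D_\Y\otimes c_0$ automatically) with the reconstruction theorem of~\cite{CRST} applied to the amenable locally compact Hausdorff \'etale groupoids $\G_\X\times\R$ and $\G_\Y\times\R$; the factor-map compatibility on the groupoid side corresponds via Gelfand duality to $\Phi(C(\X)\otimes c_0) = C(\Y)\otimes c_0$ on the $\mathrm{C^*}$-algebra side.

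The substantive implication is (ii) $\Rightarrow$ (i). Imitating the first half of the proof of Theorem~\ref{thm:flow} (ii) $\Rightarrow$ (iv), I use compactness of $\tilde{\X}$ and continuity of $\Psi^{(0)}$ to pick $n\in\N_+$ with $\Psi^{(0)}(\tilde{\X}\times\{0\})\subseteq\tilde{\Y}\times\{0,\dots,n-1\}$; letting $g\equiv n$ on $\Y$, I produce $f\in C(\X,\N_+)$, a groupoid isomorphism $\Phi'\colon\G_{\X_f}\LRA\G_{\Y_g}$, and a homeomorphism $h\colon\X_f\LRA\Y_g$ with $h\circ\pi_{\X_f} = \pi_{\Y_g}\circ(\Phi')^{(0)}$. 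Both $\X_f$ and $\Y_g$ are sofic and inherit the absence of periodic points isolated in past equivalence, so by Proposition~\ref{prop:essentially-principal} their groupoids remain effective, and Theorem~\ref{thm:coe-essentially-principal} promotes $h$ to a stabilizer-preserving continuous orbit equivalence with continuous cocycles $l_{\X_f},k_{\X_f}$ and $l_{\Y_g},k_{\Y_g}$.

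It remains to upgrade $h$ to a positive stabilizer-preserving continuous orbit equivalence in the sense of Lemma~\ref{lem:pos}, so that Corollary~\ref{cor:flow} yields flow equivalence of $\Lambda_{\X_f}$ and $\Lambda_{\Y_g}$; composition with the standard flow equivalences between $\Lambda_\X$ and $\Lambda_{\X_f}$ and between $\Lambda_\Y$ and $\Lambda_{\Y_g}$ then finishes the proof. For positivity, Lemma~\ref{lem:lift-coe} lifts $h$ to a stabilizer-preserving continuous orbit equivalence $\tilde{h}\colon\tilde{\X}_f\LRA\tilde{\Y}_g$ whose cocycles are the pullbacks $l_{\tilde{\X}_f} = l_{\X_f}\circ\pi_{\X_f}$ and $k_{\tilde{\X}_f} = k_{\X_f}\circ\pi_{\X_f}$, and similarly on the other side. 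The sofic hypothesis ensures that $\tilde{\X}_f$ and $\tilde{\Y}_g$ are conjugate to shifts of finite type, and by effectivity they contain dense sets of aperiodic points; for this class, the Matsumoto--Matui positivity theorem for continuous orbit equivalence (as invoked after Corollary~\ref{cor:flow}) yields that $\tilde{h}$ is positive. The injectivity of $\pi_{\X_f}^*\colon H^{\X_f}\LRA H^{\tilde{\X}_f}$ together with the explicit pullback form of the cocycles should then let the positive decomposition descend to give $[l_{\X_f}-k_{\X_f}]\in H^{\X_f}_+$, and symmetrically on the other side.

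The principal obstacle is precisely this last descent: although $\pi^*$ preserves positivity, it need not reflect it in general, so one must exploit both the particular form $l_{\tilde{\X}_f} = l_{\X_f}\circ\pi_{\X_f}$ (so $l_{\tilde{\X}_f}$ is constant on fibers of $\pi_{\X_f}$) and the shift-of-finite-type structure of the cover. A natural alternative, should this prove awkward, is to bypass Corollary~\ref{cor:flow} entirely and instead construct the positive cohomology isomorphism required in Theorem~\ref{thm:flow-essentially-principal} directly from $\Psi$, exploiting that effectivity identifies $H^1(\G_\X\times\R)$ with $H^{\tilde{\X}\times\N}$ and that the factor-map condition restricts this identification compatibly to $H^{\X\times\N}$.
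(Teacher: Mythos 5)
Your architecture coincides with the paper's: (i)~$\Rightarrow$~(ii)/(iii) from Theorem~\ref{thm:flow}, (ii)~$\Leftrightarrow$~(iii) from Corollary~\ref{cor:diagonal-preserving-stable} plus the reconstruction machinery of~\cite{CRST} (the paper cites \cite[Corollary 11.4]{CRST} directly), and (ii)~$\Rightarrow$~(i) via the $\X_f$, $\Y_g$ construction, Theorem~\ref{thm:coe-essentially-principal}, the Matsumoto--Matui argument, and Corollary~\ref{cor:flow}. The problem is the step you yourself flag: you try to descend the \emph{cohomological} positivity condition $[l_{\tilde{\X}_f}-k_{\tilde{\X}_f}]\in H^{\tilde{\X}_f}_+$ to $[l_{\X_f}-k_{\X_f}]\in H^{\X_f}_+$ through $\pi_{\X_f}^*$, and as you note, $\pi^*$ preserves the positive cone but there is no reason it should reflect it, even knowing that the class upstairs is a pullback. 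As written this is a genuine gap, and your fallback suggestion (building the positive isomorphism $\theta$ of Theorem~\ref{thm:flow-essentially-principal}(ii) directly from $\Psi$) is not a repair but a restatement of the same difficulty.

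The resolution, which is how the paper closes the argument, is to avoid descending positivity altogether and descend \emph{least period preservation} instead. Corollary~\ref{cor:flow} uses positivity only through Lemma~\ref{lem:pos}, whose sole output is that the orbit equivalence is least period preserving; it is least period preservation that feeds into \cite[Proposition~3.2]{CEOR}. The proof of \cite[Theorem~5.11]{MM-zeta}, applied to the cover-level orbit equivalence (the paper applies it to $(\Psi')^{(0)}$, which is already a continuous orbit equivalence $\tilde{\X}_f\to\tilde{\Y}_g$ between shifts of finite type without isolated points by the proof of Theorem~\ref{thm:flow-essentially-principal}; you could equally use your lift $\tilde h$), yields that it is least period preserving, not merely positive. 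Least period preservation is a pointwise condition on periodic orbits and \emph{does} descend: for a periodic $x\in\X_f$ with $\lp(x)=p$ one has $\lp(\iota_{\X_f}(x))=p$, $\tilde h(\iota_{\X_f}(x))=\iota_{\Y_g}(h(x))$, hence $\lp(h(x))=\lp(\tilde h(\iota_{\X_f}(x)))$, and the cocycle sums agree because $l_{\tilde{\X}_f}=l_{\X_f}\circ\pi_{\X_f}$ and $k_{\tilde{\X}_f}=k_{\X_f}\circ\pi_{\X_f}$ (this is the same descent used in the proof of Theorem~\ref{thm:coe}(iii)$\Rightarrow$(i)). Replacing your attempted cohomological descent with this pointwise one, and invoking \cite[Proposition~3.2]{CEOR} (equivalently, the proof rather than the statement of Corollary~\ref{cor:flow}), completes the argument.
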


\begin{proof}
(i) $\implies$ (ii):
This follows from Theorem~\ref{thm:flow}.

(ii)$\implies$(i): As in the proof of (ii)$\implies$(iv) in Theorem~\ref{thm:flow}, there are $f\in C(\X,\N_+)$, $g\in C(\Y,\N_+)$, 
a groupoid isomorphism $\Psi'\colon \G_{\X_f}\LRA\G_{\Y_g}$ and a homeomorphism $h\colon \X_f\LRA\Y_g$ such that $h\circ \pi_{\X_f} = \pi_{\Y_g}\circ(\Psi')^{(0)}$. 
It follows from Theorem~\ref{thm:flow-essentially-principal} and its proof that $h$ is a continuous orbit equivalence 
and that $(\Psi')^{(0)}\colon\tilde{\X}_f\LRA \tilde{\Y}_g$ is a continuous orbit equivalence. 
Since $\X_f$ and $\Y_g$ are sofic shift spaces, the covers $\tilde{\X}_f$ and $\tilde{\Y}_g$ are (conjugate to) shifts of finite type. 
By hypothesis, $\X$ and $\Y$ have no periodic points isolated in past equivalence, so $\tilde{\X}$ and $\tilde{\Y}$, and thus also $\tilde{\X}_f$ and $\tilde{\Y}_g$, 
have no isolated points. 
It therefore follows the proof of~\cite[Theorem 5.11]{MM-zeta} that the continuous orbit equivalence $(\Psi')^{(0)}$ is positive and least period preserving. 
It follows that $h$ is also positive and least period preserving. 
It therefore follows from Corollary~\ref{cor:flow} that $\X_f$ and $\Y_g$ are flow equivalent. 
Since $\X$ and $\X_f$ are flow equivalent, and $\Y$ and $\Y_g$ are flow equivalent, we conclude that $\X$ and $\Y$ are flow equivalent.

(ii) $\iff$ (iii):
This is~\cite[Corollary 11.4]{CRST}.
Note that if $\Phi\colon \OO_\X\otimes \K \LRA \OO_\Y\times \K$ is a $^*$-isomorphism as in (iii), 
then $\Phi(\D_\X\otimes c_0) = \D_\Y\otimes c_0$ by Corollary~\ref{cor:diagonal-preserving-stable}.
\end{proof}

\begin{corollary}\label{cor:coe-flow-sofic}
    Let $\X$ and $\Y$ be one-sided sofic shifts with no periodic points isolated in past equivalence.
    If $\X$ and $\Y$ are continuously orbit equivalent, then $\Lambda_\X$ and $\Lambda_\Y$ are flow equivalent.
\end{corollary}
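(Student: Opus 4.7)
The plan is to use the two sofic/effective-groupoid characterizations already established in the paper and bridge between them by tensoring with the compact operators. Since $\X$ and $\Y$ contain no periodic points isolated in past equivalence, Proposition~\ref{prop:essentially-principal} guarantees that the groupoids $\G_\X$ and $\G_\Y$ are effective, which is exactly the standing hypothesis required by both Corollary~\ref{cor:coe-sofic} and Theorem~\ref{thm:flow-sofic}.

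First I would apply Corollary~\ref{cor:coe-sofic} to the hypothesized continuous orbit equivalence between $\X$ and $\Y$, producing a $^*$-isomorphism $\Phi\colon \OO_\X \LRA \OO_\Y$ such that $\Phi(C(\X)) = C(\Y)$. Next I would tensor with the identity on $\K$ to obtain a $^*$-isomorphism $\Phi\otimes \id_\K\colon \OO_\X\otimes \K \LRA \OO_\Y\otimes \K$ satisfying $(\Phi\otimes \id_\K)(C(\X)\otimes c_0) = C(\Y)\otimes c_0$. Finally, I would invoke the implication (iii)$\implies$(i) of Theorem~\ref{thm:flow-sofic}, whose hypotheses are met because $\X$ and $\Y$ are sofic and their groupoids are effective, to conclude that $\Lambda_\X$ and $\Lambda_\Y$ are flow equivalent.

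There is essentially no obstacle here: the work of transferring a one-sided invariant to the stabilized setting has already been carried out inside Theorem~\ref{thm:flow-sofic} (via Corollary~\ref{cor:diagonal-preserving-stable} and the reconstruction theorem), and the soficity plus effectiveness hypotheses are precisely what allow one to promote a diagonal-preserving $^*$-isomorphism of stabilized algebras back to a flow equivalence of the two-sided shifts, through the positivity/least-period-preservation argument used in the proof of Theorem~\ref{thm:flow-sofic}. Thus the corollary reduces to a two-line composition of existing results.
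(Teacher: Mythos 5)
Your proposal is correct and is essentially the argument the paper intends: the corollary is stated as an immediate consequence of Corollary~\ref{cor:coe-sofic} (giving a $^*$-isomorphism $\OO_\X\LRA\OO_\Y$ carrying $C(\X)$ onto $C(\Y)$ from the continuous orbit equivalence) combined, after tensoring with $\K$, with the implication (iii)$\implies$(i) of Theorem~\ref{thm:flow-sofic}. The hypotheses you check (soficity and the absence of periodic points isolated in past equivalence, hence effectiveness via Proposition~\ref{prop:essentially-principal}) are exactly what both cited results require, so the two-line composition goes through.
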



\end{document}